\documentclass[11pt,reqno]{amsart}

\usepackage[leqno]{amsmath}
\usepackage{amsthm}
\usepackage{amsfonts, tikz-cd}
\usepackage{amssymb}
\usepackage{eucal}
\usepackage[all]{xy}
\usepackage{mathtools}
\usepackage{stmaryrd}
\usepackage{csquotes}
\usepackage{enumitem}
\usepackage{upgreek}
\usepackage{color}

\setenumerate{itemsep=2pt,parsep=2pt,before={\parskip=2pt}}

\usepackage[colorlinks=true,hyperindex, linkcolor=magenta, pagebackref=false, citecolor=cyan,pdfpagelabels]{hyperref}



\usepackage{relsize}
\usepackage[bbgreekl]{mathbbol}
\DeclareSymbolFontAlphabet{\mathbb}{AMSb} 
\DeclareSymbolFontAlphabet{\mathbbl}{bbold}
\newcommand{\Prism}{{\mathlarger{\mathbbl{\Delta}}}}


   \topmargin=0in
   \oddsidemargin=0in
   \evensidemargin=0in
   \textwidth=6.5in
   \textheight=8.5in

\usepackage{xspace}
\usepackage{epsfig,epic,eepic,latexsym,color}
\usepackage[all]{xy}
\usepackage{comment}


\usepackage[english]{babel}
\usepackage{latexsym}

\newcommand{\nc}{\newcommand}
\nc{\rnc}{\renewcommand}
\rnc{\P}{\mathbf P}
\nc{\R}{\mathbf R}
\rnc{\rm}{\mathrm}

\nc{\C}{\mathbf C}
\nc{\Q}{\mathbf Q}
\nc{\Z}{\mathbf Z}
\nc{\N}{\mathbf N}
\nc{\A}{\mathbf A}

\nc{\an}{\operatorname{an}}
\nc{\red}{\operatorname{red}}
\nc{\coker}{\operatorname{coker}}

\nc{\et}{\text{\'et}}
\nc{\fet}{\text{f\'et}}

\nc{\htt}{\operatorname{ht}}
\nc{\Nm}{\operatorname{Nm}}
\nc{\Ker}{\operatorname{Ker}}
\nc{\mmod}{\operatorname{mod}}
\nc{\End}{\operatorname{End}}
\nc{\Aut}{\operatorname{Aut}}
\nc{\cont}{\text{cont}}
\nc{\sep}{\text{sep}}
\nc{\Hom}{\mathrm{Hom}}
\nc{\Gal}{\mathrm{Gal}}
\nc{\Spec}{\text{Spec}\,}
\nc{\RZ}{\operatorname{RZ}}
\nc{\HHom}{\ud{\rm{Hom}}}
\nc{\hocolim}{\rm{hocolim}}
\nc{\diam}{\diamondsuit}
\nc{\cl}{\rm{cl}}

\rnc{\t}{\tau}
\nc{\mm}{\pmb{\mu}}
\rnc{\a}{\alpha}
\nc{\n}{\mathfrak n}
\nc{\m}{\mathfrak m}
\nc{\mfs}{\mathfrak s}
\nc{\Cat}{\cal{C}\rm{at}}
\rnc{\Pr}{\cal{P}\rm{r}^{\rm{L}}}

\nc{\p}{\mathfrak p}
\nc{\q}{\mathfrak q}

\nc{\Sym}{\operatorname{Sym}}
\nc{\codim}{\operatorname{codim}}
\nc{\rk}{\operatorname{rk}}
\nc{\GL}{\operatorname{GL}}
\nc{\SL}{\operatorname{SL}}
\nc{\Lie}{\operatorname{Lie}}
\nc{\Ind}{\operatorname{Ind}}
\nc{\Div}{\underline{Div}}
\nc{\Pic}{\mathbf{Pic}}
\nc{\uPic}{\underline{ \mathbf{Pic}}}
\nc{\rH}{\mathrm{H}}
\nc{\Spf}{\operatorname{Spf}}
\nc{\Frac}{\operatorname{Frac}}
\nc{\colim}{\operatorname{colim}}
\nc{\Spa}{\operatorname{Spa}}
\nc{\tr}{\operatorname{tr}}
\nc{\Corr}{\operatorname{Corr}}

\rnc{\an}{\operatorname{an}}

\nc{\xr}{\xrightarrow}

\nc{\eps}{\epsilon}
\nc{\ov}{\overline}
\nc{\ud}{\underline}
\nc{\wdh}{\widehat}

\nc{\I}{\mathcal I}
\nc{\F}{\mathcal F}
\nc{\G}{\mathcal G}
\nc{\E}{\mathcal E}
\nc{\M}{\mathcal M}
\nc{\cal}{\mathcal}
\rnc{\rm}{\mathrm}
\rnc{\bf}{\mathbf}

\nc{\id}{\mathrm{id}}
\nc{\alg}{\mathrm{alg}}

\nc{\X}{\mathfrak X}
\nc{\Y}{\mathfrak Y}
\nc{\T}{\mathfrak T}

\rnc{\AA}{\mathbf{A}}
\nc{\PP}{\mathbf{P}}

\nc{\LL}{\mathcal{L}}
\rnc{\S}{\mathcal S}

\nc{\ra}{\rangle}

\nc{\os}{\overset}


\rnc{\O}{\mathcal O}
\nc{\J}{\mathcal J}
\theoremstyle{definition}

\newtheorem{thm}{Theorem}[subsection]
\newtheorem{lemma}[thm]{Lemma}
\newtheorem{defn}[thm]{Definition}
\newtheorem{construction}[thm]{Construction}
\newtheorem{notation}[thm]{Notation}

\newtheorem{setup}[thm]{Setup}

\newtheorem{question}[thm]{Question}
\newtheorem{application}[thm]{Application}

\newtheorem{warning}[thm]{Warning}
\newtheorem{prop}[thm]{Proposition}

\newtheorem{rmk}[thm]{Remark}

\newtheorem{cor}[thm]{Corollary}

\newtheorem{variant}[thm]{Variant}

\setcounter{tocdepth}{1}

\begin{document}
\bibliographystyle{halpha-abbrv}
\title{Poincar\'e Duality in abstract $6$-functor formalisms}
\author{Bogdan Zavyalov}
\maketitle

\begin{abstract}
We study Poincar\'e Duality in the context of abstract $6$-functor formalisms. In particular, we give a small and simple list of assumptions that implies Poincar\'e Duality. As an application, we give new uniform (and essentially formal) proofs of some previously established Poincaré Duality results. We also discuss how our approach to Poincar\'e Duality give new (and formal) proofs of some standard results in \'etale cohomology. 
\end{abstract}

\tableofcontents
\section{Introduction}

\subsection{General overview}

A formalism of $6$-functors is an approach to formalize cohomology theories admitting a ``coefficient'' theory $D(X)$ accompanied by six operations $\left(f^*, \, f_*, \, \otimes,\, \ud{\Hom},\, f_!,\, f^!\right)$ satisfying the following list of axioms:
\begin{enumerate}
    \item every second functor is a right adjoint to the previous one;  
    \item $f^*$ is symmetric monoidal;
    \item $f_!$ commutes with arbitrary base change and satisfies the projection formula. 
\end{enumerate}

Historically, the first $6$-functor formalism was introduced by A.\,Grothendieck in \cite{SGA4} in the context of \'etale cohomology of schemes. Namely, in {\it loc.\,cit.}, it is shown that, for each integer $n$, the assignment
\[
X\mapsto D(X_\et; \Z/n\Z)
\]
can be promoted to a $6$-functor formalism. Since then, it turned out that many other cohomology theories come equipped with the corresponding $6$-functor formalisms (e.g.\,$D$-modules, mixed Hodge modules, etc.). \smallskip

Recently, there has been a new huge rise of interest in constructing new $6$-functor formalisms (see \cite{Liu-Zheng}, \cite{Scholze-diamond}, \cite{Scholze-Clausen}, \cite{complex-analytic}, \cite{Lucas-thesis}, \cite{Lucas-nuclear}, \cite{scholze-notes}). What unites all these examples (and the previous examples) is that they all satisfy a version of Poincar\'e Duality. Namely, in each of these $6$-functor formalisms, any smooth morphism $f\colon X \to Y$ admits an  invertible object $\omega_f\in D(X)$ (called the {\it dualizing object}) and an equivalence 
\[
f^!(-) \simeq f^*(-) \otimes \omega_f
\]
of functors $D(Y) \to D(X)$. Furthermore, it is often possible to give a precise formula for the dualizing object $\omega_f$. \smallskip

Despite these similarities, the proofs of Poincar\'e Duality in each case are different and require a lot of work specific to each situation. As far as we are aware, there is no uniform approach\footnote{In fact, there is a different approach to formalizing $6$-functors based on the notion of motivic triangulated categories. Unlike our situation, Poincar\'e Duality is a (non-trivial) consequence of the axioms in this setup. However, this approach seems difficult to use in practice. See Section~\ref{section:comparison-motivic} for a more detailed discussion and the relation to the results of our paper.}. For this reason, it seems important to study when an abstract $6$-functor formalism $\cal{D}$ satisfies Poincar\'e Duality. \smallskip

The main goal of this paper is to provide an approach to Poincar\'e Duality that is viable in any $6$-functor formalism. In particular, we give a uniform approach to the question of proving Poincar\'e Duality, also simplifying previously existing proofs. A somewhat surprising consequence of our approach is that it gives a method of proving Poincar\'e Duality without understanding the whole ``coefficient'' category $D(X)$. This becomes extremely useful in examples like \cite{Scholze-diamond}, \cite{complex-analytic}, or \cite{Lucas-thesis}, where the coefficient category is defined via descent, so one does not really have a good understanding of the coefficient categories on a general space $X$. \smallskip

As an explicit application of our methods, we give new proofs of Poincar\'e Duality in \'etale cohomology of schemes and adic spaces. Our proof has the advantage that it is essentially formal and does not use much beyond proper base change and the computation of cohomology of $\bf{P}^1$. We also apply our methods to the recent $6$-functor formalism $\cal{D}_\Box^{\rm{a}}(-; \O^+/p)^{\varphi}$ constructed by L.\,Mann (see \cite{Lucas-thesis}) to give a new proof of Poincar\'e Duality there without any explicit computations with the formal model of the torus. \smallskip

Lastly, we also discuss the potential applications of our methods to give a new proof of Poincar\'e Duality in the (potential) prismatic $6$-functor formalism (see Section~\ref{section:potential-application}).  

\subsection{Our results}

\subsubsection{Formulation of the questions}

Before stating the main results, we explicitly formulate the central questions we address in this paper and the primary obstacles to answering them. \smallskip

To this end, let $S$ be a base scheme (resp. a locally noetherian analytic adic space) and let $\mathcal{C}$ denote the category of locally finitely presented $S$-schemes (resp. locally finite type adic $S$-spaces). We fix a $6$-functor formalism
\[
\mathcal{D}\colon \mathrm{Corr}(\mathcal{C}) \to \mathrm{Cat}_\infty
\]
in the sense of Definition~\ref{defn:six-functors}. And we also recall that, roughly speaking, a morphism $f\colon X \to Y$ is cohomologically smooth (with respect to $\mathcal{D}$) if it (universally) satisfies the conclusion of Poincar\'e Duality up to identifying the dualizing object $\omega_f$ with the Tate twist.

Then the question of proving Poincar\'e Duality reduces to the following three (essentially independent) questions: 

\begin{question}\label{question:intro-duality} What is a minimalistic set of conditions on $\cal{D}$ that would ensure that any smooth morphism $f\colon X \to Y$ is cohomologically smooth?
\end{question}

\begin{question}\label{question:intro-duality-2} If every smooth morphism is cohomologically smooth, is there a reasonable formula for the dualizing object $\omega_f$? 
\end{question}

\begin{question}\label{question:intro-duality-3} Is there a minimalistic set of conditions on $\cal{D}$ that would ensure that $\omega_f$ is equal to the Tate twist (appropriately defined)?
\end{question}

The main goal of this paper is to give positive answers to all these questions. Our answer to Question~\ref{question:intro-duality} is optimal: it gives a characterization of all such $\cal{D}$. For Questions~\ref{question:intro-duality-2} and~\ref{question:intro-duality-3}, it seems harder to give an optimal answer; however, we give some results that cover all interesting examples of $6$-functors established up until the present moment. \smallskip

\begin{rmk} Surprisingly, our answers are uniform for schemes and adic spaces. Furthermore, the same results can be achieved in any ``geometry'' satisfying the property that, for any $f\colon X\to Y$, the diagonal morphism $X \to X\times_Y X$ is ``locally closed'' and admitting a reasonable notion of vector bundles and blow-ups. In particular, the arguments of this paper can be applied to complex-analytic spaces, formal schemes, or derived schemes almost without any change. However, it seems hard to make precise what the word ``geometry'' should mean, so we stick to the examples of schemes and adic spaces in this paper. 
\end{rmk}

\begin{rmk} With further effort, most results of this paper should extend to \emph{representable} morphisms between stacks. However, going beyond representable morphisms appears to require genuinely new tools. The arguments in Sections~\ref{section:dualizing-complex} and \ref{section:Chern-classes} rely crucially on deformation to the normal cone, a technique which currently assumes that the diagonal morphisms are locally closed immersions. The more general construction of the deformation to the normal cone in \cite{deformation2025} may provide a way to eliminate this assumption.
\end{rmk}

Before we discuss the main results of this paper, we want to point out the main problem in answering Question~\ref{question:intro-duality} and Question~\ref{question:intro-duality-2}, especially in the situation of an abstract $6$-functor formalism. \smallskip

Suppose that we have somehow guessed the correct formula for the dualizing object $\omega_f$. So the question of proving Poincar\'e Duality essentially boils down to the question of constructing an isomorphism
\[
\Hom_{D(Y)}\left(\F, f^*\G \otimes \omega_f\right) \simeq \Hom_{D(X)}\left(f_!\, \F, \G\right),
\]
functorial in $\F\in \cal{D}(X)$ and $\G\in \cal{D}(Y)$. Now the problem is that we do not have almost any control over the categories $\cal{D}(X)$ and $\cal{D}(Y)$ for a general $6$-functor formalism $\cal{D}$. This is probably not a big issue in the classical $6$-functor formalisms, but this becomes a serious issue in the recent $6$-functor formalisms (for example,  \cite{complex-analytic} or \cite{Lucas-thesis}), where the categories $\cal{D}(X)$ are defined abstractly via descent so one does not have good control over $\cal{D}(X)$ for a general $X$. \smallskip

Therefore, the main problem is to prove adjunction without really understanding the involved categories. Miraculously, it turns out to be possible, as we explain in the next section.

\subsubsection{Our Answers}

Now we are ready to discuss the answers to Questions~\ref{question:intro-duality},~\ref{question:intro-duality-2}~and~\ref{question:intro-duality-3} that we obtain in this paper. To address the first question, we separate the exact conditions needed to prove cohomological smoothness of one particular morphism $f$. We do this via the concept of a trace-cycle theory. For this, we fix a morphism $f\colon X \to Y$ with the diagonal morphism 
\[
\Delta\colon X\to X\times_Y X
\]
and the projections $p_1, p_2\colon X\times_Y X \to X$. 

\begin{defn}\label{defn:intro-trace-cycle}(Definition~\ref{defn:trace-cycle})  A {\it trace-cycle theory} on $f$ is a triple $(\omega_f, \tr_f, \cl_\Delta)$ of 
\begin{enumerate}
    \item an invertible object $\omega_f\in \cal{D}(X)$,
    \item a trace morphism
    \[
        \rm{tr}_{f}\colon f_!\, \omega_f \to \bf{1}_Y
    \]
    in the homotopy category $D(Y)$,
    \item a cycle map  
    \[
    \rm{cl}_{\Delta}\colon \Delta_! \bf{1}_{X} \longrightarrow p_2^*\,\omega_f
    \]
    in the homotopy category $D(X\times_S X)$
\end{enumerate}
such that 
\begin{equation}
\begin{tikzcd}
\bf{1}_X \arrow{r}{\sim} \arrow{d}{\rm{id}}& {p}_{1, !}\left(\Delta_! \bf{1}_X\right) \arrow{d}{{p}_{1, !}\left(\rm{cl}_{\Delta}\right)}\\
\bf{1}_X & \arrow{l}{\tr_{p_1}} p_{1, !} \left(p_{2}^*\, \omega_f\right),
\end{tikzcd}
\end{equation}
\begin{equation}
\begin{tikzcd}[column sep = 4em]
\omega_f \arrow{r}{\sim} \arrow{d}{\rm{id}}& {p}_{2, !}\left(p_{1}^* \omega_f \otimes \Delta_! \bf{1}_X\right)  \arrow{r}{p_{2, !}(\rm{id}\otimes \rm{cl}_\Delta)} & p_{2, !}(p_1^* \omega_f \otimes p_2^* \omega_f)  \arrow{d}{\wr} \\
\omega_f & \arrow{l}{\sim} \bf{1}_X\otimes \omega_f & \arrow{l}{\tr_{p_2} \otimes \rm{id}} p_{2, !}p_1^* \omega_f \otimes \omega_f,
\end{tikzcd}
\end{equation}
commute\footnote{See Construction~\ref{construction:trace-base-change} for the precise definition of $\tr_{p_i}$. Roughly, it is just the corresponding base change of $\tr_f$.} in $D(X)$ (with the right vertical arrow in the second diagram being the projection formula isomorphism).
\end{defn}

With this definition at hand, we are ready to characterize cohomologically smooth morphisms via trace-cycle theories: 

\begin{thm}\label{thm:intro-cohomologically-smooth-criterion}(Theorem~\ref{thm:cohomologically-smooth-criterion}, Remark~\ref{rmk:equivalent-trace-cycle}) Let $f\colon X \to Y$ be a morphism in $\cal{C}$. Then $f$ is cohomologically smooth if and only if $f$ admits a trace-cycle theory $(\omega_f, \tr_f, \cl_\Delta)$.
\end{thm}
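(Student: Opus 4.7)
The plan is to prove the two implications separately; the reverse direction carries essentially all the content.

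For the forward direction, assume $f$ is cohomologically smooth. Set $\omega_f := f^!(\bf{1}_Y)$, invertible by hypothesis, and let $\tr_f$ be the counit $f_! f^! \bf{1}_Y \to \bf{1}_Y$ of the $(f_!, f^!)$ adjunction. Since $p_1\colon X \times_Y X \to X$ is a base change of $f$, its cohomological smoothness together with base change give $p_1^! \bf{1}_X \simeq p_2^* \omega_f$. I would then define $\cl_\Delta$ as the image of $\mathrm{id}_{\bf{1}_X} = \mathrm{id}_{p_{1,!} \Delta_! \bf{1}_X}$ under the chain of isomorphisms
\[
\Hom(p_{1,!} \Delta_! \bf{1}_X, \bf{1}_X) \simeq \Hom(\Delta_! \bf{1}_X, p_1^! \bf{1}_X) \simeq \Hom(\Delta_! \bf{1}_X, p_2^* \omega_f).
\]
The two compatibility diagrams of Definition~\ref{defn:intro-trace-cycle} then reduce to formal consequences of the triangle identities for the various $(!_!, !^!)$-adjunctions combined with base-change compatibility of the trace.

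For the reverse direction, given $(\omega_f, \tr_f, \cl_\Delta)$, I would construct an explicit adjunction $f_! \dashv G$ for $G := f^*(-) \otimes \omega_f$. The counit uses the projection formula and the trace,
\[
\epsilon_\cal{G}\colon f_!(f^* \cal{G} \otimes \omega_f) \xrightarrow{\sim} \cal{G} \otimes f_! \omega_f \xrightarrow{\mathrm{id} \otimes \tr_f} \cal{G},
\]
and the unit uses base change along the diagonal, projection formula, and the cycle class:
\[
\eta_\cal{F}\colon \cal{F} \xrightarrow{\sim} p_{2,!}(p_1^* \cal{F} \otimes \Delta_! \bf{1}_X) \xrightarrow{p_{2,!}(\mathrm{id} \otimes \cl_\Delta)} p_{2,!}(p_1^* \cal{F} \otimes p_2^* \omega_f) \xrightarrow{\sim} f^* f_! \cal{F} \otimes \omega_f.
\]
Diagrams (1) and (2) of Definition~\ref{defn:intro-trace-cycle} encode precisely, via base-change compatibility of the trace so that $\tr_{p_i}$ matches $f^* \tr_f$ after the natural identifications, the two triangle identities for this candidate adjunction evaluated at suitable unit objects. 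Tensor-linearity of $\eta$ and $\epsilon$ in $\cal{F}$ and $\cal{G}$, combined with naturality of the projection-formula and base-change isomorphisms, then propagates these point-wise diagrams to full triangle identities. Once the adjunction is in place, uniqueness of right adjoints identifies $G$ with $f^!$, yielding weak cohomological smoothness. For full cohomological smoothness, I would verify that the trace-cycle data base-changes along any $g\colon Y' \to Y$ to a trace-cycle theory on $f'\colon X' \to Y'$ with dualizing object $(g')^* \omega_f$.

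The main obstacle is verifying the triangle identities in the $\infty$-categorical setting: one needs the concrete point-wise diagrams of Definition~\ref{defn:intro-trace-cycle} to promote to natural identities of $\infty$-functors. This should follow from the fact that $\eta$ and $\epsilon$ are built entirely from universal constructions of the $6$-functor formalism (projection formula, base change, monoidality of $f^*$), whose higher coherences are packaged into the $\infty$-functor $\cal{D}\colon \rm{Corr}(\cal{C}) \to \Cat_\infty$. Granting these coherences, the reduction of the full adjunction to the two diagrams is a formal manipulation, and this manipulation will be the technical heart of the proof.
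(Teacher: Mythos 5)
Your forward direction matches the paper's own treatment (Remark~\ref{rmk:equivalent-trace-cycle}): define $\omega_f=f^!\bf{1}_Y$, take $\tr_f$ to be the counit of $(f_!,f^!)$, and obtain $\cl_\Delta$ from the identification $p_1^!\bf{1}_X\simeq p_2^*\omega_f$; like the paper, you leave the two diagram verifications essentially to the reader, which is acceptable there. Your reverse direction also has the right overall shape: the unit and counit you write down are exactly the natural transformations the paper ultimately produces, and the two diagrams of Definition~\ref{defn:intro-trace-cycle} are indeed the triangle identities ``evaluated at unit objects.'' One genuine non-issue in your list of worries: the $\infty$-categorical promotion is harmless, since an adjunction of $\infty$-functors can be detected on homotopy categories (the paper invokes exactly this).

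The real gap is the step you describe as ``tensor-linearity of $\eta$ and $\epsilon$ \dots propagates these point-wise diagrams to full triangle identities.'' This is not a routine naturality argument. To pass from the diagrams involving only $\bf{1}_X$ and $\omega_f$ to the identity $\epsilon_{f_!\F}\circ f_!(\eta_\F)=\mathrm{id}$ for all $\F$ (and its companion), you must check that a large collection of proper base-change and projection-formula isomorphisms compose coherently when a general object $\F$ is tensored in -- precisely the kind of many-arrow diagram chase the paper explicitly set out to avoid (its discussion of the pentagon axiom in Section~\ref{section:cohomological-correspondences} is about exactly this). The paper's solution is structural: it constructs the $2$-category of cohomological correspondences $\cal{C}_S$ (Proposition~\ref{prop:8-2-category-of-cohomological-correspondences}), observes that a trace-cycle theory is literally an adjunction datum between the $1$-morphisms $A=\bf{1}_X\in\HHom_{\cal{C}_S}(X,S)$ and $B=\omega_f\in\HHom_{\cal{C}_S}(S,X)$, with your two diagrams being the zigzag identities (Z1), (Z2), and then applies the representable $2$-functor $h^S=\HHom_{\cal{C}_S}(S,-)$, which sends adjunctions to adjunctions and identifies $h^S(A)=f_!$, $h^S(B)=f^*(-)\otimes\omega_f$ (Proposition~\ref{prop:abstract-duality}, Theorem~\ref{thm:abstract-poincare-duality}). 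All the coherences you need are packaged once and for all into the construction of $\cal{C}_S$ (via self-duality of objects in $\rm{Corr}(\cal{C})$ and enriched-category machinery), rather than re-verified object by object. So either you should carry out the by-hand coherence check you are deferring -- which is substantial -- or you should route the argument through a construction like $\cal{C}_S$; as written, the technical heart of the reverse implication is asserted rather than proved. Your final paragraph on base-changing the trace-cycle data to get full (not just weak) cohomological smoothness is correct and matches Corollary~\ref{cor:pullback-PD}.
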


The main point of Theorem~\ref{thm:intro-cohomologically-smooth-criterion} is that it allows us to ``decategorify'' the question of proving cohomological smoothness and reduce it to the question of constructing two morphisms and verifying commutativity of two diagrams. In particular, one does not need to understand the categories $\cal{D}(X)$ and $\cal{D}(Y)$ themselves, only maps between very specific objects. In particular, Theorem~\ref{thm:intro-cohomologically-smooth-criterion} is sufficiently strong to answer Question~\ref{question:intro-duality} in full generality: 

\begin{thm}\label{thm:intro-cohomologically-smooth}(Theorem~\ref{thm:cohomologically-smooth}) The relative projective line $g\colon \bf{P}^1_S \to S$ admits a trace-cycle theory $\left(\omega_{g}, \rm{tr}_g, \rm{cl}_\Delta\right)$ if and only if every smooth morphism $f\colon X \to Y$ is cohomologically smooth (with respect to $\cal{D}$).
\end{thm}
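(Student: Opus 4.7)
The backward direction is immediate: if every smooth morphism is cohomologically smooth, then in particular so is $g\colon \bf{P}^1_S \to S$, and Theorem~\ref{thm:intro-cohomologically-smooth-criterion} produces the required trace-cycle theory. The content is the forward direction, which I would prove by bootstrapping from $\bf{P}^1$ to arbitrary smooth morphisms via the closure properties of cohomological smoothness.

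Assume $\bf{P}^1_S\to S$ admits a trace-cycle theory. By Theorem~\ref{thm:intro-cohomologically-smooth-criterion} it is cohomologically smooth, hence by definition every base change $\bf{P}^1_Y\to Y$ is cohomologically smooth as well. I would then combine this with two basic facts that should be established in the preliminaries: every \'etale morphism (and in particular every open immersion) is cohomologically smooth with trivial dualizing object $\omega=\bf{1}$; and compositions of cohomologically smooth morphisms are cohomologically smooth. Since $\bf{A}^1_Y\hookrightarrow \bf{P}^1_Y$ is an open immersion, composing with the projection yields that $\bf{A}^1_Y\to Y$ is cohomologically smooth for every $Y\in\cal{C}$, and iterating gives $\bf{A}^n_Y\to Y$ cohomologically smooth for every $n\ge 0$.

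For an arbitrary smooth $f\colon X\to Y$, the standard local structure theorem for smooth morphisms of schemes (respectively, of locally finite type adic spaces) provides a Zariski or \'etale cover $\{U_i\to X\}$ together with \'etale factorizations $U_i\to \bf{A}^{n_i}_Y$ of $f|_{U_i}$. Each $f|_{U_i}$ is thus a composition of two cohomologically smooth morphisms, and hence cohomologically smooth. To conclude, I would appeal to the fact that cohomological smoothness is \'etale-local on the source. Concretely, using Theorem~\ref{thm:intro-cohomologically-smooth-criterion} in reverse, one glues the local dualizing objects $\omega_{f|_{U_i}}$ (which are canonically identified via the base-change isomorphism in Definition~\ref{defn:intro-cohomologically-smooth}) to produce $\omega_f$, and then glues the trace $\tr_{f|_{U_i}}$ and cycle class $\cl_{\Delta|_{U_i\times_Y U_i}}$ using \'etale descent for $\cal{D}$ on $Y$ and on $X\times_Y X$ respectively; the two coherence diagrams of Definition~\ref{defn:intro-trace-cycle} then hold globally because they hold on the \'etale cover.

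The main obstacle is this last descent step: verifying that the locally-defined trace and cycle class satisfy the required compatibilities on overlaps, and that they assemble into genuine morphisms in the homotopy categories $D(Y)$ and $D(X\times_Y X)$. This is the one place where the argument genuinely uses the $\infty$-categorical structure built into $\cal{D}\colon \rm{Corr}(\cal{C})\to\Cat_\infty$ rather than just its homotopy-categorical shadow, since gluing morphisms along a cover requires controlling the $\infty$-categorical mapping spaces. Once this is in place, the rest of the argument is formal, and Theorem~\ref{thm:intro-cohomologically-smooth-criterion} explains why the construction is tractable: the problem has been reduced from producing an equivalence of functors to gluing two morphisms satisfying two explicit diagrams.
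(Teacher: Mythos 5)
Your overall reduction chain (trace-cycle theory on $\bf{P}^1_S$ $\Rightarrow$ $\bf{P}^1_Y \to Y$ cohomologically smooth for all $Y$ $\Rightarrow$ $\bf{A}^1_Y\to Y$ via the open immersion into $\bf{P}^1_Y$ $\Rightarrow$ $\bf{A}^n_Y\to Y$ by composition $\Rightarrow$ local case of a general smooth $f$ via the structure theorem) is exactly the paper's, and the backward direction is handled the same way. The problem is the last step, where you pass from the local statement to $f$ itself. You propose to prove that cohomological smoothness is \'etale-local on the source by gluing the local dualizing objects, traces and cycle classes into a trace-cycle theory for $f$, and you correctly flag this gluing as the hard point --- but you do not carry it out, and as stated it does not go through: the local traces $\tr_{f|_{U_i}}\colon (f|_{U_i})_!\,\omega_{f|_{U_i}}\to \bf{1}_Y$ are morphisms with varying sources living over $Y$, while your cover is a cover of $X$, so ``descent for $\cal{D}$ on $Y$'' does not apply to them; assembling them into $\tr_f$ would require writing $f_!\,\omega_f$ as a colimit over the \v{C}ech nerve of the cover and producing a compatible system of maps with all higher coherences, i.e.\ genuinely more than a diagram check in the homotopy categories. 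Moreover, \'etale descent (or even conservativity of \'etale pullback families) is not part of the definition of a $6$-functor formalism here --- only analytic/Zariski descent is assumed --- so ``\'etale-local on the source'' is not available for free.

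The fix, and what the paper actually does, is to avoid gluing any data at all: cohomological smoothness is a \emph{property}, namely that two canonical morphisms (the co-projection map and the shriek base-change map) are equivalences, and such a condition can be checked analytically/Zariski-locally on $X$ and $Y$ directly from the descent built into Definition~\ref{defn:six-functors} together with the fact that \'etale (in particular open) immersions satisfy $g^!=g^*$ (this is Lemma~\ref{lemma:coh-smooth-formal-properties}). Since the local structure theorem for smooth morphisms gives a Zariski (resp.\ analytic) open cover of $X$ on which $f$ factors as an \'etale map to $\bf{A}^d_Y$ (resp.\ $\bf{D}^d_Y$) --- not merely an \'etale cover --- this locality suffices, and combined with stability of cohomological smoothness under composition and base change (Remark~\ref{rmk:coh-smooth-base-composition}) and the cohomological smoothness of \'etale maps, your reduction to $\bf{P}^1$ closes without ever invoking Theorem~\ref{thm:cohomologically-smooth-criterion} ``in reverse'' on the glued data.
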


Theorem~\ref{thm:intro-cohomologically-smooth} implies that, in the presense of a trace-cycle theory on the relative projective line, the question of proving the full version of Poincar\'e Duality boils down to the question of {\it computing} the dualizing object $\omega_f=f^!\bf{1}_Y$ for any smooth morphism $f\colon X\to Y$. \smallskip

In general, this is a pretty hard question. To see that there could not be any ``trivial'' formula for the dualizing object, one could think about the case of the (solid) quasi-coherent $6$-functor formalism $\cal{D}_\Box(-; \O)$ on locally finite type (derived) $\Z$-schemes (see \cite{Scholze-Clausen}). In this situation, for a smooth morphism $f\colon X \to Y$ of pure dimension $d$, the dualizing object is given by $\Omega^d_{X/Y}[d]$. In particular, this object remembers the geometry of $f$ in a non-trivial way. \smallskip

Nevertheless, we are able to give a formula for the dualizing object for any smooth morphism $f\colon X \to Y$ in terms of the relative cotangent bundle $\rm{T}_f$ under some extra assumptions on the $6$-functor formalism $\cal{D}$. For the next construction, we assume that all smooth morphisms are cohomologically smooth with respect to $\cal{D}$. 

\begin{construction}\label{intro:cons-vector-bundles}(Variant~\ref{variant:vector-bundles})
Let $f\colon \rm{V}_X(\cal{E}) \to X$ be the total space of a vector bundle $\cal{E}$ on $X$ with the zero section $s\colon X \to \rm{V}_X(\cal{E})$. Then we define $C_X(\cal{E})\in \cal{D}(X)$ as
\[
C_X(\cal{E}) = s^*f^!\bf{1}_X \in \cal{D}(X).
\]
\end{construction}

With this definition at hand, we are ready to give the promised above formula for the dualizing complex under some additional hypothesis on $\cal{D}$:

\begin{thm}\label{thm:intro-formula-dualizing-complex}(Theorem~\ref{thm:formula-dualizing-complex} and Theorem~\ref{thm:formula-dualizing-complex-geometric}) Suppose the $6$-functor formalism $\cal{D}$ is weakly motivic or geometric (see Definition~\ref{defn:motivic-6-functors} and Definition~\ref{defn:6-functors-geometric}). Let $f\colon X \to Y$ be a smooth morphism. Then there is a canonical isomorphism
\[
f^!\bf{1}_Y \simeq C_X(\rm{T}_f) \in \cal{D}(X),
\]
where $\mathrm{T}_f$ is the relative tangent bundle of $f$.
\end{thm}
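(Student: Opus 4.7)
The plan is to prove the formula by deformation to the normal cone of the diagonal $\Delta\colon X\to X\times_Y X$, whose normal bundle is the relative tangent bundle $\rm{T}_f$ (since $f$ is smooth). First observe that both sides can be written uniformly as the pullback of a dualizing object along a section of a smooth morphism: the projection $p_1\colon X\times_Y X\to X$ is a base change of $f$ and is thus cohomologically smooth with $\omega_{p_1}\simeq p_2^*\omega_f$ by Theorem~\ref{thm:intro-cohomologically-smooth}, so $p_2\circ\Delta=\rm{id}_X$ gives
\[
\omega_f \simeq \Delta^*\omega_{p_1} \simeq \Delta^* p_1^!\bf{1}_X,
\]
while by definition $C_X(\rm{T}_f)=s^*\pi^!\bf{1}_X$, where $\pi\colon\rm{V}_X(\rm{T}_f)\to X$ is the relative tangent bundle with zero section $s$.

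Next I would construct the classical Fulton--MacPherson deformation space as an object $D\in\cal{C}$ flat over $\A^1_S$, equipped with a smooth morphism $F\colon D\to X\times\A^1$ and a section $\widetilde\Delta\colon X\times\A^1\hookrightarrow D$ of $F$ whose restriction over $\A^1\setminus\{0\}$ is $(p_1,\Delta)\times\rm{id}$ and whose restriction over $0\in\A^1$ is $(\pi,s)$. Theorem~\ref{thm:intro-cohomologically-smooth} then makes $F$ cohomologically smooth, so
\[
\widetilde\omega\coloneqq\widetilde\Delta^*F^!\bf{1}_{X\times\A^1}\in\cal{D}(X\times\A^1)
\]
is an invertible object, and the base-change compatibility of dualizing objects identifies its fibers as $\widetilde\omega|_{t=1}\simeq\omega_f$ and $\widetilde\omega|_{t=0}\simeq C_X(\rm{T}_f)$.

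Finally, $\A^1$-homotopy invariance in the motivic setting, or the appropriate form of blow-up excision in the geometric setting, forces the invertible object $\widetilde\omega$ to be pulled back from $X$, identifying its two fibers and thereby yielding a canonical isomorphism $\omega_f\simeq C_X(\rm{T}_f)$. The main obstacle is this last step in the geometric case: without full $\A^1$-invariance, one must propagate invertibility across the two ends of the deformation family using only blow-up excision together with the $\bf{P}^1$-trace-cycle theory feeding Theorem~\ref{thm:intro-cohomologically-smooth}, which requires a careful analysis of the blow-up structure of $D$ along its central fiber.
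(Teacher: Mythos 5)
Your overall strategy is the paper's: Verdier's diagonal trick to rewrite $f^!\bf{1}_Y$ as $\Delta^*p_1^!\bf{1}_X=C(p_1,\Delta)$, then deformation to the normal cone of the diagonal (whose normal bundle is $\rm{T}_f$) to move to $C_X(\rm{T}_f)$. But the step you treat as automatic is precisely where the real work lies, and as stated it has a genuine gap. In the motivic case you claim that $\bf{A}^1$-invariance ``forces'' the invertible object $\widetilde\omega=\widetilde\Delta^*F^!\bf{1}$ on $X\times\bf{A}^1$ to be pulled back from $X$. It does not: $\bf{A}^1$-invariance only gives \emph{full faithfulness} of the pullback on invertible objects (Lemma~\ref{lemma:invertible-ff}), not essential surjectivity, and there is no formal reason an arbitrary invertible object on $\bf{A}^1_X$ descends. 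The paper has to prove this for the specific object at hand (Proposition~\ref{prop:able-to-deform}) by a non-formal geometric argument: localize on the base, use that a smooth morphism with a section is, locally, \'etale over a disk/affine space with the section the preimage of the zero section, use \'etale base change for the deformation space, and finally compute via the Rees algebra that the deformation of the zero section of $\bf{A}^d$ (resp.\ $\bf{D}^d$) is the \emph{constant} family, so that $\widetilde\omega$ is visibly a pullback there. Without an argument of this kind your motivic case is incomplete.

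In the geometric case the situation is worse for your plan: the hypothesis you are given (Definition~\ref{defn:6-functors-geometric}, via the pre-geometric condition) is exactly that an invertible object on $\bf{P}^1_Y$ has isomorphic fibers at $0_Y$ and $1_Y$; to use it one must run the deformation over $\bf{P}^1$, i.e.\ use the \emph{projective} deformation to the normal cone (Construction~\ref{construction:projective-deformation-to-the-normal-cone}), as in Theorem~\ref{thm:formula-dualizing-complex-geometric}. Your family lives over $\bf{A}^1$, so the hypothesis cannot be applied, and the ``blow-up excision'' you invoke instead is not available in this axiomatic setting (excision is a separate axiom, not assumed here); your own closing sentence concedes this step is unresolved. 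Two smaller points: since $f$ is not assumed separated, $\Delta$ is only locally closed, so one must first factor $\Delta=j\circ i$ with $i$ a Zariski-closed immersion into an open $U\subset X\times_Y X$ and note $C(p_1,\Delta)=C(p_1\circ j,i)$ before deforming; and note that in the geometric case the resulting isomorphism is not claimed to be canonical, since the fiber identification supplied by the pre-geometric hypothesis is not.
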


Theorem~\ref{thm:intro-formula-dualizing-complex} answers Question~\ref{question:intro-duality-2} in a pretty big generality. In practice, to use Theorem~\ref{thm:intro-formula-dualizing-complex}, one first needs to establish Theorem~\ref{thm:intro-cohomologically-smooth} to guarantee that all smooth morphisms are cohomologically smooth. And then the question of applicability of Theorem~\ref{thm:intro-formula-dualizing-complex} reduces to the question of computing cohomology of the affine line or studying the invertible objects on the projective line. \smallskip

Finally, we discuss our answer to Question~\ref{question:intro-duality-3}. The main tool in answering this question will be the notion of first Chern classes. To introduce an abstract notion of first Chern classes, we need to introduce some notation. \smallskip

\begin{notation} For the rest of this section, we fix an invertible object $\bf{1}_S\langle 1\rangle \in \cal{D}(S)$. For each $f\colon X\to S$, we define $\bf{1}_X\langle 1\rangle \coloneqq f^*\bf{1}_S\langle 1\rangle \in \cal{D}(X)$. For each integer $d\geq 0$, we define $\bf{1}_X\langle d\rangle\coloneqq \bf{1}_X\langle 1\rangle^{\otimes d} \in \cal{D}(X)$. For $d\leq 0$, we define $\bf{1}_X\langle d\rangle \coloneqq \bf{1}_X\langle -d\rangle^{\vee}\in \cal{D}(X)$. 
\end{notation}

Now we note that one can define the theory of first Chern classes (resp. weak, resp. strong first Chern classes) with respect to $\bf{1}_X\langle 1\rangle$. We do not repeat the precise definitions here; instead we refer to Definition~\ref{defn:chern-classes} and Definition~\ref{defn:geometric-chern-classes-without-length}, and only mention the main idea of the definition. Namely, a weak theory of first Chern classes is roughly just a sufficiently functorial additive way to assign first Chern classes 
\[
c_1(\cal{L})\in \rm{H}^0(X, \bf{1}_X\langle 1\rangle)
\]
for any line bundle $\cal{L}$ on a space $X$. A theory of first Chern classes is a weak theory satisfying the projective bundle formula for $\bf{P}^1_S \to S$. And a {\it strong} theory of first Chern classes is a weak theory of first Chern classes satisfying the projective bundle formula $\bf{P}^d_S \to S$ for {\it all} $d\geq 1$. \smallskip

That being said, we note Definition~\ref{defn:geometric-chern-classes-without-length} implies that, if $c_1$ is a theory of first Chern classes, then
\[
\bf{1}_S\langle -1\rangle \simeq \rm{Cone}\left(\bf{1}_S \to f_* \bf{1}_{\bf{P}^1_S}\right).
\]
So the invertible object $\bf{1}_S\langle 1\rangle$ is unique up to an isomorphism, and axiomitizes the ``Tate twist''. \smallskip

Finally, we are ready to answer Question~\ref{question:intro-duality-3} in the two theorems below:

\begin{thm}\label{thm:intro-main-thm}(Theorem~\ref{thm:main-thm}) Let $\cal{D}$ be a $6$-functor formalism satisfying the excision axiom (see Definition~\ref{defn:excision-axiom}) and admitting a theory of first Chern classes  $c_1$. Suppose that $f\colon X \to Y$ is a smooth morphism of pure relative dimension $d$. Then the right adjoint to the functor $f_!\colon \cal{D}(X) \to \cal{D}(Y)$ is given by the formula
\[
f^!(-) = f^*(-)\otimes \bf{1}_X\langle d\rangle \colon \cal{D}(Y) \to \cal{D}(X).
\]
\end{thm}

Theorem~\ref{thm:intro-main-thm} is essentially the best possible answer to Question~\ref{thm:intro-main-thm} in the presence of the excision axiom. It reduces the question of proving Poincar\'e Duality to constructing a (weak) theory of first Chern classes and computing the cohomology of the projective line. \smallskip

If $\cal{D}$ does not satisfy the excision axiom, we can also prove an analogue of Theorem~\ref{thm:intro-main-thm}. Unfortunately, it becomes less elegant, but it is still sufficiently strong to apply to the potential crystalline or prismatic $6$-functor formalisms (see Section~\ref{section:potential-application}):
 
\begin{thm}\label{thm:intro-poincare-duality-chern-classes}(Theorem~\ref{thm:poincare-duality-chern-classes}) Suppose that a $6$-functor formalism $\cal{D}$ is either weakly $\bf{A}^1$-invariant or pre-geometric (see Definition~\ref{defn:homotopy-invariant} and Definition~\ref{defn:6-functors-geometric}). And let $c_1$ be a {\it strong} theory of first Chern classes on $\cal{D}$ underlying a theory of cycle maps $\cl_\bullet$ (see Definition~\ref{defn:theory-of-cycle-classes-chern-classes}), and $f\colon X \to Y$ be a smooth morphism of pure relative dimension $d$. Then the right adjoint to the functor
\[
f_!\colon \cal{D}(X) \to \cal{D}(Y)
\]
is given by the formula
\[
f^!(-) = f^*(-)\otimes \bf{1}_X\langle d\rangle \colon \cal{D}(Y) \to \cal{D}(X).
\]
\end{thm}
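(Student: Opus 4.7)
The plan is to assemble three earlier inputs---the trace-cycle criterion (Theorem~\ref{thm:intro-cohomologically-smooth-criterion}), the reduction to the relative projective line (Theorem~\ref{thm:intro-cohomologically-smooth}), and the dualizing-object formula $\omega_f\simeq C_X(\rm{T}_f)$ (Theorem~\ref{thm:intro-formula-dualizing-complex})---and to use the strong theory $c_1$ together with the underlying theory of cycle maps $\cl_\bullet$ to supply the two remaining pieces: a trace-cycle theory on $g\colon \bf{P}^1_S\to S$, and an identification $C_X(\cal{E})\simeq \bf{1}_X\langle r\rangle$ for a rank $r$ vector bundle $\cal{E}$. Combining these four steps yields $\omega_f\simeq \bf{1}_X\langle d\rangle$ and hence the desired formula for $f^!$.

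The trace-cycle theory on $g$ is built as follows. I set $\omega_g\coloneqq \bf{1}_{\bf{P}^1_S}\langle 1\rangle$; since $g$ is proper, $g_!=g_*$, and the Chern-class hypothesis in degree one identifies $g_*\omega_g$ with $\bf{1}_S\oplus \bf{1}_S\langle 1\rangle$, and I take $\tr_g$ to be projection onto the $\bf{1}_S$-summand. The cycle map $\cl_\Delta\colon \Delta_!\bf{1}_{\bf{P}^1_S}\to p_2^*\omega_g$ is the one that $\cl_\bullet$ assigns to the codimension-one diagonal $\Delta\colon \bf{P}^1_S\to \bf{P}^1_S\times_S \bf{P}^1_S$. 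After push-forward along $p_{1,!}$ or $p_{2,!}$, the two commutative diagrams of Definition~\ref{defn:intro-trace-cycle} become identifications between maps of direct sums of Tate twists---resolved by proper base change and the projective bundle formula applied to $\bf{P}^1_S\times_S\bf{P}^1_S\to \bf{P}^1_S$---and reduce to the standard compatibilities built into any theory of cycle maps, namely that $\Delta^*\cl_\Delta$ recovers $c_1(\rm{T}_g)$ and that the pushforward of $\cl_\Delta$ along one projection matches the unit in the relevant summand of the projective bundle decomposition. Granting this, Theorem~\ref{thm:intro-cohomologically-smooth} promotes every smooth morphism to a cohomologically smooth one, and Theorem~\ref{thm:intro-formula-dualizing-complex} (applicable because $\cal{D}$ is $\bf{A}^1$-invariant or pre-geometric) furnishes $\omega_f\simeq C_X(\rm{T}_f)$.

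It remains to identify $C_X(\cal{E})\simeq \bf{1}_X\langle r\rangle$. Multiplicativity $C_X(\cal{E}_1\oplus\cal{E}_2)\simeq C_X(\cal{E}_1)\otimes C_X(\cal{E}_2)$ follows by factoring $\rm{V}_X(\cal{E}_1\oplus\cal{E}_2)\to \rm{V}_X(\cal{E}_1)\to X$: the inner map is a vector bundle with zero section, so its $!$-pullback splits as $\pi^*(-)\otimes \omega_\pi$, and applying base change for $\omega$ then gives the desired product decomposition. The splitting principle therefore reduces the computation to the line bundle case. For a line bundle $\cal{L}$, realising $\rm{V}_X(\cal{L})$ as the open complement of $\bf{P}_X(\cal{L})$ inside the $\bf{P}^1$-bundle $\bf{P}_X(\cal{L}\oplus\O)$ and running the projective bundle formula for the latter pins down $C_X(\cal{L})\simeq \bf{1}_X\langle 1\rangle$. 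I expect this last point to be the main obstacle: without excision the non-triviality of $\cal{L}$ forces one to extract $C_X(\cal{L})$ purely from the projective bundle decomposition of a non-split $\bf{P}^1$-bundle, and the splitting principle for $\rm{T}_f$ itself requires the \emph{strong} hypothesis on $c_1$---the projective bundle formula for all $\bf{P}^d$---to ensure that pullback along the associated flag bundle is sufficiently conservative on invertible objects so that the local identification $C_X(\cal{E})\simeq \bf{1}_X\langle r\rangle$ upgrades to a global one.
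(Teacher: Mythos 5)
Your first two stages coincide with the paper's: the triple $(\bf{1}_{\bf{P}^1_S}\langle 1\rangle, \tr_g, \cl_\Delta)$ built from the projective bundle formula and the cycle map of the diagonal is exactly Construction~\ref{construction:trace-p1} and Corollary~\ref{cor:first-chern-cycle-theory}, and the passage to $\omega_f\simeq C_X(\rm{T}_f)$ is Theorem~\ref{thm:formula-dualizing-complex}/\ref{thm:formula-dualizing-complex-geometric}. One caveat even here: the two diagrams of Definition~\ref{defn:trace-cycle} are \emph{not} "built into" a theory of cycle maps. The identity $\tr_{p_i}\circ p_{i,!}(\cl_\Delta)=\rm{id}$ is the content of Proposition~\ref{prop:projective-bundle-trace-cycle}, whose proof needs the decomposition $\O(\Delta)\simeq p_i^*\cal{L}\otimes\O(1)$ (via the clopen decomposition of $\rm{Pic}$ of a $\bf{P}^1$-bundle) together with $\tr\circ{}^{\rm{adj}}c_1(p_i^*\cal{L})=0$; and the second diagram additionally needs that the braiding on $\bf{1}\langle 1\rangle^{\otimes 2}$ is the identity, which is where the \emph{strong} hypothesis enters via $\bf{P}^2$ (Lemma~\ref{lemma:first-chern-classes-orientable}). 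These are provable, but they are arguments, not axioms.

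The genuine gap is in your third stage, and it is precisely the point you flag yourself: the identification $C_X(\cal{L})\simeq\bf{1}_X\langle 1\rangle$ does not follow from "running the projective bundle formula" on $\bf{P}_X(\cal{L}\oplus\O)$, and the splitting-principle reduction both is unnecessary and leaves an unresolved descent problem (conservativity of the flag-bundle pullback lets you detect isomorphisms, but you still need a candidate map on $X$ that becomes the given isomorphism upstairs). The paper's resolution avoids both issues: compactify $\rm{V}_X(\rm{T}_f)$ to $\ov{g}\colon \bf{P}_X(\rm{T}_f^\vee\oplus\O)\to X$ and use the strong Chern theory to produce a \emph{globally defined} trace $\ov{g}_*\bf{1}\langle d\rangle\to\bf{1}_X$ (Construction~\ref{construction:trace-projective-bundle}), hence an adjoint map $\bf{1}\langle d\rangle\to\ov{g}^!\bf{1}_X$; since this map exists globally, whether $s^!$ of it is an isomorphism is a local question on $X$ by analytic/Zariski descent, so one may trivialize $\rm{T}_f$ and work on $\bf{P}^d_X$ with the zero section. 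There the naive cycle map of the section --- the composition of $d$ cycle maps of coordinate hyperplanes (Definition~\ref{defn:naive-cycle-class}), which only uses the divisorial cycle theory you already have --- provides a one-sided inverse (Lemma~\ref{lemma:split}), and the elementary Lemma~\ref{lemma:invertible-indecomposable} (an invertible object having $\bf{1}$ as a direct summand is $\bf{1}$) upgrades it to a two-sided inverse (Lemma~\ref{lemma:trivialize}). This local-to-global mechanism via a globally defined comparison map is the missing idea in your proposal; once you have it, neither excision, nor multiplicativity of $C_X$, nor the splitting principle is needed.
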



\subsection{Applications}

We discuss some applications of the methods developed in this paper. We first discuss how our methods help to simplify some of the already existing proofs of Poincar\'e Duality. \smallskip

Applying these simplifications to the case of \'etale sheaves, we discuss how these simplification also lead to simplifying the proofs of other foundational results (e.g., smooth base change, cohomological smoothness, preservation of constructible (resp. lisse) sheaves under $Rf_!$ for a smooth (resp. smooth and proper) morphism $f$). \smallskip

Then we also discuss how our methods could be used to prove Poincar\'e Duality in prismatic cohomology once the formalism of $6$-functors is established in this context.

\subsubsection{Simplification of the previous proofs}

Our first application is a new, uniform, and almost formal proof of the two following classical Poincar\'e Duality results:

\begin{thm}\label{thm:intro-main-PD-ell-adic-scheme}(\cite[Exp.\,XVIII, Th.\,3.2.5]{SGA4}, Remark~\ref{rmk:scheme-duality}) Let $Y$ be a scheme and $f\colon X \to Y$ a smooth morphism of pure dimension $d$, and $n$ an integer invertible in $\O_Y$. Then the functor 
\[
\rm{R}f_!\colon \cal{D}(X_\et; \Z/n\Z) \to \cal{D}(Y_\et; \Z/n\Z)
\]
admits a right adjoint given by the formula
\[
f^*(d)[2d] \colon \cal{D}(Y_\et; \Z/n\Z) \to \cal{D}(X_\et; \Z/n\Z).
\]
\end{thm}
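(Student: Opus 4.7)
The plan is to verify the hypotheses of Theorem~\ref{thm:intro-main-thm} for the six-functor formalism
\[
\cal{D}\colon X \longmapsto \cal{D}(X_\et; \Z/n\Z)
\]
on locally finitely presented $\Spec\Z[1/n]$-schemes, and then read off the claimed formula from its conclusion. The point is that once the general result is available, essentially nothing specific to \'etale cohomology remains apart from two classical inputs: the excision triangle and the cohomology of $\bf{P}^1$.

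The first step is to recall that this assignment carries a six-functor formalism in the sense of Definition~\ref{defn:six-functors}; this goes back to \cite{SGA4} and has been put into the $\infty$-categorical language in \cite{Liu-Zheng}. The excision axiom in the sense of Definition~\ref{defn:excision-axiom} reduces to the standard localization triangle $j_!j^* \to \rm{id} \to i_*i^*$ for an open-closed decomposition, which is part of the basic yoga of \'etale sheaves.

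The heart of the argument is the construction of a theory of first Chern classes. I would set the Tate twist to be $\bf{1}\langle 1\rangle \coloneqq \mu_n[2]$, which is manifestly invertible in $\cal{D}(-_\et; \Z/n\Z)$ since $n$ is invertible on the base. The Kummer sequence $1 \to \mu_n \to \mathbb{G}_m \xrightarrow{n} \mathbb{G}_m \to 1$ on the \'etale site yields a morphism of \'etale sheaves of spectra $\mathbb{G}_m[1] \to \mu_n[2]$, and composing with the comparison $\rm{R}\Gamma_{\rm{Zar}}(-, \O^\times) \to \rm{R}\Gamma_\et(-, \mathbb{G}_m)$ produces the candidate
\[
c_1 \colon \rm{R}\Gamma_{\rm{Zar}}(-, \O^\times)[1] \longrightarrow \rm{R}\Gamma(-, \bf{1}\langle 1\rangle).
\]
To upgrade this weak theory to a theory of first Chern classes in the sense of Definition~\ref{defn:intro-chern-classes}, one must check that for $f\colon \bf{P}^1_S \to S$ the map
\[
c_1 + f^*\langle 1\rangle \colon \bf{1}_S \oplus \bf{1}_S\langle 1\rangle \longrightarrow f_* \bf{1}_{\bf{P}^1_S}\langle 1\rangle
\]
is an isomorphism. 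By proper base change this reduces to the classical computation of $\rm{R}\Gamma(\bf{P}^1_{\ov k}, \mu_n)$ and the fact that $c_1$ of the tautological bundle $\O(1)$ generates $\rm{H}^2(\bf{P}^1_{\ov k}, \mu_n)$. This is the only step that uses something genuinely specific to \'etale cohomology, and it is also the one I expect to be the main (but entirely standard) obstacle.

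With both hypotheses of Theorem~\ref{thm:intro-main-thm} verified, its conclusion applies to any smooth morphism $f\colon X \to Y$ of pure relative dimension $d$ and yields
\[
f^!(-) \simeq f^*(-) \otimes \bf{1}_X\langle d\rangle = f^*(-) \otimes \mu_n^{\otimes d}[2d] = f^*(-)(d)[2d],
\]
which is the stated formula. In this way the full force of the classical \'etale Poincar\'e duality of \cite{SGA4} is reduced to the projective bundle formula for $\bf{P}^1$, with everything else folded into the abstract machinery.
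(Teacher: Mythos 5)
Your proposal is correct and follows essentially the same route as the paper: excision is checked on stalks, the Tate twist is $\mu_n[2]$, first Chern classes come from the Kummer sequence composed with the Zariski-to-\'etale comparison, the projective bundle formula for $\bf{P}^1$ is reduced by base change to the standard computation over an algebraically closed field, and Theorem~\ref{thm:intro-main-thm} then yields $f^!=f^*(d)[2d]$. The one small adjustment: to cover an arbitrary base scheme $Y$ (not necessarily locally finitely presented over $\Spec \Z[1/n]$) you should run the six-functor formalism over $S=Y$, i.e.\ on locally finitely presented $Y$-schemes, exactly as the paper does in the proof of Theorem~\ref{thm:main-PD-ell-adic}, rather than over $\Spec\Z[1/n]$.
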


\begin{thm}\label{thm:intro-main-PD-ell-adic}(\cite[Th.\,7.5.3]{H3}, Theorem~\ref{thm:main-PD-ell-adic}) Let $Y$ be a locally noetherian analytic adic space, and $f\colon X\to Y$ a smooth morphism is of pure dimension $d$, and $n$  an integer invertible in $\O_Y^+$. Then the functor
\[
\rm{R}f_!\colon \cal{D}(X_\et; \Z/n\Z) \to \cal{D}(Y_\et; \Z/n\Z)
\]
admits a right adjoint given by the formula
\[
f^*(d)[2d] \colon \cal{D}(Y_\et; \Z/n\Z) \to \cal{D}(X_\et; \Z/n\Z).
\]
\end{thm}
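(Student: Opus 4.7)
The strategy is to deduce the theorem from Theorem~\ref{thm:intro-main-thm} applied to the étale six-functor formalism
\[
\cal{D}\colon X \longmapsto \cal{D}(X_\et; \Z/n\Z)
\]
on locally finite type adic $Y$-spaces. Three inputs are required: (a) $\cal{D}$ is a six-functor formalism in the sense of Definition~\ref{defn:six-functors} satisfying the excision axiom; (b) a theory of first Chern classes $c_1$ exists on $\cal{D}$; (c) the projective bundle formula holds for $f\colon \bf{P}^1_Y \to Y$.

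Input (a) is essentially bookkeeping: Huber's construction of $\left(f^*, f_*, \otimes, \ud{\Hom}, f_!, f^!\right)$ on the analytic étale site from \cite{H3} packages into the Liu--Zheng/Mann framework recalled in Section~\ref{section:abstract-six-functors} exactly as in the scheme case, and excision reduces to the classical fiber sequence $j_!j^* \to \rm{id} \to i_*i^*$ for a complementary closed/open pair of immersions. For input (b), I take as Tate twist
\[
\bf{1}_S\langle 1\rangle \coloneqq \mu_n[2],
\]
which is invertible because $\mu_n$ is étale-locally isomorphic to $\Z/n\Z$ (using that $n\in \O_Y^{+,\times}$). The Kummer short exact sequence
\[
1 \longrightarrow \mu_n \longrightarrow \bf{G}_m \xrightarrow{\;n\;} \bf{G}_m \longrightarrow 1
\]
of étale sheaves produces a boundary $\rm{R}\Gamma_\et(-, \bf{G}_m)\to \rm{R}\Gamma_\et(-, \mu_n)[1]$, which, composed with the analytic-to-étale comparison $\rm{R}\Gamma_\an(-, \O^\times)\to \rm{R}\Gamma_\et(-, \bf{G}_m)$ and shifted, yields the required weak theory
\[
c_1\colon \rm{R}\Gamma_\an(-, \O^\times)[1] \longrightarrow \rm{R}\Gamma(-, \mu_n[2]).
\]
For input (c), one must verify that
\[
c_1(\O(1)) + f^*\langle 1\rangle \colon \bf{1}_Y\oplus \bf{1}_Y\langle 1\rangle \longrightarrow f_*\bf{1}_{\bf{P}^1_Y}\langle 1\rangle
\]
is an equivalence. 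By smooth and proper base change for adic spaces (both available independently of Poincaré Duality), this reduces to the classical computation
\[
\rm{R}\Gamma\left(\bf{P}^1_C, \mu_n\right) \simeq \mu_n\, \oplus\, \Z/n\Z[-2]
\]
for $C$ an algebraically closed non-archimedean field, with the degree-$2$ summand generated by $c_1(\O(1))$; this is proven by Čech methods and does not invoke Huber's Poincaré Duality.

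With (a)--(c) in hand, Theorem~\ref{thm:intro-main-thm} immediately yields $f^!(-)\simeq f^*(-)\otimes \mu_n^{\otimes d}[2d]$, which is the claimed formula. The main obstacle I anticipate is (a), namely assembling Huber's triangulated-category constructions into a genuine $\infty$-categorical six-functor formalism carrying all the higher coherence data required by the paper's framework. Inputs (b) and (c), by contrast, rely only on computations that are strictly weaker than Huber's Poincaré Duality, so the resulting argument is indeed the promised soft proof essentially independent of \cite[Thm.\,7.5.3]{H3}.
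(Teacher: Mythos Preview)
Your proposal is correct and matches the paper's approach almost exactly: the paper also takes $\bf{1}_S\langle 1\rangle=\mu_n[2]$, defines $c_1$ via the Kummer boundary composed with the analytic-to-\'etale map (Definition~\ref{defn:etale-first-chern-classes}), verifies excision directly, reduces the projective bundle formula for $\bf{P}^1$ to a geometric point, computes $\rm{R}\Gamma(\bf{P}^1_C,\mu_n)$ there (Lemma~\ref{lemma:mu-n-cohomology-curves} and Corollary~\ref{cor:cohohomology-projective-line}), and then invokes Theorem~\ref{thm:main-thm}. The construction of the $\infty$-categorical \'etale $6$-functor formalism that you flag as the main obstacle is handled in the paper by citing \cite[Thm.\,8.4 and Rmk.\,8.5]{adic-notes}.

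One small correction: in step (c) you invoke ``smooth and proper base change'' to reduce to a geometric point, but only proper base change is needed (the map $\bf{P}^1_Y\to Y$ is proper), and this is already part of the $6$-functor package. In the paper's logic, smooth base change is \emph{derived} from Poincar\'e Duality (see the Application following Theorem~\ref{thm:intro-main-PD-ell-adic}), so citing it here would be circular; the paper's actual reduction (Proposition~\ref{prop:projective-bundle-formula-etale}) uses overconvergence to pass to rank-$1$ points and then \cite[Prop.\,2.6.1]{H3} for the base change to $\Spa(C,\O_C)$.
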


We note that our results are slightly stronger than the classical versions appearing in \cite[Exp.\,XVII, Th.\,3.2.5]{SGA4} and \cite[Th.\,7.5.3]{H3} respectively. Namely, we do not assume that $f$ is separated and we do not make any boundedness assumptions on the derived categories $\cal{D}(X_\et; \Z/n\Z)$ and $\cal{D}(Y_\et; \Z/n\Z)$. 

\begin{rmk} In \cite{Scholze-diamond}, Scholze constructs the theory of \'etale cohomology of diamonds that generalizes the theory of \'etale cohomology of $p$-adic spaces developed in \cite{H3}. Furthermore, \cite{Scholze-diamond} is essentially independent of \cite{H3} with the following two important exceptions: quasi-compact base change and Poincar\'e Duality. We do not have anything to say about the quasi-compact base change, but we want to point out that the proof of Theorem~\ref{thm:intro-main-PD-ell-adic} can be repeated verbatim in the world of diamonds making it independent of \cite{H3}.
\end{rmk}

Before we go into the proofs of Theorem~\ref{thm:intro-main-PD-ell-adic-scheme} and Theorem~\ref{thm:intro-main-PD-ell-adic}, we would like to mention that these results formally imply many of the standard foundational results in the theory of \'etale cohomology. Since our proof does not use these results as an input, we get the following consequences of Theorems~\ref{thm:intro-main-PD-ell-adic-scheme}~and~\ref{thm:intro-main-PD-ell-adic} essentially for free (assuming that $n$ is invertible in $\O^+$):

\begin{application}
\begin{enumerate}
    \item ({\it Cohomological purity}) If $i\colon X \to Y$ is a (Zariski)-closed immersion of smooth $S$-schemes (resp. adic spaces) of pure dimension $d_X$ and $d_Y$ respectively, then 
    \[
        \rm{R}i^!\ud{\Z/n\Z} \simeq \ud{\Z/n\Z}(-c)[-2c],
    \]
    where $c=d_Y-d_Y$. This follows directly from Poincar\'e Duality and the isomorphism $\rm{R}i^! \circ \rm{R}f_Y^! \simeq \rm{R}f_X^!$, where $f_X$ and $f_Y$ are the structure morphisms.
    \item ({\it Smooth base change}) Theorem~\ref{thm:intro-main-PD-ell-adic-scheme} (resp. Theorem~\ref{thm:intro-main-PD-ell-adic}) and Proposition~\ref{lemma:cohomologically-smooth-base-change} imply the smooth base change in \'etale cohomology\footnote{We are not aware of any other proof of smooth base change simpler than the original proof in \cite[Exp.\,XVI, Cor.\,1.2]{SGA4}. The classical proof of Poincar\'e Duality uses smooth base as an input. Therefore, one cannot deduce smooth base change from the classical proof of Poincar\'e Duality and Proposition~\ref{lemma:cohomologically-smooth-base-change}.}.
    \item ({\it Preservation of constructible sheaves}) If $f\colon X \to Y$ is a smooth qcqs morphism, then $\rm{R}f_!$ restricts to the functor 
\[
\rm{R}f_!\colon \cal{D}^{(b)}_{\rm{cons}}(X_\et; \Z/n\Z) \to \cal{D}^{(b)}_{\rm{cons}}(Y_\et; \Z/n\Z).
\]
For this, we can assume that $Y$ is qcqs and $f$ is of pure dimension $d$, then \cite[Lemma 10.2]{adic-notes} implies that we only need to show that $\rm{R}f_!\colon \cal{D}^{\geq -N}(X_\et; \Z/n\Z) \to \cal{D}^{\geq -N}(Y_\et; \Z/n\Z)$ preserves compact objects for any integer $N$. This can be easily seen from the fact that the right adjoint $\rm{R}f^!=f^*(d)[2d]$ commutes with infinite direct sums and is of finite cohomological dimension. 
    \item ({\it Preservation of lisse sheaves}) If $f\colon X \to Y$ is proper and smooth, then $\rm{R}f_*$ restricts to the functor 
\[
\rm{R}f_*\colon \cal{D}^{(b)}_{\rm{lisse}}(X_\et; \Z/n\Z)\to \cal{D}^{(b)}_{\rm{lisse}}(Y_\et; \Z/n\Z).
\]
One easily reduces to the case $n=\ell$ is a prime number. By \cite[Lemma 11.1]{adic-notes}, it then suffices to show that $\rm{R}f_*$ preserves dualizable objects. Now using Poincar\'e Duality, it is formal to see that, for a dualizable object $L$, $\rm{R}f_* L$ is also dualizable with the dual $\rm{R}f_*(L^{\vee}(d)[2d])$. 
\end{enumerate}
\end{application}

Now we briefly discuss the proofs of Theorems~\ref{thm:intro-main-PD-ell-adic-scheme}~and~\ref{thm:intro-main-PD-ell-adic}. Our strategy is to use Theorem~\ref{thm:intro-main-thm} to reduce the question to constructing first Chern classes (in a sufficiently functorial manner) and verifying the projective bundle formula for the relative projective line. \smallskip

The construction of the first Chern classes comes from the Kummer short exact sequence (see Definition~\ref{defn:etale-first-chern-classes}), so the question of proving Poincar\'e Duality essentially boils down to the question of computing cohomology of the relative projective line. For this, one can reduce to the case of $S=\Spec C$ or $S=\Spa(C, \O_C)$ for an algebraically closed (non-archimedean) field $C$. Computation of cohomology groups of the projective line over an algebraically closed field is quite standard (both in the algebraic and analytic worlds). Apart from this computation, the proofs in the analytic and algebraic situations are uniform. \smallskip

Another concrete example of Poincar\'e Duality that we consider in this paper is  the version of Poincar\'e Duality for the $6$-functor formalism of ``solid almost $\O^+/p$-$\varphi$-modules'' $\cal{D}^{\rm{a}}_\Box(X; \O_{X}^+/p)^{\varphi}$ developed by L.\,Mann in \cite{Lucas-thesis}. In this context, we can give a new proof of the following result: 

\begin{thm}\label{thm:intro-main-PD-p-adic}(\cite[Th.\,3.10.20]{Lucas-thesis}, Theorem~\ref{thm:main-PD-p-adic}) Let $Y$ be a locally noetherian analytic adic space over $\Spa(\Q_p, \Z_p)$, and $f\colon X\to Y$ a smooth morphism of pure dimension $d$. Then the functor 
\[
f_!\colon \cal{D}^{\rm{a}}_\Box(X; \O_X^+/p)^{\varphi} \to \cal{D}^{\rm{a}}_\Box(Y; \O_Y^+/p)^{\varphi}
\]
admits a right adjoint given by the formula
\[
f^* \otimes \O_X^{+, \rm{a}}/p(d)[2d] \colon \cal{D}^{\rm{a}}_\Box(Y; \O_Y^+/p)^{\varphi} \to \cal{D}^{\rm{a}}_\Box(X; \O_X^+/p)^{\varphi}.
\]
\end{thm}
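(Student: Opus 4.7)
The strategy is to apply Theorem~\ref{thm:intro-main-thm} to the $6$-functor formalism $\cal{D}\colon X\mapsto \cal{D}^{\rm{a}}_\square(X;\O_X^+/p)^{\varphi}$ on locally finite type adic spaces over $\Spa(\Q_p,\Z_p)$, after fixing the Tate twist to be $\bf{1}\langle 1\rangle \coloneqq \O^{+,\rm{a}}/p(1)[2]$. This is an invertible object, and with this choice the formula $f^!\bf{1}_Y\simeq f^*\bf{1}_Y\otimes \bf{1}_X\langle d\rangle$ supplied by Theorem~\ref{thm:intro-main-thm} agrees precisely with the one in the statement. So it suffices to check that $\cal{D}$ satisfies the excision axiom and admits a theory of first Chern classes in the sense of Definition~\ref{defn:intro-chern-classes}.

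Excision for $\cal{D}^{\rm{a}}_\square(-;\O^+/p)^\varphi$ reduces by $v$-descent to strictly totally disconnected affinoids, where it follows formally from Mann's construction together with the standard description of $i_*$ for a closed immersion as the kernel of restriction to the open complement. For the Chern class theory I would define $c_1$ as the composite
\[
\rm{R}\Gamma_{\rm{an}}(-,\O^\times)[1]\xrightarrow{\ \mathrm{Kum}\ } \rm{R}\Gamma_\et(-,\Z_p(1))[2]\longrightarrow \rm{R}\Gamma\bigl(-,\O^{+,\rm{a}}/p(1)[2]\bigr)
\]
of the pro-Kummer boundary map attached to $1\to\mu_{p^n}\to\O^\times\to\O^\times\to1$ with the canonical map induced by $\Z_p(1)\to \O^{+,\rm{a}}/p(1)$. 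Both arrows are functorial in the adic space, and since $\Z_p(1)$ carries the trivial Frobenius, the image lies tautologically in the $\varphi$-invariants, so the composite assembles into a morphism of $\rm{Sp}$-valued sheaves on $\cal{C}^{\rm{op}}$.

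Verifying that $c_1$ is a genuine theory of first Chern classes then amounts to the projective bundle isomorphism $c_1+f^*\langle 1\rangle\colon \bf{1}_S\oplus \bf{1}_S\langle 1\rangle\to f_*\bf{1}_{\bf{P}^1_S}\langle 1\rangle$ for $f\colon \bf{P}^1_S\to S$. By base change and $v$-descent this reduces to $S=\Spa(C,\O_C)$ for $C$ a complete algebraically closed extension of $\Q_p$. Unwinding the definitions, the content is the almost-integral Hodge--Tate-style decomposition
\[
\rm{R}\Gamma\bigl(\bf{P}^1_C,\O_{\bf{P}^1_C}^{+,\rm{a}}/p\bigr) \simeq \O_C^{+,\rm{a}}/p \,\oplus\, \O_C^{+,\rm{a}}/p(-1)[-2],
\]
together with the statement that the generator of the second summand is $c_1(\O(1))$. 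The cohomology computation is already available in \cite{Lucas-thesis}; the identification of the generator is a \v{C}ech-level calculation that traces an explicit cocycle for $\O(1)$ through the Kummer boundary and the canonical map $\Z_p(1)\to \O^{+,\rm{a}}/p(1)$.

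The main obstacle is precisely this last identification: computing $c_1(\O(1))$ explicitly and matching it with the distinguished class in the second summand of the decomposition above. This requires an honest cohomological computation, since at the level of abstract categories the two classes are a priori unrelated. Once this is in hand, however, Theorem~\ref{thm:intro-main-thm} applies verbatim, and the resulting argument isolates all $p$-adic Hodge-theoretic content into a single computation on the geometric projective line, avoiding Mann's use of formal models and the Faltings trace map.
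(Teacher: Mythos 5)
Your overall strategy is exactly the paper's: fix $\bf{1}\langle 1\rangle=\O^{+,\rm{a}}/p(1)[2]$, verify excision and a theory of first Chern classes for $\cal{D}^{\rm{a}}_\square(-;\O^+/p)^{\varphi}$, and invoke Theorem~\ref{thm:main-thm}. The excision sketch is in the right spirit but glossed: the actual argument (Lemma~\ref{lemma:excision-phi-modules}, via Lemma~\ref{excision:perfectoid}) first uses the projection formulas to reduce to the unit object, then on a strictly totally disconnected space writes the open complement of a Zariski-closed subset as a filtered union of clopen subspaces to see that $j_!\O_U^{+,\rm{a}}/p$ is \emph{discrete}, and only then checks the fiber sequence on points $\Spa(C,C^+)$; ``$i_*$ is the kernel of restriction'' is not something you can simply quote in this formalism.

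The genuine gap is at the projective bundle formula, i.e.\ precisely the step you yourself flag as the main obstacle. You propose to compute $\rm{R}\Gamma(\bf{P}^1_C,\O^{+,\rm{a}}/p)$ and then identify the generator of the degree-$2$ part with $c_1(\O(1))$ by tracing an explicit \v{C}ech cocycle through the Kummer boundary; this identification is never carried out, so the proposal does not actually establish that $c_1$ is a theory of first Chern classes. Moreover, the claim that ``at the level of abstract categories the two classes are a priori unrelated'' is not correct for the Chern class as the paper (and, implicitly, you) define it: since $c_1^{\varphi}$ is by construction the image of the \'etale class $c_1^{\et}$ under the Riemann--Hilbert functor $-\otimes\O^{+,\rm{a}}/p$ (Definition~\ref{defn:first-chern-classes-mod-p}), there is a commutative square comparing
\[
\left(\ud{\bf{F}}_p\oplus\mu_p[2]\right)\otimes\O_S^{+,\rm{a}}/p \xrightarrow{\ (c_1^{\et}(\O(1))+f_{\et}^*)\otimes\O^{+}/p\ } f_{\et,*}\left(\mu_p[2]\right)\otimes\O_S^{+,\rm{a}}/p
\]
with the map $c_1^{\varphi}(\O(1))+f^*$ on $\O_S^{+,\rm{a}}/p\oplus\O_S^{+,\rm{a}}/p(1)[2]\to f_*(\O^{+}/p(1)[2])$. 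The top arrow is an isomorphism by the \'etale projective bundle formula (Proposition~\ref{prop:projective-bundle-formula-etale}, which only needs $p$ invertible in $\O_S$, true over $\Spa(\Q_p,\Z_p)$ even though $p$ is not invertible in $\O_S^+$), and the right vertical arrow is an isomorphism by the Primitive Comparison Theorem (Lemma~\ref{lemma:primitive-comparison}). Hence the bottom arrow is an isomorphism \emph{formally}, with no cocycle computation and no need to identify any generator; the only $p$-adic Hodge-theoretic input is primitive comparison for the proper map $\bf{P}^1_Y\to Y$. Replacing your unexecuted explicit computation by this functoriality argument closes the gap and recovers the paper's proof.
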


The proof of Theorem~\ref{thm:intro-main-PD-p-adic} follows the same strategy as the one of Theorem~\ref{thm:intro-main-PD-ell-adic}: we define first Chern classes and then compute cohomology of the relative $\bf{P}^1_Y \to Y$. \smallskip

The two main complications come from the fact that it is not, a priori, clear that this $6$-functor formalism satisfies the excision axiom, and the definition of this $6$-functor formalism is so abstract that it seems difficult to compute even cohomology of the projective line from first principles. \smallskip

However, it turns out that the verification of the excision axiom is not that hard, and we resolve the second issue via the Primitive Comparison Theorem that reduces the computation to the computation in \'etale cohomology. Besides these relatively minor points, the proof of Theorem~\ref{thm:intro-main-PD-p-adic} is essentially identical to that of Theorem~\ref{thm:intro-main-PD-ell-adic}.

\subsubsection{Potential new examples of Poincar\'e Duality}\label{section:potential-application}

Recently, V.\,Drinfeld \cite{Drinfeld22} and B.\,Bhatt--J.\,Lurie \cite{BL2} gave a new (stacky) perspective on prismatic cohomology. Namely, for a bounded prism $(A, I)$ and a bounded $p$-adic formal scheme $X$ over $A/I$, they construct its (relative derived) prizmatization stack $\rm{WCart}_{X/A}$. For an lci $X$, this comes equipped with an isomorphism 
\[
\cal{D}_{qc}(\rm{WCart}_{X/A})\simeq \widehat{\cal{D}}_{\rm{crys}}((X/A)_\Prism, \O_\Prism)
\]
between the $\infty$-categories of quasi-coherent sheaves on $\rm{WCart}_{X/A}$ and prismatic $\O_\Prism$-crystals on $X$. Therefore, it is reasonable to expect that $\cal{D}_{qc}(\rm{WCart}_{X/A})$ provide a reasonable coefficient theory for (relative) prismatic cohomology. Unfortunately, this assignment can not be promoted to a $6$-functor formalism because this is already impossible for $\cal{D}_{qc}(-)$ (even on schemes); the problem being that the open immersion pullback $j^*$ {\it does not} admit a left adjoint. \smallskip

In the case of (derived) schemes, P.\,Scholze and D.\,Clausen \cite{Scholze-Clausen} were able to enlarge the category $\cal{D}_{qc}(-)$ to the category of all {\it solid modules} $\cal{D}_{\Box}(-)$ to get a $6$-functor formalism on (derived) schemes. Therefore, it is reasonable to expect that appropriately defined $\infty$-category $\cal{D}_\Box(\rm{WCart}_{X/A})$ of solid sheaves on the stack $\rm{WCart}_{X/A}$ should give the correct coefficient theory for the prismatic cohomology and admit a $6$-functor formalism. \smallskip

Furthermore, L.\,Tang has recently proven Poincar\'e Duality for prismatic cohomology of smooth and proper $p$-adic formal $A/I$-schemes (see \cite[Th.\,1.2]{Tang}). This makes it reasonable to expect that this potential $6$-functor formalism should satisfy the full version of Poincar\'e Duality with all solid coefficients. Once this $6$-functor formalism $\cal{D}$ is constructed, Theorem~\ref{thm:intro-poincare-duality-chern-classes} reduces Poincar\'e Duality to the question of constructing (strong) first Chern classes, cycle class maps for divisors, and showing that $\cal{D}$ is pre-geometric (see Definition~\ref{defn:6-functors-geometric}). We expect that, under the correct formalization of $\cal{D}_{\Box}(\rm{WCart}_{X/A})$, all these questions should follow from the already existing results:

\begin{enumerate}
    \item (First Chern classes) A strong theory of prismatic first Chern classes has already been constructed in \cite[Notation 7.5.3 and Variant 9.1.6]{BL1};
    \item (Cycle maps for divisors) we expect that a theory of cycle maps should follow from \cite[Construction 5.32]{Tang};
    \item ($\cal{D}$ is pre-geometric) It suffices to show that every invertible object on $\bf{P}^1_Y$ comes from $Y$. We expect that, there should be an equivalence of the $\infty$-categories of invertible objects
    \[
    \cal{P}ic\left( \cal{D}_{\Box}(\rm{WCart}_{Y/A}) \right) \simeq \cal{P}ic\left( \cal{D}_{qc}(\rm{WCart}_{Y/A}) \right).
    \]
    This would reduce the question to showing that any prismatic line bundle on $\bf{P}^1_{B/J}$ comes from a line bundle $\Spec B/J$ for any morphism of bounded prisms $(A, I) \to (B, J)$. This can be explicitly seen by showing that the pullback along the natural morphism
    \[
    \bf{P}^1_B \to \rm{WCart}_{\bf{P}^1_{B/J}/B}
    \]
    is fully faithful on line bundles and first Chern class considerations to trivialize the pullback. We do not spell out the precise argument as it is beyond the scope of this paper. 
\end{enumerate}

We expect that similar considerations should apply to the absolute prismatizations $X^{\Prism}$, $X^{\cal{N}}$, and $X^{\rm{syn}}$ introduced in \cite{Drinfeld22} and \cite{bhatt-teaching}.

\subsection{Comparison with motivic categories}\label{section:comparison-motivic}

There is a different approach to $6$-functor formalisms via the theory of motivic triangulated categories (at least in case of schemes); see \cite{Ayoub-1}, \cite{Ayoub-2}, and \cite[Def.\,2.4.45]{Cisinski-Deglise}. Unlike our formalization of a $6$-functor formalism, Poincar\'e Duality turns out to be a (non-trivial) consequence of the axioms of a motivic triangulated category and, therefore, holds in any such category (see \cite[\textsection 1.5.3]{Ayoub-1} or \cite[Th.\,2.4.50]{Cisinski-Deglise}). However, the drawback of this approach is that, in order to construct motivic triangulated categories, one usually needs to establish results closely related to Poincar\'e duality. In particular, this approach does not seem particularly useful for the purpose of proving Poincar\'e Duality results in concrete examples. However, some ideas similar to the ones used in \cite{Ayoub-1}, \cite{Ayoub-2}, \cite[Def.\,2.4.45]{Cisinski-Deglise}, and \cite{Deglise-Gysin-2} also play a role in this paper. For this reason, we summarize the main differences and similarities. \smallskip

Recall that our notion of a $6$-functor formalism formalizes ``cohomology theories'' with coefficient theories that satisfy proper base-change and projection formula. Unlike this, motivic triangulated categories formalize different cohomological properties. From our point of view, one of the main axioms of a motivic triangulated category is the assumption that, for every smooth morphism $f\colon X \to Y$, the functor $f^*$ admits a {\it left} adjoint $f_\sharp$ that satisfies base change and projection formula. This is a very strong assumption since it formally implies both the usual smooth base change theorem and that the functor $f^*$ commutes with all limits (and colimits). \smallskip

Unlike proper base-change and projection formula, both of the above properties seem very difficult to verify in practice. For example, in either \cite{H3}, \cite{Lucas-thesis}, or \cite{Scholze-diamond}, smooth base change is obtained as a {\it consequence of Poincar\'e Duality}. In fact, it seems difficult to prove that $f^*$ admits a left adjoint with the desired properties without first verifying that $f$ is cohomologically smooth in the sense of Definition~\ref{defn:cohomologically-smooth}. Since proving that a smooth morphism is cohomologically smooth is arguably the hardest part of Poincar\'e Duality, it seems hard to use motivic triangulated categories for the purpose of proving Poincar\'e Duality in concrete examples of $6$-functor formalisms. \smallskip 

The axioms of a motivic triangulated category also include the excision axiom and (a strong) version of $\bf{A}^1$-invariance. Even though we also sometimes need to assume either weak $\bf{A}^1$-invariance or the excision axiom, we do not put these assumptions into the foundations of the theory and can achieve results without these assumptions (see Theorem~\ref{thm:intro-poincare-duality-chern-classes}). This is crucial for the potential application for the prismatic (or crystalline) $6$-functor formalism sketched in Section~\ref{section:potential-application} since this $6$-functor formalism can satisfy neither the excision nor the $\bf{A}^1$-invariance axiom. \smallskip

Even though the formalisms are quite different, we also note that some of the {\it ideas} used in this paper have been also extensively used in the motivic literature. In particular, the idea of using deformation to the normal cone to compute the dualizing object goes back to Voevodsky and has also been used in \cite{Ayoub-1} and \cite{Cisinski-Deglise}. Also, the idea of using first Chern classes to ``trivialize'' the dualizing complex has previously appeared in \cite{Deglise-Gysin-2}. In this regard, the key new idea of this paper is to use the $2$-category of cohomological correspondences to find a minimalistic set of conditions that ensures cohomological smoothness of a morphism.

\subsection{Strategy of the proof}

Now we discuss the strategy of our proof of Theorem~\ref{thm:intro-main-thm}: 

\begin{enumerate}\itemsep0.5em

\item(Section~\ref{section:trace-cycle}) We start by proving Theorem~\ref{thm:intro-cohomologically-smooth-criterion}. The main step in the proof is to ``decategorify'' the question. The key idea is to use the $2$-category of cohomological correspondences originally introduced in \cite{Lu-Zheng} and reviewed in Section~\ref{section:cohomological-correspondences}. After we establish Theorem~\ref{thm:intro-cohomologically-smooth-criterion}, we show that it implies Theorem~\ref{thm:intro-cohomologically-smooth}. In particular, this guarantees that any smooth morphism is cohomologically smooth if $\bf{P}^1_S \to S$ admits a trace-cycle theory. \smallskip

\item(Section~\ref{section:dualizing-complex}) The next goal is to deduce a formula for the dualizing object $f^!\bf{1}_Y$ for a smooth morphism $f\colon X \to Y$. This is done via a version of Verdier's diagonal trick and deformation to the normal cone, we show\footnote{At least under the assumptions of Theorem~\ref{thm:intro-formula-dualizing-complex} that we will prove in later steps.} that \[
f^!\bf{1}_Y \simeq C_X(\rm{T}_f) \in \cal{D}(X),
\]
where $C_X(\rm{T}_f)$ is defined in Construction~\ref{intro:cons-vector-bundles}.  \smallskip

Now the question of proving Poincar\'e Duality boils down to the question of constructing a trace-cycle theory of $\bf{P}^1_S \to S$ and then computing $C_X(\rm{T}_f)$ for every smooth morphism $f\colon X \to Y$. \smallskip

\item(Sections~\ref{section:first-chern-classes}-\ref{section:first-chern-classes-excision}) We introduce the notion of a theory of first Chern classes. Then we show that, in the presence of the excision axiom, existence of a theory of first Chern classes automatically implies weak $\bf{A}^1$-invariance of $\cal{D}$, existence of cycle maps for divisors, and the projective bundle formula. \smallskip

\item(Section~\ref{section:trace-map}) Then we construct the trace morphism for projective bundles in the presence of a theory of first Chern classes. Then we show that, for the projective line $f\colon \bf{P}^1_S \to S$, the triple $(\bf{1}_{\bf{P}^1_S}\langle 1\rangle, \tr_f, \cl_\Delta)$ forms a trace-cycle theory; this is essentially just a formal diagram chase. \smallskip

\item(Section~\ref{section:duality}) Finally, the question of proving Theorem~\ref{thm:intro-cohomologically-smooth-criterion} boils down to the question of computing
\[
f^!\bf{1}_Y \simeq C_X(\rm{T}_f) 
\]
for every smooth morphism $f\colon X \to Y$ of relative pure dimension $d$. For this, we compactify the morphism $g\colon \rm{V}_X(\rm{T}_f) \to X$ to the morphism $\ov{g}\colon \bf{P}_X(\rm{T}_f^{\vee} \oplus \O) \to X$ with the ``zero'' section $s\colon X \to \bf{P}_X(\rm{T}_f^{\vee} \oplus \O)$. Then the question reduces to constructing an isomorphism 
\[
s^*\ov{g}^!\bf{1}_X \simeq \bf{1}_X\langle d\rangle.
\]
Roughly, the morphism comes from the trace map constructed in the previous step. In order to show that this is an isomorphism, we can work locally on $X$. Thus we can assume that $\rm{T}_f$ is a trivial vector bundle, so $\bf{P}_X(\rm{T}_f^{\vee} \oplus \O) \simeq \bf{P}_X^d$. Then the cycle map of a point gives an inverse to this map. 
\end{enumerate}

\subsection{Terminology}

We say that an analytic adic space $X$ is {\it locally noetherian} if there is an open covering by affinoids $X= \bigcup_{i\in I} \Spa(A_i, A_i^+)$ with strongly noetherian Tate $A_i$. Sometimes, such spaces are called locally {\it strongly} noetherian.\smallskip

We follow \cite[Def.\,1.3.3]{H3} for the definition of a locally finite type, locally weakly finite type, and locally $+$-weakly finite type morphisms of locally noetherian adic spaces. \smallskip 

For a Grothendieck abelian category $\cal{A}$, we denote by $D(A)$ its \emph{triangulated derived category} and by $\cal{D}(\cal{A})$ its $\infty$-enhancement. \smallskip

For a symmetric monoidal $\infty$-category $\cal{C}^\otimes$, we denote by $\cal{P}ic(\cal{C}^\otimes)$ the full $\infty$-subcategory of $\cal{C}$ consisting of invertible objects. We also denote by $\rm{Pic}(\cal{C}^\otimes)$ the group of isomorphism classes of invertible objects in $\cal{C}^\otimes$. \smallskip

\subsection{Acknowledgements} We heartfully thank Ofer Gabber and Peter Scholze for their questions after author's presentation of his previous work on $p$-adic Poincar\'e Duality \cite{zav-duality} at the RAMpAGe seminar and the Oberwolfach workshop respectively; this was the starting point of this paper. We are also very grateful to Peter Scholze for numerous illuminating conversations, which have greatly influenced the development of this paper. The paper owes a huge intellectual debt to these conversations. 

We thank Marc Hoyois for suggesting the argument of Proposition~\ref{prop:8-2-category-of-cohomological-correspondences}, Ko Aoki and Peter Haine for explaining some necessary $\infty$-categorical background to the author, and Adeel Khan for patiently answering author's questions on his paper \cite{Khan-duality}.

We also thank Toni Annala, Bhargav Bhatt, Dustin Clausen, Dmitry Kaledin, Dmitry Kubrak, Shizhang Li, Lucas Mann, and Emanuel Reinecke for many interesting conversations. We thank Fr\'{e}d\'{e}ric D\'eglise, Nikon Kurnosov, Slava Naprienko, Alexander Petrov, Alberto Vezzani, and Mingjia Zhang for their comments on the previous draft of this paper. We are also very grateful to Fr\'ed\'eric D\'eglise for drawing our attention to \cite{Ayoub-1}, \cite{Cisinski-Deglise}, and \cite{Deglise-Gysin-2}. 

We thank the Max Planck Institute and the Institute for Advanced Study for funding and excellent working conditions during author's stay at these institutes.

\section{Abstract six functor formalisms}\label{section:abstract-six-functors}

In this section, we remind the reader the notion of a $6$-functor formalism and give some constructions that will be important for the rest of the paper. In particular, we fix the notation that will be freely used in the rest of the paper. After that, we construct the $2$-category of cohomological correspondences that will play a crucial role in the proof of Poincar\'e Duality. \smallskip

For the rest of the section, we fix $\cal{C}$ a category of locally finite type adic $S$-spaces (resp. a category of locally finitely presented $S$-schemes). \smallskip

\subsection{$6$-functor formalisms I}

In this section, we discuss the general notion of a $6$-functor formalism. Since this is the main object of study of this paper, we have decided to spent this section to explicitly set-up all the notation that we will use later. We also wish to convey the idea that almost all familiar structures on the classical $6$-functor formalisms can be defined in this abstract situation in a similar manner. \smallskip

We start by recalling that Y.\,Liu and W.\,Zheng have defined a symmetric monoidal\footnote{See \cite[Def.\,2.0.0.7]{HA} for the precise definition of this notion.} $\infty$-category $\rm{Corr}(\cal{C})\coloneqq \rm{Corr}(\cal{C})_{\rm{all}, \rm{all}}$ of correspondences in $\cal{C}$. We do not explain the full construction here and instead refer to \cite[Prop.\,6.1.3]{Liu-Zheng} (and to \cite[Def.\,A.5.4]{Lucas-thesis} for a nice exposition). However, we specify some lower dimensional data that will be useful for us later:

\begin{rmk}\label{rmk:category-of-corr}
\begin{enumerate}
    \item objects of $\rm{Corr}(\cal{C})$ coincide with objects of $\cal{C}$, i.e. locally finite type adic $S$-spaces;
    \item $1$-edges between $X$ and $Y$ are given by correspondences of the form
    \[
    \begin{tikzcd}
    & Z \arrow{dl}\arrow{dr} & \\
    X & & Y;
    \end{tikzcd}
    \]
    \item in the homotopy category $\rm{h}\rm{Corr}(\cal{C})$, the composition of morphisms $X\leftarrow T \rightarrow Y$ and $Y\leftarrow S \to Z$ is given by the following outer correspondence (in red):
    \[
    \begin{tikzcd}[column sep = 2em, row sep = 1em]
    & & T\times_Y S \arrow{rd} \arrow{ld} \arrow[red, bend left]{ddrr} \arrow[red, bend right]{ddll}& & \\
    & T \arrow{ld} \arrow{rd}& & S\arrow{ld} \arrow{rd} & \\
    X & & Y & & Z;
    \end{tikzcd}
    \]
    \item the tensor product $X\otimes Y$ of two objects $X$ and $Y$ is their cartesian product $X\times_S Y$.
\end{enumerate}
\end{rmk}

In the next definition, we consider the cartesian symmetric monoidal structure on $\Cat_\infty$, the $\infty$-category of (small) $\infty$-categories. 

\begin{defn}\label{defn:weak-6-functors}(\cite[Def.\,A.5.7]{Lucas-thesis}) A {\it weak $6$-functor formalism} is a lax symmetric-monoidal\footnote{By a lax symmetric-monoidal functor, we mean a functor of the associated $\infty$-operads, see \cite[Def.\,2.1.2.7]{HA}} functor
\[
\cal{D}\colon \Corr(\cal{C}) \to \Cat_\infty
\]
such that
\begin{enumerate}
    \item for each morphism $f\colon X\to Y$ in $\cal{C}$, the functors $\cal{D}([X\xleftarrow{\rm{id}}X\xr{f} Y])\colon \cal{D} \to \cal{D}(Y)$ and $\cal{D}([Y \xleftarrow{f} X \xr{\rm{id}}X]) \colon \cal{D}(Y)\to \cal{D}(X)$ admit right adjoints;
    \item for each $X\in \cal{C}$, the symmetric monoidal $\infty$-category $\cal{D}(X)$ is closed (in the sense of \cite[Def.\,4.1.1.15]{HA}). The associated homotopy $1$-category $\rm{h}\cal{D}(X)$ is denoted by $D(X)$. 
\end{enumerate}  
\end{defn}

\begin{rmk}\label{rmk:homotopy-cat-of-six-functor} One can compose $\cal{D}$ with the functor $\rm{h}\colon \Cat_\infty \to \Cat_1^{\simeq}$ to the $(2,1)$-category of categories that sends an $\infty$-category $X$ to its homotopy category $\rm{h}X$. By the universal property of homotopy $2$-categories, this functor (essentially) uniquely descends to the functor
\[
D\coloneqq h\cal{D}\colon \rm{h}_2 \Corr(\cal{C})\to \Cat_1^\simeq
\]
such that $D(X)=\rm{h}\cal{D}(X)$.
\end{rmk}

\begin{rmk}\label{rmk:a-lot-of-data} The data of a weak $6$-functor formalism is a very dense piece of data. Below, we mention some consequences of this definition, and refer to \cite[Def.\,A.5.6, Def.\,A.5.7, Prop.\,A.5.8]{Lucas-thesis} for the discussion on how to derive these consequences from Definition~\ref{defn:weak-6-functors}.
\begin{enumerate}
    \item for each $X\in \cal{C}$, a closed symmetric monoidal $\infty$-category $\cal{D}(X)$. We denote the tensor product functor and the inner Hom functor by 
    \[
     - \otimes - \colon \cal{D}(X) \times \cal{D}(X) \to \cal{D}(X), \text{ and }
    \]
    \[
    \ud{\Hom}_{X}(-, -) \colon \cal{D}(X)^{\rm{op}} \times \cal{D}(X) \to \cal{D}(X);
    \]
    \item for each morphism $f\colon X \to Y$ in $\cal{C}$, we have a symmetric monoidal functor $f^*\colon \cal{D}(Y) \to \cal{D}(X)$, and a functor $f_!\colon \cal{D}(X) \to \cal{D}(Y)$;
    \item for each $f\colon X \to Y$, $f^*$ and $f_!$ admit right adjoints that we denote by $f_*\colon \cal{D}(X) \to \cal{D}(Y)$ and $f^!\colon \cal{D}(Y) \to \cal{D}(X)$;
    \item the functor $f_!$ satisfies the projection formula, i.e., there is an isomorphism 
    \[
    f_!(-)\otimes (-) \simeq f_!(-\otimes f^*(-))
    \]
    of functors $\cal{D}(X) \times \cal{D}(Y) \to \cal{D}(Y)$;
    \item the functor $f_!$ satisfies base-change, i.e., for any cartesian diagram
    \[
    \begin{tikzcd}
    X' \arrow{r}{g'}\arrow{d}{f'} & X \arrow{d}{f} \\
    Y' \arrow{r}{g} & Y,
    \end{tikzcd}
    \]
    there is a specified isomoprhism of functors $g^* \circ f_! \simeq f'_! \circ (g')^*$;
    \item a lot of higher coherences...
\end{enumerate}
\end{rmk}

\begin{notation}\label{notation:morphisms}
\begin{enumerate}
    \item(Unit object) In what follows, we fix a unit object $\bf{1}_S \in \cal{D}(S)$. For each $f\colon X \to S$ in $\cal{C}$, we denote by $\bf{1}_X\coloneqq f^*\left(\bf{1}_S\right)$ the pullback of $\bf{1}_S$ to $X$. It is a unit object in $\cal{D}$ because $f^*$ is a (symmetric) monoidal functor;
    \item\label{notation:morphisms-coprojection}(Co-projection morphism) for any $f\colon X \to Y$ in $\cal{C}$, there is a natural morphism of functors
    \[
    w_{(-), (-)}\colon f^!(-) \otimes f^*(-) \to f^!(-\otimes -)
    \]
    from $\cal{D}(Y)\times \cal{D}(Y)$ to $\cal{D}(X)$ that is defined to be adjoint to the morphism
    \[
    f_!(f^!(-)\otimes f^*(-)) \simeq f_!(f^!(-)) \otimes (-) \xr{\rm{adj}\otimes \rm{id}} -\otimes -;
    \]
    \item\label{notation:morphisms-shriek-base-change}(Upper shriek base-change) If 
    \[
    \begin{tikzcd}
    X' \arrow{d}{f'} \arrow{r}{g'} & X \arrow{d}{f} \\
    Y' \arrow{r}{g} & Y
    \end{tikzcd}
    \]
    is a cartesian diagram in $\cal{C}$, there is a natural morphism $(g')^* \circ f^! \to (f')^! \circ g^*$ defined as an adjoint to 
    \[
    f'_! \circ (g')^* \circ f^! \simeq g^* \circ f_! \circ f^! \xr{g^*(\rm{adj})} g^*,
    \]
    where the first morphism is the base-change morphism.

\end{enumerate}
\end{notation}

For the later use, we prove the following very general (but easy) lemma:

\begin{lemma}\label{lemma:invertible-projection} Let $f\colon X \to Y$ a morphism in $\cal{C}$, and let $\F, \cal{E}$ be objects of $\cal{D}(Y)$. Suppose that $\cal{E}$ is invertible. Then the co-projection morphism
\[
w_{\cal{F}, \cal{E}}\colon f^!\cal{F}\otimes f^* \cal{E} \to f^! (\cal{F} \otimes \cal{E})
\]
is an isomorphism.
\end{lemma}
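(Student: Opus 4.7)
The plan is to verify the claim via the Yoneda lemma, by showing that the morphism $w_{\cal{F}, \cal{E}}$ induces a bijection on $\Hom_{\cal{D}(X)}(\cal{G}, -)$ for every $\cal{G} \in \cal{D}(X)$. The key leverage from invertibility of $\cal{E}$ is that $f^*\cal{E}$ is also invertible: since $f^*$ is symmetric monoidal, if $\cal{E}^{-1}$ is a tensor-inverse of $\cal{E}$ in $\cal{D}(Y)$, then $f^*\cal{E}^{-1}$ is a tensor-inverse of $f^*\cal{E}$ in $\cal{D}(X)$. In particular, tensoring with $f^*\cal{E}$ (resp.\ $\cal{E}$) is an autoequivalence of $\cal{D}(X)$ (resp.\ $\cal{D}(Y)$).

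First, I would assemble the following chain of natural bijections, for arbitrary $\cal{G}\in\cal{D}(X)$:
\begin{align*}
\Hom_{\cal{D}(X)}(\cal{G}, f^!\cal{F}\otimes f^*\cal{E})
&\simeq \Hom_{\cal{D}(X)}(\cal{G}\otimes f^*\cal{E}^{-1}, f^!\cal{F}) \\
&\simeq \Hom_{\cal{D}(Y)}(f_!(\cal{G}\otimes f^*\cal{E}^{-1}), \cal{F}) \\
&\simeq \Hom_{\cal{D}(Y)}(f_!\cal{G}\otimes \cal{E}^{-1}, \cal{F}) \\
&\simeq \Hom_{\cal{D}(Y)}(f_!\cal{G}, \cal{F}\otimes \cal{E}) \\
&\simeq \Hom_{\cal{D}(X)}(\cal{G}, f^!(\cal{F}\otimes \cal{E})),
\end{align*}
where the steps use, in order, invertibility of $f^*\cal{E}$, the $(f_!, f^!)$-adjunction, the projection formula, invertibility of $\cal{E}$, and the $(f_!, f^!)$-adjunction again. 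This already shows that the source and target of $w_{\cal{F}, \cal{E}}$ represent naturally isomorphic functors on $\cal{D}(X)$.

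Second, I would show that this composite bijection agrees with post-composition by $w_{\cal{F}, \cal{E}}$; by Yoneda, this forces $w_{\cal{F}, \cal{E}}$ itself to be an isomorphism. Taking $\cal{G} = f^!\cal{F}\otimes f^*\cal{E}$ and tracking the identity through the chain above, one checks that its image in $\Hom_{\cal{D}(Y)}(f_!(f^!\cal{F}\otimes f^*\cal{E}), \cal{F}\otimes \cal{E})$ equals the composite
\[
f_!(f^!\cal{F}\otimes f^*\cal{E}) \xr{\sim} f_!f^!\cal{F}\otimes \cal{E} \xr{\rm{adj}\otimes \rm{id}} \cal{F}\otimes \cal{E},
\]
which is precisely the defining adjoint of $w_{\cal{F}, \cal{E}}$ as recorded in Notation~\ref{notation:morphisms}(\ref{notation:morphisms-coprojection}).

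The only mildly delicate point is tracking the identity morphism correctly through the chain, which reduces to a diagram chase involving the triangle identities for the adjunction $(f_!, f^!)$, the naturality of the projection formula, and compatibility of the unit/counit of the autoequivalence $-\otimes f^*\cal{E}^{-1}$ with $f_!$; none of these present real obstacles, since the assertion concerns only morphisms in the homotopy category $D(X)$ and no higher coherences intervene.
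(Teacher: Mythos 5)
Your proof is correct, but it takes a genuinely different route from the paper. The paper's own argument is an explicit inverse construction: it applies the co-projection morphism to the pair $(\cal{F}\otimes\cal{E}, \cal{E}^{-1})$, i.e.\ $w_{\cal{F}\otimes\cal{E},\, \cal{E}^{-1}}\colon f^!(\cal{F}\otimes\cal{E})\otimes f^*\cal{E}^{-1}\to f^!\cal{F}$, twists by $f^*\cal{E}$ to obtain a candidate inverse $w'\colon f^!(\cal{F}\otimes\cal{E})\to f^!\cal{F}\otimes f^*\cal{E}$, and then checks that $w'\circ w$ and $w\circ w'$ are homotopic to the identity using that co-projection morphisms compose well. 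You instead argue by representability: the chain of adjunctions (invertibility of $f^*\cal{E}$, the $(f_!,f^!)$-adjunction, the projection formula, invertibility of $\cal{E}$, adjunction again) exhibits both sides of $w_{\cal{F},\cal{E}}$ as representing the same functor, and the Yoneda step reduces to identifying the image of the identity with the defining adjoint of $w_{\cal{F},\cal{E}}$. Both arguments hinge on the same inputs, and each defers a comparable diagram chase — your ``tracking the identity through the chain'' plays exactly the role of the paper's ``projection morphisms compose well.'' What each buys: the paper's proof is shorter and stays entirely at the level of the co-projection maps; yours isolates precisely where each ingredient enters (in particular, invertibility is used only to make $-\otimes\cal{E}$ and $-\otimes f^*\cal{E}$ adjointable, so the same chain would apply verbatim to merely dualizable $\cal{E}$), at the cost of a slightly more involved final compatibility check.
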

\begin{proof}
    Consider the morphism $w_{\cal{F} \otimes \cal{E}, \cal{E}^{-1}}\colon f^!(\cal{F}\otimes \cal{E})\otimes f^*(\cal{E}^{-1}) \to f^!\cal{F}$. It induces a morphism $w'\colon f^!(\cal{F}\otimes \cal{E}) \to f^!\cal{F} \otimes f^*\cal{E}$. Using that projection morphisms compose well, one easily checks that $w'$ is the inverse to $w_{\cal{F}, \cal{E}}$ up to a homotopy.  
\end{proof}

\begin{rmk} We put the word ``weak'' in Definition~\ref{defn:weak-6-functors} for the following reasons:
\begin{enumerate}
    \item in practice, $\infty$-categories $\cal{D}(X)$ are stable (in the sense of \cite[Def.\,1.1.1.9]{HA}). It seems reasonable to put this into the definition of a $6$-functor formalism;
    \item furthermore, in practice, there is a natural equivalence between $f_!$ and $f_*$ for a proper morphism $f$, and between $f^!$ and $f^*$ for an étale morphism $f$. It therefore seems appropriate to incorporate these equivalences directly into the definition.
\end{enumerate}
\end{rmk}

We address these matters in Section~\ref{section:2}. In the remainder of this section, we discuss another useful condition to impose on an abstract weak $6$-functor formalism.

\begin{defn}\label{defn:homotopy-invariant} A weak $6$-functor formalism $\cal{D}$ on $\cal{C}$ is {\it weakly $\bf{A}^1$-invariant} if, for every $X\in \cal{C}$ and the morphism $f\colon \bf{A}^1_X \to X$, the natural morphism
\[
\bf{1}_X \to f_* \bf{1}_{\bf{A}^1_X}
\]
is an isomorphism.
\end{defn}

\begin{rmk} In Definition~\ref{defn:homotopy-invariant}, we call $\cal{D}$ {\it weakly} $\bf{A}^1$-invariant to emphasize that we do \emph{not} require that $F \to f_*f^* F$ is an isomorphism for any $F\in \cal{D}(X)$. This stronger requirement is what would usually be called {\it $\bf{A}^1$-invariant}. 
\end{rmk}

In the next lemma, we denote by $\cal{P}ic(\cal{D}(X))$ the $\infty$-subcategory of $\cal{D}(X)$ consisting of invertible objects. 

\begin{lemma}\label{lemma:invertible-ff} Let $\cal{D}$ be an weakly $\bf{A}^1$-invariant weak $6$-functor formalism, let $X\in \cal{C}$, and let $f\colon \bf{A}^1_X \to X$ be the natural morphism. Then the pullback functor
\[
f^* \colon \cal{P}ic\bigl(\cal{D}(X)\bigr) \to \cal{P}ic\bigl(\cal{D}(\bf{A}^1_X)\bigr)
\]
is fully faithful. 
\end{lemma}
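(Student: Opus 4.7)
The plan is to reduce the full faithfulness statement to the $\bf{A}^1$-invariance axiom applied to the unit object, using the $(f^*, f_*)$-adjunction and a projection-formula argument for $f_*$ against dualizable objects.

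Fix invertible $L, M \in \cal{D}(X)$ and set $N \coloneqq L^{-1} \otimes M$. Since $L, M$ are dualizable and $f^*$ is symmetric monoidal (so $(f^*L)^{-1} \simeq f^*(L^{-1})$), both mapping spaces reduce to mapping spaces out of the unit:
\[
\operatorname{Map}_{\cal{D}(X)}(L, M) \simeq \operatorname{Map}_{\cal{D}(X)}\bigl(\bf{1}_X, N\bigr), \qquad \operatorname{Map}_{\cal{D}(\bf{A}^1_X)}(f^*L, f^*M) \simeq \operatorname{Map}_{\cal{D}(\bf{A}^1_X)}\bigl(\bf{1}_{\bf{A}^1_X}, f^*N\bigr),
\]
and the $(f^*, f_*)$-adjunction rewrites the right-hand side as $\operatorname{Map}_{\cal{D}(X)}(\bf{1}_X, f_*f^*N)$. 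The pullback map on the original mapping spaces corresponds under these identifications to post-composition with the unit of adjunction, so it suffices to show that the unit $N \to f_*f^*N$ is an equivalence for every invertible $N \in \cal{D}(X)$.

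For this I would first establish the $*$-projection formula $N \otimes f_*(-) \xrightarrow{\sim} f_*(f^*N \otimes -)$ for dualizable $N$, a purely formal consequence of the symmetric monoidality of $f^*$ together with the fact that $N \otimes (-)$ is its own two-sided adjoint up to replacing $N$ by $N^{-1}$: chaining adjunctions gives, for any $A \in \cal{D}(\bf{A}^1_X)$ and $B \in \cal{D}(X)$,
\[
\operatorname{Map}(N \otimes f_*A, B) \simeq \operatorname{Map}(A, f^*(N^{-1}\otimes B)) \simeq \operatorname{Map}(f^*N \otimes A, f^*B) \simeq \operatorname{Map}(f_*(f^*N \otimes A), B),
\]
and the claim follows by Yoneda. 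Applying this to $A = \bf{1}_{\bf{A}^1_X}$ and combining with the $\bf{A}^1$-invariance isomorphism $\bf{1}_X \xrightarrow{\sim} f_*\bf{1}_{\bf{A}^1_X}$ from Definition~\ref{defn:homotopy-invariant} yields an equivalence $N \xrightarrow{\sim} f_*f^*N$.

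The one substantive point that needs care is the identification of the abstractly produced equivalence $N \simeq f_*f^*N$ with the unit of the adjunction, which is a routine triangle-identity chase once the projection-formula map is constructed from the unit and counit rather than extracted abstractly by Yoneda. Besides this bookkeeping, the argument is entirely formal and the only geometric input used is the $\bf{A}^1$-invariance axiom applied to $\bf{1}$.
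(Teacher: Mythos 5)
Your proposal is correct and rests on exactly the same ingredients and structure as the paper's proof: the $(f^*, f_*)$-adjunction, the projection formula for $f_*$ against invertible objects (which the paper also obtains by a formal adjunction/invertibility argument, as in Lemma~\ref{lemma:invertible-projection}), and $\mathbf{A}^1$-invariance applied to the unit. The only difference is organizational — the paper chains the three identifications directly at the level of $\mathrm{Hom}$-groups, whereas you reduce to showing the unit $N \to f_*f^*N$ is an equivalence and are somewhat more explicit about the compatibility bookkeeping.
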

\begin{proof}
    We fix two invertible objects $L, L'\in  \cal{P}ic\bigl(\cal{D}(X)\bigr)$. Then the claim follows from the following sequence of isomorphisms:
    \begin{align*}
        \rm{Hom}(f^*L, f^*L') &\simeq \rm{Hom}(L, f_*f^*L') \\
        & \simeq \rm{Hom}(L, f_* \bf{1}_{\bf{A}^1_X} \otimes L') \\
        & \simeq \rm{Hom}(L, L').
    \end{align*}
    The first isomorphism follows from the $(f^*, f_*)$-adjunction, the second isomorphism follows from the projection formula for invertible objects (argue as in the proof of Lemma~\ref{lemma:invertible-projection}). The last isomorphism follows from the $\bf{A}^1$-invariance.
\end{proof}

\subsection{$(\infty, 2)$-category of cohomological correspondences}\label{section:cohomological-correspondences}

The main goal of this section is to construct the $(\infty, 2)$-category of cohomological correspondences, a $2$-categorical variant of which was first introduced in \cite[IV.2.3.3]{Fargue-Scholze} (based on \cite{Lu-Zheng}). We learnt\footnote{Ko Aoki has informed the author that a similar construction has also been known to Adam Dauser.} the arguments of this section from Marc Hoyois. \smallskip

In the rest of the paper, we will never need the $(\infty, 2)$-version of this category; the $2$-categorical version will be sufficient for all our applications. However, it seems that a rigorous explicit construction even of the associated $2$-category is an extremely tedious exercise. Even though it is probably possible to do by hand, we are not aware of any place in the literature where this has been done in full detail. \smallskip

For instance, to verify the pentagon axiom in the context of \'etale cohomology, one needs to check that the pentagon diagram of $5$ associativity constraints is commutative. Each associativity constraint includes $2$ base-change morphisms and $2$ projection formula morphisms (and a lot of implicit identifications). Each base change and projection formula morphism is, in turn, constructed by decomposing a morphism into a composition of an \'etale and a proper morphism. Therefore, the pentagon axiom effectively has at least $40$ arrows involved. Even though it is probably formal that it commutes, it seems really tedious to prove it without some other machinery. \smallskip

Because of this reason, we take another approach (explained to us by Marc Hoyois) that actually produces an $(\infty, 2)$-categorical version of this category. Since, in this approach, it is essentially the same amount of pain to construct it as an $(\infty,2)$-category as to construct it simply as a $2$-category, and the $(\infty, 2)$-categorical version may be useful for other purposes, we write the proof in this generality. We then sketch how the same argument could be run entirely in the realm of $2$-categories. \smallskip

For the rest of the section, we fix a weak $6$-functor formalism $\cal{D}\colon \Corr(\cal{C}) \to \Cat_\infty$ in the sense of Definition~\ref{defn:weak-6-functors}. \smallskip

We start the section by giving an informal definition of the $2$-categorical vesion of the category of correspondences. For this, we need to fix some notation:

\begin{defn} Let $X_1, X_2, X_3$ be objects of $\cal{C}$, and $\F\in D(X_1\times_S X_2)$ and $\G\in D(X_2\times_S X_3)$. Then the {\it composition} $\G\circ \F \in D(X_1\times_S X_3)$ is equal to
\[
p_{1, 3, !}\left(p_{1, 2}^* \F\otimes p_{2, 3}^* \G\right) \in D(X_1\times_S X_3),
\]
where $p_{i, j}\colon X_1\times_S X_2\times_S X_3 \to X_i\times_S X_j$ are the natural projections. 
\end{defn}

\begin{lemma}\label{lemma:constraints} Let $X,Y,Z, W$ be objects of $\cal{C}$, and let $\F\in D(X\times_S Y)$, let $\G\in D(Y\times_S Z)$, and let $\cal{H} \in D(Z\times_S W)$. Then 
\begin{enumerate}
    \item there is a canonical isomorphism $\Delta_!\bf{1}_X \simeq \Delta_!\bf{1}_X \circ \Delta_!\bf{1}_X$, where $\Delta\colon X \to X\times_S X$ is the diagonal morphism;
    \item there is a canonical isomorphism 
    \[
    \cal{H} \circ (\G \circ \F) \simeq (\cal{H} \circ \G) \circ \F.
    \]
\end{enumerate}
\end{lemma}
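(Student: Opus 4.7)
The plan is to verify both claims by direct computation in the triangulated category $D(-)$, using only proper base change, the projection formula, and the fact that $f^*$ is symmetric monoidal---all of which are built into the definition of a weak $6$-functor formalism (see Remark~\ref{rmk:a-lot-of-data}). Since the statements are made only in the homotopy category, no higher coherence needs to be tracked.

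For part (1), I would write $\delta_{12},\delta_{23}\colon X\times_S X\to X\times_S X\times_S X$ for the partial diagonals $(x,y)\mapsto(x,x,y)$ and $(x,y)\mapsto(x,y,y)$, and $\Delta_3\colon X\to X\times_S X\times_S X$ for the total diagonal. Proper base change applied to the evident Cartesian squares for $p_{1,2}$ and $p_{2,3}$ yields $p_{1,2}^*\Delta_!\bf{1}_X\simeq \delta_{12,!}\bf{1}$ and $p_{2,3}^*\Delta_!\bf{1}_X\simeq \delta_{23,!}\bf{1}$. The projection formula for $\delta_{12}$ reduces the tensor product to $\delta_{12,!}(\delta_{12}^*\delta_{23,!}\bf{1})$, and one further proper base change---using that the fiber product of $\delta_{12}$ and $\delta_{23}$ is $X$, embedded via $\Delta$ on both sides---identifies $\delta_{12}^*\delta_{23,!}\bf{1}$ with $\Delta_!\bf{1}_X$. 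Since $\delta_{12}\circ \Delta=\Delta_3$ and $p_{1,3}\circ \Delta_3=\Delta$, applying $p_{1,3,!}$ to $\Delta_{3,!}\bf{1}_X$ then produces $\Delta_!\bf{1}_X$, as required.

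For part (2), I would show that both iterated compositions canonically identify with the same \emph{triple composition}
$$\pi_{1,4,!}\bigl(\pi_{1,2}^*\F\otimes \pi_{2,3}^*\G\otimes \pi_{3,4}^*\cal{H}\bigr)\in D(X\times_S W),$$
where $\pi_{i,j}$ denotes the projection from $X\times_S Y\times_S Z\times_S W$ onto the indicated two-factor product. Starting from $\cal{H}\circ(\G\circ \F)$, proper base change for the Cartesian square relating $X\times_S Y\times_S Z\times_S W\to X\times_S Z\times_S W$ with $X\times_S Y\times_S Z\to X\times_S Z$ moves the inner $(-)_!$ outside the pullback, and the projection formula then absorbs $\pi_{3,4}^*\cal{H}$ into the integrand, producing the displayed expression. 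An analogous manipulation starting from $(\cal{H}\circ \G)\circ \F$ yields the second identification, and the desired associator is defined as their composite.

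The main obstacle in both parts is purely bookkeeping: each step requires identifying the right Cartesian square and tracking which projection pulls back which sheaf. Conceptually nothing is needed beyond the standard calculus of proper base change and projection formula, which is why the construction of the $(\infty,2)$-category in the remainder of the section has to proceed by the more global route sketched after the lemma rather than by grinding out these coherences by hand.
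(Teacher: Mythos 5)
Your argument is correct and essentially identical to the paper's: for (1) you use the same two base-change squares (your $\delta_{12},\delta_{23}$ are the paper's $\Delta\times_S\rm{id}$ and $\rm{id}\times_S\Delta$) together with the projection formula, and for (2) you spell out the standard identification of both iterated compositions with the triple composition over the fourfold product, which is exactly the argument the paper delegates to its Stacks project reference. No gaps.
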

\begin{proof}
    We claim that both results are formal consequences of shriek base-change and the projection formula. We show the first part, and refer to \cite[\href{https://stacks.math.columbia.edu/tag/0G0F}{Tag 0G0F}]{stacks-project} for the proof of the second part. \smallskip
    
    We first consider the cartesian square
    \[
    \begin{tikzcd}[column sep = 6em]
    X\times_S X \arrow{r}{\Delta\times_S \rm{id}} \arrow{d}{p_1} & X\times_S X\times_S X \arrow{d}{p_{1, 2}} \\
    X \arrow{r}{\Delta} & X\times_S X.
    \end{tikzcd}
    \]
    Then shriek base-change implies that 
    \[
    p_{1, 2}^*\Delta_!\left(\bf{1}_X\right) \simeq (\Delta \times_S \rm{id})_!\left(\bf{1}_{X\times_S X}\right),
    \]
    and similarly $p_{2, 3}^*\Delta_!\left(\bf{1}_X\right) \simeq (\rm{id}\times_S\Delta )_!\left(\bf{1}_{X\times_S X}\right)$. Now we use the cartesian square
    \[
    \begin{tikzcd}[column sep = 6em]
    X\arrow{r}{\Delta} \arrow{d}{\Delta} & X\times_S X \arrow{d}{\rm{id}\times_S \Delta} \\
    X\times_S X \arrow{r}{\Delta \times_S \rm{id}} & X\times_S X \times_S X,
    \end{tikzcd}
    \]
    shriek base-change, and the projection formula to get a sequence of isomorphisms
    \begin{align*}
        \Delta_!\,\bf{1}_X \circ \Delta_!\,\bf{1}_X & \simeq p_{1,3, !} \left(p_{1,2}^* \Delta_!\,\bf{1}_X \otimes p_{2,3}^* \Delta_!\,\bf{1}_X \right) \\
        & \simeq p_{1, 3, !} \bigl(\left(\Delta\times_S \rm{id}\right)_!\,\bf{1}_{X\times_S X} \otimes \left(\rm{id}\times_S \Delta\right)_!\,\bf{1}_{X\times_S X}\bigr) \\ 
        & \simeq p_{1, 3, !}\left(\Delta \times_S \rm{id}\right)_!\bigl(\left(\Delta \times_S \rm{id}\right)^*\left(\rm{id}\times_S \Delta)_!\,\bf{1}_{X\times_S X}\right)\bigr)  \\
        & \simeq p_{1, 3, !} (\Delta \times_S \rm{id})_! \Delta_! \bf{1}_X \\
        & \simeq \Delta_! \left(\bf{1}_X\right). \qedhere
        \end{align*}
\end{proof}

Now we are ready to define the $2$-category of cohomological correspondences.

\begin{defn}\label{defn:category-of-coh-correspondences}(\cite[IV.2.3.3]{Fargue-Scholze}) The {\it $2$-category of cohomological correspondences} $\cal{C}_S$ is the following $2$-category: 
\begin{enumerate}
    \item the objects of $\cal{C}_S$ are objects of $\cal{C}$;
    \item for every two objects $X, Y\in \rm{Ob}(\cal{C}_S)$, the Hom-category is defined as
    \[
    \ud{\rm{Hom}}_{\cal{C}_S}(X, Y) = D(X\times_S Y);
    \]
    \item for every triple $X_1, X_2, X_3 \in \rm{Ob}(\cal{C}_S)$, the composition functor
    \[
    \ud{\rm{Hom}}_{\cal{C}_S}(X_2, X_3) \times \ud{\rm{Hom}}_{\cal{C}_S}(X_1, X_2) \to \ud{\rm{Hom}}_{\cal{C}_S}(X_1, X_3)
    \]
    is defined as 
    \[
    (A, B) \mapsto p_{13, !}\left(p^*_{12} B\otimes p^*_{23} A\right),
    \]
    where $p_{i, j}\colon X_1 \times_S X_2 \times_S X_3$ is the projection on $X_i\times_S X_j$;  
    \item for every $X\in \rm{Ob}(\cal{C}_S)$, the identity $1$-morphism is $\rm{id}_X=\Delta_{!}\left(\bf{1}_X\right)$, where $\Delta\colon X \to X\times_S X$ is the diagonal morphism;
    \item the unit and associativity constraints come from Lemma~\ref{lemma:constraints}.
\end{enumerate}
\end{defn}

In the rest of the section, we show that Definition~\ref{defn:category-of-coh-correspondences} actually defines a $2$-category. As explained at the beginning of this section, the hard part is to verify axiom (P) from \cite[\href{https://kerodon.net/tag/007Q}{Tag 007Q}]{kerodon}.\smallskip

\begin{lemma}\label{lemma:rigid-closed} Let $\cal{D}$ be a symmetric monoidal $\infty$-category such that each object $X\in \cal{D}$ is dualizable (in the sense of \cite[Def.\,4.6.1.7 and Rmk.\,4.6.1.12]{HA}). Then $\cal{D}$ is a closed symmetric monoidal $\infty$-category.
\end{lemma}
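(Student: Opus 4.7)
\medskip

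\noindent\textbf{Proof plan.} By the definition of closedness used in the paper (\cite[Def.\,4.1.1.15]{HA}), it suffices to show that for every $Y\in \cal{D}$ the functor $-\otimes Y\colon \cal{D}\to \cal{D}$ admits a right adjoint. The plan is to produce such a right adjoint explicitly from the dualizability data of $Y$: namely, to show that if $Y^{\vee}$ is a dual of $Y$ with coevaluation $\eta\colon \bf{1}\to Y\otimes Y^{\vee}$ and evaluation $\eps\colon Y^{\vee}\otimes Y\to \bf{1}$, then $-\otimes Y^{\vee}\colon \cal{D}\to \cal{D}$ is right adjoint to $-\otimes Y$. The internal Hom is then defined by $\ud{\Hom}(Y,Z)\coloneqq Z\otimes Y^{\vee}$.

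\smallskip

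To build the adjunction $(-\otimes Y)\dashv (-\otimes Y^{\vee})$, I would construct the candidate unit and counit by ``whiskering'' $\eta$ and $\eps$: for each $X\in \cal{D}$, the unit is
\[
u_X\colon X\simeq X\otimes \bf{1}\xrightarrow{\,X\otimes \eta\,} X\otimes Y\otimes Y^{\vee},
\]
and for each $Z\in\cal{D}$ the counit is
\[
c_Z\colon Z\otimes Y^{\vee}\otimes Y\xrightarrow{\,Z\otimes \eps\,} Z\otimes \bf{1}\simeq Z.
\]
The triangle identities for the putative adjunction reduce, after the obvious symmetric-monoidal manipulations, to the triangle identities already satisfied by $(\eta,\eps)$ as part of the definition of dualizability. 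To turn these explicit unit/counit data into a genuine $\infty$-categorical adjunction, I would invoke \cite[Prop.\,5.2.2.8]{HTT} (a map is a unit of an adjunction iff it induces equivalences on the relevant mapping spaces), checking that for all $X,Z\in\cal{D}$ the composite
\[
\Map_{\cal{D}}(X\otimes Y,Z)\xrightarrow{(-)\otimes Y^{\vee}}\Map_{\cal{D}}(X\otimes Y\otimes Y^{\vee},Z\otimes Y^{\vee})\xrightarrow{\,\circ\, u_X}\Map_{\cal{D}}(X,Z\otimes Y^{\vee})
\]
is an equivalence of spaces; again, a direct diagram chase using the triangle identities produces an explicit homotopy inverse.

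\smallskip

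In practice this statement is essentially \cite[Prop.\,4.6.2.1]{HA} applied to each object $Y$ of $\cal{D}$, which precisely asserts that $-\otimes Y$ and $-\otimes Y^{\vee}$ form an adjoint pair whenever $Y$ is dualizable. Combining this with \cite[Def.\,4.1.1.15]{HA} gives the lemma. The main technical point to watch is the $\infty$-categorical coherence of the unit and counit, but this is handled uniformly by the cited result; no ``by hand'' verification of higher coherences is needed. I expect no substantial obstacle beyond correctly invoking the HA machinery, since the truth of the statement at the level of homotopy categories is classical and the $\infty$-categorical upgrade is already in the literature.
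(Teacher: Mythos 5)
Your proposal is correct and follows essentially the same route as the paper: both produce the right adjoint to $-\otimes Y$ as $-\otimes Y^{\vee}$, with unit and counit obtained by whiskering the coevaluation and evaluation, so that the triangle identities reduce to those of the duality datum. Your additional remark on handling the $\infty$-categorical coherence via the cited HA/HTT results only makes explicit what the paper dismisses with ``one easily checks.''
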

\begin{proof}
    Since $\cal{D}$ is symmetric monoidal, it suffices to show that $\cal{D}$ is right closed. In other words, we have to show that, for every object $X\in \cal{D}$, the functor $-\otimes X \colon \cal{D} \to \cal{D}$ admits a right adjoint. Since $X$ is dualizable, there is a dual object $X^\vee$ with the coevaluation and evaluation morphisms
    \[
    c\colon \bf{1}_{\cal{D}} \to X\otimes X^\vee,
    \]
    \[
    e\colon X\otimes X^\vee \to \bf{1}_{\cal{D}}.
    \]
    We claim that the functor $- \otimes X^\vee \colon \cal{D} \to \cal{D}$ is right adjoint to $-\otimes X$. Indeed, we define the unit and counit transformations explicitly as
    \[
    \eta \colon \rm{id} \xr{\rm{id}\otimes c} \rm{id} \otimes X \otimes X^\vee,
    \]
    \[
    \eps \colon \rm{id} \otimes X \otimes X^{\vee} \simeq X\otimes X^\vee \otimes \rm{id} \xr{e\otimes \rm{id}} \rm{id}.
    \]
    One easily checks that this defines the desired adjunction. 
\end{proof}

\begin{lemma}\label{lemma:self-dual} Any object of the symmetric monoidal $\infty$-category $\Corr(\cal{C})$ is self-dual. In particular, the symmetric monoidal $\infty$-categorical structure on $\Corr(\cal{C})$ is closed. 
\end{lemma}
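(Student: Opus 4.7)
The plan is to exhibit every object $X \in \Corr(\cal{C})$ as its own dual by writing down explicit coevaluation and evaluation correspondences and checking the triangle identities; the closedness is then immediate from Lemma~\ref{lemma:rigid-closed}. Since dualizability is a condition that can be checked at the level of the homotopy $2$-category (cf.\,\cite[Rem.\,4.6.1.12]{HA}), it suffices to verify the triangle identities inside $\rm{h}_2\Corr(\cal{C})$; no delicate coherence data needs to be constructed by hand.

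Denote by $\pi_X\colon X \to S$ the structure morphism and by $\Delta_X\colon X \to X\times_S X$ the diagonal. Using that the symmetric monoidal product on $\Corr(\cal{C})$ is $X\otimes Y = X \times_S Y$ with unit $S$ (Remark~\ref{rmk:category-of-corr}), I would define the coevaluation and evaluation by
\[
c_X = \bigl[\, S \xleftarrow{\pi_X} X \xrightarrow{\Delta_X} X \times_S X\,\bigr],\qquad e_X = \bigl[\, X\times_S X \xleftarrow{\Delta_X} X \xrightarrow{\pi_X} S\,\bigr].
\]
The remaining task is to verify the zigzag identity $(e_X \otimes \mathrm{id}_X)\circ (\mathrm{id}_X \otimes c_X) \simeq \mathrm{id}_X$ (and its mirror) inside $\rm{h}\Corr(\cal{C})$. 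Recall that the tensor product of two correspondences is the fiberwise product over $S$, and composition of $1$-morphisms is by fiber product at the middle object.

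Unwinding this, $\mathrm{id}_X \otimes c_X$ is the correspondence $X \leftarrow X\times_S X \to X\times_S X \times_S X$ with legs $p_1$ and $\mathrm{id}\times_S \Delta_X$, while $e_X \otimes \mathrm{id}_X$ is $X\times_S X\times_S X \leftarrow X\times_S X \to X$ with legs $\Delta_X \times_S \mathrm{id}$ and $p_2$. Their composition is obtained by taking the fiber product of these two copies of $X\times_S X$ over $X\times_S X\times_S X$ along the maps $(a,b)\mapsto (a,b,b)$ and $(c,d)\mapsto (c,c,d)$; the equations $a=c$, $b=c$, $b=d$ force all coordinates to agree, so the fiber product is canonically $X$ (sitting diagonally), and both outer legs of the composed correspondence become $\mathrm{id}_X$. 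Hence the composition is the identity correspondence $[X\xleftarrow{\mathrm{id}}X\xrightarrow{\mathrm{id}}X] = \mathrm{id}_X$. The other triangle identity is identical by the symmetric roles of $\Delta_X$ on the two sides. The ``in particular'' statement then follows from Lemma~\ref{lemma:rigid-closed}.

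The only real obstacle is the bookkeeping of fiber products in the tensor-then-compose calculation, specifically identifying which projection from $X\times_S X$ plays the role of each leg after tensoring with $\mathrm{id}_X$ on the left versus the right; this is bureaucratic but needs to be tracked carefully. Once the fibered-square computation is done, the triangle identities hold literally (not merely up to a nontrivial $2$-morphism), so no further coherence witness needs to be constructed.
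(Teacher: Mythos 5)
Your proposal is correct and follows essentially the same route as the paper: the paper takes exactly the coevaluation $S \xleftarrow{\pi_X} X \xrightarrow{\Delta_X} X\times_S X$ and evaluation $X\times_S X \xleftarrow{\Delta_X} X \xrightarrow{\pi_X} S$, invokes the explicit description of composition in $\rm{h}\Corr(\cal{C})$ (Remark~\ref{rmk:category-of-corr}) to see the zigzag identities (left there as ``easy to check'', which your fiber-product computation carries out), and deduces closedness from Lemma~\ref{lemma:rigid-closed}. The only cosmetic point is that dualizability is already detected in the homotopy $1$-category of a symmetric monoidal $\infty$-category, so your appeal to $\rm{h}_2\Corr(\cal{C})$ is more than is needed.
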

\begin{proof}
    Let $X\in \Corr(\cal{C})$ be an adic $S$-space with the structure morphism $f\colon X \to S$. We wish to show that $X$ is self-dual. For this, we define the co-evaluation morphism $c\colon S \to X\otimes X$ to be represented by the correspondence 
    \[
    S \xleftarrow{f} X \xr{\Delta} X\times_S X,
    \]
    where $\Delta$ is the diagonal morphism. Likewise, we define the evaluation morphism $e\colon X \otimes X \to S$ to be represented by the correspondence
    \[
    X\times_S X \xleftarrow{\Delta} X \xr{f} S. 
    \]
    Then it is easy to check that this morphisms define a self-duality on $X$ (see Remark~\ref{rmk:category-of-corr}).
\end{proof}

Now we are ready to rigorously construct the category $\cal{C}_S$, and even its $(\infty, 2)$-enhancement. A crucial technical tool that we will use is the formalism of $\infty$-categories enhanced in monoidal $\infty$-categories. We refer to \cite{enrichment} for a detailed discussion of this notion, and especially to \cite[Def.\,2.4.5]{enrichment}.

\begin{prop}\label{prop:8-2-category-of-cohomological-correspondences} There is an $(\infty, 2)$-category $\cal{C}_S^{(\infty, 2)}$ such that its $2$-homotopy category $\rm{h}_2\cal{C}_S^{(\infty, 2)}$ is equivalent to $\cal{C}_S$ from Definition~\ref{defn:category-of-coh-correspondences}. In particular, $\cal{C}_S$ is indeed a $2$-category.
\end{prop}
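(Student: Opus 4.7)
Following Hoyois's suggestion, the plan is to realize $\cal{C}_S^{(\infty,2)}$ by transferring a canonical self-enrichment on $\Corr(\cal{C})$ along the lax symmetric monoidal functor $\cal{D}$. The starting point is Lemma~\ref{lemma:self-dual}: every object of $\Corr(\cal{C})$ is self-dual, so $\Corr(\cal{C})$ is closed symmetric monoidal with internal Hom
\[
\ud{\Hom}_{\Corr(\cal{C})}(X, Y) \simeq X \otimes Y = X \times_S Y.
\]
Any closed symmetric monoidal $\infty$-category is canonically enriched in itself (see \cite{enrichment}). Hence $\Corr(\cal{C})$ carries the structure of a $\Corr(\cal{C})$-enriched $\infty$-category, with Hom objects $X \times_S Y$ and composition/identity morphisms obtained from the internal Hom structure via self-duality.

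The key formal input from enriched $\infty$-category theory is that any lax symmetric monoidal functor $\phi\colon \cal{V} \to \cal{W}$ induces a functor from $\cal{V}$-enriched $\infty$-categories to $\cal{W}$-enriched $\infty$-categories, by applying $\phi$ to all Hom objects and to the composition and identity morphisms. Applying this general construction to $\cal{D}\colon \Corr(\cal{C}) \to \Cat_\infty$ produces a $\Cat_\infty$-enriched $\infty$-category $\cal{C}_S^{(\infty,2)}$, which by definition is an $(\infty,2)$-category in the standard model. Its objects are those of $\cal{C}$ and its Hom $\infty$-categories are $\cal{D}(X \times_S Y)$.

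It remains to check that $\rm{h}_2 \cal{C}_S^{(\infty,2)}$ matches the description in Definition~\ref{defn:category-of-coh-correspondences}. The Hom categories agree on the nose: $\rm{h}\cal{D}(X \times_S Y) = D(X \times_S Y)$. For composition, unwinding the internal Hom via self-duality shows that the composition morphism
\[
(X \times_S Y) \otimes (Y \times_S Z) \to X \times_S Z
\]
in $\Corr(\cal{C})$ is represented by the correspondence
\[
(X \times_S Y) \times_S (Y \times_S Z) \xleftarrow{\rm{id} \times \Delta_Y \times \rm{id}} X \times_S Y \times_S Z \xrightarrow{p_{13}} X \times_S Z,
\]
obtained by inserting the evaluation morphism $Y \otimes Y \to S$ from the proof of Lemma~\ref{lemma:self-dual} on the middle factor. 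Applying $\cal{D}$ together with its lax monoidal (external product) structure yields the functor $(B, A) \mapsto p_{13,!}(\pi_{12}^* B \otimes \pi_{23}^* A)$, matching item (3). Similarly, the identity on $X$ corresponds to the coevaluation $S \xleftarrow{f} X \xrightarrow{\Delta} X \times_S X$, and applying $\cal{D}$ to $\bf{1}_S$ gives $\Delta_! \bf{1}_X$, matching item (4). The unit and associativity constraints from Lemma~\ref{lemma:constraints} are then automatically those produced by the enrichment.

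The main obstacle is conceptual rather than computational: all the higher coherence data that would be intractable to verify by hand---the pentagon axiom, compatibilities with proper base change and the projection formula, and their interplay---is packaged into and handled uniformly by the enriched $\infty$-category formalism. For the purely $2$-categorical version stated in the proposition, the same argument applies verbatim after replacing $\Cat_\infty$ by the $(2,1)$-category $\Cat_1^{\simeq}$ and $\cal{D}$ by its homotopy $D = \rm{h}\cal{D}$ from Remark~\ref{rmk:homotopy-cat-of-six-functor}.
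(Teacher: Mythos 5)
Your proposal is correct and follows essentially the same route as the paper: self-duality of objects in $\Corr(\cal{C})$ gives the closed symmetric monoidal structure (Lemma~\ref{lemma:self-dual}), hence a self-enrichment, which is then transferred along the lax symmetric monoidal functor $\cal{D}$ to a $\Cat_\infty$-enrichment defining $\cal{C}_S^{(\infty,2)}$, with $\rm{h}_2$ recovering $\cal{C}_S$ essentially by construction. Your explicit unwinding of the composition and identity morphisms via the (co)evaluation correspondences is a welcome elaboration of what the paper leaves implicit.
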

\begin{proof}
    Lemma~\ref{lemma:self-dual} implies that every object in $\rm{Corr}(\cal{C})$ is self-dual. Therefore, Lemma~\ref{lemma:rigid-closed} ensures that $\rm{Corr}(\cal{C})$ is a closed symmetric monoidal $\infty$-category with the inner Hom given by
    \[
    \ud{\Hom}_{\rm{Corr}(\cal{C})}(X, Y) = X\times Y.
    \]
    Therefore, \cite[Cor.\,7.4.10]{enrichment} implies that $\rm{Corr}(\cal{C})$ is enriched over itself. Now we use the lax-monoidal functor $\cal{D}\colon \rm{Corr}(\cal{C}) \to \cal{C}at_\infty$  to transfer\footnote{For this, look at \cite[Def.\,2.4.5, Def.\,2.4.2, and Def.\,2.2.14]{enrichment}.} the constructed above $\rm{Corr}(\cal{C})$-enrichment on $\rm{Corr}(\cal{C})$ to a $\Cat_\infty$-enrichment on $\rm{Corr}(\cal{C})$. This defines the desired $(\infty,2)$-category\footnote{We refer to \cite{8-2} for the relation with other models for the theory of $(\infty,2)$-categories.} $\cal{C}_S^{(\infty, 2)}$ by \cite[Def.\,6.1.5 and Th.\,5.4.6]{enrichment}\footnote{See also \cite[Rmk.\,5.7.13]{enrichment} for the meaning of a somewhat confusing notation $\Cat^{(-)}_{(\infty, k)}$}. Essentially by construction, the associated $2$-homotopy category $\rm{h}_2 \cal{C}_S^{(\infty, 2)}$ is equivalent to $\cal{C}_S$. 
\end{proof}

\begin{rmk} One can run the proof of Proposition~\ref{prop:8-2-category-of-cohomological-correspondences} entirely in the realm of $2$-categories. In this approach, one constructs a $2$-category weakly enriched over $\cal{C}at_1^{\simeq}$ that is tautologically equivalent to $\cal{C}_S$. \smallskip

More precisely, we mention the main changes that one needs to make in the proof of Proposition~\ref{prop:8-2-category-of-cohomological-correspondences} to avoid any mention of $(\infty, 2)$-categories. Firstly, one should use the notion of a monoidal $2$-category\footnote{See \cite[Def.\,12.1.3 and Exp.\,12.1.4]{2-cat})} in place of the notion of a monoidal $\infty$-category. Secondly, one should replace enrichments in the sense of \cite{enrichment} with weak enrichments in the sense of  \cite[\textsection 3]{enriched-bicategories}. Thirdly, one should use the $2$-categorical version of the category of correspondences. Lastly, and one should replace the $\infty$-functor $\cal{D}$ with its $2$-categorical version $D$ from Remark~\ref{rmk:homotopy-cat-of-six-functor}. \smallskip 

Then the same argument works in the world of $2$-categories with the only\footnote{Use \cite[\textsection 13.2]{enriched-bicategories} to transfer a weak enrichment along a lax-monoidal functor.} caveat that we do not know a reference for the fact that a closed monoidal $2$-category is enriched over itself. 
\end{rmk}

\subsection{$6$-functor formalisms II}\label{section:2}

We begin this section by discussing the notion of cohomologically smooth morphisms. Subsequently, following \cite[Lecture VI]{scholze-notes}, we define cohomologically 'etale and proper morphisms. We adopt a minimalistic approach sufficient for our purposes, noting that the cited reference offers a more thorough treatment. These definitions are essential for spelling out the full $6$-functor formalism used in this paper. 

\subsubsection{Cohomologically smooth morphisms}

We follow \cite{Scholze-diamond} and introduce the notion of a cohomologically smooth morphism; the idea is to require the morphism $f\colon X \to Y$ to satisfy Poincar\'e Duality ``up to a trivialization of the dualizing object $f^!\bf{1}_Y$''. \smallskip

In this section, we fix a weak $6$-functor formalism $\cal{D}\colon \rm{Corr}(\cal{C}) \to \Cat_\infty$ in the sense of Definition~\ref{defn:weak-6-functors}.

\begin{defn}\label{defn:cohomologically-smooth} A morphism $f\colon X \to Y$ in $\cal{C}$ is called {\it cohomologically smooth (with respect to $\cal{D}$)} if 
\begin{enumerate}
    \item\label{defn:cohomologically-smooth-1} the co-projection morphism $f^!\left(\bf{1}_Y\right) \otimes f^*(-) \to f^!(-)$ from Notation~\ref{notation:morphisms}(\ref{notation:morphisms-coprojection}) is an equivalence;
    \item\label{defn:cohomologically-smooth-2} the {\it dualizing object} $\omega_f \coloneqq f^!\left(\bf{1}_Y\right)$ is an invertible object of $\cal{D}(X)$;
    \item\label{defn:cohomologically-smooth-3} for any $g\colon Y' \to Y$ with base change $f'\colon X'\coloneqq X\times_Y Y' \to Y'$, properties (\ref{defn:cohomologically-smooth-1}) and (\ref{defn:cohomologically-smooth-2}) also hold for $f'$ and, moreover, the natural map from Notation~\ref{notation:morphisms}(\ref{notation:morphisms-shriek-base-change})
    \[
    g'^* f^!\bf{1}_Y \to f'^! \bf{1}_{Y'}
    \]
    is an isomorphism, where $g'\colon X' \to X$ is the base change of $g$.
\end{enumerate}
\end{defn}

\begin{rmk}\label{rmk:coh-smooth-base-composition} Definition~\ref{defn:cohomologically-smooth} formally implies that cohomologically smooth morphisms are closed under composition and (arbitrary) base change.
\end{rmk}

We first mention some formal properties of this definition:

\begin{lemma}\label{lemma:cohomologically-smooth-base-change} Let 
\[
\begin{tikzcd}
X' \arrow{r}{g'} \arrow{d}{f'} & X \arrow{d}{f} \\
Y' \arrow{r}{g} & Y
\end{tikzcd}
\]
be a cartesian square in $\cal{C}$. Then 
\begin{enumerate}
    \item\label{lemma:cohomologically-smooth-base-change-1} the natural morphism $f'_*\circ (g')^! \to g^! \circ f_*$ is an isomorphism;
    \item\label{lemma:cohomologically-smooth-base-change-2}(Cohomologically smooth base change) the natural morphism $g^*\circ f_* \to (f')_* \circ (g')^*$ is an isomorphism if $g$ is cohomologically smooth;
    \item\label{lemma:cohomologically-smooth-base-change-3} the natural morphism $(g')^* \circ f^! \to (f')^! \circ g^*$ is an isomorphism if either $f$ or $g$ is cohomologically smooth. 
\end{enumerate}
\end{lemma}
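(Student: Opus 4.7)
My plan is to deduce all three statements by combining proper base change (passed to right adjoints) with the tensor decomposition $f^!(-)\simeq f^*(-)\otimes \omega_f$ supplied by cohomological smoothness.

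For (1), view the Cartesian square with rows and columns swapped. Proper base change (Remark~\ref{rmk:a-lot-of-data}(5)) applied to this transposed square is the canonical equivalence $f^*\circ g_!\simeq g'_!\circ (f')^*$, and passing to right adjoints (using $(g_!)^R=g^!$ and $(f^*)^R=f_*$) converts it into $g^!\circ f_*\simeq f'_*\circ (g')^!$. A direct check confirms that this coincides with the base-change natural transformation in the statement.

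For (2), I combine (1) with the cohomological smoothness of $g$, which supplies $g^!(-)\simeq g^*(-)\otimes \omega_g$, $(g')^!(-)\simeq (g')^*(-)\otimes (f')^*\omega_g$, and the identification $\omega_{g'}\simeq (f')^*\omega_g$. Substituting these into (1) yields
\[
f'_*\bigl((g')^*(-)\otimes (f')^*\omega_g\bigr)\simeq g^*f_*(-)\otimes \omega_g.
\]
A short adjunction argument (using only that $-\otimes L$ is an equivalence for $L$ invertible) shows that $f'_*$ commutes with tensoring by the pullback of any invertible object. Applying this on the left with $L=\omega_g$ and canceling the invertible factor $\omega_g$ on both sides gives the desired isomorphism $g^*f_*(-)\simeq f'_*(g')^*(-)$.

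For (3), I treat the two cases separately. When $g$ is cohomologically smooth, so is $g'$, so I rewrite $(g')^*(-)\simeq (g')^!(-)\otimes \omega_{g'}^{-1}$ and $g^*(-)\simeq g^!(-)\otimes \omega_g^{-1}$ to compute
\[
(g')^*f^!(-)\simeq (fg')^!(-)\otimes \omega_{g'}^{-1},\quad (f')^!g^*(-)\simeq (gf')^!(-)\otimes (f')^*\omega_g^{-1},
\]
where the latter invokes Lemma~\ref{lemma:invertible-projection} to pull the invertible factor through $(f')^!$. The identities $fg'=gf'$ and $(f')^*\omega_g\simeq \omega_{g'}$ now match the two sides. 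When instead $f$ is cohomologically smooth, substitute $f^!(-)\simeq f^*(-)\otimes \omega_f$ and $(f')^!(-)\simeq (f')^*(-)\otimes \omega_{f'}$ (with $(g')^*\omega_f\simeq \omega_{f'}$); the claim reduces to the symmetric monoidality of $(g')^*$.

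The only non-routine aspect is checking that the chains of canonical isomorphisms constructed above actually refine the base-change natural transformations of Notation~\ref{notation:morphisms} rather than differing from them by a unit. This is a mechanical but delicate diagram chase tracking how the $(f_!,f^!)$- and $(f^*,f_*)$-adjunction units interact with proper base change and the projection formula, and will be the main genuine obstacle in turning the sketch above into a complete proof.
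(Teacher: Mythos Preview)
Your approach is essentially the same as the paper's: derive (1) by passing proper base change to right adjoints, then feed the invertible twist $g^!\simeq g^*(-)\otimes\omega_g$ (and its base change) into (1) to get (2), and handle the two cases of (3) exactly as you do. The paper likewise flags the identification of the constructed equivalence with the canonical base-change map as the only genuine point, and for that step it refers to the commutative diagrams in the proof of \cite[Lemma 4.1.13]{Liu-Zheng} rather than writing out the chase; you may wish to cite that reference instead of leaving the verification open.
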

All these claims are well-known; we spell out the proof only for the reader's convenience.
\begin{proof}
    The proof of (\ref{lemma:cohomologically-smooth-base-change-1}) is formal: it follows from general shriek base-change by passing to right adjoints. \smallskip
    
    The proof of (\ref{lemma:cohomologically-smooth-base-change-2}) is largely formal. We begin with an intuitive outline, followed by a complete treatment via reference to \cite{Heyer-Mann}.
        
    Since $g$ is cohomologically smooth, there is an invertible object $\omega_g\in \cal{D}(Y')$ such that $g^!(-)\simeq g^*(-) \otimes \omega_g$ and $(g')^!(-)\simeq (g')^*(-) \otimes (f')^*\omega_g$. Then Part~(\ref{lemma:cohomologically-smooth-base-change-1}) provides an equivalence 
    \[
    \omega_g \otimes g^* \circ f_*(-) \simeq g^!\circ f_*(-) \simeq f'_* \circ g'^!(-)\simeq f'_*(f'^*\omega_g \otimes g'^*(-)) \simeq \omega_g \otimes f'_*g'^*(-).
    \]
    Since $\omega_g$ is invertible, we can cancel it to arrive to an equivalence
    \[
    g^* \circ f_* \simeq (f')_* \circ (g')^*.
    \]
    The main difficulty is to check that this isomorphism is the inverse of the natural morphism. We do not do this in this paper and instead explain a different argument. 

    First, we note that \cite[Lemma 4.5.4]{Heyer-Mann} and Definition~\ref{defn:cohomologically-smooth}(\ref{defn:cohomologically-smooth-3}) imply that cohomologically smooth morphisms are $\cal{D}$-suave in the sense of \cite[Def.\,4.5.1]{Heyer-Mann}. Therefore, the result follows from \cite[Lemma 4.5.13(i)]{Heyer-Mann}. 
    
    Part~(\ref{lemma:cohomologically-smooth-base-change-3}) similarly follows from \cite[Lemma 4.5.13(i)]{Heyer-Mann} and the observation that cohomologically smooth morphisms are $\cal{D}$-suave. 
\end{proof}

\subsubsection{Cohomologically proper and \'etale morphisms}

In this section, we fix a weak $6$-functor formalism $\cal{D}\colon \rm{Corr}(\cal{C}) \to \Cat_\infty$ in the sense of Definition~\ref{defn:weak-6-functors}. \smallskip

We seek to axiomatize the conditions $f_! \simeq f_*$ and $f^! \simeq f^*$ by introducing the notions of cohomologically \'etale and cohomologically proper morphisms. We begin with the case of a monomorphism $f\colon X \to Y$ in $\mathcal{C}$ (i.e., the diagonal $\Delta\colon X \to X\times_Y X$ is an isomorphism). This yields the following cartesian diagram:

\begin{equation}\label{eqn:diagonal-iso}
    \begin{tikzcd}
        X \arrow{r}{\rm{id}} \arrow{d}{\rm{id}} & X\arrow{d}{f} \\
        X \arrow{r}{f} & Y.
    \end{tikzcd}
\end{equation}

\begin{construction} Suppose that $f\colon X \to Y$ is a monomorphism in $\cal{C}$. Then 
\begin{enumerate}
    \item there is the natural transformation of functors $\cal{D}(X) \to \cal{D}(Y)$
    \[
        \alpha_f \colon f_! \to f_*
    \]
    defined as the adjoint to the shriek base-change equivalence $f^*f_! \simeq \rm{id}_{\cal{D}(X)}$ coming from Diagram~(\ref{eqn:diagonal-iso});
    \item there is the natural transformation of functors $\cal{D}(Y) \to \cal{D}(X)$
    \[
        \beta_f \colon f^! \to f^*
    \]
    defined as the upper shriek base-change morphism (see Notation~\ref{notation:morphisms}(\ref{notation:morphisms-shriek-base-change})) applied to Diagram~(\ref{eqn:diagonal-iso}).
\end{enumerate}
\end{construction}

\begin{defn}\label{defn:cohomologically-proper-1} A monomorphism $f\colon X \to Y$ is {\it cohomologically proper} (resp. {\it cohomologically \'etale}) if the natural tranformation $\alpha_f\colon f_! \to f_*$ (resp. $\beta_f\colon f^! \to f^*$) is an equivalence .
\end{defn}

Now we move to the case of a general morphism $f\colon X \to Y$ in $\cal{C}$ and consider the commutative diagram
    \begin{equation}\label{eqn:diagonal-morphism-gen}
    \begin{tikzcd}[column sep =3em, row sep = 3em]
        X \arrow{dr}{\Delta} \arrow[rrd, bend left, "\rm{id}"] \arrow[ddr, bend right, swap, "\rm{id}"] & & \\
        & X\times_Y X \arrow{r}{q} \arrow{d}{p} & X\arrow{d}{f} \\
        & X \arrow{r}{f} & Y.
    \end{tikzcd}
    \end{equation}

Note that $\Delta$ is always a monomorphism, so it makes sense to ask if $\Delta$ is cohomologically proper (resp. cohomologically \'etale). 

\begin{construction} Let $f\colon X\to Y$ be a morphism in $\cal{C}$ with the diagonal morphism $\Delta\colon X\to X\times_Y X$. Then
\begin{enumerate}
    \item if $\Delta$ is cohomologically proper, there is a natural transformation of functors $\cal{D}(X) \to \cal{D}(Y)$
    \[
        \alpha_f \colon f_! \to f_*
    \]
    defined as the adjoint to the composition
    \[
        f^*f_! \simeq p_!q^* \xr{p_!\left(\rm{adj}_{\Delta} \circ q^*\right)} p_! \Delta_{*} \Delta^* q^* \simeq p_! \Delta_{!} \Delta^* q^* \simeq \rm{id},
    \]
    where the first isomorphism is the shriek base-change equivalence, the second morphism is induced by the $(\Delta^*, \Delta_{*})$-adjunction, the third isomorphism comes from cohomological properness of $\Delta$, and the last isomorphism comes from the fact that $p\circ     \Delta=\rm{id}_X$ and $q \circ \Delta = \rm{id}_X$;
    \item if $\Delta$ is cohomologically \'etale, there is a natural transformation of functors $\cal{D}(Y) \to \cal{D}(X)$
    \[
        \beta_f \colon f^! \to f^*
    \]
    defined as the composition
    \[
    f^! \simeq \Delta^* q^* f^! \to \Delta^* p^! f^* \simeq \Delta^! p^! f^* \simeq f^*,
    \]
    where the first isomorphism comes from the fact that $q\circ \Delta=\rm{id}_X$, the second isomorphism is induced from the upper shriek base-change morphism (see Notation~\ref{notation:morphisms}(\ref{notation:morphisms-shriek-base-change})), the third isomorphism comes from cohomological \'etaleness of $\Delta$, and the last isomorphism comes from the fact that $p\circ \Delta \simeq \rm{id}_X$. 
\end{enumerate}
\end{construction}

\begin{defn}\label{defn:cohomologically-proper} A morphism $f\colon X \to Y$ in $\cal{C}$ is {\it cohomologically proper} (resp. {\it cohomologically \'etale}) if the diagonal morphism $\Delta\colon X \to X\times_Y X$ is cohomologically proper (resp. cohomologically \'etale) in the sense of Definition~\ref{defn:cohomologically-proper-1}, and the natural transformation $\alpha_f\colon f_! \to f_*$ (resp. $\beta_f\colon f^! \to f^*$) is an equivalence.
\end{defn}


\begin{lemma}\label{lemma:properties-of-etale/proper-maps}
    \begin{enumerate}
    \item\label{lemma:properties-of-etale/proper-maps-1} Cohomologically \'etale and cohomologically proper maps are closed under pullbacks and compositions;
    \item\label{lemma:properties-of-etale/proper-maps-2} cohomologically \'etale maps are cohomologically smooth. 
    \end{enumerate}
\end{lemma}
\begin{proof}
    First, \cite[Lemma 4.6.4]{Heyer-Mann} implies that the notion of cohomologically \'etale maps (resp. cohomologically proper maps) is equivalent to the notion of  $\cal{D}$-\'etale maps (resp.\,$\cal{D}$-proper maps) in the sense of \cite[Def.\,4.6.1]{Heyer-Mann}. Therefore, Part~(\ref{lemma:properties-of-etale/proper-maps-1}) follows from \cite[Lemma 4.6.3]{Heyer-Mann}.

    The second claim essentially follows from the identification $f^*\simeq f^!$ for a cohomologically \'etale morphism $f$. However, it is complicated to show that this isomorphism is compatible with the co-projection morphism $f^*(-)\otimes f^!(\bf{1}) \to f^!(-)$. To avoid dealing with this issue, we also invoke the results from \cite{Heyer-Mann}.
    
    Namely, \cite[Lemma 4.5.4, Cor.\,4.5.11(i), and Lemma 4.5.13]{Heyer-Mann} imply that $\cal{D}$-smooth maps in the sense of \cite[Def.\,4.5.1(i)]{Heyer-Mann} are cohomologically smooth. Therefore, it suffices to show that cohomologically \'etale maps are $\cal{D}$-smooth. Since we already know that cohomologically \'etale morphisms are $\cal{D}$-\'etale in the sense of \cite[Def.\,4.6.1]{Heyer-Mann}, we know that they are $\cal{D}$-suave in the sense of \cite[Def.\,4.5.1]{Heyer-Mann}. Therefore, \cite[Cor.\,4.5.11]{Heyer-Mann} ensures that we only need to show that $f^!\bf{1}_Y \in \cal{D}(X)$ is invertible for a cohomologically \'etale morphism $f\colon X \to Y$. This follows immediately from the identification $f^!\bf{1}_Y\simeq f^*\bf{1}_Y\simeq \bf{1}_X$. 
\end{proof}


\subsubsection{$6$-functor formalisms}

Now we are ready to give the definition of a $6$-functor formalism that will be used in this paper: 

\begin{defn}\label{defn:six-functors} A {\it $6$-functor formalism} is a weak $6$-functor formalism $\cal{D}\colon \Corr(\cal{C}) \to \Cat_\infty$ such that 
\begin{enumerate}
    \item for each $X\in \cal{C}$, the $\infty$-category $\cal{D}(X)$ is stable and presentable;
     \item $\cal{D}^*|_{\cal{C}^{\rm{op}}}\colon \cal{C}^{\rm{op}} \to \Cat_\infty$ satisfies analytic (resp. Zariski in case of schemes) descent, i.e., for any analytic open covering $U=\{U_i \to X\}_{i\in I}$, the natural morphism
    \[
   \cal{D}(X) \to \lim_{n\in \Delta} \prod_{i_1, \dots, i_n\in I}\cal{D}(U_{i_1}\times_X \dots \times_X U_{i_n})
    \]
  is an equivalence. 
    \item every proper morphism $f$ is cohomologically proper\footnote{Strictly speaking, we should first require that any Zariski-closed immersion is cohomologically proper in the sense of Definition~\ref{defn:cohomologically-proper-1}. And then it makes sense to require that any proper morphism is cohomologically proper in the sense of Definition~\ref{defn:cohomologically-proper}.}. In particular, for any proper morphism $f\colon X \to Y$, there is a canonical identification $f_!=f_*$;
    \item every \'etale morphism $f$ is cohomologically \'etale. In particular, for any \'etale morphism $f\colon X \to Y$, there is a canonical identification $f^!=f^*$.
\end{enumerate}
\end{defn}

\begin{rmk}\label{rmk:extended-6-functors} The same definition makes sense if we everywhere replace the category $\cal{C}$ with the category $\cal{C'}$ of $+$-weakly finite type adic $S$-spaces. In the adic world, this version is actually useful for {\it constructing} $6$-functor formalisms in the sense of Definition~\ref{defn:six-functors} because it is easier to construct compactifications in the category $\cal{C}'$ (see \cite[\textsection 5.1]{H3}).
\end{rmk}

\begin{rmk} If $\cal{D}$ is a $6$-functor formalism, all the functors $f^*, f_*, f^!, f_!, \otimes, \ud{\Hom}$ are exact in the sense \cite[Prop.\,1.1.4.1]{HA} (i.e., commute with finite limits and colimits). Indeed, all of them are either left or right adjoints, so they commute with all colimit or limits respectively. But then \cite[Prop.\,1.1.4.1]{HA} implies they must be exact.
\end{rmk}

\begin{rmk} For the most part of the paper, we do not need to assume that $\cal{D}(X)$ are stable $\infty$-categories. However, we lack any examples of non-stable $6$-functor formalisms, so we prefer to put stability of $\cal{D}(X)$ into the definition. In the unstable case, the upper shriek functor $i^!$ usually does not exist even for a Zariski-closed immersion $i$. 
\end{rmk}

\begin{rmk} We recall that any stable $\infty$-category is canonically enriched over $\rm{Sp}$ the $\infty$-category of spectra (see \cite[Ex.\,7.4.14 and Prop.\,4.8.2.18]{enrichment}). In particular, for a $6$-functor formalism $\cal{D}$, $\cal{D}(X)$ is naturally enriched over $\rm{Sp}$ for every $X\in \cal{C}$. 
\end{rmk}

\begin{notation}\label{notation:defns}(Different Homs) For any two objects $\F, \G\in \cal{D}(X)$, we denote their {\it inner Hom} by $\ud{\rm{Hom}}_X(\F, \G)\in \cal{D}(X)$, their {\it Hom-spectrum} by $\rm{Hom}_X(\F, \G) \in \rm{Sp}$, and the {\it Hom-group} in the associated triangulated category $D(X)$ by $\rm{Hom}_{D(X)}(\F, \G)$. The relation between these objects is the following:
    \begin{align*}
    \rm{Hom}_X\left(\bf{1}_X, \ud{\rm{Hom}}_X\left(\F, \G\right)\right) \simeq \rm{Hom}_X\left(\F, \G\right) \\
    \rm{H}^0\left(\rm{Hom}_X\left(\F, \G\right)\right) = \rm{Hom}_{D(X)}(\F, \G). 
    \end{align*} 
\end{notation}

We first show that, for a $6$-functor formalism, the notion of a cohomologically smooth morphism (see Definition~\ref{defn:cohomologically-smooth}) is sufficiently local:  

\begin{lemma}\label{lemma:coh-smooth-formal-properties} Let $\cal{D}$ be a $6$-functor formalism. Then 
\begin{enumerate}
    \item\label{lemma:coh-smooth-formal-properties-2} \'etale morphisms are cohomologically smooth;
    \item\label{lemma:coh-smooth-formal-properties-1} the notion of a cohomologically smooth morphism is analytically (resp. Zariski) local on $X$ and $Y$.
    \item\label{lemma:coh-smooth-formal-properties-3} there is a natural equivalence $\cal{D}(\varnothing) \simeq 0$.
\end{enumerate}
\end{lemma}
\begin{proof}
    The first claim follows immediately from the assumption that \'etale maps are cohomologically \'etale and Lemma~\ref{lemma:properties-of-etale/proper-maps}(\ref{lemma:properties-of-etale/proper-maps-2}). The second claim is formal from analytic (resp. Zariski) descent, the fact that \'etale maps are cohomologically smooth, and Lemma~\ref{lemma:cohomologically-smooth-base-change}(\ref{lemma:cohomologically-smooth-base-change-3}). Claim~(\ref{lemma:coh-smooth-formal-properties-3}) follows from the fact that $\cal{D}(-) \colon \cal{C}^{\rm{op}} \to \Cat_\infty$ is a sheaf, so it sends the final object in $\cal{C}$ (equivalently, the inital object in $\cal{C}^{\mathrm{op}}$) to the final object. 
\end{proof}

\subsubsection{Excision axiom} We now turn to the excision axiom, an additional assumption on the $6$-functor formalism $\cal{D}$ that will play a big role in Section~\ref{section:Chern-classes}.

For the rest of the subsection, we fix a $6$-functor formalism $\cal{D}\colon \mathrm{Corr}(\cal{C}) \to \Cat_\infty$ in the sense of Definition~\ref{defn:six-functors}.  \smallskip

To introduce the excision axiom, we pick a Zariski-closed immersion $i\colon Z\hookrightarrow X$ and its open complement $j\colon U\hookrightarrow X$. Then Lemma~\ref{lemma:coh-smooth-formal-properties}(\ref{lemma:coh-smooth-formal-properties-3}) and the shriek base-change equivalence specify a homotopy $i^*j_! \simeq 0$. Since $j^*\simeq j^!$ due to the very definition of a $6$-functor formalism, so we obtain a natural counit transformation $j_!j^* \to \rm{id}$. Therefore, the data of a homotopy $i^*j_!\simeq 0$ defines a commutative diagram
\begin{equation}\label{eqn:excision}
\begin{tikzcd}
j_!j^* \arrow{r} \arrow{d} & \rm{id}_{\cal{D}(X)} \arrow{d} \\
0 \arrow{r} & i_*i^*
\end{tikzcd}
\end{equation}
in the $\infty$-category $\rm{Fun}(\cal{D}(X), \cal{D}(X))$. In particular, it makes sense to ask if this diagram is cartesian. 

\begin{defn}\label{defn:excision-axiom} A $6$-functor formalism $\cal{D}$ {\it satisfies the excision axiom} if Diagram~(\ref{eqn:excision}) is cartesian for any Zariski-closed $S$-immersion $Z\hookrightarrow X$. An equivalent way to say this is that Diagram~(\ref{eqn:excision}) defines an exact triangle of functors
\begin{equation}\label{eqn:excision-sequence}
j_!j^* \to \rm{id} \to i_*i^*.
\end{equation}
\end{defn}

\begin{rmk}\label{rmk:excision-sequence} If $\cal{D}$ satisfies the excision axiom, we can pass to right adjoints in (\ref{eqn:excision-sequence}) to get an exact triangle of functors
\[
i_*i^! \to \rm{id} \to j_*j^*.
\]
\end{rmk}

\section{Abstract Poincar\'e Duality}\label{section:abstract-duality}

The main goal of this section is to give a ``formal'' proof of (a weak version of) Poincar\'e Duality in any $6$-functor formalism. \smallskip

We recall that the usual proof of Poincar\'e Duality in \'etale cohomology is inductive and does not really tell the exact input one has to check to get Poincar\'e Duality for one particular smooth morphism $f$. We abstract out this condition. Surprisingly, it turns out that one needs a very limited amount of extra data. We give such a characterization in terms of the trace-cycle theories (see Definition~\ref{defn:trace-cycle}). It roughly says that, in order to prove Poincar\'e Duality, one only needs to construct a trace morphism for $f$ and a cycle map of the relative diagonal with some natural compatibilities. \smallskip

After that, we give a minimalistic set of hypothesis that ensures that any smooth morphism is cohomologically smooth. This step reduces the question of proving Poincar\'e Duality to the question of computing the dualizing object. This question is studied in more detail in the next two sections. \smallskip

For the rest of the section, we fix a locally noetherian analytic adic space $S$ (resp. a scheme $S$). We denote by $\cal{C}$ the category of locally finite type (resp. locally finitely presented) adic $S$-spaces (resp. $S$-schemes), and fix a {\it weak} $6$-functor formalism $\cal{D}\colon \rm{Corr}(\cal{C})\to \Cat_\infty$ (see Definition~\ref{defn:six-functors}).  

In what follows, we will freely use the terminology of Section~\ref{section:abstract-six-functors}. In particular, for each $X\in \cal{C}$, we denote the associated stable $\infty$-category by $\cal{D}(X)$ and its triangulated homotopy category by $D(X)$. 

\subsection{Formal Poincar\'e Duality}

In this section, we use the $2$-category of cohomological correspondences $\cal{C}_S$ to reduce the question of proving Poincar\'e Duality to the question of constructing an adjoint to $1$-morphism in the $2$-category of cohomological correspondences $\cal{C}_S$ (see Definition~\ref{defn:category-of-coh-correspondences}). 

We start by considering  the (co-)representable $2$-functor
\[
h^S=\ud{\rm{Hom}}_{\cal{C}_S}(S, -) \colon \cal{C}_S \to \Cat_1
\]
that is a $2$-functor from the $2$-category of cohomological correspondences to the $2$-category of categories (see \cite[\textsection 8.2]{2-cat} for the (dual) theory of representable functors in the $2$-categorical context). \smallskip

It turns out that $h^S$ is quite easy to describe explicitly. For this, it will be convenient to introduce the notion of a Fourier--Mukai functor:

\begin{defn} Let $X_1, X_2$ be objects in $\cal{C}$ and let $\F\in D(X_1\times_S X_2)$. Then the {\it Fourier--Mukai} functor
\[
\rm{FM}_\F \colon D(X_1) \longrightarrow D(X_2)
\]
is defined by the rule
\[
\G \mapsto p_{2, !}\left(p_{1}^* \G \otimes \F\right),
\]
where $p_{i}\colon X_1\times_S X_2 \to X_i$ is the natural projection.
\end{defn}

\begin{rmk}\label{rmk:explicit-yoneda} Explicitly, the functor $h^S$ is quite easy to describe: 
\begin{enumerate}
    \item to every object $X\in \cal{C}_S$, it associates the category
    \[
    h^S(X)=D(X);
    \]
    \item to every pair of objects $X, Y\in \cal{C}_S$, it associates the functor
    \[
    \rm{FM}_{(-)}\colon D\left(X\times_S Y\right) \to \ud{\rm{Fun}}_{\Cat_1}\left(D(X), D(Y)\right)
    \]
    \[
    \F \mapsto \rm{FM}_\F.
    \]
    \end{enumerate}
    It is also possible to describe the identity and composition constraints in terms of the projection formula and shriek base-change. We do not do this here because we will never explicitly need it. 
\end{rmk}

We also recall the definition of adjoint morphisms in a $2$-category. For this, we fix a $2$-category $\cal{C}'$, objects $C$ and $D$ of $\cal{C}'$, and a pair $f\colon C \to D$, $g\colon D \to C$  of $1$-morphisms in $\cal{C}'$.

\begin{defn}\label{defn:2-adjunction}(\cite[\href{https://kerodon.net/tag/02CG}{Tag 02CG}]{kerodon}) An {\it adjunction} between $f$ and $g$ is a pair of $2$-morphisms $(\eta, \epsilon)$, where $\eta\colon \rm{id}_C\to g\circ f$ is a morphism in the category $\HHom_{\cal{C}'}(C, C)$ and $\epsilon \colon f\circ g\to \rm{id}_D$ is a morphism in the category $\HHom_{\cal{C}'}(D, D)$, which satisfy the following compatibility conditions:
\begin{enumerate}
    \item[(Z1)] The composition
    \[
    f \xrightarrow[\sim]{\rho_f^{-1}} f\circ \rm{id}_C \xrightarrow{\rm{id}_f \circ \eta} f\circ (g\circ f) \xrightarrow[\sim]{\alpha_{f, g, f}} (f\circ g) \circ f \xrightarrow{\epsilon \circ \rm{id}_g} \rm{id}_D \circ f \xrightarrow[\sim]{\lambda_f} f
    \]
    is the identity $2$-morphism from $f$ to $f$. Here $\lambda_f$ and $\rho_f$ are the left and right unit constraints of the $2$-category $\cal{C}'$ (see \cite[\href{https://kerodon.net/tag/00EW}{Tag 00EW}]{kerodon}) and $\alpha_{f, g,f }$ is the associativity constraint for the $2$-category $\cal{C}'$.
    \item[(Z2)] The composition
    \[
    g \xrightarrow[\sim]{\lambda_g^{-1}} \rm{id}_C\circ g \xrightarrow{\eta \circ \rm{id}_g} (g\circ f)\circ g \xrightarrow[\sim]{\alpha_{g, f, g}^{-1}} g\circ (f \circ g) \xrightarrow{\rm{id}_g \circ \epsilon} g\circ \rm{id}_D \xrightarrow[\sim]{\rho_g} g
    \]
    is the identity $2$-morphism from $g$ to $g$. 
\end{enumerate}
\end{defn}

\begin{rmk} If $\cal{C}'=\Cat_1$ is the $2$-category of (small) categories. Definition~\ref{defn:2-adjunction} recovers the usual notion of adjunction of functors. 
\end{rmk}

\begin{rmk}\label{rmk:functors-and-adjoints}(\cite[\href{https://kerodon.net/tag/02CM}{Tag 02CM}]{kerodon}) Let $F\colon \cal{C}' \to \cal{C}''$ be a $2$-functor between $2$-categories, and $(f, g)$ is a pair of adjoint morphisms in $\cal{C}'$. Then $(F(c), F(g))$ is a pair of adjoint morphisms in $\cal{C}''$.
\end{rmk}

\begin{prop}\label{prop:abstract-duality}(Formal Poincar\'e Duality. I) Let $f\colon X \to S$ be a morphism in $\cal{C}$. Suppose that the $1$-morphism $A=\bf{1}_{X}\in \HHom_{\cal{C}_S}(X, S)$ is left adjoint to a $1$-morphism $B=I\in \HHom_{\cal{C}_S}(S, X)$. Then the functor 
\[
f_!(-)\colon \cal{D}(X) \longrightarrow \cal{D}(S)
\]
admits a right adjoint given by the formula
\[
f^*(-)\otimes I \colon \cal{D}(S) \longrightarrow \cal{D}(X).
\]
\end{prop}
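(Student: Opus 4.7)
The plan is to realize the desired adjunction as the image of the given $2$-categorical adjunction $(A, B)$ under the corepresentable $2$-functor $h^S = \ud{\Hom}_{\cal{C}_S}(S, -)$ described in Remark~\ref{rmk:explicit-yoneda}. Since any $2$-functor preserves adjunctions (Remark~\ref{rmk:functors-and-adjoints}), this immediately produces an adjunction between $h^S(A)$ and $h^S(B)$, viewed as functors between the hom-categories $h^S(X) = D(X)$ and $h^S(S) = D(S)$.

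The identification of $h^S(A)$ and $h^S(B)$ is a routine application of the composition formula defining $\cal{C}_S$. For $A = \bf{1}_X \in D(X \times_S S) = D(X)$, postcomposition with $A$ is the functor sending $C \in D(X) = D(S\times_S X)$ to a composite computed on the triple fiber product $S \times_S X \times_S S \simeq X$, whose projections $p_{12}, p_{23}, p_{13}$ are $\rm{id}_X, \rm{id}_X, f$ respectively; this gives $h^S(A)(C) = f_!(C)$. Symmetrically, for $B = I \in D(S\times_S X) = D(X)$, the triple $S\times_S S\times_S X \simeq X$ has projections $f, \rm{id}_X, \rm{id}_X$, so $h^S(B)(\cal{H}) = f^*(\cal{H}) \otimes I$ for $\cal{H} \in D(S)$. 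Combining this with the previous paragraph yields the desired adjunction $f_! \dashv f^*(-) \otimes I$ at the level of the triangulated homotopy categories.

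The main remaining obstacle, and the only genuinely non-routine step, is to upgrade this to an honest $\infty$-categorical adjunction between $\cal{D}(X)$ and $\cal{D}(S)$, as opposed to a mere adjunction of their homotopy $1$-categories $D(X)$ and $D(S)$. For this I will invoke the full $(\infty, 2)$-enhancement $\cal{C}_S^{(\infty, 2)}$ produced by Proposition~\ref{prop:8-2-category-of-cohomological-correspondences}, together with the standard fact that possessing a right adjoint in an $(\infty, 2)$-category is a property already detectable at the level of the underlying $2$-homotopy category. The hypothesized $2$-categorical adjunction $(A, B)$ in $\cal{C}_S = \rm{h}_2\cal{C}_S^{(\infty, 2)}$ therefore lifts canonically to an adjunction in $\cal{C}_S^{(\infty, 2)}$, and the $(\infty, 2)$-Yoneda functor $\ud{\Hom}_{\cal{C}_S^{(\infty, 2)}}(S, -)\colon \cal{C}_S^{(\infty, 2)} \to \Cat_\infty$ preserves adjunctions. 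Identifying its values on $A$ and $B$ exactly as in the previous paragraph (the computation is formally the same, only now interpreted $\infty$-categorically) delivers the required $\infty$-categorical adjunction $f_! \dashv f^*(-) \otimes I$.
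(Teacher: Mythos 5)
Your proposal is correct in substance, and its first two steps are exactly the paper's proof: apply the corepresentable $2$-functor $h^S$ of Remark~\ref{rmk:explicit-yoneda}, use that $2$-functors preserve adjunctions (Remark~\ref{rmk:functors-and-adjoints}), and identify $h^S(A)$ and $h^S(B)$ with the Fourier--Mukai functors $f_!$ and $f^*(-)\otimes I$ via the composition formula (your bookkeeping of the projections on $S\times_S X\times_S S\simeq X$ and $S\times_S S\times_S X\simeq X$ is right). Where you diverge is the upgrade from the homotopy-category statement to the $\infty$-categorical adjunction. The paper dispatches this in one line at the \emph{start} of the proof, citing \cite[Tag 02FX]{kerodon} to the effect that adjointness of functors of $\infty$-categories can be verified after passing to homotopy categories (unit and counit plus triangle identities up to homotopy suffice), so the entire argument then stays inside the $2$-category $\cal{C}_S$; the $(\infty,2)$-enhancement of Proposition~\ref{prop:8-2-category-of-cohomological-correspondences} is used only to know that $\cal{C}_S$ is a well-defined $2$-category, never in the duality argument itself. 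Your route through $\cal{C}_S^{(\infty,2)}$ is viable but invokes machinery the paper deliberately avoids and does not develop: an enriched corepresentable $(\infty,2)$-functor $\ud{\Hom}_{\cal{C}_S^{(\infty,2)}}(S,-)$ valued in $\Cat_\infty$ (this needs the enriched Yoneda formalism, not just Remark~\ref{rmk:explicit-yoneda}, and identifying its values with the $\infty$-categorical Fourier--Mukai functors is not ``formally the same'' computation---it requires unwinding the transferred enrichment), together with the Riehl--Verity-type theorem that adjunctions in the homotopy $2$-category of an $(\infty,2)$-category lift to coherent adjunctions. Note also that the lifting step is actually superfluous in your scheme: once you have the corepresentable $(\infty,2)$-functor, you may simply apply its homotopy $2$-functor $\cal{C}_S\to \rm{h}_2\Cat_\infty$ to the given $2$-adjunction $(A,B)$, since an adjunction in $\rm{h}_2\Cat_\infty$ is already, by the criterion cited above, an adjunction of $\infty$-categories. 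In short: the paper's version buys brevity and stays within the $2$-categorical framework it has set up, at the cost of leaning on the Kerodon detection result; your version makes the homotopy-vs-$\infty$ issue (which you correctly identify as the only delicate point) structurally transparent, at the cost of $(\infty,2)$-categorical and enriched-Yoneda input whose details you would still have to supply.
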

\begin{proof}
    First of all, it suffices to check that two functors are adjoint by passing to the corresponding homotopy categories by (see \cite[\href{https://kerodon.net/tag/02FX}{Tag 02FX}]{kerodon}), so we can argue with the associated homotopy catetories. \smallskip
    
    We consider the (co)-representable $2$-functor $h^S\colon \cal{C}_S \to \Cat_1$. Remark~\ref{rmk:functors-and-adjoints} guarantees that $\left(h^S(A), h^S(B)\right)$ is a pair of adjoint functors between the categories $h^S(X)$ and $h^S(S)$. Then Remark~\ref{rmk:explicit-yoneda} provides us with the identifications $h^S(X)\simeq D(X)$, $h^S(S)\simeq D(S)$, $h^S(A)=f_!(-)$ and $h^S(B)=f^*(-)\otimes I$. In particular, we conclude that $f_!$ is left adjoint to $f^*(-)\otimes I$. 
\end{proof}

\subsection{Trace-cycle theories}\label{section:trace-cycle}

In this section, we ``decategorify'' Poincar\'e Duality and reduce it to constructing two morphisms subject to two commutativity relations. The main tool for this decategorification process will be the $2$-category of cohomological correspondences $\cal{C}_S$.\smallskip

We recall that throughout this section we have fixed a {\it weak} $6$-functor formalism $\cal{D}\colon \Corr(\cal{C}) \to \Cat_\infty$. \smallskip

\begin{defn} Let $f\colon X \to Y$ be a morphism in $\cal{C}$. A {\it trace theory} on $f$ is a pair $(\omega_f, \tr_f)$ of an invertible object $\omega_f\in \cal{D}(X)$ and a morphism
\[
\rm{tr}_{f}\colon f_!\left(\omega_f\right) \to \bf{1}_Y
\]
in the homotopy category $D(Y)$. 
\end{defn}


\begin{construction}\label{construction:trace-base-change} We point out that shriek base-change implies that any base change of a morphism with a trace theory $\left(\omega_f, \tr_f\right)$ admits a canonical trace theory given by $\left(g'^*\,\omega_f, g^*(\tr_f)\right)$. More precisely, let  
\[
\begin{tikzcd}
    X' \arrow{d}{f'} \arrow{r}{g'} & X \arrow{d}{f} \\
    Y' \arrow{r}{g} & Y
\end{tikzcd}
\]
be a cartesian diagram in $\cal{C}$. Then shriek base-change tells us that the natural morphism
\[
g^*f_!\,\omega_f \overset{\sim}{\longrightarrow}  f'_!\,(g')^*\,\omega_f
\]
is an isomorphism. Therefore, the pullback $g^*(\tr_f)$ defines a trace map
\[
\tr_{f'}\coloneqq g^*\left(\tr_f\right) \colon f'_! \left(g'^{*} \omega_f\right) \to \bf{1}_{Y'}. 
\]
\end{construction}

\begin{warning} The construction of $\tr_{f'}$ depends on the choice of $g\colon Y' \to Y$. However, this will never cause any confusion in the examples where we apply this construction.
\end{warning}


For the next definition, we fix a morphism $f\colon X \to Y$ with the diagonal morphism 
\[
\Delta\colon X\to X\times_Y X
\]
and the projections $p_1, p_2\colon X\times_Y X \to X$. 


\begin{defn}\label{defn:trace-cycle}   A {\it trace-cycle theory} on $f$ is a triple $(\omega_f, \tr_f, \cl_\Delta)$ of 
\begin{enumerate}
    \item an invertible object $\omega_f\in \cal{D}(X)$,
    \item a trace morphism
    \[
        \rm{tr}_{f}\colon f_!\, \omega_f \to \bf{1}_Y
    \]
    in the homotopy category $D(Y)$,
    \item a cycle map  
    \[
    \rm{cl}_{\Delta}\colon \Delta_! \bf{1}_{X} \longrightarrow p_2^*\,\omega_f
    \]
    in the homotopy category $D(X\times_S X)$
\end{enumerate}
such that 
\begin{equation}
\begin{tikzcd}
\bf{1}_X \arrow{r}{\sim} \arrow{d}{\rm{id}}& {p}_{1, !}\left(\Delta_! \bf{1}_X\right) \arrow{d}{{p}_{1, !}\left(\rm{cl}_{\Delta}\right)}\\
\bf{1}_X & \arrow{l}{\tr_{p_1}} p_{1, !} \left(p_{2}^*\, \omega_f\right),
\end{tikzcd}
\end{equation}
\begin{equation}
\begin{tikzcd}[column sep = 4em]
\omega_f \arrow{r}{\sim} \arrow{d}{\rm{id}}& {p}_{2, !}\left(p_{1}^* \omega_f \otimes \Delta_! \bf{1}_X\right)  \arrow{r}{p_{2, !}(\rm{id}\otimes \rm{cl}_\Delta)} & p_{2, !}(p_1^* \omega_f \otimes p_2^* \omega_f)  \arrow{d}{\wr} \\
\omega_f & \arrow{l}{\sim} \bf{1}_X\otimes \omega_f & \arrow{l}{\tr_{p_2} \otimes \rm{id}} p_{2, !}p_1^* \omega_f \otimes \omega_f,
\end{tikzcd}
\end{equation}
commute in $D(X)$ (with the right vertical arrow in the second diagram being the projection formula isomorphism).
\end{defn}

\begin{rmk} The name trace-cycle theory comes from the fact that, in the case of the \'etale $6$-functor formalism, the morphism $\rm{cl}_\Delta$ is equivalent to a class in $\rm{H}^{2d}_\Delta(X\times_Y X, \Z/n\Z(d))$, which comes from the cycle class of the diagonal. 
\end{rmk}

\begin{rmk} Commutativity of the first diagram in Definition~\ref{defn:trace-cycle} should be thought as a formal way of saying that trace of the cycle class of a point is ``universally'' equal to $1$.
\end{rmk}

\begin{rmk}\label{rmk:pullback-trace-cycle} Similarly to Constrution~\ref{construction:trace-base-change}, one can pullback trace-cycle theories along any morphism $Y' \to Y$ in $\cal{C}$. 
\end{rmk}

Now we are ready to show the main result of this section: 

\begin{thm}\label{thm:abstract-poincare-duality}(Formal Poincar\'e Duality II) Let $f\colon X \to S$ be a morphism in $\cal{C}$. Suppose that $f$ admits a trace-cycle theory $(\omega_f, \tr_f, \cl_\Delta)$. Then 
\[
f_!\,(-)\colon \cal{D}(X) \to \cal{D}(S)
\]
admits a right adjoint given by the formula
\[
f^*(-)\otimes \omega_f \colon \cal{D}(S) \to \cal{D}(X).
\]
\end{thm}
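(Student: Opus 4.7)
The plan is to apply Proposition~\ref{prop:abstract-duality} with invertible object $I = \omega_f$. For this it suffices to promote the trace-cycle data to a genuine adjunction $A \dashv B$ in the $2$-category $\cal{C}_S$ of cohomological correspondences, where $A = \bf{1}_X \in \HHom_{\cal{C}_S}(X, S)$ and $B = \omega_f \in \HHom_{\cal{C}_S}(S, X)$ (using the canonical identifications $D(X \times_S S) \simeq D(X)$ and $D(S \times_S X) \simeq D(X)$). Once the adjunction $A \dashv B$ in $\cal{C}_S$ is established, Proposition~\ref{prop:abstract-duality} produces the desired right adjoint $f^*(-) \otimes \omega_f$ to $f_!$.

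First I would unwind the two composites in $\cal{C}_S$ via the formula $(-) \circ (-) = \pi_{1,3,!}(\pi_{1,2}^*(-) \otimes \pi_{2,3}^*(-))$. For $B \circ A \in \HHom_{\cal{C}_S}(X, X) = D(X \times_S X)$, the intermediate fiber product $X \times_S S \times_S X$ collapses to $X \times_S X$ with the three projections becoming $p_1$, $p_2$ and the identity, yielding $B \circ A \simeq p_2^* \omega_f$. For $A \circ B \in \HHom_{\cal{C}_S}(S, S) = D(S)$, the intermediate $S \times_S X \times_S S$ collapses to $X$, and the composition formula produces $A \circ B \simeq f_! \omega_f$. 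Since $\rm{id}_X = \Delta_! \bf{1}_X$ and $\rm{id}_S = \bf{1}_S$ in $\cal{C}_S$, the cycle map and trace of the given trace-cycle theory provide candidate unit and counit
\[
\eta = \cl_\Delta \colon \Delta_! \bf{1}_X \to p_2^* \omega_f, \qquad \epsilon = \tr_f \colon f_! \omega_f \to \bf{1}_S.
\]

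The bulk of the proof is the verification of the two zigzag identities (Z1) and (Z2) from Definition~\ref{defn:2-adjunction}. For (Z1) I would trace the composite
\[
A \xrightarrow{\sim} A \circ \rm{id}_X \xrightarrow{A \circ \eta} A \circ (B \circ A) \xrightarrow{\sim} (A \circ B) \circ A \xrightarrow{\epsilon \circ A} \rm{id}_S \circ A \xrightarrow{\sim} A
\]
through the identifications above. The unit constraint $A \xrightarrow{\sim} A \circ \rm{id}_X$ realizes the isomorphism $\bf{1}_X \simeq p_{1,!}(\Delta_! \bf{1}_X)$ coming from Lemma~\ref{lemma:constraints}; the associativity constraint uses proper base change to rewrite $(A \circ B) \circ A$ as $p_{1,!}(p_2^* \omega_f) \simeq f^* f_! \omega_f$, under which the map $\epsilon \circ A$ becomes precisely $f^*(\tr_f) = \tr_{p_1}$ by the very construction of $\tr_{p_1}$ in Construction~\ref{construction:trace-base-change}. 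Thus (Z1) unwinds to exactly the first commutative square in Definition~\ref{defn:trace-cycle}. A parallel unwinding of (Z2), now invoking the projection formula isomorphism $p_{2,!}(p_1^* \omega_f \otimes p_2^* \omega_f) \simeq p_{2,!}(p_1^* \omega_f) \otimes \omega_f$ at the appropriate step, identifies (Z2) with the second commutative diagram in Definition~\ref{defn:trace-cycle}.

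The main obstacle is precisely this last bookkeeping step: the associativity and unit constraints of $\cal{C}_S$ are themselves assembled from proper base change and projection formula isomorphisms with nontrivial higher coherences that must be tracked carefully. By the very design of Definition~\ref{defn:trace-cycle} these coherences are arranged to reassemble into exactly the two diagrams imposed there, so the verification is a finite (if tedious) diagram chase rather than a conceptual difficulty. Once (Z1) and (Z2) are checked, $A \dashv B$ in $\cal{C}_S$; applying the $2$-functor $h^S = \ud{\rm{Hom}}_{\cal{C}_S}(S, -)$ as in the proof of Proposition~\ref{prop:abstract-duality} transports this to the desired adjunction $f_! \dashv (f^*(-) \otimes \omega_f)$ of homotopy categories, and hence (by \cite[\href{https://kerodon.net/tag/02FX}{Tag 02FX}]{kerodon}) of the underlying $\infty$-categories, concluding the proof.
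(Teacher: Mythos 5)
Your proposal is correct and follows essentially the same route as the paper: the paper's proof of this theorem likewise identifies $A\circ B\simeq f_!\,\omega_f$ and $B\circ A\simeq p_2^*\,\omega_f$ in $\cal{C}_S$, takes $\epsilon=\tr_f$ and $\eta=\cl_\Delta$ as counit and unit, unwinds the zigzag identities (Z1) and (Z2) into precisely the two diagrams of Definition~\ref{defn:trace-cycle}, and then concludes via Proposition~\ref{prop:abstract-duality} and \cite[\href{https://kerodon.net/tag/02FX}{Tag 02FX}]{kerodon}. No substantive difference.
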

\begin{proof}
    By Proposition~\ref{prop:abstract-duality}, it suffices to verify that $A=\bf{1}_X\in \HHom_{\cal{C}_S}(X, S)$ is left adjoint to $B=\omega_f \in \HHom_{\cal{C}_S}(S, X)$ in the $2$-category of cohomological correspondences $\cal{C}_S$. \smallskip
    
    {\it Step~$1$. Construction of the counit $\epsilon \colon A\circ B \to \rm{id}_S$.} By definition, the composition $A \circ B$ corresponds to 
    \[
    f_! \left(\omega_f\right) \in D(S)=\HHom_{\cal{C}_S}(S,S).
    \]
    We also note the the identity morphism $\rm{id}_S$ is given by $\bf{1}_S$ since $S\times_S S=S$. We define the counit $2$-morphism 
    \[
    \epsilon \colon f_! \left(\omega_f\right) \to \bf{1}_S
    \]
    to be the trace morphism $\tr_f$. \smallskip
    
    {\it Step~$2$. Construction of the unit $\eta\colon \rm{id}_X \to B\circ A$.} By definition, the composition $B \circ A$ corresponds to the object $p_2^*\left( \omega_f\right) \in D(X\times_S X)$, and the identity $1$-morphism $\rm{id}_X$ corresponds to the object $\Delta_! \bf{1}_X$. Thus we define the unit $2$-morphism 
    \[
    \eta \colon \Delta_! \bf{1}_X \to p_2^*\left(\omega_f \right)
    \]
    to be the cycle morphism $\rm{cl}_\Delta$. \smallskip
    
    {\it Step~$3$. Verification of the axiom (Z1).} One needs to check that the composition
    \[
    A \xrightarrow[\sim]{\rho_A^{-1}} A\circ \rm{id}_{X} \xrightarrow{\rm{id}_A \circ \eta} A\circ (B\circ A) \xrightarrow[\sim]{\alpha_{A, B, A}} (A\circ B) \circ A \xrightarrow{\epsilon \circ \rm{id}_B} \rm{id}_{S} \circ A \xrightarrow[\sim]{\lambda_A} A
    \]
    is equal to the identity morphism. After unravelling the definitions, this verification essentially boils down to the definition of a trace-cycle theory. We explain this verification in more detail for the convenience of the reader. \smallskip
    \begin{center}
        {\textbf{We make the diagram explicit:}}
    \end{center}
    \begin{enumerate}
        \item First, we see that $A\circ \rm{id}_{X}$ is equal to the 
    \[
    A\circ \rm{id}_{X} =p_{1, !}\left(p_2^* \bf{1}_X\otimes \Delta_!\bf{1}_X\right) = p_{1, !}\,\Delta_!\,\bf{1}_X \in D(X).
    \]
    The right unit constraint $\rho_{A}^{-1}$ is identified with the natural isomorphism
    \[
    \bf{1}_X \xrightarrow{\sim} p_{1, !}\Delta_{!}\bf{1}_X
    \]
    coming from the fact that $p_1\circ \Delta =\rm{id}_X$;
    \item the composition $A \circ (B\circ A)$ is the object
    \[
    A \circ (B\circ A) = p_{1, !}\left(p_2^* \, \omega_f\right)  \in D(X)
    \]
    and the morphism $\rm{id}_{X} \circ \eta$ is given by $p_{1,!}(\rm{cl}_\Delta)$; 
    \item the composition $(A\circ B) \circ A$ is given by $f^*f_!\, \omega_f$ and the associativity constraint $\alpha_{A, B, A}$ is the inverse of the base change isomorphism 
    \[
    f^*f_!\, \omega_f \to p_{1, !}\left(p_2^* \omega_f\right);
    \]
    \item $\rm{id}_{S}\circ A$ is just equal to $\bf{1}_X$ since the diagonal $S \to S\times_S S$ is the identity morphism. And the composition $\epsilon \circ \rm{id}_{A}$ is equal to 
    \[
    f^*(\tr_f) \colon f^*(f_! \omega_f) \to \bf{1}_X;
    \]
    \item finally, the left unit constraint $\lambda_A$ is the identity morphism because the diagonal $S \to S\times_S S$ is the identity morphism. 
    \end{enumerate}
    
    After making all these identifications, we see that the composition $\alpha_{A, \beta, A}\circ(\rm{id}_A \circ \eta)$ is equal to $\tr_{p_1}$ by the very definition of $\tr_{p_1}$. Therefore, the axiom (Z1) boils down to checking that the diagram  
    \[
    \begin{tikzcd}
    \bf{1}_X \arrow{r}{\sim} \arrow{d}{\rm{id}}& {p}_{1, !}\left(\Delta_! \bf{1}_X\right) \arrow{d}{{p}_{1, !}\left(\rm{cl}_{\Delta}\right)}\\
    \bf{1}_X & \arrow{l}{\tr_{p_1}} p_{1, !} \left(p_2^* \omega_f\right)
    \end{tikzcd}
    \]
    commutes. We finish the proof by noting that this is part of the definition of a trace-cycle theory. \smallskip
    
    {\it Step~$4$. Verification of the axiom (Z2).} The verification is essentially the same as the one in Step~$3$. After unravelling all the definitions, the axiom boils down to the commutativity of the second diagram in Definition~\ref{defn:trace-cycle}.
\end{proof}

\begin{cor}\label{cor:pullback-PD} Let $f\colon X \to S$ be as in Theorem~\ref{thm:abstract-poincare-duality}, let $S' \to S$ be a morphism in $\cal{C}$, and let $f'\colon X' \to S'$ be the base change of $f$ along $g$. Then the functor
\[
f'_!(-)\colon \cal{D}(X') \to \cal{D}(S')
\]
admits a right adjoint given by the formula
\[
(f')^*(-)\otimes (g')^*\left( \omega_f \right)\colon \cal{D}(S') \to \cal{D}(X'),
\]
where $g'\colon X' \to X$ is the base-change morphism.
\end{cor}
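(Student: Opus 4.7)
The plan is to reduce the claim to a direct application of Theorem~\ref{thm:abstract-poincare-duality} to the morphism $f'\colon X' \to S'$, by equipping $f'$ with the trace-cycle theory inherited from the one on $f$. Concretely, the first step is to make the pullback construction sketched in Remark~\ref{rmk:pullback-trace-cycle} explicit: one defines
\[
\omega_{f'} \coloneqq (g')^*\,\omega_f,
\]
which is invertible since $\omega_f$ is and $(g')^*$ is symmetric monoidal; one defines the trace $\tr_{f'}\colon f'_!\,\omega_{f'} \to \bf{1}_{S'}$ via Construction~\ref{construction:trace-base-change} applied to the outer Cartesian square (using proper base change $g^*f_!\,\omega_f \xrightarrow{\sim} f'_!(g')^*\omega_f$); and one defines $\cl_{\Delta'}$ as the proper-base-change pullback of $\cl_\Delta$ along the square
\[
\begin{tikzcd}
X' \times_{S'} X' \arrow{r}{g'\times g'} \arrow{d}{p_2'} & X\times_S X \arrow{d}{p_2}\\
X' \arrow{r}{g'} & X,
\end{tikzcd}
\]
using the identification $(g'\times g')^*\,\Delta_!\bf{1}_X \simeq \Delta'_!\,\bf{1}_{X'}$ given by proper base change applied to the Cartesian square $\Delta'=(\Delta,g')$.

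The second step is to verify that the triple $(\omega_{f'},\tr_{f'},\cl_{\Delta'})$ indeed satisfies the two commutativity relations in Definition~\ref{defn:trace-cycle}. This is where some care is required, but no new idea: both diagrams for $f'$ are obtained by applying $g^*$ (or $(g')^*$) to the corresponding diagrams for $f$, together with the natural base-change and projection-formula isomorphisms that are part of the $6$-functor formalism. Since $(g')^*$ is symmetric monoidal and commutes with $p_{i,!}$ via proper base change in the appropriate Cartesian squares, commutativity of the $f$-diagrams transports verbatim to commutativity of the $f'$-diagrams. I would formalize this by writing each $f'$-diagram as the image of the corresponding $f$-diagram under the symmetric monoidal functor induced by the Cartesian cube encoded by $g$.

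Once this is established, the third step is immediate: Theorem~\ref{thm:abstract-poincare-duality} applied to $f'$ with the trace-cycle theory $(\omega_{f'},\tr_{f'},\cl_{\Delta'})$ yields a right adjoint to $f'_!$ given by
\[
(f')^*(-)\otimes \omega_{f'} = (f')^*(-)\otimes (g')^*\,\omega_f,
\]
which is exactly the formula claimed in the corollary.

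The main (only) obstacle is the bookkeeping in the second step: checking that the base-change and projection-formula coherences align so that the pulled-back diagrams really are the image of the original diagrams. In practice this is a standard diagram chase in the $2$-category $\cal{C}_S$ (or equivalently, an application of the fact that $g^*\colon \cal{C}_S \to \cal{C}_{S'}$ extends to a $2$-functor of cohomological correspondence $2$-categories and preserves adjunctions as in Remark~\ref{rmk:functors-and-adjoints}). Invoking this second viewpoint gives an even cleaner proof: the adjunction $A\dashv B$ produced in Theorem~\ref{thm:abstract-poincare-duality} lives in $\cal{C}_S$, and pulling it back under the base-change $2$-functor to $\cal{C}_{S'}$ yields the desired adjunction with unit and counit obtained from $\cl_\Delta$ and $\tr_f$ by base change, hence giving the formula for the right adjoint of $f'_!$.
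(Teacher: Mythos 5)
Your proposal is correct and follows essentially the same route as the paper: pull back the trace-cycle theory $(\omega_f,\tr_f,\cl_\Delta)$ along $g$ (as in Remark~\ref{rmk:pullback-trace-cycle}/Construction~\ref{construction:trace-base-change}) and then apply Theorem~\ref{thm:abstract-poincare-duality} to $f'$. The only point the paper makes explicit that you leave implicit is that, since Theorem~\ref{thm:abstract-poincare-duality} is stated for morphisms to the base, one must first restrict $\cal{D}$ to $\Corr(\cal{C}_{/S'})$ (equivalently, work in the correspondence $2$-category $\cal{C}_{S'}$, which you do invoke in your second viewpoint) before applying it to $f'\colon X'\to S'$.
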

\begin{proof}
    By Remark~\ref{rmk:pullback-trace-cycle}, we can pullback the trace-cycle theory on $f$ to a trace cycle theory on $f'$. Then we denote by $\cal{C}'$ the slice category $\cal{C}_{/S'}$ and restrict the $6$-functor formalism $\cal{D}$ on $\Corr(\cal{C'})$ to apply Theorem~\ref{thm:abstract-poincare-duality} to $f'$. 
\end{proof}

\begin{rmk} We note that Corollary~\ref{cor:pullback-PD} is already a quite non-trivial statement. It is not clear why duality for $f$ should imply duality for $f'$ from first principles. 
\end{rmk}

\subsection{Cohomological smoothness}

The main goal of this section is to show how Theorem~\ref{thm:abstract-poincare-duality} can be used to formulate a pretty minimalistic set of assumptions that ensures that any smooth morphism is cohomologically smooth (see Definition~\ref{defn:cohomologically-smooth}). This statement should be thought like a version of Poincar\'e Duality without identifying the dualizing object. \smallskip

We recall that throughout this section we have fixed a {\it weak} $6$-functor formalism $\cal{D}\colon \Corr(\cal{C}) \to \Cat_\infty$. \smallskip


\begin{thm}\label{thm:cohomologically-smooth-criterion} Let $f\colon X \to Y$ be a morphism in $\cal{C}$ with a trace-cycle theory $(\omega_f, \tr_f, \cl_\Delta)$. Then $f$ is cohomologically smooth (see Definition~\ref{defn:cohomologically-smooth}). 
\end{thm}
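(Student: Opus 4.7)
The plan is to leverage Theorem~\ref{thm:abstract-poincare-duality} together with the pullback property of trace-cycle theories (Remark~\ref{rmk:pullback-trace-cycle}), and then verify the two axioms in Definition~\ref{defn:cohomologically-smooth}. Given the trace-cycle theory $(\omega_f,\tr_f,\cl_\Delta)$, Theorem~\ref{thm:abstract-poincare-duality} furnishes an adjunction $f_!\dashv G$ where $G(-)=f^*(-)\otimes\omega_f$. Since $f^!$ is the right adjoint to $f_!$ coming from the weak $6$-functor formalism, uniqueness of right adjoints yields a canonical equivalence $\phi\colon G(-)\xr{\sim} f^!(-)$ of functors $\cal{D}(Y)\to\cal{D}(X)$. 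Evaluating at $\bf{1}_Y$, I obtain $f^!(\bf{1}_Y)\simeq\omega_f$, which is invertible by hypothesis.

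Next I would verify the two remaining conditions for weak cohomological smoothness. For the coprojection morphism, the key point is that $\phi$ intertwines the natural transformation $w_{\bf{1}_Y,\G}\colon f^!(\bf{1}_Y)\otimes f^*(\G)\to f^!(\G)$ with the symmetric monoidal swap $\omega_f\otimes f^*(\G)\simeq f^*(\G)\otimes\omega_f$. Unwinding: $w_{\bf{1}_Y,\G}$ is by definition adjoint to the composite
\[
f_!(f^!(\bf{1}_Y)\otimes f^*(\G))\simeq f_!f^!(\bf{1}_Y)\otimes\G \xr{\epsilon\otimes\rm{id}}\bf{1}_Y\otimes\G\simeq\G,
\]
where $\epsilon$ is the counit of $f_!\dashv f^!$. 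Transporting along $\phi$ replaces $\epsilon$ by the counit of $f_!\dashv G$, which in the proof of Theorem~\ref{thm:abstract-poincare-duality} is built from $\tr_f$ and the projection formula. A direct comparison of these two adjuncts shows that $w_{\bf{1}_Y,\G}$ agrees with the symmetry iso under $\phi$, hence is an equivalence. For the base-change compatibility, the trace-cycle theory pulls back along any $g\colon Y'\to Y$ to one on $f'\colon X'\to Y'$ by Remark~\ref{rmk:pullback-trace-cycle}; by Corollary~\ref{cor:pullback-PD} (or the same argument as above applied to $f'$), one gets an equivalence $(f')^!(\bf{1}_{Y'})\simeq (g')^*\omega_f$, and a parallel diagram chase identifies this with the abstractly-defined shriek-base-change morphism $(g')^*f^!(\bf{1}_Y)\to (f')^!(\bf{1}_{Y'})$ of Notation~\ref{notation:morphisms}(\ref{notation:morphisms-shriek-base-change}).

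To promote weak cohomological smoothness to cohomological smoothness, I would invoke Remark~\ref{rmk:pullback-trace-cycle} once more: for any morphism $g\colon Y'\to Y$ in $\cal{C}$, the pulled-back triple $((g')^*\omega_f,\, g^*\tr_f,\, (g'\times g')^*\cl_\Delta)$ is a trace-cycle theory on $f'\colon X'\to Y'$. The preceding paragraphs applied to $f'$ show that every base change of $f$ is weakly cohomologically smooth, which is exactly Definition~\ref{defn:uni-cohomologically-smooth}.

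The main obstacle will be the two coherence verifications in the second paragraph: showing that, under the equivalence $\phi$ produced by uniqueness of adjoints, the abstractly-defined coprojection morphism and shriek-base-change morphism coincide with the concrete symmetric monoidal swap and with the equivalence supplied by Corollary~\ref{cor:pullback-PD} respectively. Both reduce to tracking how the counit $\epsilon\colon f_!f^!\to\rm{id}$ and the proper base-change isomorphism enter the construction of $\phi$ inside the proof of Theorem~\ref{thm:abstract-poincare-duality}; everything else is formal manipulation of adjunctions.
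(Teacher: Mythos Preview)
Your proposal is correct and follows essentially the same route as the paper: the paper's proof is the single sentence ``This follows directly from Theorem~\ref{thm:abstract-poincare-duality} and Corollary~\ref{cor:pullback-PD},'' and your argument is precisely an unpacking of that sentence. The coherence verifications you flag (that the abstractly defined coprojection and shriek-base-change morphisms agree with the equivalences supplied by uniqueness of adjoints) are exactly what the paper leaves implicit; your sketch of how to discharge them via tracking counits through the equivalence $\phi$ is the right one, and the identification $\epsilon^{f^!}\circ f_!(\phi)=\epsilon^G$ together with the construction of $\epsilon^G$ from $\tr_f$ in the proof of Theorem~\ref{thm:abstract-poincare-duality} makes both checks routine.
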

\begin{proof}
    This follows directly from Theorem~\ref{thm:abstract-poincare-duality} and Corollary~\ref{cor:pullback-PD}.
\end{proof}

\begin{rmk}\label{rmk:equivalent-trace-cycle} It is not hard to see that $f\colon X \to Y$ is cohomologically smooth {\it if and only if} $f$ admits a trace-cycle theory. Indeed, we put $\omega_f\coloneqq f^!\bf{1}_Y$, and $\tr_f\colon f_!\,\omega_f \to \bf{1}_Y$ to be the counit of the $(f_!, f^!)$-adjunction. Then we note that Definition~\ref{defn:cohomologically-smooth} implies that 
\[
\bf{1}_X \simeq \Delta^! p_1^! \bf{1}_X \simeq \Delta^! p_2^* \,\omega_f.
\]
Therefore, we define the cycle morphism $\cl_\Delta \colon \Delta_! \bf{1}_X \to p_{2}^*\omega_f$ to be counit the $(\Delta_!, \Delta^!)$-adjunction. We leave it to the reader to verify that the triple $(\omega_f, \tr_f, \cl_\Delta)$ satisfies the assumptions of Definition~\ref{defn:trace-cycle}. 
\end{rmk}

\begin{thm}\label{thm:cohomologically-smooth} Suppose that $\cal{D}$ is a $6$-functor formalism (see Definition~\ref{defn:6-functors-geometric}). Then the relative projective line $g\colon \bf{P}^1_S \to S$ admits a trace-cycle theory $\left(\omega_{g}, \rm{tr}_g, \rm{cl}_\Delta\right)$ if and only if every smooth morphism $f\colon X \to Y$ is cohomologically smooth (with respect to $\cal{D}$).
\end{thm}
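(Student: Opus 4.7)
The ``only if'' direction is immediate: if every smooth morphism is cohomologically smooth, then so is $g\colon \bf{P}^1_S\to S$, and Remark~\ref{rmk:equivalent-trace-cycle} produces the desired trace-cycle theory $(\omega_g, \tr_g, \cl_\Delta)$.

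For the ``if'' direction, the plan is to bootstrap from $\bf{P}^1_S\to S$ to all smooth morphisms using closure under composition, base change, and locality. First, Theorem~\ref{thm:cohomologically-smooth-criterion} applied to the given trace-cycle theory shows that $g\colon \bf{P}^1_S\to S$ is cohomologically smooth. Since cohomological smoothness is stable under arbitrary base change (Remark~\ref{rmk:coh-smooth-base-composition}), the relative projective line $\bf{P}^1_Y\to Y$ is cohomologically smooth for every $Y\in \cal{C}$. The open immersion $j\colon \bf{A}^1_Y\hookrightarrow \bf{P}^1_Y$ is \'etale, hence cohomologically smooth by Lemma~\ref{lemma:coh-smooth-formal-properties}(\ref{lemma:coh-smooth-formal-properties-2}); composing with $\bf{P}^1_Y\to Y$ shows that $\bf{A}^1_Y\to Y$ is cohomologically smooth. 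Iterating (base-changing $\bf{A}^1\to S$ along $\bf{A}^{d-1}_Y\to S$ and composing), the projection $\bf{A}^d_Y\to Y$ is cohomologically smooth for all $d\geq 0$ and all $Y\in \cal{C}$.

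Now let $f\colon X\to Y$ be an arbitrary smooth morphism. By the local structure theorem for smooth morphisms (in both the algebraic and adic settings), $f$ is, locally on source and target, the composition of an \'etale morphism with the projection from a model affine space---$\bf{A}^d_Y\to Y$ in the scheme case, and the relative polydisc $\bf{D}^d_Y\to Y$ in the adic case. Since \'etale morphisms are cohomologically smooth (Lemma~\ref{lemma:coh-smooth-formal-properties}(\ref{lemma:coh-smooth-formal-properties-2})), the composition is cohomologically smooth, and Lemma~\ref{lemma:coh-smooth-formal-properties}(\ref{lemma:coh-smooth-formal-properties-1}) lets us glue this local statement to conclude that $f$ itself is cohomologically smooth.

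The one subtlety worth spelling out is the uniform treatment of the adic and algebraic cases. In the adic setting the standard local model is $\bf{D}^d_Y$ rather than $\bf{A}^d_Y$; however $\bf{D}^d_Y$ embeds as a rational open subspace of $\bf{P}^d_Y$, which in turn is smooth (in fact iteratively constructed from $\bf{P}^1$) over $Y$. Thus the same composition-of-\'etale-and-projection argument applies once we know $\bf{P}^d_Y\to Y$ is cohomologically smooth, which follows inductively: $\bf{P}^d_Y\to Y$ is covered by the standard charts $\bf{A}^d_Y\to Y$ handled above, and cohomological smoothness is analytic- (resp.\ Zariski-) local on the source. This is the only step that depends on the geometric framework; everything else is purely formal and relies solely on Theorem~\ref{thm:cohomologically-smooth-criterion} together with the stability properties of cohomologically smooth morphisms.
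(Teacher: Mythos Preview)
Your argument is correct and follows essentially the same route as the paper: apply Theorem~\ref{thm:cohomologically-smooth-criterion} to get cohomological smoothness of $\bf{P}^1_S\to S$, then use stability under base change, composition, and locality (Lemma~\ref{lemma:coh-smooth-formal-properties}, Remark~\ref{rmk:coh-smooth-base-composition}) together with the local structure of smooth morphisms to conclude.

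Two minor remarks. First, you have the labels ``if'' and ``only if'' reversed: in the statement ``$g$ admits a trace-cycle theory if and only if every smooth morphism is cohomologically smooth'', the \emph{if} direction is ``every smooth cohomologically smooth $\Rightarrow$ trace-cycle theory on $g$'' (the one you call ``only if''), and conversely. The paper uses the standard convention. Second, in the adic case your detour through $\bf{P}^d_Y$ is unnecessary: the paper simply observes that $\bf{D}^1_Y$ is open in $\bf{P}^1_Y$ and that $\bf{D}^d_Y\to Y$ factors as an iterated $\bf{D}^1$-bundle, so one passes directly from $\bf{D}^d_Y$ to $\bf{D}^1_Y$ to $\bf{P}^1_Y$ without ever invoking $\bf{P}^d_Y$ or the adic $\bf{A}^d_Y$. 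Your route via the affine charts of $\bf{P}^d_Y$ is valid but longer.
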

\begin{proof}
    The ``if'' part follows directly from Remark~\ref{rmk:equivalent-trace-cycle}. So we prove the ``only if'' part. \smallskip

    By Lemma~\ref{lemma:coh-smooth-formal-properties}(\ref{lemma:coh-smooth-formal-properties-1}), we can argue analytically locally on $X$ and $Y$. Therefore, \cite[Cor.\,1.6.10]{H3} implies that we may assume that $X$ is \'etale over the relative disk $\bf{D}^d_Y$ (resp. affine space $\bf{A}^d_Y$). Now Lemma~\ref{lemma:coh-smooth-formal-properties}(\ref{lemma:coh-smooth-formal-properties-2}) and Remark~\ref{rmk:coh-smooth-base-composition} ensure that it suffices to show that the natural projection $\bf{D}^d_Y \to Y$ (resp. $\bf{A}^d_Y \to Y$) is cohomologically smooth. Then we use Remark~\ref{rmk:coh-smooth-base-composition} once again to reduce the question further to the case of the one-dimensional relative disk $\bf{D}^1_Y \to Y$ (resp. $\bf{A}^1_Y \to Y$). In this case, it suffices to show it for the relative projective line $\bf{P}^{1}_Y \to Y$ compactifying the relative disk (resp. affine line). In this case, the result follows Theorem~\ref{thm:cohomologically-smooth-criterion}.
\end{proof}

\section{Dualizing object}\label{section:dualizing-complex}

Theorem~\ref{thm:cohomologically-smooth} gives a minimalistic condition that implies Poincar\'e Duality up to computing the dualizing object $\omega_f$. Thus the question of proving the full version of Poincar\'e Duality reduces to computing the dualizing object.  \smallskip

In this section, we show that (under a relatively mild assumption) there is always a ``formula'' for the dualizing object $f^!\bf{1}_Y$ in terms of the relative tangent bundle $\rm{T}_f$. The formula says that $\omega_f$ is equal to $0_X^*g^! \bf{1}_X$, where $g\colon \rm{V}_X(\rm{T}_f) \to X$ is the total space of the relative tangent bundle and $0_X$ is the zero section. In particular, it implies that, for the purpose of computing $f^!\bf{1}_Y$, it suffices to assume that $f$ is the total space of a vector bundle and make the computating in a ``neighborhood'' of the zero section. In the next section, we will use this to show that, in the presence of first Chern classes, one can fully trivialize $f^!\bf{1}_Y$ (up to the appropriate Tate twists). \smallskip

We prove the desired formula in two steps: we first use Verdier's diagonal trick to reduce the question of computing $\omega_f$ for a general smooth morphism to the question of computing $s^*\omega_f$ for a smooth morphism $f$ with a section $s$. Then we use a version of the deformation to the normal cone to reduce further to the case, where $f$ is the total space of the (normal) vector bundle. \smallskip

The methods of this section are essentially independent of Section~\ref{section:abstract-duality}. Therefore, we always put into our assumptions that any smooth morphism in $\cal{C}$ is cohomologically smooth with respect to $\cal{D}$ (see Definition~\ref{defn:cohomologically-smooth}). Theorem~\ref{thm:cohomologically-smooth} shows that this is equivalent to the existence of a trace-cycle theory on the relative projective line. \smallskip

Throughout this section, we fix a locally noetherian analytic adic space $S$ (resp. a scheme $S$). We denote by $\cal{C}$ the category of locally finite type (resp. locally finitely presented) adic $S$-spaces (resp. $S$-schemes), and fix a $6$-functor formalism $\cal{D}\colon \rm{Corr}(\cal{C})\to \Cat_\infty$.

\subsection{Verdier's diagonal trick}

We start the discussion by reviewing a version of Verdier's diagonal trick. 

\begin{prop}\label{prop:diagonal-trick} Let $f\colon X \to Y$ be a cohomologically smooth morphism in $\cal{C}$, let $\Delta \colon X \to X\times_Y X$ be the relative diagonal, and let $p \colon X\times_Y X \to X$ be the projection onto the first factor. Then there is a canonical isomorphism
\[
\Delta^* p^! \bf{1}_X \simeq f^! \bf{1}_Y.
\]
\end{prop}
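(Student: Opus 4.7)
The plan is to apply base change directly to the Cartesian square
\[
\begin{tikzcd}
X\times_Y X \arrow{r}{q} \arrow{d}{p} & X \arrow{d}{f} \\
X \arrow{r}{f} & Y,
\end{tikzcd}
\]
where $q$ denotes the projection onto the second factor, and then pull the resulting identity back along the diagonal $\Delta$. Concretely, since $f$ is cohomologically smooth, Lemma~\ref{lemma:cohomologically-smooth-base-change}(\ref{lemma:cohomologically-smooth-base-change-3}) says the shriek base change morphism $q^* \circ f^! \to p^! \circ f^*$ is an equivalence; evaluating at $\bf{1}_Y$ yields a canonical isomorphism
\[
q^* f^! \bf{1}_Y \simeq p^! f^* \bf{1}_Y = p^! \bf{1}_X.
\]

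The second step is simply to apply $\Delta^*$ to both sides and use the tautology $q\circ \Delta = \rm{id}_X$ (and the symmetric monoidality, hence functoriality in pullback, of $(-)^*$) to conclude
\[
\Delta^* p^! \bf{1}_X \simeq \Delta^* q^* f^! \bf{1}_Y \simeq (q\circ \Delta)^* f^! \bf{1}_Y \simeq f^! \bf{1}_Y.
\]
This is the required canonical isomorphism. There is no real obstacle here: the content is entirely packaged into cohomological smoothness of $f$, which (via its stability under base change) supplies the shriek base change isomorphism for free, together with the defining property $q\circ\Delta=\rm{id}_X$ of the relative diagonal. The only care required is bookkeeping in the $\infty$-categorical setting to ensure that the composite isomorphism is canonical, but this is automatic from the functoriality of the shriek base change transformation and the composition law for $(-)^*$ encoded in the $6$-functor formalism.
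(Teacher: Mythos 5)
Your proof is correct and follows essentially the same route as the paper: both arguments combine the shriek base-change isomorphism $q^*f^!\simeq p^!f^*$ for the Cartesian square (which holds by cohomological smoothness of $f$, whether cited via Lemma~\ref{lemma:cohomologically-smooth-base-change}(\ref{lemma:cohomologically-smooth-base-change-3}) or directly via the base-change clause in Definition~\ref{defn:cohomologically-smooth} applied to $\bf{1}_Y$) with the identity $q\circ\Delta=\rm{id}_X$. No gaps.
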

\begin{proof}
    We consider the commutative diagram
    \[
    \begin{tikzcd}[column sep =3em, row sep = 3em]
        X \arrow{dr}{\Delta} \arrow[rrd, bend left, "\rm{id}"] \arrow[ddr, swap, bend right, "\rm{id}"] & & \\
        & X\times_Y X \arrow{r}{q} \arrow{d}{p} & X\arrow{d}{f} \\
        & X \arrow{r}{f} & Y.
    \end{tikzcd}
\]
    Then we have a sequence of isomorphisms:
    \begin{align*}
        f^! \bf{1}_{Y} & \simeq \Delta^* q^* f^! \bf{1}_Y \\
            & \simeq \Delta^* p^! f^* \bf{1}_Y \\
            & \simeq \Delta^* p^! \bf{1}_{X}.
\end{align*}
    The first isomorphism follows from the equality $q\circ \Delta =\rm{id}$. The second isomorphism follows from the base change condition in the definition of cohomological smoothness. The third isomorphism is trivial. 
\end{proof}

We note that Proposition~\ref{prop:diagonal-trick} allows us to reduce the question of computing $f^!$ for a general smooth morphism $f$ to the question of computing $s^* f^!\bf{1}_X$ in the case $f$ has a section $s$. For our later convenience, we axiomize this construction. We recall that $\rm{Pic}(\cal{D}(Y))$ denotes the group of the isomorphism classes of invertible objects in $\cal{D}(Y)$.

\begin{construction}\label{construction:clausen-map} Let $f\colon X \to Y$ be a cohomologically smooth morphism in $\cal{C}$ with a section $s$. Then we denote by $C(f, s) \in \rm{Pic}(\cal{D}(Y))$ the object
\[
C(f, s) \coloneqq s^* f^! \bf{1}_Y.
\]
By definition of cohomological smoothness, the formation of $C(f, s)$ commutes with an arbitrary base change $Y' \to Y$.
\end{construction}

For the rest of this section, we assume that all smooth morphisms in $\cal{C}$ are cohomologically smooth with respect to $\cal{D}$. 

\begin{variant}\label{variant:vector-bundles}
Let $f\colon \rm{V}_X(\cal{E}) \to X$ be the total space of a vector bundle $\cal{E}$ on $X$ with the zero section $s\colon X \to \rm{V}_X(\cal{E})$. Then we define $C_X(\cal{E})\in \rm{Pic}(\cal{D}(X))$ as
\[
C_X(\cal{E}) = C(f, s) \in \cal{D}(X).
\]
\end{variant}

\begin{rmk}\label{rmk:reduction-to-normal} Using this notation, Proposition~\ref{prop:diagonal-trick} tells us that, for a smooth morphism $f\colon X \to Y$, we have a canonical isomorphism $f^! \bf{1}_Y \simeq C(p, \Delta)$. Our goal is to relate $C(p, \Delta)$ to $C_X(\rm{T}_f)$, where $\rm{T}_f$ is the total space of the relative tangent bundle. This will be done in the next section using (a version) of the deformation to the normal cone. \smallskip
\end{rmk}

In the rest of this section, we would like to show that $C_Y(-)$ defines an additive morphism from $K_0(\rm{Vect}(Y))$ to $\rm{Pic}(\cal{D}(Y))$, where $K_0(\rm{Vect}(Y))$ is the Grothendieck group of vector bundle on $Y$. This will not play any role in this paper, but it seems to be of independent interest as it defines an interesting invariant of a $6$-functor formalism.

\begin{lemma} Assume that all smooth morphisms in $\cal{C}$ are cohomologically smooth with respect to $\cal{D}$ and let $X$ be an object of $\cal{C}$. Then the construction $C_X(E)$ defines an additive homomorphism
\[
C_X\colon K_0(\rm{Vect}(X)) \to \rm{Pic}(\cal{D}(X)).
\]
\end{lemma}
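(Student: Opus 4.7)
The plan is to produce, for every short exact sequence $0 \to \cal{E}' \to \cal{E} \to \cal{E}'' \to 0$ of vector bundles on $X$, an isomorphism $C_X(\cal{E}) \simeq C_X(\cal{E}')\otimes C_X(\cal{E}'')$ in $\rm{Pic}(\cal{D}(X))$. Well-definedness of $C_X$ on isomorphism classes of vector bundles is immediate from the construction, so this additivity relation suffices to descend $C_X$ to an additive homomorphism from $K_0(\rm{Vect}(X))$ into the multiplicative group $\rm{Pic}(\cal{D}(X))$.

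The key geometric input is that the surjection $\cal{E} \twoheadrightarrow \cal{E}''$ induces a smooth (hence cohomologically smooth) morphism $p\colon \rm{V}_X(\cal{E}) \to \rm{V}_X(\cal{E}'')$ whose fiber over the zero section $s''\colon X \hookrightarrow \rm{V}_X(\cal{E}'')$ is $\rm{V}_X(\cal{E}')$. This yields a Cartesian square
\[
\begin{tikzcd}
\rm{V}_X(\cal{E}') \arrow{r}{i} \arrow{d}{\pi'} & \rm{V}_X(\cal{E}) \arrow{d}{p} \\
X \arrow{r}{s''} & \rm{V}_X(\cal{E}''),
\end{tikzcd}
\]
and the zero section $s\colon X \to \rm{V}_X(\cal{E})$ factors as $s = i\circ 0_{\cal{E}'}$, where $0_{\cal{E}'}\colon X \to \rm{V}_X(\cal{E}')$ is the zero section of $\pi'$.

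I would proceed in two steps. First, applying Lemma~\ref{lemma:cohomologically-smooth-base-change}(\ref{lemma:cohomologically-smooth-base-change-3}) to the square above (using cohomological smoothness of $p$) yields $i^*\omega_p \simeq \omega_{\pi'}$, so pulling back along $0_{\cal{E}'}$ gives $s^*\omega_p \simeq C_X(\cal{E}')$. Second, writing the structure map as $\pi = \pi''\circ p$, cohomological smoothness of $p$ produces $\omega_\pi \simeq \omega_p \otimes p^*\omega_{\pi''}$, and pulling back along $s$ together with $p\circ s = s''$ gives
\[
C_X(\cal{E}) = s^*\omega_\pi \simeq s^*\omega_p \otimes (s'')^*\omega_{\pi''} \simeq C_X(\cal{E}')\otimes C_X(\cal{E}''),
\]
as required.

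The main obstacle I expect is purely bookkeeping: one must verify that these base-change and projection-formula isomorphisms assemble coherently from the $6$-functor formalism and that no additional choices creep in. Beyond the coherence already present in the formalism and the standing hypothesis that every smooth morphism is cohomologically smooth, no further substantive input is needed.
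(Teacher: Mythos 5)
Your proof is correct and is essentially the paper's own argument: both factor the structure map of $\rm{V}_X(\cal{E})$ through the cohomologically smooth projection $p\colon \rm{V}_X(\cal{E})\to\rm{V}_X(\cal{E}'')$ induced by the surjection, identify $\rm{V}_X(\cal{E}')$ as the fiber of $p$ over the zero section so that the dualizing object base-changes correctly, and then split $\omega_{\pi}$ using the (co)projection formula before restricting along the zero section. The only differences are cosmetic: the paper strings these steps into one chain of isomorphisms and invokes Lemma~\ref{lemma:invertible-projection} together with invertibility of the dualizing object of $\rm{V}_X(\cal{E}'')\to X$, where you instead use the coprojection equivalence coming from cohomological smoothness of $p$ and Lemma~\ref{lemma:cohomologically-smooth-base-change}(\ref{lemma:cohomologically-smooth-base-change-3}).
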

\begin{proof}
    The only thing that we need to show is that, for any short exact sequence of vector bundle
    \[
    0 \to \cal{E}' \xr{i} \cal{E} \xr{\pi} \cal{E}'' \to 0,
    \]
    there is an isomorphism 
    \[
    C_X(\cal{E}) \simeq C_X(\cal{E}')\otimes C_X(\cal{E}'').
    \]
    For this, we denote the structure morphism of $\rm{V}_X(\cal{E})$ by $f$ and the zero section by $0_{\cal{E}}$, similarly for $f', f''$ and $0_{\cal{E}'}$ and $0_{\cal{E}''}$ Now we consider the commutative diagram
    \begin{equation}\label{eqn:additivity}
        \begin{tikzcd}[column sep = 3em, row sep = 4em]
        \rm{V}_X(\cal{E}') \arrow{r}{i} \arrow[swap]{d}{f'} & \rm{V}_X(\cal{E}) \arrow{d}{\pi} \arrow[bend left]{dd}{f} \\
        X \arrow{rd}{\rm{id}} \arrow[swap, bend right]{u}{0_{\cal{E'}}} \arrow{r}{0_{\cal{E''}}} \arrow{ru}{0_{\cal{E}}} & \rm{V}_X(\cal{E}'') \arrow{d}{f''} \\
        & X.
    \end{tikzcd}
    \end{equation}
    Now the result follows from the following sequence of isomorphisms:
    \begin{align*}
        C_X(\cal{E}) &=  0_{\cal{E}}^* \left(f^! \bf{1}_X\right)  \\
        & \simeq 0_{\cal{E}}^*\left( \pi^! (f'')^! \bf{1}_X \right)\\
        & \simeq 0_{\cal{E}}^*\pi^* \left( (f'')^! \bf{1}_X\right) \otimes 0_{\cal{E}}^* \left( \pi^! \bf{1}_{\rm{V}(\cal{E''})}\right) \\
        & \simeq 0_{\cal{E}''}^* \left((f'')^! \bf{1}_X \right)\otimes 0_{\cal{E}'}^*  \left(i^* \pi^! \bf{1}_{\rm{V}(\cal{E''})}\right) \\
        & \simeq 0_{\cal{E}''}^* \left( (f'')^! \bf{1}_X \right)\otimes 0_{\cal{E}'}^* \left( (f')^! \bf{1}_{X} \right)\\
        & = C_X(\cal{E}'') \otimes C_X(\cal{E}').
    \end{align*}
    The first equality holds by definition. The second isomorphism comes from the equality $f= f''\circ \pi$. The third isomorphism comes from invertibility of $(f'')^! \bf{1}_X$ and Lemma~\ref{lemma:invertible-projection}. The fourth isomorphism comes from the equalities $\pi\circ 0_{\cal{E}} = 0_{\cal{E}''}$ and $0_{\cal{E}}=i\circ 0_{\cal{E}'}$. The fifth isomorphism comes from the fact that $\pi$ is cohomologically smooth, and so formation of $\pi^! \bf{1}$ commutes with arbitrary base change. And sixth equality holds by definition. 
\end{proof}

\subsection{Deformation to the normal cone}

Our goal in this section is to fulfill the promise made in Remark~\ref{rmk:reduction-to-normal} by showing that $C(p, \Delta) = C_X(\rm{T}_f)$. We achieve this via deformation to the normal cone, an approach to computing the dualizing object due to Dustin Clausen. In particular, a version of this argument appears in \cite[Lecture XIII]{complex-analytic} to compute the dualizing object for liquid sheaves on complex-analytic spaces.\smallskip

Below, we provide two slightly different arguments for the formula $C(p, \Delta) = C_X(\rm{T}_f)$, corresponding to two different assumptions on the $6$-functor formalism $\cal{D}$. 

\subsubsection{Weakly motivic $6$-functor formalisms}

In this subsection, we show that $C(p, \Delta) = C_X(\rm{T}_f)$ under the assumption that $\cal{D}$ is weakly motivic in the following sense:

\begin{defn}\label{defn:motivic-6-functors} Let $\cal{C}$ be the category of locally finite type (resp. locally finitely presented) adic $S$-spaces (resp. $S$-schemes). A $6$-functor formalism $\cal{D}\colon \Corr(\cal{C}) \to \Cat_\infty$ is {\it weakly motivic} if
\begin{enumerate}
    \item weakly $\bf{A}^1$-invariant (see Definition~\ref{defn:homotopy-invariant}),
    \item any smooth morphism $f$ in $\cal{C}$ is cohomologically smooth with respect to $\cal{D}$.
\end{enumerate}
\end{defn}

The main idea of the proof is to deform a Zariski-closed immersion $s \colon Y \to X$ to the zero section of its normal cone. Since this construction relies on blow-ups, we refer to \cite[Section~7]{adic-notes} for a detailed treatment of the $\mathrm{Proj}$ and blow-up construction in the adic setting, and to \cite[Section~5]{adic-notes} for the definition of lci (Zariski-closed) immersions. In the setting of schemes, these notions are standard. \smallskip

\begin{construction}\label{construction:deformation-to-the-normal-cone}(Deformation to the normal cone) Let $Z\xhookrightarrow{i} X$ be an lci $S$-immersion. Then the {\it deformation to the normal} cone $\rm{D}_Z(X)$ is the $S$-space
\[
\rm{D}_Z(X) \coloneqq \rm{Bl}_{Z\times_S 0_S} \left(X\times_S \bf{A}^1_S\right) - \rm{Bl}_{Z}(X).
\]
By definition, it admits a morphism $\pi\colon \rm{D}_Z(X) \to \bf{A}^1_X$. Moreover, by functoriality, there is a morphism 
\[
\rm{D}_Z(Z) = \bf{A}^1_Z \xr{\widetilde{i}} \rm{D}_Z(X)
\]
making the diagram
\[
\begin{tikzcd}
    \bf{A}^1_Z \arrow{r}{\widetilde{i}} \arrow{rd} & \rm{D}_Z(X) \arrow{d}{\pi} \\
    & \bf{A}^1_X
\end{tikzcd}
\]
commute. 
\end{construction}

We first discuss some basic properties of this construction. 

\begin{rmk}\label{rmk:analytification-normal-cone} The construction of the deformation to the normal cone is compatible with the (relative) analytification functor (see \cite[\textsection 6]{adic-notes}). More precisely, let $S=\Spa(A, A^+)$ be a strongly noetherian Tate affinoid, let $S^\alg=\Spec A$, and let $Z\hookrightarrow X$ be an lci immersion of locally finite type $S^\alg$-schemes. Then \cite[Cor.\,6.8 and Lemma 7.15]{adic-notes} imply that $\bigl(\rm{D}_Z(X)\bigr)^{\an/S} \simeq \rm{D}_{Z^{\an/S}}(X^{\an/S})$.
\end{rmk}

\begin{lemma}\label{lemma:etale-base-change-normal-cone} Let $i\colon Z \hookrightarrow Y$ be an lci $S$-immersion, let $\varphi\colon X \to Y$ be an \'etale morphism, and let $i_X \colon Z_X \coloneqq Z\times_YX  \hookrightarrow X$ be the base change of $i$. Then $i_X$ is an lci $S$-immersion, and the natural diagram
\begin{equation}\label{eqn:etale-base-change-normal-cone}
\begin{tikzcd}
    \rm{D}_{Z_X}(X) \arrow{r}{\widetilde{\varphi}} \arrow{d}{\pi_X} & \rm{D}_{Z}(Y) \arrow{d}{\pi_Y} \\
    \bf{A}^1_X \arrow{r}{f\times \rm{id}} & \bf{A}^1_Y
\end{tikzcd}
\end{equation}
is cartesian. In particular, $\rm{D}_{Z_X}(X) \to \rm{D}_{Z}(Y)$ is \'etale. 
\end{lemma}
\begin{proof}
    First, \cite[Lemma 5.6 and Rmk.\,5.7]{adic-notes} imply that $i_X$ is an lci $S$-immersion. Therefore, in order to show that Diagram~(\ref{eqn:etale-base-change-normal-cone}) is cartesian, it suffices to show blow-ups commute with \'etale base change. This follows from \cite[Lemma 5.8 and Rmk.\,7.8]{adic-notes}.
\end{proof}

We now remark on a slightly different approach to $\rm{D}_Z(X)$.

\begin{rmk}\label{rmk:G_m-action}(Local construction) 
\begin{enumerate}
    \item\label{rmk:G_m-action-1} Suppose first that $X=\Spec A$ and $Z=\Spec A/I$ for a regular ideal $I\subset A$. Then \cite[\textsection 5.1, end of p.51]{Fulton} implies that $\rm{D}_Z(X)$ admits a concrete description as the spectrum of the Rees algebra. More precisely\footnote{In the formula below, the convention is that $I^n=A$ for $n<0$.},
\[
\rm{D}_{Z}(X) \simeq \Spec \bigoplus_{n\in \Z} I^{n} T^{-n},
\]
where $\bigoplus_{n\in \Z} I^{n} T^{-n}$ is the natural subring of $A[T, T^{-1}]$. Furthermore, under this isomorphism, the morphism $\pi\colon \rm{D}_Z(X) \to \bf{A}^1_X$ is equal to the morphism
\[
\Spec \bigoplus_{n\in \Z} I^{n} T^{-n} \to \Spec A[T]
\]
induced by the natural morphism $A[T] \to \bigoplus_{n\in \Z} I^{n} T^{-n}$. The fiber over $0_X$ is isomorphic to $\Spec \bigoplus_{n\leq 0} I^n/I^{n+1}$, the total space of the normal bundle\footnote{Here we use the lci assumption to make sure that $I/I^2$ is projective and $I^n/I^{n+1}=\rm{Sym}^n_{A/I} I/I^2$.}.
    \item Now assume that $X=\Spa(A, A^+)$ and $Z=\Spa\bigl(A/I, (A^+/(I\cap A^+))^+\bigr)$ for a strongly noetherian Tate--Huber pair $(A, A^+)$ and a regular ideal $I\subset A$. Set $X^{\rm{alg}}\coloneqq \Spec A$ and $Z^{\rm{alg}}=\Spec A/I$. Then \cite[Lemma 7.15]{adic-notes} implies that 
    \[
    \rm{D}_{Z}(X) \simeq \bigl(\rm{D}_{Z^{\rm{alg}}}(X^{\rm{alg}})\bigr)^{\rm{an}/X},
    \]
    where $(-)^{\rm{an}/X}$ is the relative analytification over $X$. 
    \item If $Z \subset X$ is a general lci $S$-immersion of pure codimension $c$ (either in the analytic or algebraic world). Then $\rm{D}_Z(X)$ can be constructed via gluing of the local constructions.
\end{enumerate}
\end{rmk}

\begin{rmk}\label{rmk:diagram-deformation-to-the-normal-cone} Similarly to the situation in algebraic geometry (or using the local description from Remark~\ref{rmk:G_m-action}), we obtain the following diagram with cartesian squares: 
\[
\begin{tikzcd}
    \bf{G}_{m, Z} \arrow{d}{i\times \rm{id}}\arrow[r, hook] & \bf{A}^1_Z \arrow{d}{\widetilde{i}} & Z \arrow{d}{0_Z} \arrow[l, hook'] \\
    \bf{G}_{m, X}  \arrow{d}{\wr} \arrow[r, hook] & \rm{D}_Z(X) \arrow{d}{\pi} & \arrow[l, hook'] \rm{V}_Z(\cal{N}_{Z/X}) \arrow{d}\\
    \bf{G}_{m, X} \arrow[r, hook] & \bf{A}^1_X & \arrow[l, hook', swap, "0_X"] X.
\end{tikzcd}
\]
Here, the maps $0_X$ and $0_Z$ denote the respective zero sections. 
\end{rmk}

We now apply this construction to the case where $f \colon X \to Y$ is a smooth morphism and $i=s \colon Y \to X$ is a Zariski-closed section of $f$. Note that $s$ is automatically an lci immersion by \cite[Cor.\,5.11]{adic-notes}. In this case, we slightly adjust our notation as follows:

\begin{notation}\label{not:deformation} In the situation as above, we denote $\rm{D}_{s(Y)}(X)$ by $\widetilde{X}$. It fits into the following commutative diagram with cartesian squares:
\begin{equation}\label{eqn:deform-1}
\begin{tikzcd}[row sep = 4em, column sep = 4em]
    \bf{G}_{m, X}  \arrow{d}{f\times \bf{G}_m} \arrow[r, hook] & \widetilde{X} \arrow{d}{\widetilde{f}} & \arrow[l, hook'] \rm{V}_Y(\cal{N}_{s}) \arrow{d}{f_0}\\
    \bf{G}_{m, Y} \arrow[bend left]{u}{s\times \bf{G}_m}\arrow[r, hook] & \bf{A}^1_Y \arrow[bend left]{u}{\widetilde{s}} & \arrow[l, swap, hook', "0_Y"] Y \arrow[bend left]{u}{s_0}.
\end{tikzcd}
\end{equation}
Here, $\widetilde{f}\colon \widetilde{X} \to \bf{A}^1_Y$ is the composition $\widetilde{X} \to \bf{A}^1_X \to \bf{A}^1_Y$, $\widetilde{s}$ is the morphism previously denoted by $\widetilde{i}$, and $s_0$ is the zero section.  
\end{notation}

We now study some of the basic properties of the $\widetilde{X}$ construction. For this, we will need the following general lemma: 

\begin{lemma}\label{lemma:good-coordinates} Let $f\colon X \to Y$ be a smooth morphism in $\cal{C}$, let $s\colon Y \hookrightarrow X$ be a Zariski-closed section of $f$, and let $y\in Y$ be a point. Then there exists an open subset $U\subset X$ and an \'etale morphism $r\colon U \to \AA^d_V$, where $V\coloneqq f(U)$, satisfying the following list of properties:
\begin{enumerate}
    \item $s(y)$ lies in $U$;
    \item $s(V)\subset U$;
    \item the morphism $f|_U \colon U \to V$ factors as the composition
    \[
    U \xr{r} \AA^d_V \xr{\pi} V,
    \]
    where $\pi$ is the natural projection morphism;
    \item $r^{-1}(0_V)=s(Y)\cap U = s(V)$. 
\end{enumerate}
\end{lemma}
\begin{proof}
    We give a proof when $S$ is a locally noetherian analytic adic space and $\cal{C}$ is the category of locally finite type adic $S$-spaces, the case of a scheme $S$ and locally finitely presented $S$-schemes is analogous (in fact, it is easier). 

    By \cite[Lemma 5.9]{adic-notes}, there exists an open neighborhood $U' \subset X$ of $s(y)$ such that the restriction $f|_{U'} \colon U' \to V'$, where $V'=f(U')$, factors as
    \[
    U' \xrightarrow{r'_0} \mathbf{D}^d_{V'} \to V',
    \]
    with $r'_0$ being \'etale and satisfying $r_0'^{-1}(0_{V'}) = s(Y)\cap U'$. Now we replace $r'_0$ with the composition $U'\xr{r'_0} \bf{D}^d_{V'} \hookrightarrow \AA^d_{V'}$ to get a decomposition of $f|_{U'}$ into a composition 
    \[
    U' \xr{r'} \bf{A}^d_{V'} \to V'
    \]
    with $r'$ being \'etale and satisfying $r'^{-1}(0_{V'}) = s(Y)\cap U' \ni s(y)$.
    
    Now we set $U\coloneqq f^{-1}(s^{-1}(U')) \cap U'$, $V\coloneqq f(U)$, and $r\coloneqq r'|_U \colon U \to \AA^d_V$. Then we see that $s(y)\in U$, $s(V) \subset U$, and $r^{-1}(0_V) = s(Y) \cap U$. So we are only left to show that $s(Y)\cap U = s(V)$. It is formal that $s(Y) \cap U \subset s(V)$, while the inclusion $s(V)\subset s(Y)\cap U$ follows from the previous observation that $s(V)\subset U$.    
\end{proof}

Now we consider the special case of the relative affine space $f\colon \AA^d_S \to S$ with the zero-section $0_S\colon S \hookrightarrow \AA^d_S$. 

\begin{lemma}\label{lemma:normal-cone-zero-section} Let $S=\Spa(A, A^+)$ (resp.\,$S=\Spec A$)  be a strongly noetherian Tate affinoid (resp.\,an affine scheme) and let $f\colon \AA^d_S \to S$ be the natural projection with the zero-section $0_S \colon S \hookrightarrow \AA^d_S$. Then there is a canonical $S$-isomorphism 
    \[
    \widetilde{\AA^d_S} \simeq \AA^d_S \times_S \AA^1_S
    \]
    such that, under this isomorphism, the morphism $\widetilde{f}\colon \widetilde{\AA^d_S} \to \AA^1_S$ becomes equal to the projection $\AA^d_S \times_S \AA^1_S \to \AA^1_S$ and $\widetilde{0}_S\colon \AA^1_S \hookrightarrow \widetilde{\AA^d_S}$ becomes equal to the ``zero''-section $0_S  \times \rm{id}_{\AA^1} \colon \AA^1_S \hookrightarrow \AA^d_S \times_S \AA^1_S$.
\end{lemma}
\begin{proof}
    Remark~\ref{rmk:analytification-normal-cone} implies that it suffices to prove the claim only when $S=\Spec A$ is an affine scheme. In this case, we have $\AA^d_S = \Spec A[T_1, \dots, T_d]$. Thus, Remark~\ref{rmk:G_m-action}(\ref{rmk:G_m-action-1}) implies that $\widetilde{\AA^d_S} \simeq \Spec \bigoplus_{n \in \Z} I^n t^{-n}$, where $I=(T_1, \dots, T_d) \subset A[T_1, \dots, T_d]$. Now we see that the unique $A$-linear morphism
    \[
    A[X_1, \dots, X_d][T] \to \bigoplus_{n\in \Z} I^n t^{-n}
    \]
    sending $X_i$ to $T_it^{-1}$ and $T$ to $t$ is an isomorphism. In particular, we get an $S$-isomorphism $\widetilde{\AA^d_S}\simeq \AA^d_S \times_S \AA^1_S$. After unraveling all the definitions, we also see that, under this isomorphism, the morphism $\widetilde{f}$ becomes equal to the natural projection $\AA^d_S \times_S \AA^1_S \to \AA^1_S$ and $\widetilde{0}_S\colon \AA^1_S \hookrightarrow \widetilde{\AA^d_S}$ becomes equal to the ``zero''-section $0_S  \times \rm{id}_{\AA^1} \colon \AA^1_S \hookrightarrow \AA^d_S \times_S \AA^1_S$.
\end{proof}

Finally, we show that the morphism $\widetilde{f}\colon \widetilde{X} \to \AA^1_Y$ is smooth in full generality. This observation will play a crucial role in our computation of the dualizing complex $\omega_f =f^!\mathbf{1}_Y$ for a smooth morphism $f\colon X \to Y$ (see Theorem~\ref{thm:formula-dualizing-complex}). 

\begin{lemma}\label{lemma:smooth-normal-cone-deformation} Let $f\colon X \to Y$ be a smooth morphism of locally finite type adic $S$-spaces (reps.\,locally finitely presented $S$-schemes), let $s\colon Y \hookrightarrow X$ be a Zariski-closed section of $f$. Then $\widetilde{f}\colon \widetilde{X} \to \bf{A}^1_Y$ is smooth.
\end{lemma}
\begin{proof}
    Pick a point $x\in s(Y)\subset X$ and set $U_x\subset X$ and $V_x\subset Y$ to be the open subspaces from Lemma~\ref{lemma:good-coordinates} applied to $f$, $s$, and $f(x)$ (so $s(f(x))=x$). We also set $U\coloneqq X\smallsetminus s(Y)$. 

    Now Lemma~\ref{lemma:etale-base-change-normal-cone} and the properties of $U_x$ and $V_x$ imply that, for each $x\in s(Y)\subset X$, we have a commutative diagram
    \[
    \begin{tikzcd}
        \mathrm{D}_{s(V_x)}(U_x) = \widetilde{U_x} \arrow[r, hook] \arrow[dd, bend right =50, "\widetilde{f|_{U_x}}"'] \arrow{d} & \mathrm{D}_{s(Y)}(X) =\widetilde{X} \arrow{d}  \arrow[dd, bend left = 50, "\widetilde{f}"]\\
        \AA^1_{U_x} \arrow[r, hook]\arrow{d} & \AA^1_{X} \arrow{d} \\
        \AA^1_{V_x} \arrow[r, hook] & \AA^1_Y
    \end{tikzcd}
    \]
    with the top square being cartesian. Similarly, we have a commutative diagram
    \[
    \begin{tikzcd}
        \mathrm{D}_{\varnothing}(U) \arrow[r, hook] \arrow{d} & \mathrm{D}_{s(Y)}(X) =\widetilde{X} \arrow{d}  \arrow[dd, bend left = 50, "\widetilde{f}"]\\
        \AA^1_{U} \arrow[rd, "f|_U\times \id"'] \arrow[r, hook] & \AA^1_{X} \arrow{d} \\
         & \AA^1_Y
    \end{tikzcd}
    \]
    with the top square being cartesian. Since $\AA^1_X = \AA^1_U \bigcup \cup_{x\in s(Y)} \AA^1_{U_x}$ is an open covering, we conclude that $\widetilde{X} = \mathrm{D}_{\varnothing}(U) \bigcup \cup_{x\in s(Y)} \widetilde{U_x}$ is an open covering as well. Therefore, it suffices to show that the natural morphism $\mathrm{D}_{\varnothing}(U) \to \AA^1_U$ is smooth and that, for each $x\in s(Y)\subset X$, the natural morphism $\widetilde{U_x} \to \AA^1_{V_x}$ is smooth. 

    For the first claim, we note that Construction~\ref{construction:deformation-to-the-normal-cone} directly implies that $\mathrm{D}_{\varnothing}(U)=\mathbf{G}_{m, U}$, so the natural morphism $\mathrm{D}_{\varnothing}(U)=\mathbf{G}_{m, U} \to \AA^1_U$ is clearly smooth (it is even an open immersion). Therefore, we reduce the question to showing that, for each $x\in s(Y)\subset X$, the morphism 
    \[
    \widetilde{f|_{U_x}} \colon \widetilde{U_x} \to \AA^1_{V_x}
    \]
    is smooth. In other words, we can replace $X$ with $U_x$ and $Y$ with $V_x$ to assume that $f\colon X \to Y$ factors as the composition 
    \[
    X \xr{r} \AA^d_Y \xr{\pi} Y
    \]
    such that $s(Y)= r^{-1}(0_Y)$. In this case, we apply Lemma~\ref{lemma:etale-base-change-normal-cone} to $r$ to reduce the question to showing that the morphism $\widetilde{\pi} \colon \widetilde{\AA^d_Y} \to \AA^1_Y$ is smooth. This follows immediately from the Lemma~\ref{lemma:normal-cone-zero-section}. 
\end{proof}

We will use Lemma~\ref{lemma:smooth-normal-cone-deformation} throughout the remainder of this paper, often without explicit reference.

The next proposition is the key statement of this section.

\begin{prop}\label{prop:able-to-deform} Suppose the $6$-functor formalism $\cal{D}$ is weakly motivic (see Definition~\ref{defn:motivic-6-functors}). Let $f\colon X \to Y$ be a smooth morphism, let $s\colon Y\hookrightarrow  X$ be a Zariski-closed section of $f$, and let $\widetilde{f}\colon \widetilde{X} \to \bf{A}^1_Y$ and $\widetilde{s}\colon \bf{A}^1_Y \hookrightarrow \widetilde{X}$ be as in Notation~\ref{not:deformation}.  Then the invertible object
\[
C(\widetilde{f}, \widetilde{s}) = \widetilde{s}^{*} \widetilde{f}^{!} \bf{1}_{\bf{A}^1_{Y}} \in \cal{P}ic\bigl(\cal{D}\left(\bf{A}^1_Y\right)\bigr)
\]
lies in the essential image of the pullback functor $\cal{P}ic\bigl(\cal{D}(Y)\bigr) \to \cal{P}ic\bigl(\cal{D}(\bf{A}^1_Y)\bigr)$.
\end{prop}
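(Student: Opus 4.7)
The strategy is to construct a canonical isomorphism $\pi^*M \simeq L$ where $\pi\colon \bf{A}^1_Y \to Y$ is the projection, $L := C(\widetilde{f}, \widetilde{s})$, and $M := 0_Y^* L$. The candidate $M$ is essentially forced: applying the base-change condition in Definition~\ref{defn:cohomologically-smooth}(2) to the cohomologically smooth morphism $\widetilde{f}$, and using the cartesian square from Notation~\ref{not:deformation}, we compute
\[
M \;=\; 0_Y^*\, \widetilde{s}^*\, \widetilde{f}^!\, \bf{1}_{\bf{A}^1_Y} \;\simeq\; s_0^*\, f_0^!\,\bf{1}_Y \;=\; C_Y(\cal{N}_s),
\]
which is invertible in $\cal{P}ic(\cal{D}(Y))$.

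The key geometric input is the natural $\bf{G}_m$-symmetry of the deformation $\widetilde{X}$. Since the question is local on $Y$ by Lemma~\ref{lemma:coh-smooth-formal-properties}(\ref{lemma:coh-smooth-formal-properties-1}), one may use Remark~\ref{rmk:G_m-action}(\ref{rmk:G_m-action-1}) to reduce to the situation $\widetilde{X} \simeq \Spec \bigoplus_{n\in\Z} I^n T^{-n}$. The natural $\Z$-grading on this extended Rees algebra produces a $\bf{G}_m$-action on $\widetilde{X}$ covering the scaling $\bf{G}_m$-action on $\bf{A}^1_Y$ and trivial on $Y$; both $\widetilde{f}$ and the section $\widetilde{s}$ are $\bf{G}_m$-equivariant. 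Pushing this through the six-functor formalism yields a canonical $\bf{G}_{m,Y}$-equivariant structure on $L$, namely an isomorphism $a^* L \simeq p_2^* L$ in $\cal{D}(\bf{G}_{m,Y} \times_Y \bf{A}^1_Y)$ (with $a$ the action and $p_2$ the second projection), satisfying the usual cocycle relations.

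To conclude, I extend this equivariance across the zero section of $\bf{A}^1$: the aim is to produce an isomorphism $\mu^* L \simeq p_2^* L$ on $\bf{A}^1_Y \times_Y \bf{A}^1_Y$, where $\mu$ is the multiplication on the monoid $\bf{A}^1$. Restricting such an extended isomorphism along the closed immersion $\{0\}_Y \times_Y \bf{A}^1_Y \hookrightarrow \bf{A}^1_Y \times_Y \bf{A}^1_Y$ then yields the desired $\pi^* M \simeq L$, because on this locus $\mu$ becomes the constant map $\pi\circ 0_Y\circ \pi$ (so $\mu^* L$ restricts to $\pi^* M$) while $p_2$ is the identity (so $p_2^* L$ restricts to $L$). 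The delicate point is precisely this extension step: the $\bf{G}_m$-action on $\widetilde{X}$ does not prolong to an $\bf{A}^1$-action (the Rees grading has negative weights), so no geometric extension is available, and one must argue categorically by combining the $\bf{A}^1$-invariance of $\cal{D}$ with the fully-faithfulness of pullback on $\cal{P}ic$ from Lemma~\ref{lemma:invertible-ff}. This extension is the main obstacle of the argument.
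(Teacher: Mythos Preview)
Your proposal stops short at precisely the step you flag as ``the main obstacle,'' and the tools you cite do not close the gap. From the $\bf{G}_m$-equivariance $a^*L\simeq p_2^*L$ on $\bf{G}_{m,Y}\times_Y\bf{A}^1_Y$ you want to extend to $\mu^*L\simeq p_2^*L$ on all of $\bf{A}^1_Y\times_Y\bf{A}^1_Y$, but Lemma~\ref{lemma:invertible-ff} gives only \emph{full faithfulness} of $\pi^*$ on $\cal{P}ic$. That lets you glue local descent data for an object already known to be a pullback; it does not let you extend an isomorphism of invertible objects across a closed complement. Concretely, showing that $\mu^*L\otimes(p_2^*L)^{-1}$ is trivial on $\bf{A}^1_Y\times_Y\bf{A}^1_Y$ from its triviality on the open $\bf{G}_{m,Y}\times_Y\bf{A}^1_Y$ would require either excision (not assumed here) or essential surjectivity of pullback on $\cal{P}ic$ --- the latter being exactly what the proposition asserts. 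So as written the argument is incomplete at its crux. (A minor point: the locality claim should cite Lemma~\ref{lemma:invertible-ff} together with analytic descent, not Lemma~\ref{lemma:coh-smooth-formal-properties}.)

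The paper's argument is quite different and avoids any extension problem. After using Lemma~\ref{lemma:invertible-ff} and descent to work locally on $Y$, it makes a further reduction you do not attempt: via the \'etale-local structure of smooth morphisms, the fact that $C(\widetilde f,\widetilde s)$ depends only on a neighbourhood of the section, and compatibility of $\rm{D}_Z(-)$ with \'etale base change, one reduces to the case $f\colon\bf{D}^d_Y\to Y$ (resp.\ $\bf{A}^d_Y\to Y$) with $s$ the zero section. For the ideal $I=(T_1,\dots,T_d)$ in the Tate (resp.\ polynomial) algebra the extended Rees ring is \emph{explicitly} isomorphic to $\O_S\langle X_1,\dots,X_d\rangle[T]$, yielding $\widetilde{\bf{D}^d_S}\simeq\bf{D}^d_S\times_S\bf{A}^1_S$ with $(\widetilde f,\widetilde s)$ the base change of $(f,0_S)$ along $\bf{A}^1_S\to S$; hence $C(\widetilde f,\widetilde s)\simeq\pi^*C(f,0_S)$ on the nose. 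Note that for a general regular ideal $I\subset A$ the extended Rees algebra is \emph{not} polynomial over $\O_S$, so your localisation to the Rees description alone does not suffice; the missing idea is the \'etale reduction to the split case $I=(T_1,\dots,T_d)$.
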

\begin{proof}
    {\it Step~$1$. Localize on $Y$ and reduce to a simpler situation.} We first note that Lemma~\ref{lemma:invertible-ff} ensures that the functor
    \[
    g^* \colon \cal{P}ic\left(\cal{D}\left(Y\right)\right) \to \cal{P}ic\left(\cal{D}\left(\bf{A}^1_Y\right)\right)
    \]
    is fully faithful for any $Y\in \cal{C}$. Therefore, using the analytic (resp. Zariski) descent, we can check that an object lies in the essential image of $g^*$ {\it locally on} $Y$. \smallskip
    
    We fix a point $y\in Y$ and apply Lemma~\ref{lemma:good-coordinates} to $f\colon X \to Y$ and $x=s(y)$ to find an open subspace $s(y)\in U\subset X$ and an \'etale morphism $r\colon U \to \AA^d_V$, where $V=f(U)$, such that $f|_U\colon U \to V$ factors as 
    \[
    U \xr{r} \bf{A}^d_V \to V,
    \]
    and satisfies the following properties: $s(V) \subset U$ and $r^{-1}(0_V)=s(V)$. Then we consider the square
    \[
    \begin{tikzcd}[row sep = 4em, column sep = 4em]
        U \arrow[r, hook, "j"] \arrow{d}{f|_U} & X_V \arrow{d}{f_V} \arrow[r, hook] & X \arrow{d}{f}\\
        V \arrow[bend left]{u}{s|^U_V} \arrow[r, "\rm{id}"] & V\arrow[bend left]{u}{s|_V} \arrow[r, hook] & Y \arrow[bend left]{u}{s},
    \end{tikzcd}
    \]
    where the right square is cartesian and all horizontal arrows are open immersions. Then Lemma~\ref{lemma:etale-base-change-normal-cone} ensures that we get a commutative diagram
    \[
    \begin{tikzcd}
        \AA^1_V \arrow[r, "\id"] \arrow[d, hook, "\widetilde{s|_V^U}"]& \AA^1_V \arrow[d, hook, "\widetilde{s|_V}"] \arrow[r, hook] & \AA^1_X \arrow[d, hook, "\widetilde{s}"] \\
        \widetilde{U} \arrow[d, "\widetilde{f|_U}"] \arrow[r, hook, "\widetilde{j}"] & \widetilde{X_V} \arrow[d, "\widetilde{f_V}"] \arrow[r, hook] & \widetilde{X} \arrow[d, "\widetilde{f}"] \\
        \AA^1_V \arrow[r, "\id"] & \AA^1_V \arrow[r, hook] & \AA^1_Y,
    \end{tikzcd}
    \]
    where the two right squares are cartesian and all horizontal arrows are open immersions. Therefore, Lemma~\ref{lemma:cohomologically-smooth-base-change}(\ref{lemma:cohomologically-smooth-base-change-3}) implies that $C(\widetilde{f}, \widetilde{s})|_{\bf{A}^1_V} \simeq  C(\widetilde{f_V}, \widetilde{s_V})$. Now we wish to relate $C(\widetilde{f_V}, \widetilde{s|_V})$ to $C(\widetilde{f|_U}, \widetilde{s|^U_V})$.

    For this, we recall that open immersions are cohomologically \'etale by the definition of a $6$-functor formalism (see Definition~\ref{defn:six-functors}). In particular, we have $\widetilde{j}^*\simeq \widetilde{j}^!$. Therefore, we see that 
    \[
    C(\widetilde{f_V}, \widetilde{s|_V}) = \widetilde{s|_V}^* \widetilde{f_V}^! \bf{1} \simeq \widetilde{s|_V^U}^* \widetilde{j}^* \widetilde{f_V}^!\bf{1} \simeq \widetilde{s|_V^U}^* \widetilde{j}^! \widetilde{f_V}^!\bf{1} \simeq \widetilde{s|_V^U}^* \widetilde{f|_U}^! \bf{1} = C(\widetilde{f|_U}, s|_V^U). 
    \]
    Combining these things together, we get a canonical identification
    \[
    C(\widetilde{f}, \widetilde{s})|_{\bf{A}^1_V} \simeq  C(\widetilde{f_V}, \widetilde{s_V}) \simeq C(\widetilde{f|_U}, \widetilde{s|_V}) \in \rm{Pic}\left(\cal{D}\left(\bf{A}^1_V\right)\right).
    \]
    Since we already showed that the claim of this proposition is {\it local on $Y$}, we may replace the pair $(f, s)$ by the pair $(f|_U, s|^U_V)$ to assume that $f\colon X \to Y$ factors as 
    \[
    X \xr{r} \bf{A}^d_Y \xr{h} Y
    \]
    such that $r$ is \'etale and $s(Y)=r^{-1}(0_Y)$. \smallskip
    
    {\it Step~$2$. Reduce further to the case of the relative affine space $\bf{A}^d_Y \to Y$ and the zero section $s=0_Y$}. We consider the cartesian square
    \[
    \begin{tikzcd}
        Y \arrow[r, hook, "s"] \arrow{d}{\rm{id}} & X \arrow{d}{r} \\
        Y \arrow[r, hook, "0_Y"] & \AA^d_Y.
    \end{tikzcd}
    \]
    Lemma~\ref{lemma:etale-base-change-normal-cone} applied to $X \to \AA^d_Y$ ensures that we have a commutative diagram
    \[
    \begin{tikzcd}
    \bf{A}^1_Y \arrow[d, hook, "\widetilde{s}"] \arrow{r}{\rm{id}}  & \bf{A}^1_Y \arrow[d, hook, "\widetilde{0}_Y"] \\
    \widetilde{X}\arrow{d}{\widetilde{f}} \arrow{r}{\widetilde{r}} &\widetilde{\bf{A}^{d}_Y} \arrow{d}{\widetilde{h}} \\
    \bf{A}^1_Y \arrow{r}{\rm{id}} &  \bf{A}^1_Y,
    \end{tikzcd}
    \]
    where $\widetilde{r}$ is \'etale. In particular, there is a natural equivalence $r^!\simeq r^*$. Therefore, we obtain the following sequence of isomorphisms
    \[
    C(\widetilde{f}, \widetilde{s}) = \widetilde{s}^*\widetilde{f}^! \bf{1}_{\bf{A}^1_Y} \simeq \widetilde{s}^* \widetilde{r}^! \widetilde{h}^! \bf{1}_{\bf{A}^1_Y} \simeq \widetilde{s}^* \widetilde{r}^* \widetilde{h}^! \bf{1}_{\bf{A}^1_Y} \simeq \widetilde{0}_Y^* \widetilde{h}^! \bf{1}_{\bf{A}^1_Y} = C(\widetilde{h}, \widetilde{0}_Y) \in \rm{Pic}(\cal{D}(\bf{A}^1_Y)).
    \]
    Therefore, it suffices to show the claim for the natural projection $f\colon X=\bf{A}^d_Y \to Y$ and the zero section $s=0_Y$. Using that the formation of $C$ commutes with arbitrary base change, we can reduce further to the case $S=Y$. \smallskip 
    
    {\it Step~$3$. The case of the relative affine space $f\colon X=\AA^d_S \to S$ and the zero section $0_S$}. Since the question is local on $S$ (see Step~$1$), we can assume that $S=\Spa(A, A^+)$ is a strongly noetherian Tate affinoid (resp.\,an affine scheme). Then Lemma~\ref{lemma:normal-cone-zero-section} ensures that there is an $S$-isomorphism
    \[
    \widetilde{\AA^d_S} \simeq \AA^d_S \times_S \AA^1_S
    \]
    such that the projection $\widetilde{f}\colon \widetilde{\AA^d_S} \to \AA^1_S$ corresponds to the projection onto the second factor, and the section $\widetilde{0}_S\colon \AA^1_S \hookrightarrow \widetilde{\AA^d_S}$ corresponds to the ``zero''-section $0_S  \times \rm{id}_{\AA^1} \colon \AA^1_S \hookrightarrow \AA^d_S \times_S \AA^1_S$.
    In particular, there is a commutative square
    \[
    \begin{tikzcd}
        \AA^1_S \arrow{r} \arrow[d, hook, "\widetilde{0}_S"]& S \arrow[d, hook, "0_S"] \\
        \widetilde{\AA^d_S} \arrow{r} \arrow[d, "\widetilde{f}"] & \bf{A}^d_S\arrow{d}{g_d} \\
        \bf{A}^1_S \arrow{r}{g_1} & S,
    \end{tikzcd}
    \]
    where each square is cartesian. Since the formation of $C(f, s)$ commutes with arbitrary base change, we conclude that 
    \[
    C\bigl(\widetilde{f}, \widetilde{0}_S\bigr) \simeq g_1^* C\bigl(g_d, 0_S\bigr).
    \]
    This finishes the proof. 
\end{proof}

\begin{cor}\label{cor:finally-deform} In the notation of Proposition~\ref{prop:able-to-deform}, there is a canonical isomorphism
\[
C(f, s) \simeq C_Y(\cal{N}_{s}) \in \cal{D}(Y),
\]
where $\cal{N}_s$ is the normal bundle of $s(Y)$ in $X$.
\end{cor}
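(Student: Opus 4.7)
The plan is to use the one-parameter family $\widetilde{f}\colon \widetilde{X} \to \bf{A}^1_Y$ with section $\widetilde{s}$ from Notation~\ref{not:deformation} as a bridge interpolating between the special fiber over $0_Y \in \bf{A}^1_Y$, which by construction is $(f_0, s_0) = (\rm{V}_Y(\cal{N}_s) \to Y, \text{zero section})$, and the generic fiber over $1_Y \in \bf{A}^1_Y$, which is $(f, s)$ itself. Diagram~(\ref{eqn:deform-1}) exhibits both identifications as Cartesian squares: the right square directly gives the special fiber, while the left square identifies the family over $\bf{G}_{m, Y}$ with the constant family, so that further base change along the unit section $1_Y\colon Y \to \bf{G}_{m, Y}$ recovers $(f, s)$.

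Since the formation of $C(-,-)$ commutes with arbitrary base change (Construction~\ref{construction:clausen-map}), applying $0_Y^*$ and $1_Y^*$ to the invertible object $C(\widetilde{f}, \widetilde{s}) \in \rm{Pic}(\cal{D}(\bf{A}^1_Y))$ yields canonical identifications
\[
0_Y^*\, C(\widetilde{f}, \widetilde{s}) \simeq C(f_0, s_0) = C_Y(\cal{N}_s), \qquad 1_Y^*\, C(\widetilde{f}, \widetilde{s}) \simeq C(f, s),
\]
using Variant~\ref{variant:vector-bundles} for the first equality. To tie the two together, Proposition~\ref{prop:able-to-deform} supplies an invertible $L \in \rm{Pic}(\cal{D}(Y))$ together with an isomorphism $C(\widetilde{f}, \widetilde{s}) \simeq g^* L$, where $g\colon \bf{A}^1_Y \to Y$ is the structure morphism. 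Because both $0_Y$ and $1_Y$ are sections of $g$, pulling back this isomorphism along each gives $C_Y(\cal{N}_s) \simeq L \simeq C(f, s)$, which produces the desired isomorphism.

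The only delicate point is canonicity: Proposition~\ref{prop:able-to-deform} a priori only provides existence of $L$. The remedy is Lemma~\ref{lemma:invertible-ff}, which under $\bf{A}^1$-invariance ensures that $g^*\colon \cal{P}ic(\cal{D}(Y)) \to \cal{P}ic(\cal{D}(\bf{A}^1_Y))$ is fully faithful; consequently the descent datum $(L, C(\widetilde{f}, \widetilde{s}) \simeq g^* L)$ is unique up to unique isomorphism, which rigidifies the construction and yields a \emph{canonical} isomorphism $C(f, s) \simeq C_Y(\cal{N}_s)$. I do not anticipate any serious obstacle beyond carefully threading this uniqueness through.
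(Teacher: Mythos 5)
Your proposal is correct and follows essentially the same route as the paper: specialize the invertible object $C(\widetilde{f},\widetilde{s})$ of the deformation-to-the-normal-cone family at $0_Y$ and $1_Y$, use base-change compatibility of $C(-,-)$ (which relies on $\widetilde{f}$ being smooth, as the paper notes in a footnote), and use Proposition~\ref{prop:able-to-deform} to identify the two fibers through the object on $Y$. Your extra remark that Lemma~\ref{lemma:invertible-ff} makes the descended object unique up to unique isomorphism is a welcome clarification of the word ``canonical,'' which the paper leaves implicit.
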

\begin{proof}
    Consider the deformation to the normal cone: 
\[
\begin{tikzcd}[row sep = 4em, column sep = 4em]
    \bf{G}_{m, X}  \arrow{d}{f\times \bf{G}_m} \arrow[r, hook] & \widetilde{X} \arrow{d}{\widetilde{f}} & \arrow[l, hook'] \rm{V}_Y(\cal{N}_{s}) \arrow{d}{f_0}\\
    \bf{G}_{m, Y} \arrow[bend left]{u}{s\times \bf{G}_m}\arrow[r, hook] & \bf{A}^1_Y \arrow[bend left]{u}{\widetilde{s}} & \arrow[l, hook', "0_Y"] Y \arrow[bend left]{u}{s_0},
\end{tikzcd}
\]
Then we know that the the formation of $C(\widetilde{f}, \widetilde{s})$ commutes with arbitrary base change\footnote{This step implicitly uses that $\widetilde{f}$ is a smooth morphism. This can either be seen from the proof of Proposition~\ref{prop:able-to-deform} or from the local description in Remark~\ref{rmk:G_m-action}}. Therefore we get isomorphisms
\[
C(\widetilde{f}, \widetilde{s})|_{0_Y} \simeq C(f_0, 0_Y)=C_Y(\cal{N}_s) \in \cal{D}(Y),
\]
\[
C(\widetilde{f}, \widetilde{s})|_{1_Y} \simeq C(f, s) \in \cal{D}(Y). 
\]
Now we note that Proposition~\ref{prop:able-to-deform} comes as a pullback from $\cal{D}(Y)$, so we get a canonical identification of its ``fibers''
\[
C(f, s) \simeq C(\widetilde{f}, \widetilde{s})|_{1_Y}  \simeq C(\widetilde{f}, \widetilde{s})|_{0_Y} \simeq C_S(\cal{N}_s). \qedhere
\]
\end{proof}

\begin{thm}\label{thm:formula-dualizing-complex} Suppose the $6$-functor formalism $\cal{D}$ is weakly motivic. Let $f\colon X \to Y$ be a smooth morphism. Then there is a canonical isomorphism
\[
f^!\bf{1}_Y \simeq C_X(\rm{T}_f) \in \cal{D}(X),
\]
where $\mathrm{T}_f$ is the relative tangent bundle of $f$ and $C_X(\mathrm{T}_f)$ is from Variant~\ref{variant:vector-bundles}.
\end{thm}
\begin{proof}
    Proposition~\ref{prop:diagonal-trick} says that 
    \[
    f^! \bf{1}_Y \simeq \Delta^* p^! \bf{1}_X = C(p, \Delta),
    \]
    where $p\colon X \times_Y X \to X$ is the projection onto the first factor, and $\Delta \colon X \to X\times_Y X$ is the diagonal morphism. Then \cite[Lemma B.7.3]{quotients} ensures that we can decompose $\Delta$ as
    \[
    X \xr{i} U \xr{j} X\times_Y X,
    \]
    where $i$ is a Zariski-closed immersion, and $j$ is an open immersion. Then we see that
    \[
    C(p, \Delta) = \Delta^* p^! \bf{1}_X \simeq i^* j^* p^! \bf{1}_X \simeq i^* (p\circ j)^! \bf{1}_X = C(i, p\circ j).
    \]
    Now $i$ is a Zariski-closed section of the smooth morphism $g\coloneqq p\circ j\colon U \to X$. So the result follows directly from Corollary~\ref{cor:finally-deform} and the observation that the normal bundle of the (relative) diagonal is equal to the (relative) tangent bundle $\rm{T}_f$.  
\end{proof}

\subsubsection{Geometric $6$-functor formalisms}

In this section, we perform the deformation to the normal cone type argument under a different assumption on $\cal{D}$.

\begin{defn}\label{defn:6-functors-geometric} A $6$-functor formalism $\cal{D}\colon \Corr(\cal{C}) \to \Cat_\infty$ is {\it pre-geometric} if, for every object $Y\in \cal{C}$ and an invertible object $L\in \rm{Pic}(\bf{P}^1_Y)$, there is an isomorphism  $L|_{0_Y} \cong L|_{1_Y}$ inside $\cal{D}(Y)$.

  A pre-geometric $6$-functor formalism $\cal{D}\colon \Corr(\cal{C}) \to \Cat_\infty$ is {\it geometric} if any smooth moprhism $f$ in $\cal{C}$ is cohomologically smooth with respect to $\cal{D}$.
\end{defn}

To adapt the proof of Theorem~\ref{thm:formula-dualizing-complex} to a geometric $6$-functor formalism $\cal{D}$, we need to introduce the projective version of Construction~\ref{construction:deformation-to-the-normal-cone}

\begin{construction}\label{construction:projective-deformation-to-the-normal-cone}(Projective deformation to the normal cone) Let $Z\xhookrightarrow{i} X$ be an lci $S$-immersion. Then the {\it projective deformation to the normal} cone $\rm{PD}_Z(X)$ is the $S$-space
\[
\rm{PD}_Z(X) \coloneqq \rm{Bl}_{Z\times_S 0_S} \left(X\times_S \bf{P}^1_S\right) - \rm{Bl}_{Z}(X).
\]
By definition, it admits a morphism $\pi\colon \rm{PD}_Z(X) \to \bf{P}^1_X$. Moreover, by functoriality, there is a morphism 
\[
\rm{PD}_Z(Z) = \bf{P}^1_Z \xr{\widetilde{i}} \rm{PD}_Z(X)
\]
making the diagram
\[
\begin{tikzcd}
    \bf{P}^1_Z \arrow{r}{\widetilde{i}} \arrow{rd} & \rm{PD}_Z(X) \arrow{d}{\pi} \\
    & \bf{P}^1_X
\end{tikzcd}
\]
commute. 
\end{construction}

Similarly to Notation~\ref{not:deformation}, we specialize Construction~\ref{construction:projective-deformation-to-the-normal-cone} to the case when $f\colon X \to Y$ is a smooth morphism and $i=s\colon Y \hookrightarrow X$ is a Zariski-closed section of $f$ (it is automatically an lci immersion by \cite[Cor.\,5.11]{adic-notes}). In this case, we slightly change our notation as follows:

\begin{notation}\label{not:proj-deformation} In the situation as above, we denote $\rm{PD}_{s(Y)}(X)$ by $\widetilde{X}$. It fits into the following commutative diagram with cartesian squares:
\begin{equation}
\begin{tikzcd}[row sep = 4em, column sep = 4em]
    \bf{A}^1_{X}  \arrow{d}{f\times \bf{A}^1_S} \arrow[r, hook] & \widetilde{X} \arrow{d}{\widetilde{f}} & \arrow[l, hook'] \rm{V}_Y(\cal{N}_{s}) \arrow{d}{f_0}\\
    \bf{A}^1_{Y} \arrow[bend left]{u}{s\times \bf{A}^1_S}\arrow[r, hook, "j"] & \bf{P}^1_Y \arrow[bend left]{u}{\widetilde{s}} & \arrow[l, hook', "0_Y"'] Y \arrow[bend left]{u}{s_0}.
\end{tikzcd}
\end{equation}
Here, $j$ is the open complement to the zero section $0_Y \colon Y \hookrightarrow \bf{P}^1_Y$. 
\end{notation}

\begin{thm}\label{thm:formula-dualizing-complex-geometric} Suppose the $6$-functor formalism $\cal{D}$ is geometric. Let $f\colon X \to Y$ be a smooth morphism. Then there is an isomorphism
\[
f^!\bf{1}_Y \cong C_X(\rm{T}_f) \in \cal{D}(X),
\]
where $\mathrm{T}_f$ is the relative tangent bundle of $f$ and $C_X(\mathrm{T}_f)$ is from Variant~\ref{variant:vector-bundles}.
\end{thm}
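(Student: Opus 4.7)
The plan is to mirror the proof of Theorem~\ref{thm:formula-dualizing-complex}, but use the projective deformation to the normal cone (Construction~\ref{construction:projective-deformation-to-the-normal-cone}) in place of its affine cousin, and invoke the pre-geometric hypothesis in place of $\bf{A}^1$-invariance.

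First I would apply Verdier's diagonal trick (Proposition~\ref{prop:diagonal-trick}) together with the factorization $\Delta = j\circ i$ of the relative diagonal $\Delta\colon X\to X\times_Y X$ into a Zariski-closed immersion $i\colon X\hookrightarrow U$ followed by an open immersion $j\colon U\hookrightarrow X\times_Y X$ (using \cite[Lemma B.7.3]{quotients}). This reduces the problem to producing an isomorphism $C(g, i)\cong C_X(\rm{T}_f)$, where $g\coloneqq p_1\circ j\colon U\to X$ is smooth and $i$ is a Zariski-closed section of $g$ whose normal bundle is the relative tangent bundle $\rm{T}_f$.

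Next I would apply Construction~\ref{construction:projective-deformation-to-the-normal-cone} to the pair $(g, i)$ in the projective variant of Notation~\ref{not:proj-deformation}. This produces a smooth morphism $\widetilde{g}\colon \widetilde{U}\to \bf{P}^1_X$ together with a Zariski-closed section $\widetilde{i}\colon \bf{P}^1_X\to \widetilde{U}$ whose restriction over $1_X\in \bf{P}^1_X$ recovers $(g, i)$ and whose restriction over $0_X$ is the zero section $s_0\colon X\to \rm{V}_X(\rm{T}_f)$ of the total space of the normal bundle $f_0\colon \rm{V}_X(\rm{T}_f)\to X$. Because $\cal{D}$ is geometric, $\widetilde{g}$ is cohomologically smooth, so $L\coloneqq C(\widetilde{g}, \widetilde{i}) = \widetilde{i}^{\,*}\widetilde{g}^{\,!}\bf{1}_{\bf{P}^1_X}$ is an invertible object of $\cal{D}(\bf{P}^1_X)$, and the base-change compatibility built into Construction~\ref{construction:clausen-map} yields $L|_{1_X}\cong C(g, i)$ and $L|_{0_X}\cong C(f_0, s_0) = C_X(\rm{T}_f)$.

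Finally, the pre-geometric hypothesis applied to $X\in\cal{C}$ and the invertible object $L\in\rm{Pic}(\bf{P}^1_X)$ produces an isomorphism $L|_{0_X}\cong L|_{1_X}$ in $\cal{D}(X)$; chaining these gives $f^{\,!}\bf{1}_Y\simeq C(g, i)\cong L|_{1_X}\cong L|_{0_X}\cong C_X(\rm{T}_f)$. The main technical point will be to verify that the projective deformation $\widetilde{g}\colon\widetilde{U}\to \bf{P}^1_X$ is actually smooth and that its fibers over $0_X$ and $1_X$ are as claimed; this is the projective analogue of Remark~\ref{rmk:diagram-deformation-to-the-normal-cone} and can be checked Zariski-locally using the Rees algebra description of Remark~\ref{rmk:G_m-action} together with the standard chart at infinity on $\bf{P}^1$. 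A secondary subtlety worth flagging is that the pre-geometric axiom supplies only a non-canonical isomorphism, which is precisely why the conclusion is recorded as $\cong$ rather than $\simeq$, in contrast to Theorem~\ref{thm:formula-dualizing-complex}.
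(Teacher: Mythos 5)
Your proposal is correct and takes essentially the same route as the paper: the paper likewise reduces via the argument of Theorem~\ref{thm:formula-dualizing-complex} (Verdier's diagonal trick plus the factorization of the diagonal into a Zariski-closed immersion followed by an open immersion) to comparing $C(g,i)$ with $C_X(\rm{T}_f)$, then runs the projective deformation to the normal cone of Construction~\ref{construction:projective-deformation-to-the-normal-cone} and uses invertibility of $C(\widetilde{g},\widetilde{i})\in\cal{D}(\bf{P}^1_X)$ together with the pre-geometric identification of its fibers over $0_X$ and $1_X$. The technical points you flag (smoothness of $\widetilde{g}$ and the claimed fibers, and the non-canonicity of the resulting isomorphism) are exactly the ones the paper treats implicitly via Remark~\ref{rmk:G_m-action}, Notation~\ref{not:proj-deformation}, and the remark following the theorem.
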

\begin{proof}
    The same proof as in Theorem~\ref{thm:formula-dualizing-complex} reduces the question to proving that $C(f, s)\simeq C_Y(\cal{N}_s)$ for a smooth morphism $f\colon X \to Y$ with a Zariski-closed section $s$ and a geometric $6$-functor formalism $\cal{D}$. Then we use the projective deformation to the normal cone
    \[
    \begin{tikzcd}[row sep = 4em, column sep = 4em]
        \bf{A}^1_{X}  \arrow{d}{f\times \bf{A}^1_S} \arrow[r, hook] & \widetilde{X} \arrow{d}{\widetilde{f}} & \arrow[l, hook'] \rm{V}_Y(\cal{N}_{s}) \arrow{d}{f_0}\\
        \bf{A}^1_{Y} \arrow[bend left]{u}{s\times \bf{A}^1_S}\arrow[r, hook, "j"] & \bf{P}^1_Y \arrow[bend left]{u}{\widetilde{s}} & \arrow[l, hook', "0_Y"'] Y \arrow[bend left]{u}{s_0}
    \end{tikzcd}
    \]
    and the fact that, for an invertible object $C(\widetilde{f}, \widetilde{s})\in \cal{D}(\bf{P}^1_Y)$, the fibers over $1_Y$ and $0_Y$ are isomorphic to conclude that there is a sequence of isomorphisms
    \[
    C(f, s) \simeq C(\widetilde{f}, \widetilde{s})|_{1_Y} \cong C(\widetilde{f}, \widetilde{s})|_{0_Y} \simeq C(f_0, 0_Y)=C_Y(\cal{N}_s) \in \cal{D}(YS). \qedhere
    \]
\end{proof}

\begin{rmk} In practice, the isomorphism $L|_{1_Y}\simeq L|_{0_Y}$ in Definition~\ref{defn:6-functors-geometric}, can be always achieved to be ``canonical''. This would make the isomorphism in Theorem~\ref{thm:formula-dualizing-complex-geometric} also canonical. In particular, this should apply to the potential crystalline or prismatic $6$-functor formalisms. However, it seems annoying to explicitly spell out what this "canonicity" should mean in an abstract $6$-functor formalism, so we do not discuss it here. 
\end{rmk}

\newpage 
\section{First Chern classes}\label{section:Chern-classes}

We note that Theorem~\ref{thm:cohomologically-smooth} and Theorem~\ref{thm:formula-dualizing-complex} (or Theorem~\ref{thm:formula-dualizing-complex-geometric}) together already imply a big part of Poincar\'e Duality. More precisely, Theorem~\ref{thm:cohomologically-smooth} gives a minimalistic way to check that all smooth morphisms are cohomologically smooth with respect to a $6$-functor formalism $\cal{D}$, and Theorem~\ref{thm:formula-dualizing-complex} gives a ``formula'' for the dualizing object $\omega_f=f^! \bf{1}_Y$.\smallskip

However, in many cases, the dualizing object has a particularly nice description as the tensor power of the ``Tate object'' (e.g. relative reduced cohomology of the projective line). This description is not automatic and does not happen for all (geometric) $6$-functor formalisms (e.g. this is false for the (solid) quasi-coherent $6$-functors). Therefore, this further trivialization requires some new argument. \smallskip

In this section, we give different conditions that imply that a $6$-functor formalism $\cal{D}$ automatically satisfies the strongest possible version of Poincar\'e Duality. The strategy is to use Chern classes to both construct the trace map for the relative projective line, and trivialize the dualizing object. \smallskip

We get essentially the optimal result if $\cal{D}$ satisfies the excision axiom (see Definition~\ref{defn:excision-axiom}); in this case, the existence of a theory of first Chern classes (see Definition~\ref{defn:geometric-chern-classes-without-length}) implies Poincar\'e Duality. After unravelling the definition, a theory of first Chern classes essentially boils down to a sufficiently functorial additive assignment of a first Chern class $c_1(\cal{L})$ to a line bundle $\cal{L}$ with the constraint that it satisfies the projective bundle formula for the relative projective line. \smallskip

For a general $6$-functor formalism (not necessarily satisfying the excision axiom), the results are slightly weaker, and we require additional assumptions on $\cal{D}$ to obtain Poincar\'e Duality. Namely, we assume that $\cal{D}$ is either weakly $\bf{A}^1$-invariant or pre-geometric (see Definition~\ref{defn:6-functors-geometric}), and that there exists a theory of cycle maps underlying a {\it strong} theory of first Chern classes (see Definition~\ref{defn:geometric-chern-classes-without-length}). While the results are not as strong as in the case of $6$-functor formalisms with the excision axiom, these conditions appear verifiable in practice. \smallskip


For the rest of the section, we fix a locally noetherian analytic adic space $S$ (resp. a scheme $S$). We denote by $\cal{C}$ the category of locally finite type (resp. locally finitely presented) adic $S$-spaces (resp. $S$-schemes), and fix a $6$-functor formalism $\cal{D}\colon \rm{Corr}(\cal{C})\to \Cat_\infty$. We also fix an inverible object $\bf{1}_S\langle 1\rangle \in \cal{D}(S)$. \smallskip

\subsection{Notation}

In this section, we fix some notation that we will freely use later. We recall that we fixed an invertible object $\bf{1}_S\langle 1\rangle \in \cal{D}(S)$ for the rest of this section. \smallskip

\begin{notation}
    \begin{enumerate}
    \item\label{notation:tate-twists-base}(Tate objects) For a non-negative integer $d\geq 0$, we define {\it Tate objects} 
    \[
    \bf{1}_S\langle d\rangle \coloneqq \bf{1}_S\langle 1\rangle^{\otimes d} \in \cal{D}(S).
    \]
    Using that $\bf{1}_S\langle 1\rangle$ is invertible, we extend the above formula to negative integers $d$ by the following formula: 
    \[
    \bf{1}_S\langle d\rangle \coloneqq (\bf{1}_S\langle -d\rangle)^{\vee} \in \cal{D}(S).
    \]
    \item\label{notation:tate-twists}(Tate twists) In general, for a morphism $f\colon X \to S$, an object $\F\in \cal{D}(X)$, and an integer $d$, we define {\it its Tate twist}
    \[
    \F\langle d\rangle \coloneqq \F\otimes f^* \bf{1}_S\langle d \rangle \in \cal{D}(X).
    \]
    In particular, the object $\bf{1}_X\langle d\rangle\in \cal{D}(X)$ is defined to be $f^*\bf{1}_S\langle d \rangle$. 
    \end{enumerate}
\end{notation}

\subsection{Theory of first Chern classes}\label{section:first-chern-classes}

The main goal of this section is to define the notion of a theory of first Chern classes and verify some of its formal properties. \smallskip

We start the section by giving a precise definition of a theory of first Chern classes. This will be convenient to do in the $\infty$-categorical setting to automatically keep track of all higher coherences. One nice feature of this definition, is that it allows us to define localized first Chern classes for free, while in the $1$-categorical approach, it seems to be extra data. \smallskip

Recall that we have fixed a $6$-functor formalism
\[
\cal{D} \colon \Corr(\cal{C}) \to \Cat_\infty
\]
with an invertible object $\bf{1}_S\langle 1\rangle \in \cal{D}(S)$. \smallskip

\begin{notation} We write $\cal{C}_{\rm{an}}$ for the site whose underlying category is the category $\cal{C}$ and whose coverings are analytic open coverings (resp. Zariski open coverings). 
\end{notation}

We consider sheaf of abelian group 
\[
\O^\times\colon \cal{C}_{\rm{an}}^{\rm{op}} \to \bf{Mod}_\Z
\]
defined by $X\mapsto \O_X^\times(X)$. We can compose it with the natural morphism $\mathbf{Mod}_\Z \to \cal{D}(\Z)$, to get the $\infty$-functor $ \O^{\times} \colon \cal{C}_{\rm{an}}^{\rm{op}} \to \cal{D}(\Z)$. This functor is {\it not} a $\cal{D}(\Z)$-valued sheaf (in the sense \cite[Def.\,1.3.1.1]{SAG}). 

\begin{notation} The sheafification of the $\cal{D}(\Z)$-valued functor $\O^\times$ is the functor
\[
\rm{R}\Gamma_{\rm{an}}(-, \O^\times) \colon \cal{C}_{\rm{an}}^{\rm{op}} \to \cal{D}(\Z).
\]
By \cite[L.\,3, Cor.\,11]{Dustin-lectures}, the values of this functor on an object $X\in \cal{C}$ are canonically identified with $\rm{R}\Gamma_{\rm{an}}(X, \O^\times_X)$ justifying the name. In what follows, we will usually consider the functor $\rm{R}\Gamma_{\rm{an}}(-, \O^\times)$ as an $\rm{Sp}$-valued sheaf by compositing with the natural ``forgetful'' functor $\cal{D}(\Z) \to \cal{D}(\rm{Sp})$. 
\end{notation}

\begin{notation}\label{notation:absolute-cohomology}  We also consider absolute cohomology as an $\rm{Sp}$-valued functor 
\[
\rm{R}\Gamma(-, \bf{1}\langle c\rangle) \colon \cal{C}^{\rm{op}}_{\rm{an}} \to \rm{Sp}
\]
that sends an object $X\in \cal{C}$ to $\rm{R}\Gamma(X, \bf{1}_X\langle c\rangle)=\rm{Hom}_{X}(\bf{1}_X, \bf{1}_X\langle c\rangle)$. One easily checks that it is a $\rm{Sp}$-valued sheaf due to the fact that  $\cal{D}$ satisfies analytic descent.
\end{notation}

\begin{defn}\label{defn:chern-classes} A {\it weak theory of first Chern classes} on a $6$-functor formalism $\cal{D}$ is a morphism 
\[
c_1 \colon \rm{R}\Gamma_{\rm{an}}(-, \O^\times)[1] \to \rm{R}\Gamma(-, \bf{1}\langle 1\rangle)
\]
of $\rm{Sp}$-valued sheaves on $\cal{C}_{\rm{an}}$.
\end{defn}

This definition may seem a bit random at first. However, it does have a strong connection to is classically called a theory of (additive) first Chern classes. We will see in a moment that this definition, in particular, assigns a cohomology class to each line bundle. Furthermore, this assignment is sufficiently functorial so, in the presence of the excision axiom, it even allows us to assign ``localized'' classes to a line bundle with a trivialization. It also encodes functoriality and additivity of this classes. \smallskip


In the following remark, we partially unravel the content of Definition~\ref{defn:chern-classes}. 

\begin{rmk}\label{rmk:properties-first-chern-theory-unlocalized}
\begin{enumerate}
    \item\label{rmk:properties-first-chern-theory-unlocalized-classes}(First Chern classes) By passing to $\rm{H}^0$, a weak theory of first Chern classes gives a group homomorphism
    \[
    \rm{H}^1_{\rm{an}}(X, \O^\times_X) \to \rm{H}^0(X, \bf{1}_X\langle 1\rangle).
    \]
    Recall that the group $\rm{H}^1_{\rm{an}}(X, \O^\times_X)$ classifies the isomorphism classes of line bundles on $X$, so, for each isomorphism class of line bundles $\cal{L}$, a weak theory of first Chern classes assigns {\it the first Chern class of $\cal{L}$} as an element 
    \[
    c_1(\cal{L})\in \rm{H}^0(X, \bf{1}\langle1\rangle)= \rm{Hom}_{D(X)}(\bf{1}_X, \bf{1}_X\langle 1 \rangle).
    \]
    For our purposes, it will be convenient to also consider this class as a homotopy class of morphisms
    \[
    c_1(\cal{L}) \colon \bf{1}_X \to \bf{1}_{X}\langle 1\rangle.
    \]
     \item\label{rmk:properties-first-chern-theory-classes-unlocalized-additivity}(Additivity) Since $c_1$ is a map of spectra, we see that first Chern classes are additive. If $\cal{L}$ and $\cal{L}'$ two isomorphism classes of line bundles on $X$, then 
    \[
    c_1(\cal{L}) + c_1(\cal{L}') = c_1(\cal{L}\otimes \cal{L}').
    \]
    \item\label{rmk:properties-first-chern-theory-unlocalizaed-base-change}(Base Change) The formation of $c_1(\cal{L})$ commutes with arbitrary base due to functoriality of $c_1$. More precisely, if $Y\to X$ is a morphism in $\cal{C}$. Then we have an equality of classes
    \[
    f^*\big(c_1(\cal{L})\big) = c_1\big(f^*\cal{L}\big) \in \rm{Hom}_{D(Y)}(\bf{1}_{Z'}, \bf{1}_{Y}\langle 1\rangle). 
    \]
    \end{enumerate}
\end{rmk}

Now we show that if $\cal{D}$ satisfies the excision axiom (see Definition~\ref{defn:excision-axiom}), then one can also define the localized version of the usual first Chern classes:

\begin{rmk}\label{rmk:properties-first-chern-theory}
\begin{enumerate}
    \item\label{rmk:properties-first-chern-theory-localized-classes}(Localized first Chern classes) More generally, let $Z\xhookrightarrow{i} X$ be a Zariski-closed subset with the complement $U$. Then the group
    \[
    \rm{H}^0\bigg(\rm{fib} \left(\rm{R}\Gamma_{\rm{an}}(X, \O_X^\times) \to \rm{R}\Gamma_{\rm{an}}(U, \O_U^\times)\right)[1]\bigg)=\rm{H}^1_Z(X, \O_X^\times)
    \]
    classifies\footnote{Even though this fact is well-known, it does not seem to be explicitly formulated in the literature. The interested reader may adapt the argument used in \cite[2.13]{Olsson} to this situation.} isomorphism classes of pairs $(\cal{L}, \phi_U)$ of a line bundle $\cal{L}$ and a trivialization $\phi\colon \O_U \to \cal{L}|_{U}$ on $U$. Therefore, for any such isomorphism class, a weak theory of first Chern classes assigns the {\it localized Chern class of $(\cal{L}, \varphi_U)$} as an element\footnote{Use the excision sequence from Remark~\ref{rmk:excision-sequence} for the second isomorphism below.}
    \begin{align*}
    c_1(\cal{L}, \varphi_U) & \in \rm{H}^0\bigg(\rm{fib} \left(\rm{R}\Gamma(X, \bf{1}_X\langle 1\rangle) \to \rm{R}\Gamma(U, \bf{1}_U\langle 1\rangle)\right)\bigg) \\ 
    &\simeq \rm{H}^0_Z(X, \bf{1}_X\langle1\rangle) = \rm{Hom}_{D(X)}(i_*\bf{1}_Z \to \bf{1}_{X}\langle 1\rangle).
    \end{align*}
    Again, for our purposes, it will also be convenient to think about the localized first Chern class as of a homotopy class of morphisms
    \[
    c_1(\cal{L}, \varphi_U) \colon i_*\bf{1}_Z \to \bf{1}_{X}\langle 1\rangle.
    \]
    Non-localized first Chern classes can be recovered from this construction by taking $Z=X$.
    \item\label{rmk:properties-first-chern-theory-additivity}(Additivity) Since $c_1$ is a map of spectra, we see that localized first Chern classes are additive. If $(\cal{L}, \varphi_U)$ and $(\cal{L}', \varphi'_U)$ two isomorphism classes of line bundles with a trivialization on $U$, then 
    \[
    c_1(\cal{L}, \varphi_U) + c_1(\cal{L}', \varphi'_U) = c_1(\cal{L}\otimes \cal{L}', \varphi_U\otimes \varphi'_U).
    \]
    \item\label{rmk:properties-first-chern-theory-base-change}(Base Change) The formation of $c_1(\cal{L}, \varphi_U)$ commutes with arbitrary base due to functoriality of $c_1$. More precisely, if 
    \[
    \begin{tikzcd}
    Z' \arrow{d}{i'} \arrow{r}{f'} & Z \arrow{d}{i} \\
    Y \arrow{r}{f} & X
    \end{tikzcd}
    \]
    is a cartesian diagram in $\cal{C}$. Then we have an equality of classes
    \[
    f^*\big(c_1(\cal{L}, \varphi_U)\big) = c_1\big(f^*\cal{L}, f^*(\varphi_U)\big) \in \rm{Hom}_{D(Y)}(i'_*\bf{1}_{Z'}, \bf{1}_{Y}\langle 1\rangle). 
    \]
    In other words, the diagram
    \[
    \begin{tikzcd}[column sep = 5em]
        f^*i_* \bf{1}_Z \arrow{d}{\wr} \arrow{r}{f^*\bigl(c_1(\cal{L}, \varphi_U)\bigr)} & f^*(\bf{1}_{X}\langle 1\rangle) \arrow{d}{\wr} \\
        i'_*\bf{1}_{Z'} \arrow{r}{c_1\big(f^*\cal{L}, f^*\varphi_U\big)} & \bf{1}_Y\langle 1\rangle
    \end{tikzcd}
    \]
    commutes (up to homotopy), where the left vertical map is the base-change morphism. 
    \item\label{rmk:properties-first-chern-theory-localization}(Localization) Now we discuss another instance of functoriality of $c_1$. Let $i_1$ and $i_2$
    \[
    \begin{tikzcd} Z_1 \arrow[rr, bend left, "i_1"] \arrow{r} & Z_2\arrow{r}{i_2} & X\end{tikzcd}
    \]
    be Zariski-closed immersions with open complements $U_1$ and $U_2$ respectively, and $(\cal{L}, \varphi_{U_1})$ a pair of a line bundle and its trivialization on $U_1$. Then the diagram
    \[
    \begin{tikzcd}[row sep = 3em, column sep =3em]
        i_{2, *} \bf{1}_{Z_2} \arrow[swap]{rd}{c_1(\cal{L}, \varphi_{U_1}|_{U_2})} \arrow{r} & i_{1, *}\bf{1}_{Z_1} \arrow{d}{c_1(\cal{L}, \varphi_{U_1})} \\
        & \bf{1}_{X}\langle 1\rangle 
    \end{tikzcd}
    \]
    commutes (up to homotopy).
\end{enumerate}
\end{rmk}

\begin{construction}\label{construction:adjoint-classes} Suppose that $f\colon X \to Y$ is a morphism is $\cal{C}$ and $c\colon f^*\bf{1}_Y=\bf{1}_X \to \bf{1}_X\langle1\rangle$ is a morphism in $\cal{D}(X)$. By the $(f^*, f_*)$-adjunction, this uniquely defines a morphism
\[
^\rm{adj}c\colon \bf{1}_Y  \to f_*\bf{1}_X\langle 1\rangle.
\]
Unless there is some possible confusion, we will denote the morphism $^\rm{adj}c$ simply by $c$. Applying the same construction to tensor powers of $c$, we get morphisms
\[
c^k \colon \bf{1}_Y \to f_* \bf{1}_X \langle k \rangle.
\]
We note that, for $k=0$, we get simply the adjunction morphism that we denote by 
\[
f^*\colon \bf{1}_Y \to f_*\bf{1}_X.
\]
\end{construction}

Now we apply this construction to the projective bundle $f\colon \bf{P}_Y(\cal{E}) \to Y$ for some vector bundle $\cal{E}$ on $Y$ of rank $d+1$ (see \cite[Def.\,7.11]{adic-notes}) and the first Chern class morhism of the universal line bundle:
\[
c_1=c_1(\O(1))\colon \bf{1}_{\bf{P}_Y(\cal{E})}\to \bf{1}_{\bf{P}_Y(\cal{E})}\langle 1\rangle.
\]
Then Construction~\ref{construction:adjoint-classes} gives us a morphism
\[
\sum_{k=0}^d c_1^k\langle d-k\rangle \colon \bigoplus_{k=0}^d \bf{1}_Y\langle d-k\rangle \to f_* \bf{1}_{\bf{P}_Y(\cal{E})}\langle d\rangle.
\]

\begin{defn}\label{defn:geometric-chern-classes-without-length} A {\it theory of first Chern classes} is a weak theory of first Chern classes $c_1$ such that, for the relative projective line $f\colon \bf{P}^1_S \to S$, the morphism
\[
c_1 + f^*\langle 1\rangle \colon \bf{1}_S \oplus \bf{1}_S\langle 1\rangle \to f_* \bf{1}_{\bf{P}^1_S}\langle 1\rangle.
\]
is an isomorphism. 

A {\it strong theory of first Chern classes} is a weak theory of first Chern classes $c_1$ such that, for any integer $d\geq 1$ and the relative projective space $f\colon \bf{P}^d_S \to S$, the morphism
\[
\sum_{k=0}^d c_1^k\langle d-k\rangle \colon \bigoplus_{k=0}^d \bf{1}_S\langle d-k\rangle \to f_* \bf{1}_{\bf{P}_S^d}\langle d\rangle.
\]
is an isomorphism. 
\end{defn}

\begin{rmk} Definition~\ref{defn:geometric-chern-classes-without-length} implies that, if $c_1$ is a theory of first Chern classes, then
\[
\bf{1}_S\langle -1\rangle \simeq \rm{Cone}\left(\bf{1}_S \to f_* \bf{1}_{\bf{P}^1_S}\right).
\]
So the invertible object $\bf{1}_S\langle 1\rangle$ is unique up to an isomorphism, and axiomitizes the ``Tate twist''. 
\end{rmk}

\begin{lemma}\label{lemma:projective-bundle-formula}(Projective Bundle Formula) Let $c_1$ be a theory of strong first Chern classes, let $Y$ be an element of $\cal{C}$, and let $f\colon \bf{P}_Y(\cal{E}) \to Y$ be the projective bundle associated to a vector bundle $\cal{E}$ of rank $d+1$. Then the morphism
\[
\sum_{k=0}^d c_1^k\langle d-k\rangle \colon \bigoplus_{k=0}^d \bf{1}_Y\langle d-k\rangle \to f_* \bf{1}_{\bf{P}_Y(\cal{E})}\langle d\rangle.
\]
is an isomorphism. If $c_1$ is a theory first Chern classes, the same holds for vector bundles of rank $2$. 
\end{lemma}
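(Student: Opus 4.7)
The plan is to reduce the statement to the base case built into Definition~\ref{defn:geometric-chern-classes-without-length} by performing two successive reductions: first trivialize $\cal{E}$ locally on $Y$, then base-change from $Y$ down to $S$. The main work is simply verifying that the morphism in question is compatible with both reductions, which is essentially the content of Remark~\ref{rmk:properties-first-chern-theory-unlocalized}(\ref{rmk:properties-first-chern-theory-unlocalizaed-base-change}) together with proper base change.

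First I would show that it is enough to prove the statement after replacing $Y$ by an analytic (resp. Zariski) cover. The left-hand side $\bigoplus_{k=0}^{d}\bf{1}_Y\langle d-k\rangle$ satisfies analytic descent because $\cal{D}^*|_{\cal{C}^{\mathrm{op}}}$ does. For the right-hand side, note that $f$ is proper, so $f_* = f_!$ commutes with arbitrary base change; thus formation of $f_*\bf{1}_{\bf{P}_Y(\cal{E})}\langle d\rangle$ is also analytically local on $Y$. The map $\sum c_1^k\langle d-k\rangle$ itself is local: this is because $c_1(\O(1))$ is compatible with pullback by Remark~\ref{rmk:properties-first-chern-theory-unlocalized}(\ref{rmk:properties-first-chern-theory-unlocalizaed-base-change}), and the higher powers $c_1^k$ are obtained from Construction~\ref{construction:adjoint-classes} (again via $(f^*,f_*)$-adjunction, which is natural in $Y$). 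Hence we may pick a cover of $Y$ on which $\cal{E}$ is free and reduce to the case $\cal{E}\simeq \O_Y^{d+1}$, so that $\bf{P}_Y(\cal{E})\simeq \bf{P}^d_Y$.

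Next I would reduce from an arbitrary $Y\in\cal{C}$ to $Y=S$. Let $\pi\colon Y\to S$ be the structure morphism and consider the Cartesian square
\[
\begin{tikzcd}
    \bf{P}^d_Y \arrow{r}{\pi'} \arrow{d}{f} & \bf{P}^d_S \arrow{d}{g} \\
    Y \arrow{r}{\pi} & S.
\end{tikzcd}
\]
Since $g$ (and hence $f$) is proper, proper base change gives $\pi^*g_*\bf{1}_{\bf{P}^d_S}\langle d\rangle \simeq f_*\bf{1}_{\bf{P}^d_Y}\langle d\rangle$. The summand $\bigoplus_{k=0}^d\bf{1}_Y\langle d-k\rangle$ is tautologically $\pi^*$ of $\bigoplus_{k=0}^d\bf{1}_S\langle d-k\rangle$. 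The universal line bundle $\O_{\bf{P}^d_Y}(1)$ is the pullback of $\O_{\bf{P}^d_S}(1)$ under $\pi'$, so Remark~\ref{rmk:properties-first-chern-theory-unlocalized}(\ref{rmk:properties-first-chern-theory-unlocalizaed-base-change}) identifies the class $c_1(\O_{\bf{P}^d_Y}(1))$ with the pullback of $c_1(\O_{\bf{P}^d_S}(1))$; chasing through Construction~\ref{construction:adjoint-classes} and the proper base change isomorphism then identifies the whole map $\sum c_1^k\langle d-k\rangle$ over $Y$ with $\pi^*$ of the same map over $S$.

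Thus the general case is reduced to the case $Y=S$, $\cal{E}=\O_S^{d+1}$, which is exactly the hypothesis in Definition~\ref{defn:geometric-chern-classes-without-length} for a strong theory of first Chern classes. For the final assertion, when $c_1$ is only assumed to be a theory of first Chern classes, the identical argument works but only the case $d=1$ of the base statement is available, so we can conclude only for vector bundles of rank $2$. The one step that requires care, and which I expect to be the main (mild) obstacle, is checking that the definition of the map $\sum c_1^k\langle d-k\rangle$ via Construction~\ref{construction:adjoint-classes} truly commutes with $\pi^*$ up to homotopy; this is a diagram chase combining the $(f^*,f_*)$-unit compatibility, proper base change, and the naturality of $c_1$ encoded in Definition~\ref{defn:chern-classes}.
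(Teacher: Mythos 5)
Your proposal is correct and follows essentially the same route as the paper's proof: localize analytically (resp.\ Zariski) on $Y$ to trivialize $\cal{E}$, then use proper base change and the base-change compatibility of $c_1(\O(1))$ to reduce to the case $Y=S$, where the statement is exactly the defining condition of a (strong) theory of first Chern classes. The paper compresses this into a few lines, while you spell out the descent and base-change compatibilities explicitly, but the argument is the same.
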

\begin{proof}
    Since $\cal{D}$ is an analytic sheaf, we can check that $\sum_{i=0}^d c_1^k\langle d-k\rangle$ is an isomorphism analytically locally on $Y$. Therefore, we may and do assume that $\cal{E}$ is a trivial vector bundle of rank $d$. In this case, the result follows from Definition~\ref{defn:geometric-chern-classes-without-length}, shriek base change (together with the canonical equivalence $f_!\simeq f_*$), and the fact that $c_1(\O(1))$ commutes with base change along $Y\to S$. 
\end{proof}

Now we show that a {\it strong} theory of first Chern classes automatically implies that the braiding morphism
\[
    s\colon \bf{1}_S\langle 1\rangle^{\otimes 2} \to \bf{1}_S\langle 1\rangle^{\otimes 2}
\]
is homotopic to the identity morphism. This will be used later to simplify the second diagram in Definition~\ref{defn:trace-cycle} in the presence of a strong theory of first Chern classes. 

\begin{lemma}\label{lemma:first-chern-classes-orientable} Let $c_1$ be a theory of strong first Chern classes on a $6$-functor formalism $\cal{D}$. Then the braiding morphism
\[
    s\colon \bf{1}_S\langle 1\rangle^{\otimes 2} \to \bf{1}_S\langle 1\rangle^{\otimes 2}
\]
is homotopic to the identity morphism. 
\end{lemma}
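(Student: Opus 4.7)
The plan is to pin down $s$ by testing it against the ``top'' Chern class $c_1^2$ produced by the strong projective bundle formula for $\bf{P}^2_S$. Let $g\colon \bf{P}^2_S \to S$ denote the structure morphism. By hypothesis, the morphism
\[
\sum_{k=0}^{2} c_1^k\langle 2-k\rangle \colon \bigoplus_{k=0}^{2} \bf{1}_S\langle 2-k\rangle \xr{\sim} g_*\bf{1}_{\bf{P}^2_S}\langle 2\rangle
\]
is an isomorphism. In particular the $k=2$ component $c_1^2 \colon \bf{1}_S \to g_*\bf{1}_{\bf{P}^2_S}\langle 2\rangle$ exhibits $\bf{1}_S$ as a direct summand; let $\pi \colon g_* \bf{1}_{\bf{P}^2_S}\langle 2\rangle \to \bf{1}_S$ be the corresponding retraction, so $\pi \circ c_1^2 = \rm{id}_{\bf{1}_S}$.

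The key input is graded commutativity for $c_1$. By Construction~\ref{construction:adjoint-classes}, the $(g^*, g_*)$-adjoint of $c_1^2$ is the morphism $c_1 \otimes c_1 \colon \bf{1}_{\bf{P}^2_S} \to \bf{1}_{\bf{P}^2_S}\langle 1\rangle^{\otimes 2}$. Naturality of the braiding in $\cal{D}(\bf{P}^2_S)$, combined with the fact that the braiding on $\bf{1}_{\bf{P}^2_S}\otimes \bf{1}_{\bf{P}^2_S}$ equals the identity (a standard consequence of the coherence of the symmetric monoidal structure and the fact that $\bf{1}$ is the unit), yields
\[
g^*(s) \circ (c_1\otimes c_1) = c_1\otimes c_1,
\]
where we use that $g^*$ is symmetric monoidal, so $g^*(s)$ is the braiding on $\bf{1}_{\bf{P}^2_S}\langle 1\rangle^{\otimes 2}$. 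Passing back through the adjunction and composing with $\pi$ yields the identity $\pi \circ g_*(g^*s) \circ c_1^2 = \rm{id}_{\bf{1}_S}$ in $D(S)$.

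Finally, the projection formula identifies $g_*(g^*s)$ with $s\otimes \rm{id}_{g_*\bf{1}_{\bf{P}^2_S}}$ under the canonical isomorphism $g_*\bf{1}_{\bf{P}^2_S}\langle 2\rangle \simeq \bf{1}_S\langle 2\rangle \otimes g_*\bf{1}_{\bf{P}^2_S}$. Since $\bf{1}_S\langle 1\rangle$ is invertible, $s$ corresponds under the canonical identification $\rm{End}_{D(S)}(\bf{1}_S\langle 2\rangle)\simeq \rm{End}_{D(S)}(\bf{1}_S)$ to a unique element $\epsilon \in \rm{End}_{D(S)}(\bf{1}_S)$, and via the self-duality $\bf{1}_S\langle 2\rangle\otimes \bf{1}_S\langle -2\rangle \simeq \bf{1}_S$ identifying the $k=2$ summand of $g_*\bf{1}_{\bf{P}^2_S}\langle 2\rangle$ with $\bf{1}_S$, the endomorphism $s\otimes \rm{id}$ restricts to multiplication by $\epsilon$ on this summand. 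The relation above then forces $\epsilon = \rm{id}_{\bf{1}_S}$, and hence $s = \rm{id}$.

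The main technical subtlety to watch out for is the coherent bookkeeping of three identifications at once: the projection formula isomorphism, the direct-summand decomposition from the strong projective bundle formula, and the self-duality isomorphism for $\bf{1}_S\langle 1\rangle$. Once these are set up compatibly, the computation above goes through as written.
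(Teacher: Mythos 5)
Your proof is correct and takes essentially the same approach as the paper: both arguments use the strong projective bundle formula for $\bf{P}^2_S$ to split $\bf{1}_S$ off of $g_*\bf{1}_{\bf{P}^2_S}\langle 2\rangle$ via $c_1^2$ and reduce everything to the fact that the braiding on the unit $\bf{1}_{\bf{P}^2_S}$ is the identity. The only (cosmetic) difference is the transport mechanism: the paper pushes the braiding through the lax monoidal (cup-product) structure of the pushforward applied to $\bf{1}_{\bf{P}^2_S}^{\otimes 2}$ and concludes from split injectivity of $^{\rm{adj}}c_1^{2}$, whereas you pull the braiding back along the symmetric monoidal $g^*$, descend via adjunction and the projection formula, and conclude by the scalar argument against the retraction $\pi$.
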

\begin{proof}
    Firstly, it suffices to prove the analogous claim for $\bf{1}_S\langle -1\rangle$. The key is that $\bf{1}_S\langle -1\rangle$ can be realized as a direct summand of the ``relative'' cohomology of $\bf{P}^2_S$ .\smallskip
    
    We first fix the relative projective space $f\colon \bf{P}^2_S \to S$. Now we note that $f_*$ is a right-adjoint to a symmetric monoidal functor $f^*$, so it is lax-monoidal. In particular, for every object $\F\in D(\bf{P}^2_S)$ with the braiding morphism $s_{\F}\colon \F^{\otimes 2} \to \F^{\otimes 2}$, we have a commutative diagram
    \begin{equation}\label{eqn:lax-monoidal}
    \begin{tikzcd}
    (f_* \F)^{\otimes 2} \arrow{d}{s_{f_*\F}} \arrow{r}{\cup} & f_*( \F^{\otimes 2}) \arrow{d}{f_*(s_\F)} \\
    (f_* \F)^{\otimes 2} \arrow{r}{\cup} & f_*( \F^{\otimes 2}).
    \end{tikzcd}
    \end{equation}
    in the homotopy category $D(S)$. \smallskip
    
    Now we consider the (twisted) first Chern class morphism $c_1(\cal{O}(1))\langle -1\rangle \colon \bf{1}_{\bf{P}^2_S} \langle -1 \rangle \to \bf{1}_{\bf{P}^2_S}$. Then similarly to Construction~\ref{construction:adjoint-classes}, we get the morphism
    \[
    ^{\rm{adj}}c_1 \colon \bf{1}_S\langle -1\rangle \to f_* \bf{1}_{\bf{P}^2_S}. 
    \]
    The same construction applied to $c_1(\cal{O}(1))\langle-1\rangle^{\otimes 2} \colon \bf{1}_{\bf{P}^2_S} \langle -1 \rangle^{\otimes 2} \to \bf{1}_{\bf{P}^2_S}^{\otimes 2}$ produces the morphism
    \[
    ^{\rm{adj}}c_1^{2} \colon \bf{1}_S\langle -1\rangle^{\otimes 2} \to f_*(\bf{1}_{\bf{P}^2_S}^{\otimes 2}).
    \]
    A formal diagram chase implies that the diagram 
    \[
    \begin{tikzcd}
    \bf{1}_{S}\langle -1\rangle^{\otimes 2}\arrow[swap]{d}{^{\rm{adj}}c_1\otimes ^{\rm{adj}}c_1} \arrow{rd}{^{\rm{adj}}c_1^{2}} & \\
    \bigl(f_* \bf{1}_{\bf{P}^2_S}\bigr)^{\otimes 2} \arrow{r}{\cup} & f_*\bigl(\bf{1}_{\bf{P}^2_S}^{\otimes 2} \bigr)
    \end{tikzcd}
    \]
    commutes in $D(S)$. Definition~\ref{defn:geometric-chern-classes-without-length} (with maps twisted by $\bf{1}\langle -2\rangle$) implies that $^{\rm{adj}}c_1^{2}$ realizes $\bf{1}_S\langle -1\rangle^{\otimes 2}$ as a direct summand of $f_* \bigl(\bf{1}_{\bf{P}^2_S}^{\otimes 2} \bigr)$. Now we consider the commutative diagram 
    \[
    \begin{tikzcd}[column sep = 5em]
    \bf{1}_{S}\langle -1\rangle^{\otimes 2} \arrow{d}{s} \arrow{r}{^{\rm{adj}}c_1\otimes ^{\rm{adj}}c_1} \arrow[bend left]{rr}{^{\rm{adj}}c_1^{2}} & \bigl(f_* \bf{1}_{\bf{P}^2_S}\bigr)^{\otimes 2} \arrow{r}{\cup} \arrow{d}{s_{f_*(\bf{1})}}& f_*\bigl(\bf{1}_{\bf{P}^2_S}^{\otimes 2} \bigr) \arrow{d}{f_*(s_{\bf{1}})} \\
    \bf{1}_{S}\langle -1\rangle^{\otimes 2} \arrow{r}{^{\rm{adj}}c_1\otimes ^{\rm{adj}}c_1} \arrow[bend right]{rr}{^{\rm{adj}}c_1^{2}} & \bigl(f_* \bf{1}_{\bf{P}^2_S}\bigr)^{\otimes 2} \arrow{r}{\cup} & f_*\bigl(\bf{1}_{\bf{P}^2_S}^{\otimes 2} \bigr),
    \end{tikzcd}
    \]
    where $s$ stands for the braiding morphisms. The left square commutes by the definition of a symmetric monoidal category, and the right square commutes due to Diagram~(\ref{eqn:lax-monoidal}). Since $^{\rm{adj}}c_1^{2}$ splits, it suffices to show that $f_*(s_{\bf{1}})$ is equal to $\rm{id}$. But this is clear since the braiding morphism of the unit object is homotopic to the identity morphism.
\end{proof}

In the next couple of sections, we will show how a theory of first Chern classes can be used to prove the full version of Poincar\'e Duality. 

\subsection{Theory of cycle maps}

The main goal of this section is to axiomitize a theory of cycle maps (for divisors) on a $6$-functor formalism $\cal{D}$ ``compatible'' with a weak theory of first Chern classes $c_1$ on $\cal{D}$. Then we show that, if $\cal{D}$ satisfies the excision axiom, one can canonically construct such a theory from any weak theory of first Chern classes. 

\subsubsection{Definitions}

In this subsection, we explain the definition of a theory of cycle maps (for divisors) and what it means for a theory of first Chern classes to underlie a theory of cycle maps.  \smallskip

As previously, we fix an invertible object $\bf{1}_S\langle 1\rangle\in \cal{D}(S)$ and always consider (weak) theories of first Chern Classes with respect to this invertible object. \smallskip

\begin{defn}\label{defn:cartier-divisor} Let $i\colon D\hookrightarrow X$ be an effective Cartier divisor with the associated coherent ideal sheaf $\cal{I}=\rm{ker}(\O_X \to i_*\O_D)\subset \O_X$ (see \cite[Def.\,5.3]{adic-notes}). The {\it associated line bundle} $\O_X(D) \coloneqq \cal{I}^{\vee}$ is the dual of $\cal{I}$, we denote its dual by $\O_X(-D)$ (that is simply just a different name for $\cal{I}$).
\end{defn}

\begin{defn}\label{defn:theory-of-cycle-classes} A {\it theory of cycles maps (for effective Cartier divisors)} $\cl_{\bullet}$ on a $6$-functor formalism $\cal{D}\colon \Corr(\cal{C})\to \Cat_\infty$ is a collection of morphisms 
\[
\rm{cl}_i\colon i_* \bf{1}_Y \to \bf{1}_X\langle 1\rangle \text{ in the homotopy category } D(X)
\]
for each effective Cartier divisor $i\colon Y \to X$ such that they satisfy transversal base change, i.e., for any cartesian diagram 
    \[
    \begin{tikzcd}
    Y' \arrow{r}{g'} \arrow[d, hook, "i'"] & Y \arrow[d, hook, "i"] \\
    X' \arrow{r}{g} & X
    \end{tikzcd}
    \]
    such that the vertical arrows are effective Cartier divisors, the diagram \[
    \begin{tikzcd}
    g^* i_* \bf{1}_Y \arrow{d}{\wr} \arrow{r}{g^*(\rm{cl}_i)} & g^*(\bf{1}_X\langle 1\rangle) \arrow{d}{\wr}\\
    i'_* \bf{1}_{Y'} \arrow{r}{\rm{cl}_{i'}}& \bf{1}_{X'}\langle 1\rangle 
    \end{tikzcd}
    \]
    commutes in $D(X)$.
\end{defn}

\begin{defn}\label{defn:theory-of-cycle-classes-chern-classes} A weak theory of first Chern classes $c_1$ {\it underlies a theory of cycle maps $\cl_{\bullet}$} if, for every effective Cartier divisor $i\colon Y\to X$, the composition
\[
\bf{1}_X \to i_*\bf{1}_Y \xr{\cl_i} \bf{1}_X\langle 1\rangle
\]
is equal to $c_1(\O_X(Y))$ in the homotopy category $D(X)$.
\end{defn}

For the next remark, we fix a weak theory of first Chern classes $c_1$ underlying a theory of cycle maps $\cl_\bullet$. 

\begin{rmk}\label{rmk:first-chern-first-cycle-comp} Let $f\colon X \to Y$ be a morphism in $\cal{C}$ and let $i\colon D \hookrightarrow X$ be an effective Cartier divisor. We can apply Construction~\ref{construction:adjoint-classes} to the composition morphism
\[
\begin{tikzcd}[column sep =5em]
\bf{1}_X \arrow[bend left]{rr}{c_1(\cal{O}_X(D))} \arrow{r} & i_* \bf{1}_D  \arrow{r}{\rm{cl}_i}& \bf{1}_X\langle 1\rangle
\end{tikzcd}
\]
to get the morphism $c\colon \bf{1}_Y \to f_*\bf{1}_{X}\langle 1\rangle$. Then $c$ has an alternative description as the composition
\[
\bf{1}_Y \xr{} f_*i_* \bf{1}_D \xr{f_*(\rm{cl}_i)} f_*(\bf{1}_X)\langle 1\rangle.
\]
\end{rmk}

\subsubsection{Constructing cycle maps}

The main goal of this subsection is to show that, if $\cal{D}$ satisfies the excision axiom, then any weak theory of first Chern classes $c_1$ canonically underlies a theory of cycle maps. 

\begin{warning} We do not know a way to extract a theory of cycle maps from a weak theory of first Chern classes without the excision axiom. In practice, however, all $6$-functor formalisms with a strong theory of first Chern classes admit a compatible theory of cycle maps. This suggests that a weaker assumption on $\mathcal{D}$ might suffice to canonically construct cycle maps from first Chern classes.

However, since essentially all $6$-functor formalisms with a strong theory of first Chern classes admit compatible cycle maps, it may be possible to canonically construct these maps using a weaker assumption on $\mathcal{D}$.
\end{warning}

For the rest of this section, we fix a $6$-functor formalism $\cal{D}$ satisfying the excision axiom and a weak theory of first Chern classes $c_1$. \smallskip

To construct cycle maps, we note that an effective Cartier divisor $D$ comes with the canonical short exact sequence (see Definition~\ref{defn:cartier-divisor}):
\[
0 \to \O_X(-D) \to \O_X \to i_*\O_D \to 0.
\]
By passing to duals, we get a morphism $\O_X \to \O_X(D)$ that is an isomorphism over $U\coloneqq X\setminus D$. We denote its restriction on $U$ by an isomorphism
\[
\varphi_U\colon \O_U \xr{\simeq} \O_X(D)|_U.
\]

Now, in the presence of the excision axiom, we can give the following definition: 

\begin{defn}\label{defn:cycle-clas-divisors} A {\it cycle map (relative to $c_1$) of an effective divisor $D \subset X$} is a homotopy class of morphisms 
\[
\rm{cl}_i \colon i_*\bf{1}_D \to \bf{1}_X\langle1\rangle
\]
equal to $c_1(\cal{O}_X(D), \varphi_U) \in \rm{H}^0_D(X, \bf{1}_X\langle 1\rangle)=\rm{Hom}_{D(X)}(i_*\bf{1}_D, \bf{1}_X\langle 1\rangle)$ (see Remark~\ref{rmk:properties-first-chern-theory}(\ref{rmk:properties-first-chern-theory-localized-classes}). 
\end{defn}

\begin{lemma}\label{lemma:canonical-cycle-classes} Let $\cal{D}$ be a $6$-functor formalism satisfying the excision axiom, and $c_1$ is a weak theory of first Chern classes on $\cal{D}$. Then the construction of cycle maps $\cl_\bullet$ from Definition~\ref{defn:cycle-clas-divisors} defines a theory of cycle maps (see Definition~\ref{defn:theory-of-cycle-classes}) such that $c_1$ underlies $\cl_\bullet$ (see Definition~\ref{defn:theory-of-cycle-classes-chern-classes}).
\end{lemma}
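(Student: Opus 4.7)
Both assertions follow almost formally from the functoriality properties of the weak theory of first Chern classes recorded in Remark~\ref{rmk:properties-first-chern-theory}, once one recognizes the correct geometric input. I plan to verify them in turn.

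First, for the compatibility with $c_1$ (Definition~\ref{defn:theory-of-cycle-classes-chern-classes}): given an effective Cartier divisor $i\colon D\hookrightarrow X$ with open complement $j\colon U\hookrightarrow X$, I will apply the localization property of Remark~\ref{rmk:properties-first-chern-theory}(\ref{rmk:properties-first-chern-theory-localization}) to the two Zariski-closed immersions $i_1 = i\colon D\hookrightarrow X$ and $i_2 = \mathrm{id}\colon X\hookrightarrow X$ (so $U_2 = \emptyset$, $U_1 = U$), together with the pair $(\cal{O}_X(D),\varphi_U)$. Since the restriction of $\varphi_U$ to the empty set carries no data, the localized class $c_1(\cal{O}_X(D),\varphi_U|_{U_2})$ is just the unlocalized class $c_1(\cal{O}_X(D))\colon \bf{1}_X\to \bf{1}_X\langle 1\rangle$. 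The commutative triangle supplied by Remark~\ref{rmk:properties-first-chern-theory}(\ref{rmk:properties-first-chern-theory-localization}) then reads as the desired identity
\[
c_1(\cal{O}_X(D)) = \bigl(\bf{1}_X\to i_*\bf{1}_D\xrightarrow{\cl_i}\bf{1}_X\langle 1\rangle\bigr)
\]
in $D(X)$.

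Next, for transversal base change (Definition~\ref{defn:theory-of-cycle-classes}): consider a Cartesian square with $i\colon Y\hookrightarrow X$ and $i'\colon Y'\hookrightarrow X'$ both effective Cartier divisors. The transversality assumption guarantees that the defining ideal of $Y'$ equals the pullback of the defining ideal of $Y$, so that there is a canonical identification $g^*\cal{O}_X(Y)\simeq \cal{O}_{X'}(Y')$ intertwining the canonical sections, and similarly the trivialization $\varphi_U$ on $U = X\setminus Y$ pulls back to the trivialization $\varphi_{U'}$ on $U' = X'\setminus Y'$. Thus the pair $(g^*\cal{O}_X(Y), g^*\varphi_U)$ is canonically identified with $(\cal{O}_{X'}(Y'),\varphi_{U'})$, and the base change property Remark~\ref{rmk:properties-first-chern-theory}(\ref{rmk:properties-first-chern-theory-base-change}) gives the commutative square
\[
\begin{tikzcd}[column sep = 4em]
g^*i_*\bf{1}_Y \arrow{d}{\sim}\arrow{r}{g^*(\cl_i)} & g^*\bf{1}_X\langle 1\rangle\arrow{d}{\sim}\\
i'_*\bf{1}_{Y'} \arrow{r}{\cl_{i'}} & \bf{1}_{X'}\langle 1\rangle,
\end{tikzcd}
\]
where the left vertical arrow is the proper base-change isomorphism.

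The main (minor) obstacle is to verify that the transversality of $g$ along $i$ indeed identifies the pullback pair $(g^*\cal{O}_X(Y),g^*\varphi_U)$ with $(\cal{O}_{X'}(Y'),\varphi_{U'})$ on the nose; this is a direct check using the fact that the sheaf of ideals defining $Y$ is locally generated by a single non-zero-divisor whose pullback remains a non-zero-divisor generator of the ideal of $Y'$. Once this identification is in place, the rest of the argument is a purely formal invocation of Remark~\ref{rmk:properties-first-chern-theory}.
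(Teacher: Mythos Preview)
Your proof is correct and matches the paper's approach essentially verbatim: the paper also invokes Remark~\ref{rmk:properties-first-chern-theory}(\ref{rmk:properties-first-chern-theory-localization}) with $Z_1=Y$ and $Z_2=X$ for the compatibility with $c_1$, and Remark~\ref{rmk:properties-first-chern-theory}(\ref{rmk:properties-first-chern-theory-base-change}) for transversal base change. The only difference is that where you sketch the ``minor obstacle'' of identifying $(g^*\cal{O}_X(Y),g^*\varphi_U)$ with $(\cal{O}_{X'}(Y'),\varphi_{U'})$ by hand, the paper simply cites \cite[Lemma 5.7]{adic-notes} for this geometric input.
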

\begin{proof}
    We need to check two things: cycle maps commute with transversal base change and, for each effective Cartier divisor $i\colon Y \hookrightarrow X$, the composition
    \[
    \bf{1}_X \to i_*\bf{1}_Y \xr{\cl_i} \bf{1}_X\langle 1\rangle
    \]
    is equal to $c_1(\O_X(Y))$. \smallskip
    
    The first claim is automatic from Remark~\ref{rmk:properties-first-chern-theory}(\ref{rmk:properties-first-chern-theory-base-change})  and \cite[Lemma 5.8]{adic-notes}. The second claim is automatic from Remark~\ref{rmk:properties-first-chern-theory}(\ref{rmk:properties-first-chern-theory-localization}) by taking $Z_1=Y$ and $Z_2=X$. 
\end{proof}

\subsection{Cycle map of a point}

In this section, we construct the (naive) cycle map of the (``zero'') section on the relative projective space $f_d\colon \bf{P}^d_S \to S$. We do not develop a robust theory of cycle maps for all lci closed immersions of higher co-dimension, instead we give an ad hoc construction in this particular case. The theory of higher dimensional cycle classes can be developed if $\cal{D}$ satisfies the excision axiom (following the strategy of defining cycle classes in \'etale cohomology developed in \cite{Fujiwara-purity}), but we are not aware of a way of doing this for a general $\cal{D}$ so we do not discuss it in this paper. The ad hoc construction mentioned above is enough for all purposes of this paper. \smallskip

Before we go into details, we point out that this construction will be used both in establishing Poincar\'e Duality for weakly $\bf{A}^1$-invariant or pre-geometric (see Definition~\ref{defn:homotopy-invariant} and Definition~\ref{defn:6-functors-geometric}) $6$-functor formalisms with a strong theory of first Chern classes underlying a theory of cycle maps, {\it and} in proving that a theory of first Chern classes is automatically a strong theory of first Chern classes if $\cal{D}$ satisfies the excision axiom. \smallskip

For the rest of this section, we fix a $6$-functor formalism $\cal{D}$ with a theory of weak first Chern classes $c_1$ underlying a theory of cycle maps $\cl_\bullet$ (see Definition~\ref{defn:theory-of-cycle-classes-chern-classes}). \smallskip



We fix a relative projective space $f_d\colon \bf{P}^d_Y \to Y$ with homogenenous coordinates $X_1, \dots, X_{d+1}$ and a set of $d+1$-standard $Y$-hyperplanes
\[
H_1, \dots, H_d, H_{d+1}\subset \bf{P}^d_Y
\]
given as the vanishing locus of the homogeneous coordinate $X_i$ respectively. We note that the intersection $H_1\cap H_2\cap \dots H_d$ is canonically isomorphic to $Y$ and the natural embedding
\[
s\colon H_1\cap H_2\cap \dots H_d = Y \hookrightarrow \bf{P}^d_Y
\]
defines the ``zero'' section of $\bf{P}^d_Y$. We also denote by $i_k\colon H_k \hookrightarrow \bf{P}^d_Y$ the natural immersion of $H_k$ into $\bf{P}^d_Y$ for $k=1, \dots, d+1$, and by $s'\colon Y \hookrightarrow H_d$ the closed immersion of $H_1\cap H_2\cap \dots H_d$ into $H_d$. In particular, we have the following commutative diagram:
\[
\begin{tikzcd}
Y \arrow[rr, bend left, hook, "s"] \arrow[r, hook, "s'"] & H_d \arrow[r, hook, "i_d"] &\bf{P}^d_Y.
\end{tikzcd}
\]


\begin{defn}\label{defn:naive-cycle-class}(Naive Cycle map of the (``zero'') section) We define the {\it naive cycle map of $s$ (relative to $c_1$, $\cl_\bullet$)} to be the homotopy class of morphisms $\cl_s \colon s_*\bf{1}_Y \to \bf{1}_{\bf{P}^d_Y}\langle d\rangle$ inductively obtained by the following rule:
\begin{enumerate}
    \item if $d=1$, $s$ is an effective Cartier divisors, so $\cl_s$ is the cycle map of the corresponding effective Cartier divisor;
    \item if $d>1$, we suppose that we defined $\cl_s$ for all $d'< d$ (so, in particular, it is defined for $s'$), and define $\cl_i$ as the composition
    \[
    s_*\bf{1}_Y \simeq i_{d, *} s'_* \bf{1}_Y \xr{i_{d, *}(\cl_{s'})} i_{d, *} \bf{1}_{H_d}\langle d-1\rangle \xr{\rm{id}_{\bf{1}\langle d-1\rangle} \otimes \cl_{i_d}} \bf{1}_{\bf{P}^d_Y}\langle d\rangle, 
    \]
    where $\cl_{s'}$ is defined due to the induction hypothesis and $\cl_{i_d}$ is the cycle map of an effective Cartier divisor. 
\end{enumerate}
\end{defn}

\begin{warning} The definition~\ref{defn:naive-cycle-class}, a priori, depends on the choice of coordinates on $\bf{P}^d_Y$. In particular, it is not clear that the cycle map $\cl_i$ does not change if we permute coordinates on $\bf{P}^d_Y$. 
\end{warning}

\begin{lemma}\label{lemma:naive-cycle-of-point-product-of-chern-classes} Let $c_1$ be a weak theory of first Chern classes on $\cal{D}$ underlying a theory of cycle maps $\cl_\bullet$, let $f_d\colon \bf{P}^d_Y \to Y$ be the relative projective space, and let
\[
\cl_s\colon s_*\bf{1}_Y \to \bf{1}_{\bf{P}^d_Y}\langle d\rangle
\]
be the naive cycle map from Definition~\ref{defn:naive-cycle-class}. Then the diagram
\[
    \begin{tikzcd}
    \bf{1}_{\bf{P}^d_Y} \arrow[bend left]{rr}{c_1(\O_{\bf{P}^d_Y/Y}(1))^{\otimes d}}\arrow{r}{\rm{adj}_s} & s_{*} \bf{1}_Y \arrow{r}{\cl_{s}}  &  \bf{1}_{\bf{P}^d_Y}\langle d\rangle.
    \end{tikzcd}
\]
commutes in $D(\bf{P}^d_Y)$, where $\rm{adj}_s$ is the canonical morphism coming from the $(s^*, s_*)$-adjunction.
\end{lemma}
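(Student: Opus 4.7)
I will argue by induction on $d$. Set $\alpha \coloneqq c_1(\O_{\bf{P}^d_Y/Y}(1))\colon \bf{1}_{\bf{P}^d_Y} \to \bf{1}_{\bf{P}^d_Y}\langle 1\rangle$ throughout.

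\textbf{Base case $d=1$.} Here $s\colon Y \to \bf{P}^1_Y$ is the effective Cartier divisor $\{X_1 = 0\}$, so $\O_{\bf{P}^1_Y/Y}(s) \simeq \O_{\bf{P}^1_Y/Y}(1)$. The claim that $\rm{cl}_s \circ \rm{adj}_s = c_1(\O_{\bf{P}^1_Y/Y}(1))$ is precisely the content of Definition~\ref{defn:theory-of-cycle-classes-chern-classes}.

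\textbf{Inductive step.} Assume the lemma for all $d' < d$. The key geometric input is that $i_d\colon H_d \hookrightarrow \bf{P}^d_Y$ is a hyperplane with $\O_{\bf{P}^d_Y/Y}(H_d) \simeq \O_{\bf{P}^d_Y/Y}(1)$ and $\O_{H_d/Y}(1) \simeq i_d^*\O_{\bf{P}^d_Y/Y}(1)$; under the identification $H_d \simeq \bf{P}^{d-1}_Y$, the section $s'\colon Y \to H_d$ is again the standard ``zero'' section. By Remark~\ref{rmk:properties-first-chern-theory-unlocalized}(\ref{rmk:properties-first-chern-theory-unlocalizaed-base-change}) the induction hypothesis applied to $s'$ on $H_d$ yields
\[
\rm{cl}_{s'} \circ \rm{adj}_{s'} = (i_d^*\alpha)^{\otimes(d-1)} \colon \bf{1}_{H_d} \to \bf{1}_{H_d}\langle d-1\rangle.
\]
Unwinding Definition~\ref{defn:naive-cycle-class} and using naturality of the adjunction $(i_d^*, i_{d,*})$, the composition $\rm{cl}_s \circ \rm{adj}_s$ factors as
\[
\bf{1}_{\bf{P}^d_Y} \xr{\rm{adj}_{i_d}} i_{d,*}\bf{1}_{H_d} \xr{i_{d,*}(i_d^*\alpha^{\otimes(d-1)})} i_{d,*}\bf{1}_{H_d}\langle d-1\rangle \xr{\rm{id}\otimes \rm{cl}_{i_d}} \bf{1}_{\bf{P}^d_Y}\langle d\rangle,
\]
where the last arrow uses the projection formula isomorphism $i_{d,*}\bf{1}_{H_d}\langle d-1\rangle \simeq i_{d,*}\bf{1}_{H_d} \otimes \bf{1}_{\bf{P}^d_Y}\langle d-1\rangle$.

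\textbf{Endgame.} Under the projection formula, the middle arrow is identified with $\rm{id}_{i_{d,*}\bf{1}_{H_d}} \otimes \alpha^{\otimes(d-1)}$ (this compatibility between pullback of morphisms and the projection formula is one of the higher coherences packaged into the $6$-functor formalism, cf.~Remark~\ref{rmk:a-lot-of-data}). Consequently the composition of the last two arrows equals $\rm{cl}_{i_d} \otimes \alpha^{\otimes(d-1)}$, and by symmetric monoidality this equals
\[
i_{d,*}\bf{1}_{H_d} \xr{\rm{cl}_{i_d}} \bf{1}_{\bf{P}^d_Y}\langle 1\rangle \xr{\alpha^{\otimes(d-1)}\otimes \rm{id}} \bf{1}_{\bf{P}^d_Y}\langle d\rangle.
\]
The composite $\rm{cl}_{i_d} \circ \rm{adj}_{i_d}$ equals $\alpha = c_1(\O_{\bf{P}^d_Y/Y}(1))$ by Definition~\ref{defn:theory-of-cycle-classes-chern-classes} applied to the Cartier divisor $H_d$. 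Therefore $\rm{cl}_s \circ \rm{adj}_s = \alpha^{\otimes(d-1)} \circ \alpha = \alpha^{\otimes d}$, completing the induction.

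The only genuine subtlety is the bookkeeping in the endgame: one needs to use the compatibility between $i_{d,*}$ applied to a morphism of the form $i_d^*(-)$ and the projection formula, together with symmetric monoidality to commute $\rm{cl}_{i_d}$ past $\alpha^{\otimes(d-1)}$. Everything else is either the induction hypothesis, the compatibility axiom relating $c_1$ to $\rm{cl}_\bullet$, or base change for first Chern classes.
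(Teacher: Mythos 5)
Your proposal is correct and follows essentially the same route as the paper: induction on $d$, with the base case given by the compatibility of $c_1$ with $\cl_\bullet$ for the Cartier divisor $s$, and the inductive step combining the induction hypothesis on $H_d\simeq \bf{P}^{d-1}_Y$ (via $\O_{H_d/Y}(1)\simeq i_d^*\O_{\bf{P}^d_Y/Y}(1)$ and base change for $c_1$) with the divisor compatibility for $H_d$. The only cosmetic difference is that the paper packages the bookkeeping into two commuting diagrams, whereas you carry it out equationally via naturality of the projection formula and the interchange law, which amounts to the same verification.
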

\begin{proof}
    We argue by induction. If $d=1$, the claim follows directly from Definition~\ref{defn:theory-of-cycle-classes-chern-classes}. \smallskip
    
    Now we suppose the claim is know for all $d'<d$ and wish to show it for $d$. Note that, in particular, the induction hypothesis applies to the morphism $s'\colon Y \hookrightarrow H_d\simeq \bf{P}^{d-1}_Y$. In particular, we conclude that the diagram
    \[
    \begin{tikzcd}[column sep =4em]
    i_{d, *}\bf{1}_{H_d} \arrow[bend left]{rr}{i_{d, *}\bigl(c_1\left(\O_{H_d/Y}(1)\right)^{\otimes d-1}\bigr)}\arrow{r}{i_{d,*}(\rm{adj}_{s'})} & i_{d, *} s'_* \bf{1}_Y \arrow{r}{i_{d, *}(\cl_{s'})}  & i_{d, *} \bf{1}_{H_d}\langle d-1\rangle.
    \end{tikzcd}
    \]
    commutes in $D(\bf{P}^d_Y)$. Now note that $\O_{H_d/Y}(1) \simeq i_d^* \O_{\bf{P}^d_Y/Y}(1)$ to conclude that the following diagram commutes in $D(\bf{P}^d_Y)$:
    \begin{equation}\label{eqn:induction}
    \begin{tikzcd}[column sep = 5em, row sep = 3em]
    \bf{1}_{\bf{P}^d_Y} \arrow{dd}{\rm{adj}_{i_d}} \arrow{rd}{\rm{adj}_s}\arrow{rr}{c_1\bigl(\O_{\bf{P}^d_Y/Y}(1)\bigr)^{\otimes d-1}} & & \bf{1}_{\bf{P}^d_Y}\langle d-1\rangle  \arrow{dd}{\rm{adj}_{i_d}}\\
    & s_*\bf{1}_Y\arrow{d}{\wr} & \\
    i_{d, *}\bf{1}_{H_d} \arrow[bend right, rr, "i_{d, *}\bigg(c_1(\O_{H_d/Y}(1))^{\otimes d-1}\bigg)"]\arrow{r}{i_{d,*}(\rm{adj}_{s'})} & i_{d, *} s'_* \bf{1}_Y \arrow{r}{i_{d, *}(\cl_{s'})}  & i_{d, *} \bf{1}_{H_d}\langle d-1\rangle.
    \end{tikzcd}
    \end{equation}
    By definition of a (weak) theory of first Chern classes underlying a theory of cycle maps (see Definition~\ref{defn:theory-of-cycle-classes-chern-classes}), we also get a commutative diagram
    \begin{equation}\label{eqn:divisor}
    \begin{tikzcd}[row sep = 4em, column sep = 5em]
        \bf{1}_{\bf{P}^d_Y}\langle d-1\rangle \arrow{r}{\rm{adj}_{i_d}} \arrow[bend left]{rr}{\rm{id}_{\bf{1}\langle d-1\rangle} \otimes c_1\bigl(\O_{\bf{P}^d_Y/Y}(1)\bigr)} & i_{d, *} \bf{1}_{H_d}\langle d-1\rangle \arrow{r}{\rm{id}_{\bf{1}\langle d-1\rangle} \otimes \cl_{i_d}} & \bf{1}_{\bf{P}^d_Y}\langle d\rangle 
    \end{tikzcd}
    \end{equation}
    Therefore, we may combine Diagram~(\ref{eqn:induction}) and Diagram~(\ref{eqn:divisor}) to conclude that the composition
    \[
    \bf{1}_{\bf{P}^d_Y}\xr{\rm{adj}_s} s_{*} \bf{1}_Y \xr{\cl_{s}}  \bf{1}_{\bf{P}^d_Y}\langle d\rangle.
    \]
    is equal (in the homotopy category $D(\bf{P}^d_Y)$) to the following composition:
    \[
    \bf{1}_{\bf{P}^d_Y}  \xr{c_1\bigl(\O_{\bf{P}^d_Y/Y}(1)\bigr)^{\otimes d-1}} \bf{1}_{\bf{P}^d_Y}\langle d-1\rangle \xr{\rm{id}_{\bf{1}\langle d-1\rangle} \otimes c_1(\O_{\bf{P}^d_Y/Y}(1))} \bf{1}_{\bf{P}^d_Y}\langle d\rangle
    \]
    that is just equal to $c_1(\O_{\bf{P}^d_Y/Y}(1))^{\otimes d}$. This finishes the proof. 
\end{proof}

\subsection{First Chern classes and excision}\label{section:first-chern-classes-excision}

The main goal of this section is to show that, if $\cal{D}$ satisfies the excision axiom, then any theory of first Chern classes on $\cal{D}$ is automatically a strong theory of first Chern classes (see Definition~\ref{defn:geometric-chern-classes-without-length}). More precisely, we have to show that the projective bundle formula for the $\bf{P}^1_S$ implies the projective bundle formula for all higher dimensional relative projective spaces in the presence of the excision axiom. We show this by induction on $d$ cutting $\bf{P}^d_S$ into a closed subspace $\bf{P}^{d-1}_S$ and an open complement $\bf{A}^d_S$. To deal with the open complement, we use the naive cycle map of the zero section from Definition~\ref{defn:naive-cycle-class}. \smallskip

For the rest of the section, we fix a $6$-functor formalism $\cal{D}$ {\it satisfying the excision axiom}, and a theory of first Chern classes $c_1$. We also fix an object $Y\in \cal{C}$. \smallskip

\begin{setup}\label{setup:an-pn} We denote by  $0_Y\colon Y \to \bf{A}^{d}_Y$ the zero section. This fits into the following commutative diagram:
\[
\begin{tikzcd}[row sep = 4em]
Y \arrow[r, hook, "0_Y"] \arrow[rr, bend left, "s"] \arrow{dr}{\rm{id}} & \bf{A}^{d}_Y \arrow{d}{g} \arrow[r, hook, "j"] & \bf{P}^{d}_Y\arrow{dl}{f_d} & \bf{P}^{d-1}_Y\simeq H_{d+1} \arrow{dll}{f_{d-1}} \arrow[l, swap, hook', "i_{d+1}"] \\
& Y, &
\end{tikzcd}
\]
where $f_d, f_{d+1}$, and $g$ are the structure morphisms, $j$ is the natural open immersion, and $s$ and $i_{d+1}$ are respectively the ``zero'' section and the $X_{d+1}$-hyperplane section from the discussion above Definition~\ref{defn:naive-cycle-class}.
\end{setup}

\begin{defn}\label{defn:naive-cycle-class-affine}(Naive cycle map of the zero section) We define the {\it naive cycle map of $0_Y$} to be the homotopy class of morphisms 
\[
\cl_{0_Y} \colon 0_{Y, *}\bf{1}_Y \to \bf{1}_{\bf{A}^d_Y}\langle d\rangle
\]
equal to $j^*(\cl_{s})$, where $\cl_s$ is from Definition~\ref{defn:naive-cycle-class}. More precisely, $\cl_{0_Y}$ is obtained as the composition
\[
0_{Y, *}\bf{1}_Y \simeq j^*s_{*} \bf{1}_Y \xr{j^*(\cl_s)} j^*\bf{1}_{\bf{P}^d_Y}\langle d\rangle \simeq \bf{1}_{\bf{A}^d_Y}\langle d\rangle. 
\]
\end{defn}

\begin{rmk}\label{rmk:inductive-definition} Alternatively, one can repeat Definition~\ref{defn:naive-cycle-class} in the affine case, and define $\cl_{0_Y}$ to be the composition of $d-1$ cycle maps of divisors. 
\end{rmk}

\begin{lemma}\label{lemma:cycle-point-vs-chern-classes} Following the notion from Setup~\ref{setup:an-pn}, let $c_1^{d}\colon \bf{1}_Y \to  f_{d, *} \bf{1}_{\bf{P}^{d}_Y}\langle d\rangle$ be the morphism obtained by applying Construction~\ref{construction:adjoint-classes} to $c_1(\O_{\bf{P}^d_Y/Y}(1))^{\otimes d}$. Then the diagram
\[
\begin{tikzcd}
\bf{1}_Y \arrow{r}{g_!\left(\rm{cl}_{0_{Y}}\right)}\arrow[swap]{rd}{c_1^{d}}  &g_! \bf{1}_{\bf{A}_Y^{d}}\langle d\rangle  \arrow{d}{\rm{can}}\\
 & f_{d, *} \bf{1}_{\bf{P}^{d}_Y}\langle d\rangle 
\end{tikzcd}
\]
commutes in (the homotopy category) $D(Y)$.
\end{lemma}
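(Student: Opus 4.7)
The plan is to lift the desired identity in $D(Y)$ to a stronger identity in $D(\bf{P}^d_Y)$ where Lemma~\ref{lemma:naive-cycle-of-point-product-of-chern-classes} applies directly, and then push forward by $f_{d,*}$ and simplify using the triangle identities for the $(f_d^*, f_{d,*})$-adjunction.

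The main tool is the naturality square of the counit $\epsilon\colon j_! j^* \to \rm{id}$ applied to $\cl_s\colon s_*\bf{1}_Y \to \bf{1}_{\bf{P}^d_Y}\langle d\rangle$. Factoring $s = j\circ 0_Y$ with $0_Y$ a closed immersion and $j$ an open immersion, I record the canonical identifications $s_*\simeq j_!\,0_{Y,*}$ (using properness of $0_Y$, so $0_{Y,!}=0_{Y,*}$) and $j^*s_*\simeq 0_{Y,*}$ (using $j^*j_!\simeq \rm{id}$); under these, $\epsilon_{s_*\bf{1}_Y}$ becomes the identity and $j_!j^*(\cl_s)=j_!(\cl_{0_Y})$ by the very definition $\cl_{0_Y}=j^*(\cl_s)$ of Definition~\ref{defn:naive-cycle-class-affine}. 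The naturality square therefore yields
\[
\cl_s \;=\; \epsilon_{\bf{1}\langle d\rangle}\circ j_!(\cl_{0_Y}) \qquad \text{in } D(\bf{P}^d_Y).
\]
Precomposing with $\rm{adj}_s\colon \bf{1}_{\bf{P}^d_Y}\to s_*\bf{1}_Y$ and invoking Lemma~\ref{lemma:naive-cycle-of-point-product-of-chern-classes} to rewrite the left-hand side gives
\[
c_1(\O_{\bf{P}^d_Y/Y}(1))^{\otimes d} \;=\; \epsilon_{\bf{1}\langle d\rangle}\circ j_!(\cl_{0_Y}) \circ \rm{adj}_s \qquad \text{in } D(\bf{P}^d_Y).
\]

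Next I apply the proper pushforward $f_{d,*}=f_{d,!}$. Using $f_d\circ s = \rm{id}_Y$ and $f_d\circ j = g$, the identifications $f_{d,*}s_*\simeq \rm{id}$ and $f_{d,!}j_!\simeq g_!$ transform the above into an equality in $D(Y)$ between morphisms $f_{d,*}\bf{1}_{\bf{P}^d_Y} \to f_{d,*}\bf{1}_{\bf{P}^d_Y}\langle d\rangle$, namely
\[
f_{d,*}\!\left(c_1(\O(1))^{\otimes d}\right) \;=\; \rm{can} \circ g_!(\cl_{0_Y}) \circ f_{d,*}(\rm{adj}_s),
\]
where $\rm{can}=f_{d,!}(\epsilon)\colon g_!\bf{1}_{\bf{A}^d_Y}\langle d\rangle \to f_{d,*}\bf{1}_{\bf{P}^d_Y}\langle d\rangle$ is the canonical morphism. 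Finally, I precompose with the unit $f_d^*\colon \bf{1}_Y \to f_{d,*}\bf{1}_{\bf{P}^d_Y}$ of the $(f_d^*,f_{d,*})$-adjunction. By Construction~\ref{construction:adjoint-classes} the left-hand side becomes exactly $c_1^d$, and the composite
\[
\bf{1}_Y \xr{f_d^*} f_{d,*}\bf{1}_{\bf{P}^d_Y} \xr{f_{d,*}(\rm{adj}_s)} f_{d,*}s_*\bf{1}_Y = \bf{1}_Y
\]
is the unit of the trivial adjunction associated to $f_d\circ s=\rm{id}_Y$, hence equals $\rm{id}_{\bf{1}_Y}$ by the triangle identity. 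This gives $c_1^d = \rm{can}\circ g_!(\cl_{0_Y})$, as required.

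The main obstacle is not conceptual but purely bookkeeping: one must verify carefully, within the abstract $6$-functor formalism, the four identifications $s_*\simeq j_!\,0_{Y,*}$, $j^*s_*\simeq 0_{Y,*}$, $\epsilon_{s_*\bf{1}_Y}=\rm{id}$, and $f_{d,*}(\rm{adj}_s)\circ f_d^* = \rm{id}$. All of these are consequences of the $2$-functorial coherence of $\cal{D}$ applied to the factorizations $s=j\circ 0_Y$ and $f_d\circ s = \rm{id}$, combined with $j^*j_!\simeq \rm{id}$ and the standard triangle identities; but spelling out the resulting chains of natural transformations rigorously requires some care.
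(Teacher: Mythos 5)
Your proof is correct and follows essentially the same route as the paper: the paper's "essentially by construction" commutative triangle is exactly your naturality square for the counit $j_!j^*\to \mathrm{id}$ applied to $\mathrm{cl}_s$ (using $\mathrm{cl}_{0_Y}=j^*(\mathrm{cl}_s)$), and the identification of the resulting map with $c_1^d$ is done, as in the paper, via Lemma~\ref{lemma:naive-cycle-of-point-product-of-chern-classes} together with the adjunction bookkeeping of Remark~\ref{rmk:first-chern-first-cycle-comp} (your precomposition with $\mathrm{adj}_s$ and with the unit $f_d^*$). The coherence facts you defer (e.g.\ $s_*\simeq j_!0_{Y,*}$, $\epsilon_{s_*\bf{1}_Y}=\mathrm{id}$ under $j^*j_!\simeq \mathrm{id}$, and $f_{d,*}(\mathrm{adj}_s)\circ f_d^*$ being the canonical identification, which is compatibility of units under composition of adjunctions rather than literally a triangle identity) are exactly what the paper also leaves implicit.
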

\begin{proof}
    Essentially by construction, we have the following commutative diagram
    \[
    \begin{tikzcd}
    \bf{1}_Y \arrow{r}{g_!(\rm{cl}_{0_{Y}})} \arrow[swap]{rd}{f_{d, *}\left(\rm{cl}_s\right)} & g_! \bf{1}_{\bf{A}^{d}_Y}\langle d\rangle \arrow{d}{\rm{can}}  \\
     & f_{d, *} \bf{1}_{\bf{P}^{d}_Y}\langle d\rangle.
    \end{tikzcd}
    \]
    Thus, we are only left to identify $f_{d, *}(\rm{cl}_s)$ with $c_1^{d}$. This follows from Remark~\ref{rmk:first-chern-first-cycle-comp} and Lemma~\ref{lemma:naive-cycle-of-point-product-of-chern-classes}.
\end{proof}

\begin{lemma}\label{lemma:short-exact-sequence} Suppose $\cal{D}$ satisfies the excision axiom and that $c_1$ is a theory of first Chern classes. Then, following the notation from Setup~\ref{setup:an-pn}, there is a morphism of exact triangles
\[
\begin{tikzcd}[row sep = 3em]
\bf{1}_Y \arrow{d}{g_!\left(\rm{cl}_{0_Y}\right)} \arrow{r} & \bigoplus_{k=0}^{d} \bf{1}_Y\langle d-k\rangle \arrow{d}{\sum_{k=0}^{d} c_1^k\langle d-k\rangle} \arrow{r} & \bigoplus_{k=0}^{d-1} \bf{1}_Y\langle d-k\rangle \arrow{d}{\sum_{k=0}^{d-1} c_1^k\langle d-k\rangle} \\
g_! \bf{1}_{\bf{A}^{d}_Y} \langle d\rangle \arrow{r} & f_{d, *} \bf{1}_{\bf{P}^{d}_Y} \langle d\rangle  \arrow{r} & f_{d-1, *} \bf{1}_{\bf{P}^{d-1}_Y} \langle d\rangle \end{tikzcd}
\]
in $D(S)$, where the left lower map is the evident inclusion and the right lower map is the evident projection. 
\end{lemma}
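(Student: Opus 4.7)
The plan is to produce the bottom exact triangle by applying the excision triangle (Remark~\ref{rmk:excision-sequence}) for the open--closed decomposition with open immersion $j\colon \bf{A}^d_Y \hookrightarrow \bf{P}^d_Y$ and closed complement $i_{d+1}\colon \bf{P}^{d-1}_Y \hookrightarrow \bf{P}^d_Y$ from Setup~\ref{setup:an-pn} to the object $\bf{1}_{\bf{P}^d_Y}\langle d\rangle$, and then pushing the result forward along $f_d$. Since $f_d$ is proper we have $f_{d,!}=f_{d,*}$, so $f_{d,*}j_!=g_!$ and $f_{d,*}i_{d+1,*}=f_{d-1,*}$; this yields exactly the stated bottom triangle. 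I denote its left map by $\alpha$ and its right map by $\sigma$. The top row is the tautologically split exact triangle in which $\bf{1}_Y=\bf{1}_Y\langle 0\rangle$ sits as the $k=d$ summand and the other summands are preserved by the projection; its connecting morphism is null-homotopic.

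It then suffices to verify that the left and right squares commute, because the connecting-morphism square will follow automatically: the right vertical $\sum_{k=0}^{d-1} c_1^k(\O_{\bf{P}^{d-1}_Y/Y}(1))\langle d-k\rangle$ factors through $\sigma$ via the obvious partial-sum map $\bigoplus_{k=0}^{d-1}\bf{1}_Y\langle d-k\rangle \to f_{d,*}\bf{1}_{\bf{P}^d_Y}\langle d\rangle$, and $\partial\circ\sigma=0$ in the bottom exact triangle. The \emph{left square}, after precomposition with the $k=d$ inclusion, reduces to the identity $c_1^d(\O_{\bf{P}^d_Y/Y}(1)) = \alpha\circ g_!(\cl_{0_Y})$ of adjoint maps $\bf{1}_Y \to f_{d,*}\bf{1}_{\bf{P}^d_Y}\langle d\rangle$, which is precisely Lemma~\ref{lemma:cycle-point-vs-chern-classes}. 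For the \emph{right square}, naturality of Construction~\ref{construction:adjoint-classes} together with functoriality of first Chern classes under the closed immersion $i_{d+1}$ (Remark~\ref{rmk:properties-first-chern-theory-unlocalized}\,(\ref{rmk:properties-first-chern-theory-unlocalizaed-base-change})) gives $\sigma \circ c_1^k(\O_{\bf{P}^d_Y/Y}(1))=c_1^k(\O_{\bf{P}^{d-1}_Y/Y}(1))$ for every $k$, matching the required commutativity on each summand $k=0,\ldots,d-1$.

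The main obstacle I anticipate is the $k=d$ column of the right square: \emph{a priori} this looks like it demands the vanishing of $c_1^d(\O_{\bf{P}^{d-1}_Y/Y}(1))$, which one would ordinarily extract from the projective bundle formula for $\bf{P}^{d-1}_Y$---creating a potential circularity since that formula is downstream of this lemma. The clean resolution is to invoke Lemma~\ref{lemma:cycle-point-vs-chern-classes} a second time to factor $c_1^d(\O_{\bf{P}^d_Y/Y}(1))=\alpha\circ g_!(\cl_{0_Y})$; then $\sigma\circ c_1^d = (\sigma\circ\alpha)\circ g_!(\cl_{0_Y}) = 0$ for purely triangulated reasons. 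This completes the verification of the morphism of exact triangles.
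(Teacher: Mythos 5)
Your proposal is correct, and its skeleton is the same as the paper's: the bottom triangle is obtained exactly as you say, by applying $f_{d,*}=f_{d,!}$ to the excision triangle and using $f_{d,!}\,j_!=g_!$ and $f_{d,*}\,i_{d+1,*}=f_{d-1,*}$, and the left square is precisely Lemma~\ref{lemma:cycle-point-vs-chern-classes}. The one place you diverge is in how the right-hand vertical map is handled: the paper checks only the left square, invokes the triangulated axiom to complete it to a morphism of triangles with some third map $c$, and then uses the splitting of the top row together with compatibility of $c_1$ with pullback and $\O_{\bf{P}^d_Y/Y}(1)|_{\bf{P}^{d-1}_Y}=\O_{\bf{P}^{d-1}_Y/Y}(1)$ to identify $c$ with $\sum_{k=0}^{d-1}c_1^k\langle d-k\rangle$; you instead verify both displayed squares (and the connecting square) directly. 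Your resolution of the delicate $k=d$ column — factoring $c_1^d=\alpha\circ g_!(\cl_{0_Y})$ via a second application of Lemma~\ref{lemma:cycle-point-vs-chern-classes} and using $\sigma\circ\alpha=0$ in the distinguished bottom triangle — is correct and neatly avoids the circularity you worried about (needing the projective bundle formula for $\bf{P}^{d-1}_Y$ to kill $c_1^d$ there); the paper's TR3 route sidesteps this vanishing altogether, at the cost of the slightly less direct step of identifying the completed map. Both arguments are valid and rest on the same inputs.
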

\begin{proof}
    The upper exact triangle is evident, and the lower exact triangle comes by applying $f_{d, *}=f_{d, !}$ to the excision fiber sequence (see Remark~\ref{rmk:excision-sequence})
    \[
    j_! \bf{1}_{\bf{A}^{d}_Y}\langle d \rangle \to \bf{1}_{\bf{P}^{d}_Y}\langle d \rangle \to i_{d+1, *} \bf{1}_{\bf{P}^{d-1}_Y}\langle d \rangle. 
    \]
    Lemma~\ref{lemma:cycle-point-vs-chern-classes} ensures that the left square commutes. So using the axioms of triangulated categories, we can extend this commutative square to a morphism of exact triangles:
    \[
    \begin{tikzcd}[row sep = 3em]
        \bf{1}_Y \arrow{d}{g_!(\rm{cl}_{0_Y})} \arrow{r} & \bigoplus_{k=0}^{d} \bf{1}_Y\langle d-k\rangle \arrow{d}{\sum_{k=0}^{d} c_1^k\langle d-k\rangle} \arrow{r} &     \bigoplus_{k=0}^{d-1} \bf{1}_Y\langle d-k\rangle \arrow{d}{c} \\
        g_! \bf{1}_{\bf{A}^{d}_Y} \langle d\rangle \arrow{r} & f_{d, *} \bf{1}_{\bf{P}^{d}_Y}     \langle d\rangle  \arrow{r} & f_{d-1, *} \bf{1}_{\bf{P}^{d-1}_Y} \langle d\rangle.
    \end{tikzcd}
    \]
    The only thing we are left to show is to compute $c$. It suffices to do separately on each direct summand $\bf{1}_Y\langle d-k\rangle$. Then we use that the upper exact triangle is split to see that $c|_{\bf{1}_Y\langle d-k\rangle}$ must be equal to the composition
    \[
    \bf{1}_Y\langle d-k\rangle \xr{c_1^i\langle d-k\rangle} f_{d, *} \bf{1}_{\bf{P}^{d}_Y}\langle d\rangle \xr{\rm{can}} f_{d-1, *} \bf{1}_{\bf{P}^{d-1}_Y} \langle d\rangle.
    \]
    Using the first Chern classes commute with pullbacks and $\O_{\bf{P}^{d}_Y/Y}(1)|_{\bf{P}^{d-1}_Y}=\O_{\bf{P}^{d-1}_Y/Y}(1)$, one easily sees that the composition is equal to 
    \[
    c_1^k\langle d-k\rangle \colon \bf{1}_Y\langle d-k\rangle \to f_{d-1, *} \bf{1}_{\bf{P}^{d-1}_Y} \langle d\rangle.
    \]
\end{proof}

\begin{lemma}\label{lemma:tr-cl-point-1} Suppose that $\cal{D}$ satisfies the excision axiom and that $c_1$ is a theory of first Chern classes. Let $g\colon \bf{A}^d_Y \to Y$ be a relative affine space and let $0_Y \colon Y \to \bf{A}^d_Y$ be the zero section. Then the natural morphism
\[
\bf{1}_Y \xr{g_!(\rm{cl}_{0_Y})} g_! \big(\bf{1}_{\bf{A}^{d}_Y} \langle d\rangle\big)
\]
is an isomorphism for any $d$. 
\end{lemma}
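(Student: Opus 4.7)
The plan is to induct on $d$. For the base case $d=1$, I would invoke Lemma~\ref{lemma:short-exact-sequence}: the middle vertical map is $c_1 + f_1^*\langle 1\rangle$, an isomorphism by the assumption that $c_1$ is a theory of first Chern classes; the right vertical reduces to the identity $\bf{1}_Y\langle 1\rangle \to f_{0,*}\bf{1}_{\bf{P}^0_Y}\langle 1\rangle = \bf{1}_Y\langle 1\rangle$ (since $\bf{P}^0_Y = Y$ and $f_0 = \rm{id}$). The $2$-out-of-$3$ property for morphisms of exact triangles in the stable $\infty$-category $\cal{D}(Y)$ then forces the left vertical $g_!(\cl_{0_Y})$ to be an isomorphism.

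For the inductive step with $d\geq 2$, I would assume the statement for dimension $d-1$ and factor $g\colon \bf{A}^d_Y \to Y$ through a coordinate projection $p\colon \bf{A}^d_Y \to \bf{A}^{d-1}_Y$ (realizing $\bf{A}^d_Y$ as a relative affine line over $\bf{A}^{d-1}_Y$), so that the zero section $0_{Y,d}$ factors as $0_p\circ 0_{Y,d-1}$, where $0_p\colon \bf{A}^{d-1}_Y \to \bf{A}^d_Y$ is the (divisor) zero section of $p$ and $0_{Y,d-1}$ is the zero section of $g_{d-1}\colon \bf{A}^{d-1}_Y \to Y$. Unpacking Definition~\ref{defn:naive-cycle-class-affine} via Remark~\ref{rmk:inductive-definition} (which exhibits the affine naive cycle class as an iterated composition of divisor cycle classes) yields that $\cl_{0_{Y,d}}$ decomposes as
\[
0_{Y,d,*}\bf{1}_Y \simeq 0_{p,*}\, 0_{Y,d-1,*}\bf{1}_Y \xrightarrow{0_{p,*}(\cl_{0_{Y,d-1}})} 0_{p,*}\bf{1}_{\bf{A}^{d-1}_Y}\langle d-1\rangle \xrightarrow{\cl_{0_p}\otimes \rm{id}} \bf{1}_{\bf{A}^d_Y}\langle d\rangle.
\]
Applying $p_!$ together with $p\circ 0_p = \rm{id}_{\bf{A}^{d-1}_Y}$ and the projection formula, the morphism $p_!(\cl_{0_{Y,d}})$ becomes the composition of $\cl_{0_{Y,d-1}}\colon 0_{Y,d-1,*}\bf{1}_Y \to \bf{1}_{\bf{A}^{d-1}_Y}\langle d-1\rangle$ with $p_!(\cl_{0_p})\langle d-1\rangle\colon \bf{1}_{\bf{A}^{d-1}_Y}\langle d-1\rangle \to p_!(\bf{1}_{\bf{A}^d_Y})\langle d\rangle$. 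A further application of $g_{d-1,!}$ (using $g = g_{d-1}\circ p$) then expresses $g_!(\cl_{0_{Y,d}})$ as a composition of $g_{d-1,!}(\cl_{0_{Y,d-1}})$, which is an isomorphism by the inductive hypothesis applied to $\bf{A}^{d-1}_Y \to Y$, and $g_{d-1,!}(p_!(\cl_{0_p})\langle d-1\rangle)$, which is an isomorphism because $p_!(\cl_{0_p})$ is the $d=1$ case applied to the relative affine line $p$ over the base $\bf{A}^{d-1}_Y$ (both tensoring with the invertible $\bf{1}\langle d-1\rangle$ and applying the functor $g_{d-1,!}$ preserve isomorphisms).

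The main technical obstacle is the accurate unpacking of the inductive structure of $\cl_{0_{Y,d}}$ from Definition~\ref{defn:naive-cycle-class-affine} and Definition~\ref{defn:naive-cycle-class}, and verifying that its decomposition as an iterated composition of divisor cycle classes is compatible with the chosen coordinate factorization $g = g_{d-1}\circ p$. Once this identification is granted, the rest of the inductive step is a routine combination of the projection formula, functoriality of $g_{d-1,!}$, and the inductive hypothesis.
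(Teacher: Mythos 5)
Your proof is correct and takes essentially the same route as the paper's: the identical $d=1$ base case via Lemma~\ref{lemma:short-exact-sequence} and the two-out-of-three property, and the identical inductive step that decomposes $\cl_{0_Y}$ (via Remark~\ref{rmk:inductive-definition}) along a coordinate projection, uses $j_!=j_*$, $p\circ 0_p=\rm{id}$ and the projection formula, and then invokes the induction hypothesis twice — at dimension $d-1$ over $Y$ and at dimension $1$ over the base $\bf{A}^{d-1}_Y$. The only cosmetic difference is that you apply $p_!$ first and then $g_{d-1,!}$ where the paper writes $g_!=h_!\circ f_!$ directly, and the ``canonical identification'' of the naive cycle map with the iterated divisor composition that you flag is treated just as informally in the paper itself.
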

\begin{proof}
    We prove this claim by induction on $d$. \smallskip
    
    
    {\it Step~$1$. Base of induction.} Here, we follow the notation of Setup~\ref{setup:an-pn} with $d=1$. In this case, we note that the Zariski-closed immersion $i_2\colon \bf{P}^0_Y \to \bf{P}^1_Y$ is the ``$\infty$''-section of $\bf{P}^1_Y$. So the commutative diagram from Lemma~\ref{lemma:short-exact-sequence} simplifies to the following form:
    \[
    \begin{tikzcd}
        \bf{1}_Y \arrow{d}{g_!(\rm{cl}_{0_Y})} \arrow{r} & \bf{1}_Y \oplus \bf{1}_{Y}\langle 1\rangle \arrow{d}{c_1+f^*\langle 1\rangle} \arrow{r} & \bf{1}_Y\langle 1\rangle \arrow{d}{\rm{id}} \\
        g_! \bf{1}_{\bf{A}^{1}_Y} \langle 1\rangle \arrow{r} & f_* \bf{1}_{\bf{P}^{1}_Y} \langle 1\rangle  \arrow{r} & \bf{1}_Y\langle 1\rangle.
    \end{tikzcd}
    \]
    The right vertical map is clearly an isomorphism, and the middle vertical arrow is an isomorphism by Lemma~\ref{lemma:projective-bundle-formula}. Therefore, we conclude that $g_!(\rm{cl}_{0_Y})$ is also an isomorphism finishing this step. \smallskip
    
    {\it Step~$2$. Inductive argument.} We suppose that $d\geq 2$ and that we know the result for all integers less than $d$ and wish to deduce the result for $d$. For this, we consider the commutative diagram
    \[
    \begin{tikzcd}[row sep = 2em, column sep = 4em]
    Y \arrow[r, hook, "i"] \arrow[rd, hook, "i"] \arrow[swap]{rdd}{\rm{id}} \arrow[rr, bend left, hook, "0_Y"] &\bf{A}^{d-1}_Y \arrow{d}{\rm{id}}\arrow[r, hook, "j"] & \bf{A}^{d}_Y \arrow[swap]{dl}{f} \arrow{ldd}{g} \\
    & \bf{A}^{d-1}_Y \arrow{d}{h} & \\
    & Y,&  
    \end{tikzcd}
    \]
    where $i$ is the zero section of $\bf{A}^d_Y$, and $j$ is the Zariski-closed immersion realizing $\bf{A}^{d-1}_Y$ inside $\bf{A}^d_Y$ as the vanishing locus of the last coordinate. We warn the reader that this notation is different from the one used in Setup~\ref{setup:an-pn}. \smallskip

    By Remark~\ref{rmk:inductive-definition}, we have an equality (up to canonical identifications\footnote{In this proof, we will ignore canonical identifications and write ``$=$'' meaning canonically isomorphic. This does not cause any problems because our goal is to show that a well-defined morphism is an isomorphism.}) 
    \begin{equation}\label{eqn:composition-cycles}
    \rm{cl}_{0_Y} = \rm{cl}_{j}\langle1\rangle \circ j_*(\rm{cl}_i), 
    \end{equation}
    where $\cl_i$ is the naive cycle of the zero section $i\colon Y \hookrightarrow \bf{A}^{d-1}_Y$. 
    Therefore, we have the following sequence of equalities
    \begin{align*}
        g_!(\rm{cl}_{0_S}) & =g_!\bigg(\rm{cl}_j\langle 1\rangle \circ j_*\left(\rm{cl}_i\right)\bigg) \\
        & = g_!\bigg(\rm{cl}_j\langle 1\rangle\bigg) \circ g_!\bigg(j_*\left(\rm{cl}_i\right) \bigg) \\
        &= h_!\bigg(f_!(\rm{cl}_j\langle 1\rangle)\bigg) \circ 
         h_!\bigg(f_!\left(j_*\left(\rm{cl}_i\right)\right)\bigg) \\ 
        & = h_!\bigg(f_!(\rm{cl}_j\langle 1\rangle)\bigg) \circ 
         h_!\bigg(\rm{cl}_i\bigg).
    \end{align*}
    The first equality comes from Equation~(\ref{eqn:composition-cycles}). The second equality comes from the fact that $g_!$ is a functor. The third equality comes from the fact that $g=h\circ f$. The fourth equality comes from the fact that $f\circ j=\rm{id}$ and $j_!=j_*$ (because $j$ is a closed immersion). \smallskip
    
    Now we note that the induction hypothesis implies that $h_!(\rm{cl}_i)$ is an isomorphism. Similarly, we note that the induction hypothesis implies that $f_!(\rm{cl}_j)$ is an isomorphism by applying it to relative $\bf{A}^1$-morphism $f\colon \bf{A}^{d+1}_Y \to \bf{A}^{d}_Y$. Therefore, we conclude that the composition
    \[
     g_!(\rm{cl}_{0_S}) = h_!\bigl(f_!(\rm{cl}_j\langle 1\rangle)\bigr) \circ 
         h_!\bigl(\rm{cl}_i\bigr)
    \]
    must be an isomorphism as well.  
\end{proof}

\begin{thm}\label{thm:theory-is-strong-theory} Suppose that $\cal{D}$ satisfies the excision axiom and that $c_1$ is a theory of first Chern classes. Then $c_1$ is a strong theory of first Chern classes (see Definition~\ref{defn:geometric-chern-classes-without-length}). 
\end{thm}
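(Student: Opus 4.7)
The plan is to proceed by induction on $d\geq 1$. The base case $d=1$ is precisely the hypothesis that $c_1$ is a theory of first Chern classes in the sense of Definition~\ref{defn:geometric-chern-classes-without-length}, and it is already established in this (base) form. All the hard work has been set up in the preceding lemmas; what remains is a clean ``five-lemma'' style argument using excision.

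For the inductive step, assume the strong projective bundle formula has been proven for $d-1$. First, tensoring the inductive isomorphism
\[
\sum_{k=0}^{d-1} c_1^k\langle d-1-k\rangle \colon \bigoplus_{k=0}^{d-1} \bf{1}_S\langle d-1-k\rangle \xrightarrow{\sim} f_{d-1,*}\bf{1}_{\bf{P}^{d-1}_S}\langle d-1\rangle
\]
with the invertible object $\bf{1}_S\langle 1\rangle$ and using that first Chern classes are compatible with pullback (so that $c_1(\O_{\bf{P}^{d}_S/S}(1))$ restricts to $c_1(\O_{\bf{P}^{d-1}_S/S}(1))$ along the hyperplane inclusion $i_{d+1}$), we deduce that the rightmost vertical map
\[
\sum_{k=0}^{d-1} c_1^k\langle d-k\rangle \colon \bigoplus_{k=0}^{d-1} \bf{1}_S\langle d-k\rangle \to f_{d-1,*}\bf{1}_{\bf{P}^{d-1}_S}\langle d\rangle
\]
in the diagram produced by Lemma~\ref{lemma:short-exact-sequence} is an isomorphism.

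Next, Lemma~\ref{lemma:tr-cl-point-1} (applied at the current $d$, using only the \emph{theory} of first Chern classes and excision, via its own induction) shows that the leftmost vertical map $g_!(\rm{cl}_{0_S})\colon \bf{1}_S\to g_!\bf{1}_{\bf{A}^d_S}\langle d\rangle$ is an isomorphism. Since the two outer vertical arrows in the morphism of exact triangles provided by Lemma~\ref{lemma:short-exact-sequence} are isomorphisms, the middle vertical arrow
\[
\sum_{k=0}^{d} c_1^k\langle d-k\rangle \colon \bigoplus_{k=0}^{d} \bf{1}_S\langle d-k\rangle \to f_{d,*}\bf{1}_{\bf{P}^d_S}\langle d\rangle
\]
is also an isomorphism by the usual three-out-of-two property for maps of exact triangles in a stable $\infty$-category. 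This completes the inductive step.

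The only potentially subtle point to spell out carefully is the comparison of the rightmost vertical map in Lemma~\ref{lemma:short-exact-sequence} with the (twisted) inductive hypothesis; this is a bookkeeping check on indices together with the base-change property of $c_1$ from Remark~\ref{rmk:properties-first-chern-theory-unlocalized}(\ref{rmk:properties-first-chern-theory-unlocalizaed-base-change}) applied to the closed immersion $i_{d+1}\colon \bf{P}^{d-1}_S\hookrightarrow \bf{P}^{d}_S$. All the genuinely geometric content (the excision fiber sequence, the computation of $g_!(\rm{cl}_{0_S})$, and the identification of the connecting map with the cycle class of the zero section) is already carried out in Lemmas~\ref{lemma:cycle-point-vs-chern-classes}, \ref{lemma:short-exact-sequence}, and \ref{lemma:tr-cl-point-1}, so no further input is required.
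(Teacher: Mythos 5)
Your proposal is correct and follows exactly the paper's argument: induction on $d$ with the base case given by the definition, the morphism of exact triangles from Lemma~\ref{lemma:short-exact-sequence}, Lemma~\ref{lemma:tr-cl-point-1} for the left arrow, the twisted inductive hypothesis for the right arrow, and two-out-of-three for the middle one. The paper's own proof is just a terse version of this, so nothing is missing.
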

\begin{proof}
    Following the notation of Definition~\ref{defn:geometric-chern-classes-without-length}, we need to show that the morphism
    \[
    \sum_{k=0}^d c_1^k\langle d-k\rangle \colon \bigoplus_{k=0}^d \bf{1}_S\langle d-k\rangle \to f_{d, *} \bf{1}_{\bf{P}_S^d}\langle d\rangle
    \]
    is an isomorphism for the relative projective space $f_d\colon \bf{P}^d_S \to S$ for any $d\geq 1$. For $d=1$, this is the definition of a theory of first Chern classes. For $d>1$, this follows from Lemma~\ref{lemma:short-exact-sequence} and Lemma~\ref{lemma:tr-cl-point-1} by an evident inductive argument.
\end{proof}

\subsection{Trace morphisms}\label{section:trace-map}

The main goal of this section is to construct the trace morphism for the relative projective line from a theory of first Chern classes. Then we show that any theory of first Chern classes underlying a theory of cycle maps (see Definition~\ref{defn:theory-of-cycle-classes-chern-classes}) admits a trace-cycle theory on the relative projective line (see Definition~\ref{defn:trace-cycle}). When combined with Theorem~\ref{thm:cohomologically-smooth}, this already shows that any smooth morphism is cohomologically smooth with respect to a $6$-functor formalism with a theory of first Chern classes. \smallskip

As previously, we fix an invertible object $\bf{1}_S\langle 1\rangle\in \cal{D}(S)$. In this section, we also fix a theory of first Chern Classes with respect $\bf{1}_S\langle 1\rangle$ (see Definition~\ref{defn:geometric-chern-classes-without-length}). \smallskip

\subsubsection{Recovering trace morphisms} Now we discuss the construction of the trace morphism for the relative projective line. It comes as the ``inverse'' of the first Chern class morphism. More precisely, we fix the relative projective line $f\colon \bf{P}_Y^1 \to Y$ and recall that Lemma~\ref{lemma:projective-bundle-formula} provides us with the isomorphism
\begin{equation}\label{eqn:decomposition-p1}
c_1+f^*\langle 1\rangle \colon \bf{1}_Y \oplus \bf{1}_Y\langle1\rangle  \to f_* \bf{1}_{\bf{P}^1_Y}\langle 1\rangle.
\end{equation}
We denote by $(c_1)^{-1}\colon f_* \bf{1}_{\bf{P}^1_Y}\langle 1\rangle \to \bf{1}_Y$ the projection onto the first component of the decomposition~(\ref{eqn:decomposition-p1}).

\begin{construction}\label{construction:trace-p1} The {\it trace map} $\tr_f\colon f_*\bf{1}_{\bf{P}^1_Y} \langle 1\rangle \to \bf{1}_Y$ is the morphism 
\[
(c_1)^{-1}\colon f_*\bf{1}_{\bf{P}^1_Y} \langle 1\rangle \to \bf{1}_Y.
\]
\end{construction}

\begin{rmk} The formation of $\tr_f$ commutes with arbitrary base change. This formally follows from the fact that $c_1(\O_{\bf{P}^1_Y/Y}(1))$ commutes with arbitrary base change.
\end{rmk}

\begin{warning} This construction is well-defined only if we assume that $c_1$ is a theory of first Chern classes, and not merely a weak theory of first Chern classes. 
\end{warning}

For the later reference, it will also be convenient to discuss a more general construction of trace morphisms for a {\it strong} theory of first Chern classes (see Definition~\ref{defn:geometric-chern-classes-without-length}). In this situation, Lemma~\ref{lemma:projective-bundle-formula} provides us with the isomorphism
\begin{equation}\label{eqn:decomposition-projective-bundle}
\sum_{k=0}^d c_1^k\langle d-k\rangle \colon \bigoplus_{k=0}^d \bf{1}_Y\langle d-k\rangle \to f_* \bf{1}_{\bf{P}_Y(\cal{E})}\langle d\rangle.
\end{equation} 
for any object $Y\in \cal{C}$, a rank $d+1$ vector bundle $\cal{E}$, and the corresponding projective bundle 
\[
f\colon \bf{P}_Y(\cal{E}) \to Y.
\] 
As above, it make sense to define $(c_1^d)^{-1}\colon f_* \bf{1}_{\bf{P}_Y(\cal{E})}\langle d\rangle \to \bf{1}_Y$ to be the projection onto the last component of decomposition~(\ref{eqn:decomposition-projective-bundle}).

\begin{construction}\label{construction:trace-projective-bundle} In the notation as above, the {\it trace map} $\tr_f\colon f_*\bf{1}_{\bf{P}_Y(\cal{E})} \langle d\rangle \to \bf{1}_Y$ is the morphism 
\[
(c_1^d)^{-1}\colon f_* \bf{1}_{\bf{P}_Y(\cal{E})}\langle d\rangle \to \bf{1}_Y.
\]
\end{construction}

\subsubsection{Properties of the trace morphism}

Our next goal is to show that, if $c_1$ is a theory of first Chern classes underlying a theory of cycle maps $\cl_\bullet$, then the triple $(\bf{1}_{\bf{P}^1_S}\langle 1\rangle, \tr_f, \cl_{\Delta})$ satisfies the definition of a trace-cycle theory (see Definition~\ref{defn:trace-cycle}) where $f\colon \bf{P}^1_S \to S$ is the relative projective line. For this, we will actually show a stronger statement:

\begin{prop}\label{prop:projective-bundle-trace-cycle} Let $c_1$ be a theory of first Chern classes on $\cal{D}$ underlying a theory of cycle maps $\cl_\bullet$ (see Definition~\ref{defn:theory-of-cycle-classes-chern-classes}), let $f\colon \bf{P}^1_Y \to Y$ be the relative projective line, let $s\in \bf{P}^1_Y(Y)$ be a section, and let $\tr_f\colon f_*\bf{1}_{\bf{P}^1_Y}\langle 1\rangle \to \bf{1}_Y$ be the trace morphism from Construction~\ref{construction:trace-p1}. Then the diagram 
\[
\begin{tikzcd}
\bf{1}_Y \arrow{r}{\sim} \arrow{d}{\rm{Id}}& {f}_{*}\left(s_* \bf{1}_Y\right) \arrow{d}{{f}_{*}\left(\rm{cl}_{s}\right)}\\
\bf{1}_Y & \arrow{l}{\tr_{f}} f_{*} \bf{1}_{\bf{P}^1_Y}\langle 1\rangle
\end{tikzcd}
\]
commutes in $D(Y)$. 
\end{prop}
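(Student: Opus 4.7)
The plan is to rewrite the top-right composite of the diagram as an adjoint first Chern class via Remark~\ref{rmk:first-chern-first-cycle-comp}, decompose the line bundle $\O(s)$ in $\rm{Pic}(\bf{P}^1_Y)$, and then conclude by additivity of $c_1$ combined with the defining property of $\tr_f$.

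\smallskip

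Since $s$ is a section of the smooth morphism $f$, it is an effective Cartier divisor (e.g.\ by \cite[Cor.\,5.10]{adic-notes}). The hypothesis that $c_1$ underlies $\cl_\bullet$ (Definition~\ref{defn:theory-of-cycle-classes-chern-classes}), combined with Remark~\ref{rmk:first-chern-first-cycle-comp}, identifies the composite
\[
\bf{1}_Y \xrightarrow{\sim} f_* s_* \bf{1}_Y \xrightarrow{f_*(\cl_s)} f_* \bf{1}_{\bf{P}^1_Y}\langle 1\rangle
\]
with the morphism $c_1(\O_{\bf{P}^1_Y}(s)) \colon \bf{1}_Y \to f_*\bf{1}_{\bf{P}^1_Y}\langle 1\rangle$ produced from the Chern class on $\bf{P}^1_Y$ by Construction~\ref{construction:adjoint-classes} (following the convention there of writing $c$ for both the original morphism and its adjoint). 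The proposition thus reduces to showing $\tr_f \circ c_1(\O(s)) = \rm{id}_{\bf{1}_Y}$ in $D(Y)$.

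\smallskip

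Because $s$ is a section, $\O(s)$ has relative degree $1$, and the standard decomposition $\rm{Pic}(\bf{P}^1_Y) \simeq \rm{Pic}(Y) \oplus \Z\cdot[\O(1)]$---which follows from $R^0 f_* \O^\times = \O_Y^\times$ together with the splitting of the relative-degree map provided by any global section---yields an isomorphism $\O(s) \simeq \O(1) \otimes f^*\M$ for some $\M \in \rm{Pic}(Y)$. By additivity of $c_1$ and its compatibility with pullback (Remark~\ref{rmk:properties-first-chern-theory-unlocalized}), one has $c_1(\O(s)) = c_1(\O(1)) + f^*c_1(\M)$ as morphisms $\bf{1}_{\bf{P}^1_Y} \to \bf{1}_{\bf{P}^1_Y}\langle 1\rangle$. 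Transferring this equality through Construction~\ref{construction:adjoint-classes} and using naturality of the $(f^*, f_*)$-adjunction, the adjoint of $f^*c_1(\M)$ factors as $\bf{1}_Y \xrightarrow{c_1(\M)} \bf{1}_Y\langle 1\rangle \xrightarrow{f^*\langle 1\rangle} f_*\bf{1}_{\bf{P}^1_Y}\langle 1\rangle$, so
\[
c_1(\O(s)) = c_1 + (f^*\langle 1\rangle) \circ c_1(\M)
\]
as morphisms $\bf{1}_Y \to f_*\bf{1}_{\bf{P}^1_Y}\langle 1\rangle$, where $c_1 = c_1(\O(1))$ denotes the first summand of the splitting from Definition~\ref{defn:geometric-chern-classes-without-length}.

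\smallskip

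By Construction~\ref{construction:trace-p1}, $\tr_f$ is the retraction onto the first factor of the splitting $(c_1, f^*\langle 1\rangle) \colon \bf{1}_Y \oplus \bf{1}_Y\langle 1\rangle \xrightarrow{\sim} f_*\bf{1}_{\bf{P}^1_Y}\langle 1\rangle$ from Definition~\ref{defn:geometric-chern-classes-without-length}; consequently $\tr_f \circ c_1 = \rm{id}_{\bf{1}_Y}$ and $\tr_f \circ (f^*\langle 1\rangle) = 0$. Post-composing the displayed formula with $\tr_f$ thus yields $\rm{id}_{\bf{1}_Y} + 0 = \rm{id}_{\bf{1}_Y}$, completing the reduction. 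The only delicate point is the global Picard decomposition $\O(s) \simeq \O(1) \otimes f^*\M$; this is classical for schemes and extends to locally noetherian adic spaces by the same Leray-type argument. Should one wish to avoid this step, the identity can equivalently be verified Zariski- or analytically-locally on $Y$, where $\M$ is trivializable so that $\O(s) \simeq \O(1)$, reducing the claim to the tautological identity $\tr_f \circ c_1 = \rm{id}_{\bf{1}_Y}$.
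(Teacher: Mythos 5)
Your argument is correct and is essentially the paper's own proof: reduce via Remark~\ref{rmk:first-chern-first-cycle-comp} to the identity $\tr_f\circ{}^{\rm{adj}}c_1(\O(s))=\rm{id}$, write $\O(s)\simeq \O(1)\otimes f^*\M$, and conclude by additivity of $c_1$ together with $\tr_f\circ c_1=\rm{id}$ and $\tr_f\circ (f^*\langle 1\rangle)=0$. Two small points of comparison. First, the paper does not invoke a global splitting $\rm{Pic}(\bf{P}^1_Y)\simeq \rm{Pic}(Y)\oplus \Z\cdot[\O(1)]$ (which as stated is imprecise for disconnected $Y$); it instead cites \cite[Cor.\,7.10]{adic-notes} to decompose $Y$ into clopen pieces on which $\O(s)\simeq f^*\cal{L}\otimes\O(n)$ and then restricts to a fiber to see $n=1$ --- your observation that a section forces fiberwise degree $1$ plays the same role, so the conclusion $\O(s)\simeq\O(1)\otimes f^*\M$ is fine, but the cleanest reference is the clopen-decomposition statement rather than a Leray argument. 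Second, your proposed fallback of verifying the identity Zariski- or analytically-locally on $Y$ is not justified in this abstract setting: commutativity of a diagram in the homotopy category $D(Y)$ is an equality of elements of $\rm{Hom}_{D(Y)}(\bf{1}_Y,\bf{1}_Y)=\pi_0\rm{R}\Gamma(Y,\bf{1})$, and for a sheaf of spectra local vanishing in $\pi_0$ does not imply global vanishing (the kernel of restriction is controlled by \v{C}ech cohomology with coefficients in higher homotopy sheaves). Clopen decompositions are harmless because they give product decompositions of $\cal{D}(Y)$, which is exactly why the paper uses them; genuine open covers are not. Since that fallback is optional, this does not affect the validity of your main argument.
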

Whenever we use Construction~\ref{construction:adjoint-classes} in the following proof, we use the notation $^{\rm{adj}}c$ to distinguish Chern morphisms on the base and morphisms adjoint to Chern morphisms on $\bf{P}^1_Y$. 
\begin{proof}
    We first note that Remark~\ref{rmk:first-chern-first-cycle-comp} implies that $f_*(\rm{cl}_s)$ (up to a canonical identification $f_*s_*\bf{1}_Y\simeq \bf{1}_Y$) is equal to 
    \[
    ^{\rm{adj}}c_1(\O(s)) \colon \bf{1}_Y \to f_* \bf{1}_{\bf{P}^1_Y}\langle1\rangle,
    \]
    where $\O(s)$ is the line bundle corresponding to the effective Cartier divisor $s\colon Y \to \bf{P}^1_Y$. We wish to show that 
    \begin{equation}\label{eqn:tr-cycle-1-pl}
    \tr_f \circ ^{\rm{adj}}c_1(\O(s)) = \rm{id}.
    \end{equation}
    \cite[Cor.\,8.9]{adic-notes} (resp. its schematic counterpart) implies that there is a decomposition of $Y$ into clopen subspaces $Y=\sqcup_{i\in I} Y_i$ with the induced morphisms
    \[
    f_i\colon \bf{P}^1_{Y_i} \to Y_i, \ s_i\colon Y_i\to \bf{P}^1_{Y_i}
    \]
    such that, $\O_{\bf{P}^1_{Y_i}}(s_i)=f_i^*\cal{L}_i \otimes \O_{\bf{P}^1_{Y_i}/Y_i}(n_i)$ for some $\cal{L_i}\in \rm{Pic}(Y_i)$ and integers $n_i$. Equation~(\ref{eqn:tr-cycle-1-pl}) can be checked on each $Y_i$ separately, so we can assume that $\O(s)\simeq f^*\cal{L}\otimes \O(n)$ for a line bundle $\cal{L}$ on $Y$ and an integer $n$.\smallskip
    
    By restricting onto a fiber, one concludes that $n=1$, so we have an isomorphism
    \[
    \O(Y) \simeq f^*\cal{L} \otimes \O(1).
    \]
    Therefore, we see that 
    \[
    ^{\rm{adj}}c_1(\O(s)) = ^{\rm{adj}}c_1(f^*\cal{L}) + ^{\rm{adj}}c_1(\O(1))\colon \bf{1}_Y \to f_* \bf{1}_{\bf{P}^1_Y}\langle 1\rangle. 
    \]
    By definition, we know that $\tr_f \circ ^{\rm{adj}}c_1(\O(1)) = \rm{id}$. Thus we reduce the question to showing that $\tr_f \circ ^{\rm{adj}}c_1(f^*\cal{L}) = 0$ for any line bundle $\cal{L}$ on $Y$. For this, we note that 
    \[
    c_1(f^*\cal{L}) = f^*c_1(\cal{L}).
    \]
    Therefore, after unravelling Construction~\ref{construction:adjoint-classes}, we get that $^{\rm{adj}}c_1(f^*\cal{L})$ is equal to the composition
    \[
    \bf{1}_Y \xr{c_1(\cal{L})} \bf{1}_Y \langle 1\rangle \xr{f^*\langle 1\rangle} f_*\bf{1}_{\bf{P}^1_Y}\langle 1\rangle.
    \]
    By definition of the trace map, we have $\tr_f\circ f^*\langle 1\rangle=0$. Therefore, this formally implies that
    \[
    \tr_f \circ ^{\rm{adj}}c_1(f^*\cal{L}) = 0
    \]
    finishing the proof. 
\end{proof}

Proposition~\ref{prop:projective-bundle-trace-cycle} already has some non-trivial consequences:

\begin{cor}\label{cor:first-chern-cycle-theory} Let $c_1$ be a {\it strong} theory of first Chern classes on $\cal{D}$ underlying a theory of cycle maps $\cl_\bullet$. Then the triple 
\[
(\bf{1}_{\bf{P}^1_S}\langle 1\rangle, \tr_f, \cl_{\Delta})
\]
forms a trace-cycle theory on the relative projective line $f\colon \bf{P}^1_S \to S$. In particular, any smooth morphism in $\cal{C}$ is cohomologically smooth with respect to $\cal{D}$ (see Definition~\ref{defn:cohomologically-smooth}). 
\end{cor}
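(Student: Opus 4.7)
My plan is to verify the two diagrams of Definition~\ref{defn:trace-cycle} for the triple $(\bf{1}_{\bf{P}^1_S}\langle 1\rangle, \tr_f, \cl_\Delta)$ and then invoke Theorem~\ref{thm:cohomologically-smooth} for the ``in particular'' statement. First, I would define $\cl_\Delta$: the diagonal $\Delta\colon \bf{P}^1_S\to \bf{P}^1_S\times_S\bf{P}^1_S$ is a closed immersion of smooth $S$-spaces of codimension one, locally cut out by a single equation, hence an effective Cartier divisor, so the theory of cycle maps $\cl_\bullet$ supplies a morphism
\[
\cl_\Delta\colon \Delta_!\bf{1}_{\bf{P}^1_S} \to \bf{1}_{\bf{P}^1_S\times_S\bf{P}^1_S}\langle 1\rangle \simeq p_2^*\omega_f.
\]
With this definition, commutativity of the first diagram in Definition~\ref{defn:trace-cycle} is immediate from Proposition~\ref{prop:projective-bundle-trace-cycle} applied with $Y=\bf{P}^1_S$, $f=p_1\colon \bf{P}^1_{\bf{P}^1_S}\to \bf{P}^1_S$, and section $s=\Delta$; note that by Construction~\ref{construction:trace-base-change}, the trace $\tr_{p_1}$ is precisely the base change of $\tr_f$ along $f$, which coincides with the trace map of $p_1$ in the sense of Construction~\ref{construction:trace-p1}.

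I expect the second diagram to be the main step. My approach is to apply Proposition~\ref{prop:projective-bundle-trace-cycle} again, now with $f=p_2$ and $s=\Delta$, to get $\tr_{p_2}\circ p_{2,!}(\cl_\Delta) = \rm{id}_{\bf{1}_{\bf{P}^1_S}}$, and then to tensor this identity on the right with $\omega_f$. Absorbing the extra $\omega_f$ into $p_{2,!}$ via the projection formula for $p_2$ should yield that the composition
\[
\omega_f \xrightarrow{\sim} p_{2,!}\left(\Delta_!\bf{1}\otimes p_2^*\omega_f\right) \xrightarrow{p_{2,!}(\cl_\Delta\otimes \rm{id})} p_{2,!}(p_2^*\omega_f\otimes p_2^*\omega_f) \xrightarrow{\sim} p_{2,!}p_2^*\omega_f \otimes \omega_f \xrightarrow{\tr_{p_2}\otimes \rm{id}} \omega_f
\]
is the identity. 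To match this with the second diagram of Definition~\ref{defn:trace-cycle}, I would then use the projection formula for $\Delta$ (together with $p_1\circ\Delta = p_2\circ\Delta = \rm{id}$) to canonically identify both $\Delta_!\bf{1}\otimes p_2^*\omega_f$ and $p_1^*\omega_f\otimes \Delta_!\bf{1}$ with $\Delta_!\omega_f$, reducing the comparison of the two diagrams to the problem of swapping the two tensor factors in $\omega_f\otimes\omega_f$. The hard part is precisely this braiding bookkeeping, which is where the \emph{strong} Chern class hypothesis enters: Lemma~\ref{lemma:first-chern-classes-orientable} asserts that the braiding on $\bf{1}_S\langle 1\rangle^{\otimes 2}$ is homotopic to the identity, so by compatibility of braidings with the (symmetric monoidal) pullback, so is the braiding on $\omega_f\otimes \omega_f$.

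Once both diagrams are verified, the triple $(\bf{1}_{\bf{P}^1_S}\langle 1\rangle, \tr_f, \cl_\Delta)$ constitutes a trace-cycle theory on $\bf{P}^1_S\to S$, and the ``in particular'' statement is an immediate application of Theorem~\ref{thm:cohomologically-smooth}. The only delicate point of the entire argument is the verification of the second diagram sketched above; the remainder reduces either to direct invocation of Proposition~\ref{prop:projective-bundle-trace-cycle} or to coherent manipulation of projection-formula isomorphisms.
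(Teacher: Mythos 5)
Your proposal is correct and follows essentially the same route as the paper: both diagrams of Definition~\ref{defn:trace-cycle} are verified by applying Proposition~\ref{prop:projective-bundle-trace-cycle} with $(Y,f,s)=(\bf{P}^1_S,p_1,\Delta)$ and $(\bf{P}^1_S,p_2,\Delta)$, the braiding bookkeeping in the second diagram is handled exactly as in the paper via Lemma~\ref{lemma:first-chern-classes-orientable} (using that $\omega_f$ is pulled back from $\bf{1}_S\langle 1\rangle$) together with the projection formula, and the ``in particular'' claim is Theorem~\ref{thm:cohomologically-smooth}. The only difference is cosmetic: you tensor the scalar identity $\tr_{p_2}\circ p_{2,!}(\cl_\Delta)=\rm{id}$ by $\omega_f$ and then reconcile with the diagram, whereas the paper starts from the diagram and reduces to that identity.
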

\begin{proof}
    In this proof, we will freely identify 
    \[
    p_1^*\bf{1}_{\bf{P}^1_S} \simeq \bf{1}_{\bf{P}^1_S \times_S \bf{P}^1_S} \simeq p_2^*\bf{1}_{\bf{P}^1_S}.
    \]
    Thus, the cycle map of the diagonal takes the form
    \[
    \rm{cl}_\Delta \colon \Delta_! \bf{1}_{\bf{P}^1_S} \to \bf{1}_{\bf{P}^1_S\times_S \bf{P}^1_S} \langle 1\rangle
    \]
    defining a cycle map in the sense of Definition~\ref{defn:trace-cycle}. \smallskip
    
    Now commutativity of the first diagram in Definition~\ref{defn:trace-cycle} follows directly from Proposition~\ref{prop:projective-bundle-trace-cycle} by taking $Y=\bf{P}^1_S$, $f=p_1$, and $s=\Delta$. We wish to establish commutativity of the second diagram. \smallskip
    
    For brevity, we denote $\bf{P}^1_S$ by $X$ and $\bf{P}^1_S\times_S \bf{P}^1_S$ by $X^2$. We have to check that the composition
    \[
    \bf{1}_{X}\langle 1\rangle \simeq p_{2, !}\left(\bf{1}_{X^2}\langle 1\rangle \otimes \Delta_! \bf{1}_{X}\right) \xr{p_{2, !}(\rm{id} \otimes \rm{cl}_\Delta)} p_{2, !}(\bf{1}_{X^2}\langle 1\rangle \otimes \bf{1}_{X^2}\langle 1\rangle) \simeq p_{2, !}\bf{1}_{X^2}\langle 1\rangle \otimes \bf{1}_X\langle 1\rangle \xr{\tr_{p_2}\otimes \rm{id}} \bf{1}_X\langle 1\rangle 
    \]
    is equal to the identity morphism (in the homotopy category $D(X)$). For this, we first note that Lemma~\ref{lemma:first-chern-classes-orientable} implies that the diagram
    \[
    \begin{tikzcd}[column sep = 4em]
        \bf{1}_{X^2}\langle1\rangle \otimes \Delta_! \bf{1}_{X} \arrow{d}{\wr}\arrow{r}{\rm{id}\otimes \rm{cl}_\Delta} & \bf{1}_{X^2}\langle 1\rangle \otimes \bf{1}_{X^2}\langle 1\rangle  \\
        \Delta_! \bf{1}_{X} \otimes \bf{1}_{X^2}\langle1\rangle \arrow[swap]{ru}{\rm{cl}_\Delta \otimes \rm{id}} & 
    \end{tikzcd}
    \]
    commutes in $D(X^2)$, where the left vertical map is the braiding morphism. Therefore, we have the following commutative diagram
    \[
    \begin{tikzcd}[column sep = 6em]
        \bf{1}_X\langle 1\rangle \arrow{r}{\sim}\arrow{dd}{\rm{id}} &    p_{2, !}\left(\bf{1}_{X^2}\langle 1\rangle \otimes \Delta_! \bf{1}_X \right) \arrow{d}{\wr} \arrow{r}{p_{2, !}\left(\rm{id} \otimes \rm{cl}_\Delta\right)} & p_{2, !}(\bf{1}_{X^2}\langle 1\rangle \otimes \bf{1}_{X^2}\langle 1\rangle) \arrow{d}{\rm{id}}\\
        &    p_{2, !}\left(\Delta_! \bf{1}_{X} \otimes \bf{1}_{X^2}\langle1\rangle\right) \arrow{r}{p_{2, !}\left(\rm{cl}_\Delta \otimes \rm{id} \right)}\arrow{d}{\wr} & p_{2, !}(\bf{1}_{X^2}\langle 1\rangle \otimes \bf{1}_{X^2}\langle 1\rangle) \arrow{d}{\wr} \\
        \bf{1}_X\langle 1\rangle\arrow{r}{\sim} &    \bf{1}_X \otimes \bf{1}_{X}\langle 1\rangle \arrow{r}{p_{2, !}(\cl_\Delta) \otimes \rm{id}}& p_{2, !} \bf{1}_{X^2}\langle 1\rangle \otimes \bf{1}_X\langle 1\rangle,
    \end{tikzcd}
    \]
    where the two bottom vertical maps come from the projection formula. Therefore, it suffices to show that the composition
    \[
    \bf{1}_X\langle 1\rangle \xr{p_{2, !}(\cl_\Delta) \otimes \rm{id}} p_{2, !} \bf{1}_{X^2}\langle 1\rangle \otimes \bf{1}_X\langle 1\rangle \xr{\tr_{p_2}\otimes \rm{id}} \bf{1}_X\langle 1\rangle 
    \]
    is equal to the identity morphism (in the homotopy category $D(X)$). For this, it suffices to show that 
    \[
    \rm{tr}_{p_2} \circ p_{2, !}(\rm{cl}_\Delta) = \rm{id}. 
    \]
    This follows from Proposition~\ref{prop:projective-bundle-trace-cycle} by taking $Y=\bf{P}^1_S$, $f=p_2$, and $s=\Delta$. \smallskip
    
    Overall, this proves that $(\bf{1}_{\bf{P}^1_S}\langle 1\rangle, \tr_f, \cl_{\Delta})$ forms a trace-cycle theory. The ``in particular'' claim follows directly from Theorem~\ref{thm:cohomologically-smooth}.
\end{proof}


Now we discuss another consequence of Proposition~\ref{prop:projective-bundle-trace-cycle}: we show that a $6$-functor formalism $\cal{D}$ satisfying the excision axiom and admitting a theory of first Chern classes is automatically weakly $\bf{A}^1$-invariant (see Definition~\ref{defn:homotopy-invariant}). For this, we need the following construction:

\begin{construction}\label{construction:adjoints-trace} Let $f\colon \bf{P}^{1}_Y \to Y$ be the relative projective line with a trace morphism $\rm{tr}\colon f_* \bf{1}_{\bf{P}^1_Y}\langle 1\rangle \to \bf{1}_Y$. By the $(f_*, f^!)$-adjunction, it also defines the {\it adjoint trace} morphism 
\[
^{\rm{adj}}\rm{tr} \colon \bf{1}_{\bf{P}^1_Y}\langle 1\rangle  \to f^!\left(\bf{1}_Y\right).
\]
\end{construction}

\begin{lemma}\label{cor:excision-first-classes-geometric} Let $\cal{D}$ be a $6$-functor formalism satisfying the excision axiom and let $c_1$ be a theory of first Chern classes. Then $\cal{D}$ is weakly motivic (see Definition~\ref{defn:motivic-6-functors}).
\end{lemma}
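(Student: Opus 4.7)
The plan is to verify the two defining axioms of a motivic $6$-functor formalism (see Definition~\ref{defn:motivic-6-functors}) separately. The first—that every smooth morphism is cohomologically smooth—follows immediately from what has been already established: excision together with a theory of first Chern classes promotes $c_1$ to a strong theory of first Chern classes via Theorem~\ref{thm:theory-is-strong-theory}; Lemma~\ref{lemma:canonical-cycle-classes} then produces a compatible theory of cycle maps $\cl_\bullet$; and Corollary~\ref{cor:first-chern-cycle-theory} gives a trace-cycle theory on the relative projective line $\bf{P}^1_S\to S$, so that Theorem~\ref{thm:cohomologically-smooth} applies.

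The substantial task is to establish $\bf{A}^1$-invariance, i.e., that for each $X\in \cal{C}$ the adjunction unit $\eta_g\colon \bf{1}_X \to g_* \bf{1}_{\bf{A}^1_X}$ of $g\colon \bf{A}^1_X \to X$ is an isomorphism. I would compactify via $j\colon \bf{A}^1_X \hookrightarrow \bf{P}^1_X$ with closed complement $i_\infty\colon X \hookrightarrow \bf{P}^1_X$, and apply $f_*$ (where $f\colon \bf{P}^1_X\to X$) to the dual excision sequence of Remark~\ref{rmk:excision-sequence} to produce the fiber sequence
\[
i_\infty^! \bf{1}_{\bf{P}^1_X} \xr{\alpha'} f_*\bf{1}_{\bf{P}^1_X} \xr{\beta'} g_*\bf{1}_{\bf{A}^1_X}.
\]
Two ingredients would identify the outer terms. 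Pulling the trace-cycle theory on $\bf{P}^1_S$ back to $\bf{P}^1_X$ via Remark~\ref{rmk:pullback-trace-cycle} and invoking Theorem~\ref{thm:abstract-poincare-duality} gives $f^!\bf{1}_X \simeq \bf{1}_{\bf{P}^1_X}\langle 1\rangle$; combined with $i_\infty^! f^! = \rm{id}$ and Lemma~\ref{lemma:invertible-projection}, this yields $i_\infty^!\bf{1}_{\bf{P}^1_X} \simeq \bf{1}_X\langle -1\rangle$. Separately, Lemma~\ref{lemma:projective-bundle-formula} decomposes $f_*\bf{1}_{\bf{P}^1_X} \simeq \bf{1}_X \oplus \bf{1}_X\langle -1\rangle$ via $(f^*, c_1\langle -1\rangle)$.

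Under these identifications, I would verify that $\beta' = (\eta_g, 0)$: the first component equals $\eta_g$ by naturality of units, while the second component is adjoint under $(g^*, g_*)$ to $c_1(j^*\O_{\bf{P}^1_X}(1))\langle -1\rangle = c_1(\O_{\bf{A}^1_X})\langle -1\rangle = 0$, vanishing because additivity of first Chern classes forces $c_1$ of a trivial bundle to be zero. In parallel, a short computation based on the decomposition of the $(f\circ i_\infty)_! (f\circ i_\infty)^! \to \rm{id}$ counit through $f_!$ and $i_{\infty, !}$, combined with the projection formula and the identification of $\tr_f$ with the counit of the $(f_!, f^!)$-adjunction at $\bf{1}_X$, should yield $\tr_f\langle -1\rangle \circ \alpha' = \rm{id}_{\bf{1}_X\langle -1\rangle}$. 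Putting these together, the fiber of $\beta'$ is simultaneously $\rm{fib}(\eta_g) \oplus \bf{1}_X\langle -1\rangle$ (from the matrix form of $\beta'$) and $\bf{1}_X\langle -1\rangle$ (from the excision sequence), with the comparison isomorphism projecting to the identity on the $\bf{1}_X\langle -1\rangle$ summand. An elementary argument in the stable $\infty$-category $\cal{D}(X)$ then forces $\rm{fib}(\eta_g) = 0$, giving the desired isomorphism. The main obstacle is the bookkeeping in the third step: carefully tracking the various counits, cycle classes, trace maps, and projection formulas to rigorously justify the identity $\tr_f\langle -1\rangle\circ \alpha' = \rm{id}$.
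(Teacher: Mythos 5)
Your proposal is correct, and it follows the same skeleton as the paper's proof: the reduction to $\bf{A}^1$-invariance via Theorem~\ref{thm:theory-is-strong-theory}, Lemma~\ref{lemma:canonical-cycle-classes}, and Corollary~\ref{cor:first-chern-cycle-theory} is identical, and both arguments then push forward the excision sequence for $\bf{A}^1_X\subset \bf{P}^1_X$ along $f$ and play it against the projective bundle formula and duality for $\bf{P}^1$. The execution differs at two points. The paper identifies the ``closed'' term of the pushed-forward triangle using the cycle map $\cl_s$ of the section at infinity: Proposition~\ref{prop:projective-bundle-trace-cycle} shows the square comparing $\cl_s$ with the $(s^*,s_*)$/$(f_!,f^!)$ counits through $^{\rm{adj}}\tr$ commutes, so the triangle becomes $\bf{1}_Y \xr{c_1} f_*\bf{1}_{\bf{P}^1_Y}\langle 1\rangle \to g_*\bf{1}_{\bf{A}^1_Y}\langle 1\rangle$, and then a map to the tautologically split triangle together with the PBF isomorphism and $2$-out-of-$3$ finishes immediately. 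You instead never invoke $\cl_s$ in this step: you compute the components of the restriction map ($\eta_g$, and $0$ since $c_1(\O_{\bf{A}^1_X})=0$ by additivity and naturality of $c_1$ along $j$), split the boundary map by $\tr_f\langle -1\rangle$, and conclude by the elementary matrix argument. Your flagged identity $\tr_f\langle -1\rangle\circ\alpha'=\rm{id}$ does hold, and the clean way to see it is exactly your sketch: it is the statement that the counit of the composite adjunction for $f\circ i_\infty=\rm{id}$ is the identity, \emph{provided} the identification $f^!\bf{1}_X\simeq \bf{1}_{\bf{P}^1_X}\langle 1\rangle$ you use is $^{\rm{adj}}\tr_f$ itself --- which is legitimate because in the proof of Theorem~\ref{thm:abstract-poincare-duality} the counit of the constructed adjunction is $\tr_f$ by definition, so uniqueness of right adjoints identifies the comparison isomorphism at $\bf{1}_X$ with $^{\rm{adj}}\tr_f$ (the paper relies on the same fact when asserting $^{\rm{adj}}\tr$ is an isomorphism). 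So the trade-off is: the paper concentrates the diagram-chasing in Proposition~\ref{prop:projective-bundle-trace-cycle} and gets a one-line finish, while your route avoids the cycle map of the $\infty$-section here at the cost of the composite-adjunction bookkeeping and a slightly longer endgame; both are complete once that bookkeeping is written out.
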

\begin{proof}
    Firstly, we note that Lemma~\ref{lemma:canonical-cycle-classes} constructs a theory of cycle maps underlying $c_1$. Furthermore, Theorem~\ref{thm:theory-is-strong-theory} implies that $c_1$ is a strong theory of first Chern classes. Therefore, Corollary~\ref{cor:first-chern-cycle-theory} ensures that any smooth morphism is cohomologically smooth. So we only need to show that $\cal{D}$ is weakly $\bf{A}^1$-invariant.\smallskip
    
    We fix a relative affine line $g\colon \bf{A}^1_Y \to Y$ and compactify it to a relative projective line $f\colon \bf{P}^1_Y \to Y$. The complement of the open immersion $j \colon \bf{A}^1_Y \hookrightarrow \bf{P}^1_Y$ forms a section $s\colon Y \to \bf{P}^1_Y$. Then Definition~\ref{defn:cycle-clas-divisors} defines a theory cycle maps underlying $c_1$. In particular, it defines a morphism
    \[
    s_* \bf{1}_{Y} \to \bf{1}_{\bf{P}^1_Y}\langle 1\rangle.
    \]
    Using Proposition~\ref{prop:projective-bundle-trace-cycle}, it is essentially formal to verify that the following diagram commutes:
    \[
    \begin{tikzcd}
        s_* \bf{1}_Y \arrow{d}{\rm{cl}_s} \arrow{r}{\sim} & s_* s^! f^! \bf{1}_Y \arrow{d}{\rm{adj}} \\
        \bf{1}_{\bf{P}^1_Y}\langle 1\rangle \arrow{r}{^{\rm{adj}}\rm{tr}} & f^! \bf{1}_Y. 
    \end{tikzcd}
    \]
    Now Corollary~\ref{cor:first-chern-cycle-theory} and Theorem~\ref{thm:abstract-poincare-duality} provide us with an equivalence of functors $\gamma\colon f^*(-)\otimes \bf{1}_{\PP^1_Y}\langle 1\rangle \xr{\sim} f^!(-)$. By construction, the $(f_!\simeq f_*, f^!)$-adjoint to $\gamma(\bf{1}_Y)$ is given by $\tr\colon f_* \bf{1}_{\PP^1_Y}\langle 1\rangle \to \bf{1}_Y$. In other words, we conclude that $^{\rm{adj}}\rm{tr} = \gamma(\bf{1}_Y)$ is an isomorphism. 
    Thus, the exact triangle $s_*s^! f^! \bf{1}_Y \xr{\rm{adj}} f^!\bf{1}_Y \xr{\rm{can}} j_*j^*f^!\bf{1}_Y$ becomes isomorphism to the following exact triangle: 
    \[
    s_* \bf{1}_Y \xr{\rm{cl}_s} \bf{1}_{\bf{P}^1_Y}\langle 1\rangle \xr{\rm{can}} j_* \bf{1}_{\bf{A}^1_Y}\langle 1\rangle.
    \]
    Now we apply $f_*$ (and Remark~\ref{rmk:first-chern-first-cycle-comp}) to this sequence to get an exact triangle
    \[
    \bf{1}_Y \xr{c_1} f_* \bf{1}_{\bf{P}^1_Y}\langle 1\rangle \to g_* \bf{1}_{\bf{A}^1_Y}\langle 1\rangle.
    \]
    In particular, we have a commutative diagram of exact triangles 
    \[
    \begin{tikzcd}
        \bf{1}_Y \arrow{r} \arrow{d}{\rm{id}} & \bf{1}_Y \oplus \bf{1}_{Y}\langle 1\rangle \arrow{d}{c_1+f^*\langle 1\rangle} \arrow{r} &\bf{1}_Y\langle 1\rangle \arrow{d}{g^*\langle 1\rangle} \\
        \bf{1}_Y \arrow{r}{c_1} & f_* \bf{1}_{\bf{P}^1_Y}\langle 1\rangle \arrow{r}& g_* \bf{1}_{\bf{A}^1_Y}\langle 1\rangle.
    \end{tikzcd}
    \]
    Now the definition of the first Chern classes and the $2$-out-of-$3$ property implies that 
    \[
    \bf{1}_Y\langle 1\rangle \to g_* \bf{1}_{\bf{A}^1_Y}\langle 1\rangle
    \]
    is an isomorphism. Since $\bf{1}_Y\langle 1\rangle$ is an invertible sheaf, this formally implies that the natural morphism $\bf{1}_Y \to g_* \bf{1}_{\bf{A}^1_Y}$ is an isomorphism as well. \smallskip
\end{proof}

\subsection{Poincar\'e Duality}\label{section:duality}

The first goal of this section is to show that a strong theory of first Chern classes $c_1$ underlying a theory of cycle maps (see Definition~\ref{defn:geometric-chern-classes-without-length} and Definition~\ref{defn:theory-of-cycle-classes-chern-classes}) implies the strongest version of Poincar\'e Duality under the additional assumption that $\cal{D}$ is either weakly $\bf{A}^1$-invariant or pre-geometric (see Definition~\ref{defn:6-functors-geometric}). The second goal is to show that, if $\cal{D}$ satisfies the excision axiom, it suffices to assume that $\cal{D}$ admits a theory of first Chern classes. \smallskip

We now briefly sketch the idea behind the proof. Corollary~\ref{cor:first-chern-cycle-theory} reduces the question of proving Poincar\'e Duality to the question of computing dualizing object $f^! \bf{1}_Y$. For this, we use Theorem~\ref{thm:formula-dualizing-complex} (or Theorem~\ref{thm:formula-dualizing-complex-geometric}) to reduce the question to computing $C(\rm{T}_f)$. This is done via compactifying $\rm{T}_f$ to a projective bundle and the (naive) cycle map of a point from Definition~\ref{defn:naive-cycle-class}. \smallskip

For the rest of this section, we fix a $6$-functor formalism $\cal{D}$ with a {\it strong} theory of first Chern classes $c_1$ underlying a theory of cycle maps $\cl_\bullet$ (see Definition~\ref{defn:theory-of-cycle-classes-chern-classes}). \smallskip

We start by defining the adjoint to the trace map from Construction~\ref{construction:trace-projective-bundle}. More precisely, let $Y$ be an object of $\cal{C}$, $\cal{E}$ is a vector bundle on $Y$ of rank $d+1$, and 
\[
f\colon \bf{P}_Y(\cal{E}) \to Y
\]
be the corresponding projective bundle. Then Construction~\ref{construction:trace-projective-bundle} defines the trace morphism
\[
\tr_f \colon f_* \bf{1}_{\bf{P}_Y(\cal{E})}\langle d\rangle \to \bf{1}_Y
\]

\begin{construction}\label{construction:adjoints-trace-vector-bundle} Let $f\colon \bf{P}_Y(\cal{E}) \to Y
$ and $\tr_f$ be as above. By the $(f_*, f^!)$-adjunction, $\tr_f$ uniquely defines the {\it adjoint trace} morphism 
\[
^{\rm{adj}}\rm{tr} \colon \bf{1}_{\bf{P}_Y(\cal{E})}\langle d\rangle  \to f^!\left(\bf{1}_Y\right).
\]
in $D(\bf{P}_Y(\cal{E}))$.
\end{construction}

Now suppose that $\cal{E}=\O_Y^{d+1}$, so $\bf{P}_Y(\cal{E})=\bf{P}^d_Y$. Then Definition~\ref{defn:naive-cycle-class} defines the (cycle) class of the ``zero'' section
\[
\rm{cl}_s\colon s_*\bf{1}_Y \to \bf{1}_{\bf{P}^d_Y}\langle d \rangle.
\]

\begin{construction}\label{construction:adjoints-Gysin} In the notation as above, $\cl_s$ uniquely defines the {\it adjoint cycle map} morphism 
\[
^{\rm{adj}} \cl_s \colon \bf{1}_Y \to s^!\left(\bf{1}_{\bf{P}^d_Y}\langle d \rangle\right).
\]
in $D(Y)$.
\end{construction}

\begin{lemma}\label{lemma:split} Let $c_1$ be a strong theory of first Chern classes on $\cal{D}$ underlying a theory of cycle maps $\cl_\bullet$ and let $f\colon \bf{P}^d_Y \to Y$ be the relative projective space. Then the following diagram
\[
\begin{tikzcd}
    \bf{1}_Y \arrow{r}{^{\rm{adj}} \cl_s} \arrow[rd, "\sim" {sloped, anchor=center, above}] & s^! \bf{1}_{\bf{P}^d_Y}\langle d\rangle \arrow{d}{s^!(^{\rm{adj}}\tr_f)} \\
    & s^!f^!\bf{1}
\end{tikzcd}
\]
commutes in $D(Y)$.
\end{lemma}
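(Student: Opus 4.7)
The plan is to reduce the commutativity of the diagram to the computation $\tr_f \circ f_*(\cl_s) = \rm{id}_{\bf{1}_Y}$ after applying the $(f_* s_*, s^! f^!)$-adjunction, and then to identify $f_*(\cl_s)$ with the adjoint Chern class morphism $c_1^d$ using the material already developed.

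First I would unwind the diagonal isomorphism: since $f\circ s = \rm{id}_Y$, we have a canonical equivalence $s^! f^! \bf{1}_Y \simeq (fs)^! \bf{1}_Y \simeq \bf{1}_Y$, and the diagonal arrow in the diagram is this canonical identification. Passing to adjoints under $(s_*, s^!)$ and then $(f_*, f^!)$, both sides of the claimed equality become morphisms $f_* s_* \bf{1}_Y \to \bf{1}_Y$; the diagonal identification corresponds to $\rm{id}_{\bf{1}_Y}$ under the isomorphism $f_*s_* \bf{1}_Y \simeq \bf{1}_Y$ (coming again from $fs = \rm{id}$), and the composition $s^!({}^{\rm{adj}}\tr_f) \circ {}^{\rm{adj}}\cl_s$ corresponds to $\tr_f \circ f_*(\cl_s)$. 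So it suffices to prove that the latter composition equals $\rm{id}_{\bf{1}_Y}$ after the identification $f_*s_*\bf{1}_Y \simeq \bf{1}_Y$.

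Next I would compute $f_*(\cl_s)\colon \bf{1}_Y \to f_*\bf{1}_{\bf{P}^d_Y}\langle d\rangle$. By the $(f^*, f_*)$-adjunction, this is determined by its $f^*$-adjoint, which is precisely the composition
\[
\bf{1}_{\bf{P}^d_Y} \xrightarrow{\rm{adj}_s} s_*\bf{1}_Y \xrightarrow{\cl_s} \bf{1}_{\bf{P}^d_Y}\langle d\rangle.
\]
By Lemma~\ref{lemma:naive-cycle-of-point-product-of-chern-classes}, this composition equals $c_1(\O_{\bf{P}^d_Y/Y}(1))^{\otimes d}$. Then, by the very definition of the morphism $c_1^d$ given in Construction~\ref{construction:adjoint-classes}, the $(f^*, f_*)$-adjoint of this morphism is exactly $c_1^d \colon \bf{1}_Y \to f_* \bf{1}_{\bf{P}^d_Y}\langle d\rangle$. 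Therefore $f_*(\cl_s) = c_1^d$ under the identification $f_*s_*\bf{1}_Y \simeq \bf{1}_Y$.

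Finally, recall that Construction~\ref{construction:trace-projective-bundle} defines $\tr_f$ as $(c_1^d)^{-1}$, i.e.\,the projection onto the $\bf{1}_Y$-summand (corresponding to $k=d$) in the projective-bundle decomposition $\bigoplus_{k=0}^d \bf{1}_Y\langle d-k\rangle \xrightarrow{\sim} f_*\bf{1}_{\bf{P}^d_Y}\langle d\rangle$ provided by Lemma~\ref{lemma:projective-bundle-formula}. In particular $\tr_f \circ c_1^d = \rm{id}_{\bf{1}_Y}$, which combined with the previous step yields $\tr_f \circ f_*(\cl_s) = \rm{id}_{\bf{1}_Y}$ and hence the desired commutativity. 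No step seems to present a serious obstacle: the only nontrivial input beyond formal adjunction manipulations is Lemma~\ref{lemma:naive-cycle-of-point-product-of-chern-classes}, which has already been established.
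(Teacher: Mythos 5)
Your proposal is correct and follows essentially the same route as the paper's proof: pass to adjoints to reduce to $\tr_f\circ f_*(\cl_s)=\rm{id}$, identify $f_*(\cl_s)$ with $c_1^d$ via Lemma~\ref{lemma:naive-cycle-of-point-product-of-chern-classes} together with the formal adjunction argument of Remark~\ref{rmk:first-chern-first-cycle-comp}, and conclude by the definition of $\tr_f$ in Construction~\ref{construction:trace-projective-bundle}.
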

\begin{proof}
    By passing to adjoints, it suffices to show that the diagram
    \[
    \begin{tikzcd}
        f_* s_* \bf{1}_Y \arrow{r}{f_*(\cl_s)} \arrow[rd, "\sim" {sloped, anchor=center, above}] \arrow[swap]{rd}{h}& f_*\bf{1}_{\bf{P}^d_Y}\langle d\rangle     \arrow{d}{\tr_f} \\
        & \bf{1}_Y 
    \end{tikzcd}
    \]
    commutes in $D(Y)$. Lemma~\ref{lemma:naive-cycle-of-point-product-of-chern-classes} and a formal argument with adjoints (similar to Remark~\ref{rmk:first-chern-first-cycle-comp}) implies that the composition
    \[
    \bf{1}_Y \xr{h^{-1}} f_* s_* \bf{1}_Y \xr{f_*(\cl_s)} f_*\bf{1}_{\bf{P}^d_Y}\langle d\rangle
    \]
    is equal to the morphism adjoint to $c_1^d(\O_{\bf{P}^d_Y/Y}(1)) \colon \bf{1}_{\bf{P}^d_Y} \to \bf{1}_{\bf{P}^d_Y}\langle d\rangle$. In other words, this composition is equal to the morphism
    \[
    c_1^d \colon \bf{1}_Y \to f_*\bf{1}_{\bf{P}^d_Y}\langle d\rangle
    \]
    from Construction~\ref{construction:adjoint-classes} applied to $c=c_1\bigl(\O_{\bf{P}^d_Y/Y}(1)\bigr)$. Therefore, the question boils down to showing that the composition 
    \[
    \bf{1}_Y \xr{c_1^d} f_*\bf{1}_{\bf{P}^d_Y}\langle d\rangle \xr{\tr_f} \bf{1}_Y
    \]
    is the identity morphism (in $D(Y)$). However, this follows from the definition of the trace morphism (see Construction~\ref{construction:trace-projective-bundle}). 
\end{proof}

Now we turn to the proof of Poincar\'e Duality. In the process of the proof, we will need the following simple (but useful) lemma:

\begin{lemma}\label{lemma:invertible-indecomposable} Let $D$ be a closed symmetric monoidal additive category with a unit object $\bf{1}$ and $L$ be an invertible object. Suppose that $L=\bf{1}\oplus X$. Then $X\simeq 0$.
\end{lemma}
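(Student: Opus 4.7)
My plan is to exploit the symmetry constraint $\tau_{L,L}\colon L \otimes L \to L \otimes L$ together with the invertibility of $L$, which forces all endomorphisms of $L \otimes L$ to be scalars.

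First I would expand
\[
L \otimes L \simeq (\bf{1} \otimes \bf{1}) \oplus (\bf{1} \otimes X) \oplus (X \otimes \bf{1}) \oplus (X \otimes X) \simeq \bf{1} \oplus X \oplus X \oplus (X \otimes X),
\]
recording that the two middle copies of $X$ come from the summands $\bf{1}\otimes X$ and $X \otimes \bf{1}$, which get interchanged by the braiding.

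The key general fact to invoke is that for an invertible object $M$ in a closed symmetric monoidal additive category, the scaling map
\[
\End(\bf{1}) \to \End(M), \qquad r \mapsto r \otimes \rm{id}_M,
\]
is a ring isomorphism; indeed, using the closed structure and invertibility, $\End(M) \simeq \rm{Hom}(\bf{1}, \ud{\rm{Hom}}(M,M)) \simeq \rm{Hom}(\bf{1}, M \otimes M^{-1}) \simeq \End(\bf{1})$, and one checks that scaling is the inverse. Since $L \otimes L$ is again invertible (invertibles are closed under $\otimes$), we conclude $\tau_{L,L} = u \cdot \rm{id}_{L \otimes L}$ for a unique $u \in \End(\bf{1})$. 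Next I would read off $u$ by restricting $\tau_{L,L}$ along the inclusion of the summand $\bf{1} \otimes \bf{1} \subset L \otimes L$: by the coherence axioms of symmetric monoidal categories, $\tau_{\bf{1},\bf{1}} = \rm{id}_{\bf{1}}$, while the scaling description forces this restriction to equal $u \cdot \rm{id}_{\bf{1}} = u$. Hence $u = 1$ and therefore $\tau_{L,L} = \rm{id}_{L \otimes L}$.

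Finally, I would restrict the identity $\tau_{L,L}$ to the direct summand $(\bf{1} \otimes X) \oplus (X \otimes \bf{1}) \simeq X \oplus X$: by naturality of the braiding and coherence (which identify $\tau_{\bf{1},X}$ with $\rm{id}_X$ after the unit isomorphisms), this restriction is the ``swap'' map $(a,b) \mapsto (b,a)$, with $2 \times 2$ block matrix
\[
\begin{pmatrix} 0 & \rm{id}_X \\ \rm{id}_X & 0 \end{pmatrix}.
\]
But this must equal $\rm{id}_{X \oplus X}$, whose matrix has $\rm{id}_X$ on the diagonal and $0$ off-diagonal. Comparing entries yields $\rm{id}_X = 0$, so $X \simeq 0$.

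The only mild subtlety is the standard identification of $\End(M)$ with $\End(\bf{1})$ via scaling for invertible $M$; this is the step where the closed symmetric monoidal hypothesis enters, through the formula $\ud{\rm{Hom}}(M,M) \simeq M \otimes M^{-1}$. Everything else is a direct application of coherence.
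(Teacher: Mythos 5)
Your argument is correct, but it takes a genuinely different route from the paper's. The paper tensors $L=\mathbf{1}\oplus X$ with its dual and uses that, for an invertible object, the evaluation map $L\otimes L^{\vee}\to\mathbf{1}$ is an isomorphism; expanding $L\otimes L^{\vee}\simeq \mathbf{1}\oplus X\oplus X^{\vee}\oplus (X\otimes X^{\vee})$, it reads off $X\simeq 0$ from the resulting direct-sum decomposition of $\mathbf{1}$. You never use the dual explicitly: you use that $L\otimes L$ is invertible, hence every endomorphism of it is a scalar in $\End(\mathbf{1})$, so the braiding $\tau_{L,L}$ is a scalar; naturality of $\tau$ with respect to the inclusion $\mathbf{1}\to L$ together with $\tau_{\mathbf{1},\mathbf{1}}=\mathrm{id}$ pins the scalar down to $1$, and then comparing the $(\mathbf{1}\otimes X,\mathbf{1}\otimes X)$-component of $\tau_{L,L}$ (which vanishes, since the braiding interchanges the two middle summands) with that of the identity gives $\mathrm{id}_X=0$. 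Each approach has its advantages. The paper's is shorter, but its final step (passing from $\mathbf{1}\oplus X\oplus X^{\vee}\oplus(X\otimes X^{\vee})\simeq\mathbf{1}$ to $X\simeq 0$) is left implicit and itself requires a small argument, e.g. splitting the idempotent and using Eckmann--Hilton commutativity of $\End(\mathbf{1})$ to see that a one-sided inverse there is two-sided. Your version front-loads all the work into the standard fact that endomorphisms of an invertible object are scalars (for which either the closed structure, as you argue, or simply the equivalence $-\otimes L^{-1}$ suffices), and as a byproduct it proves the stronger statement that the symmetry on $L\otimes L$ is the identity for such $L$, in the same spirit as the paper's Lemma~\ref{lemma:first-chern-classes-orientable}. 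Two points you use implicitly and could flag: distributivity of $\otimes$ over finite direct sums (which follows from additivity plus closedness, since $-\otimes A$ is a left adjoint) and naturality of the braiding with respect to the inclusions and projections of the summands, which is what justifies your block-matrix description of $\tau_{L,L}$.
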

\begin{proof}
    If $L$ is an invertible object, then the natural evaluation morphism
    \[
    L\otimes L^{\vee} \to \bf{1}
    \]
    must be an isomorphism. Now we write 
    \[
    L\otimes L^{\vee} \simeq (\bf{1}\oplus X)\otimes (\bf{1}\oplus X)^{\vee} \simeq (\bf{1}\oplus X)\otimes (\bf{1}\oplus X^{\vee}) \simeq \bf{1}\oplus X\oplus X^{\vee} \oplus X\otimes X^{\vee}
    \]
    to conclude that $X=X^{\vee}=0$. 
\end{proof}

Now we specialize to the case of a vector bundle of the form $\cal{E}'=\cal{E}\oplus \O$ on an object $Y\in \cal{C}$. Then the relative projective bundle
\[
f\colon \bf{P}_Y(\cal{E}\oplus \cal{O})\to Y
\]
has a canonical section $s\colon Y \to \bf{P}_Y(\cal{E}\oplus \cal{O})$ corresponding to the quotient $\cal{E}\oplus \O\xr{p} \O$. 

\begin{lemma}\label{lemma:trivialize} Let $c_1$ be a strong theory of first Chern classes on $\cal{D}$ underlying a theory of cycle maps $\cl_\bullet$, let $Y$ be an object of $\cal{C}$, let $\cal{E}$ be a vector bundle of rank $d+1$ on $Y$, and let
\[
f\colon \bf{P}_Y(\cal{E}\oplus \O)\to Y
\]
be the relative projective bundle with the canonical section $s$. Then the natural morphism
\[
s^!(^{\rm{adj}}\tr_f) \colon s^!\bf{1}_{\bf{P}_Y(\cal{E}\oplus \O)}\langle d\rangle \to s^!f^!\bf{1}_Y
\]
is an isomorphism, where $^{\rm{adj}}\tr_f$ is from Construction~\ref{construction:adjoints-trace-vector-bundle}.
\end{lemma}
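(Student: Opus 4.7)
The strategy has three steps: first, show that the source $s^!\bf{1}_{\bf{P}_Y(\cal{E}\oplus\O)}\langle d\rangle$ is invertible in $\cal{D}(Y)$; second, reduce to the case where $\cal{E}$ is trivial; third, combine Lemma~\ref{lemma:split} with the indecomposability lemma (Lemma~\ref{lemma:invertible-indecomposable}) to conclude.

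For Step~1, Corollary~\ref{cor:first-chern-cycle-theory} implies that $f$ is cohomologically smooth, so $\omega_f \coloneqq f^!\bf{1}_Y$ is invertible and the co-projection isomorphism gives $f^!\bf{1}_Y\langle d\rangle \simeq \omega_f \otimes \bf{1}_{\bf{P}_Y(\cal{E}\oplus\O)}\langle d\rangle$. Rearranging and applying $s^!$, together with Lemma~\ref{lemma:invertible-projection} applied to the invertible object $\omega_f^{-1}$ and the identity $s^!f^! = (f\circ s)^! = \rm{id}$, yields
\[
s^!\bf{1}_{\bf{P}_Y(\cal{E}\oplus\O)}\langle d\rangle \simeq s^*\omega_f^{-1} \otimes \bf{1}_Y\langle d\rangle,
\]
which is manifestly invertible.

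For Step~2, every ingredient in the construction of $s^!(^{\rm{adj}}\tr_f)$ is stable under analytic (resp.\ Zariski) localization on $Y$: the first Chern classes (and hence the projective bundle decomposition defining $\tr_f$) are functorial in the base by Remark~\ref{rmk:properties-first-chern-theory-unlocalized}; cohomological smoothness of $f$ renders $f^!$ and the $(f_*, f^!)$-adjunction producing $^{\rm{adj}}\tr_f$ stable under base change; and $s^!$ commutes with pullback along any (cohomologically smooth) open cover $g\colon Y' \to Y$ by Lemma~\ref{lemma:cohomologically-smooth-base-change}(\ref{lemma:cohomologically-smooth-base-change-3}) applied to the cartesian square attached to $s$ (with $g$ on the base). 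We may therefore assume $\cal{E}$ is trivial, in which case $\bf{P}_Y(\cal{E}\oplus\O)$ identifies with a relative projective space $\bf{P}^n_Y$, and the canonical section coming from the quotient $\cal{E}\oplus\O \twoheadrightarrow \O$ is identified with the ``zero'' section of Setup~\ref{setup:an-pn}.

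With these reductions in place, Lemma~\ref{lemma:split} produces the adjoint cycle morphism $^{\rm{adj}}\cl_s\colon \bf{1}_Y \to s^!\bf{1}_{\bf{P}^n_Y}\langle d\rangle$ whose composite with $s^!(^{\rm{adj}}\tr_f)$ equals the canonical isomorphism $\bf{1}_Y \simeq s^!f^!\bf{1}_Y$. Stability of $\cal{D}(Y)$ then splits $s^!\bf{1}_{\bf{P}_Y(\cal{E}\oplus\O)}\langle d\rangle$ as $\bf{1}_Y \oplus K$, with $K$ the fiber of $s^!(^{\rm{adj}}\tr_f)$. Since the source is invertible by Step~1, Lemma~\ref{lemma:invertible-indecomposable} forces $K \simeq 0$, and hence $s^!(^{\rm{adj}}\tr_f)$ is an isomorphism. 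The only subtle technical point is the base-change compatibility in Step~2, but this is a direct application of Lemma~\ref{lemma:cohomologically-smooth-base-change} once one observes that the relevant cover of $Y$ is cohomologically smooth.
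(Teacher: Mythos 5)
Your proof is correct and follows essentially the same route as the paper: localize on $Y$ to trivialize $\cal{E}$, deduce invertibility of $s^!\bf{1}_{\bf{P}_Y(\cal{E}\oplus\O)}\langle d\rangle$ from cohomological smoothness of $f$ (Corollary~\ref{cor:first-chern-cycle-theory}) together with Lemma~\ref{lemma:invertible-projection} and $s^!f^!\simeq\rm{id}$, and then combine Lemma~\ref{lemma:split} with Lemma~\ref{lemma:invertible-indecomposable} to kill the complementary summand. The only differences are cosmetic: you establish invertibility before localizing and you spell out the base-change justification for the localization step (via Lemma~\ref{lemma:cohomologically-smooth-base-change}), which the paper simply asserts.
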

\begin{proof}
    We first note that the question is local on $Y$, so we can assume that $\cal{E}=\O_Y^{\oplus d+1}$. So $\bf{P}_Y(\cal{E}\oplus \O)\simeq \bf{P}^d_Y$, and $s$ corresponds to the ``zero'' section defined just before Definition~\ref{defn:naive-cycle-class}. \smallskip
    
    Now we note that $s^!\bf{1}_{\bf{P}_Y^d}$ is an invertible object. Indeed, Corollary~\ref{cor:first-chern-cycle-theory} (and Definition~\ref{defn:cohomologically-smooth}) implies that $f^! \bf{1}_Y$ is an invertible object. Therefore, Lemma~\ref{lemma:invertible-projection} implies that
    \[
    \bf{1}_Y \simeq s^!f^!\bf{1}_Y \simeq s^!\bf{1}_{\bf{P}^d_Y} \otimes s^* f^! \bf{1}_Y.
    \]
    Since $s^*f^! \bf{1}_Y$ is invertible and $s^!\bf{1}_{\bf{P}^d_Y}$ is dual to it, we formally conclude that $s^!\bf{1}_{\bf{P}^d_Y}$ is invertible as well. \smallskip
    
    Now we note that Construction~\ref{construction:adjoints-Gysin} defines a morphism
    \[
    ^{\rm{adj}}\cl_s \colon \bf{1}_Y\to s^! \bf{1}_{\bf{P}^d_Y}.
    \]
    Lemma~\ref{lemma:split} implies that the composition
    \[
    \bf{1}_Y\xr{^{\rm{adj}}\cl_s} s^! \bf{1}_{\bf{P}^d_Y}\langle d\rangle \xr{s^!(^{\rm{adj}}\tr_f)} s^!f^!\bf{1}_Y\simeq \bf{1}_Y
    \]
    is the identity morphism (in the homotopy category $D(Y)$). So $\bf{1}_Y$ is a direct summand of the invertible object $s^! \bf{1}_{\bf{P}^d_Y}\langle d\rangle$. Therefore, Lemma~\ref{lemma:invertible-indecomposable} implies that both $^{\rm{adj}}\cl_s$ and $s^!(^{\rm{adj}}\tr_f)$ must be isomorphisms. 
\end{proof}

\begin{thm}\label{thm:poincare-duality-chern-classes} Suppose that a $6$-functor formalism $\cal{D}$ is either weakly $\bf{A}^1$-invariant or pre-geometric. Let $c_1$ be a strong theory of first Chern classes on $\cal{D}$ underlying a theory of cycle maps $\cl_\bullet$ and let $f\colon X \to Y$ be a smooth morphism of pure relative dimension $d$ (see \cite[Def.\,1.8.1]{H3}). Then the right adjoint to the functor
\[
f_!\colon \cal{D}(X) \to \cal{D}(Y)
\]
is given by the formula
\[
f^!(-) = f^*(-)\otimes \bf{1}_X\langle d\rangle \colon \cal{D}(Y) \to \cal{D}(X).
\]
\end{thm}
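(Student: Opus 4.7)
The plan is to assemble ingredients already developed in the previous sections in a straightforward way. First, by Corollary~\ref{cor:first-chern-cycle-theory}, the strong theory of first Chern classes $c_1$ together with the underlying theory of cycle maps $\cl_\bullet$ equips $\bf{P}^1_S\to S$ with a trace-cycle theory, so Theorem~\ref{thm:cohomologically-smooth} ensures that every smooth morphism in $\cal{C}$ is cohomologically smooth; in particular $\cal{D}$ is geometric in the sense of Definition~\ref{defn:6-functors-geometric}. Cohomological smoothness of $f$ yields the co-projection isomorphism $f^!(-)\simeq f^*(-)\otimes f^!\bf{1}_Y$ with $f^!\bf{1}_Y$ invertible, so it suffices to produce a canonical isomorphism $f^!\bf{1}_Y\simeq \bf{1}_X\langle d\rangle$. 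Under the hypothesis that $\cal{D}$ is either $\bf{A}^1$-invariant (hence motivic) or pre-geometric, Theorem~\ref{thm:formula-dualizing-complex}, respectively Theorem~\ref{thm:formula-dualizing-complex-geometric}, provides a canonical isomorphism $f^!\bf{1}_Y\simeq C_X(\rm{T}_f)$, and the problem reduces to the identification $C_X(\rm{T}_f)\simeq \bf{1}_X\langle d\rangle$.

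For this, I would compactify the total space $g\colon \rm{V}_X(\rm{T}_f)\to X$ to the projective bundle $\bar{g}\colon \bf{P}_X(\rm{T}_f^{\vee}\oplus \O)\to X$ via the standard open immersion $j\colon \rm{V}_X(\rm{T}_f)\hookrightarrow \bf{P}_X(\rm{T}_f^{\vee}\oplus \O)$. The zero section $s$ of $g$ extends along $j$ to the canonical section $\bar{s}\colon X\to \bf{P}_X(\rm{T}_f^{\vee}\oplus \O)$ of $\bar{g}$ associated with the quotient $\rm{T}_f^{\vee}\oplus \O\to \O$, so that $\bar{s}=j\circ s$. Since $j$ is an open immersion we have $j^!=j^*$, whence
\[
C_X(\rm{T}_f)=s^*g^!\bf{1}_X=s^*j^*\bar{g}^!\bf{1}_X=\bar{s}^*\bar{g}^!\bf{1}_X.
\]

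To compute $\bar{s}^*\bar{g}^!\bf{1}_X$, I would invoke Lemma~\ref{lemma:trivialize}, applied with $\cal{E}=\rm{T}_f^{\vee}$ and $Y=X$, which asserts that the morphism $\bar{s}^!({}^{\rm{adj}}\tr_{\bar{g}})$ is an isomorphism; combined with the identity $\bar{g}\circ \bar{s}=\rm{id}_X$, this gives $\bar{s}^!\bf{1}_{\bf{P}_X(\rm{T}_f^{\vee}\oplus \O)}\langle d\rangle \simeq \bar{s}^!\bar{g}^!\bf{1}_X\simeq \bf{1}_X$. A first application of the co-projection isomorphism of Lemma~\ref{lemma:invertible-projection} to the invertible object $\bf{1}\langle d\rangle$ then produces $\bar{s}^!\bf{1}_{\bf{P}_X(\rm{T}_f^{\vee}\oplus \O)}\simeq \bf{1}_X\langle -d\rangle$. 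Since $\bar{g}$ is smooth, $\bar{g}^!\bf{1}_X$ is invertible, and a second application of Lemma~\ref{lemma:invertible-projection} gives
\[
\bf{1}_X\simeq \bar{s}^!\bar{g}^!\bf{1}_X\simeq \bar{s}^!\bf{1}\otimes \bar{s}^*\bar{g}^!\bf{1}_X\simeq \bf{1}_X\langle -d\rangle \otimes C_X(\rm{T}_f),
\]
whence $C_X(\rm{T}_f)\simeq \bf{1}_X\langle d\rangle$, as desired. The main input behind the scenes is Lemma~\ref{lemma:trivialize}: producing the trivialization of $^{\rm{adj}}\tr_{\bar{g}}$ after $\bar{s}^!$ uses the splitting coming from the projective bundle formula for $\bar{g}$ (so the strong theory of first Chern classes) together with a right inverse built from the naive cycle class of a point (so the underlying theory of cycle maps). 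Once that lemma is in place, the remainder is a routine bookkeeping exercise with the co-projection isomorphism for invertible objects.
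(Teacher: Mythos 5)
Your proposal is correct and follows essentially the same route as the paper: cohomological smoothness of all smooth morphisms via Corollary~\ref{cor:first-chern-cycle-theory}, identification $f^!\bf{1}_Y\simeq C_X(\rm{T}_f)$ via Theorem~\ref{thm:formula-dualizing-complex} or Theorem~\ref{thm:formula-dualizing-complex-geometric}, compactification to $\bf{P}_X(\rm{T}_f^{\vee}\oplus\O)$, and then Lemma~\ref{lemma:trivialize} together with Lemma~\ref{lemma:invertible-projection} to untwist. Your only addition is making explicit the step $s^*g^!\bf{1}_X=\bar{s}^*\bar{g}^!\bf{1}_X$ using $j^!=j^*$ for the open immersion, which the paper leaves implicit.
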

\begin{proof}
    Corollary~\ref{cor:first-chern-cycle-theory} already implies that any smooth morphism $f\colon X \to Y$ is cohomologically smooth. Thus, the question of computing $f^!$ boils down to computing the dualizing object $\omega_f=f^!\bf{1}_Y$.\smallskip
    
    Now Theorem~\ref{thm:formula-dualizing-complex} (if $\cal{D}$ is weakly $\bf{A}^1$-invariant) and Theorem~\ref{thm:formula-dualizing-complex-geometric} (if $\cal{D}$ is pre-geometric) imply that $f^!\bf{1}_Y$ is given by the formula
    \[
    f^!\bf{1}_Y \simeq C_X(\rm{T}_f) \simeq s^*g^!\bf{1}_X,
    \]
    where $g\colon \rm{V}_X(\rm{T}_f) \to X$ is the total space of the (relative) tangent bundle, and $s$ is the zero section. We may compactify $g$ to the morphism\footnote{The dual vector bundle $\rm{T}_f^{\vee}$ shows up due to the convention used in \cite[Def.\,7.11]{adic-notes}.}
    \[
    \ov{g}\colon P\coloneqq \bf{P}_X(\rm{T}^{\vee}_f \oplus \O_X) \to X,
    \]
    where $s$ corresponds to the ``zero'' section defined just before Definition~\ref{defn:naive-cycle-class}. Therefore, it suffices show that 
    \[
    s^*\ov{g}^!\bf{1}_X\simeq \bf{1}_X\langle d\rangle. 
    \]
    For this, we note that 
    \[
    \bf{1}_X \simeq s^!\ov{g}^!\bf{1}_X \simeq s^!\bf{1}_{P} \otimes s^* \ov{g}^! \bf{1}_X,
    \]
    where the second isomorphism follows from Lemma~\ref{lemma:invertible-projection} and the fact that $\ov{g}^!\bf{1}_X$ is invertible due to cohomological smoothness. Thus, it suffices to produce an isomorphism
    \[
    s^!\bf{1}_{P} \simeq \bf{1}_X\langle -d\rangle. 
    \]
    This follows from Lemma~\ref{lemma:trivialize} and Lemma~\ref{lemma:invertible-projection}.
\end{proof}

\begin{thm}\label{thm:main-thm} Let $\cal{D}$ be a $6$-functor formalism satisfying the excision axiom (see Definition~\ref{defn:excision-axiom}) and admitting a theory of first Chern classes  $c_1$. Suppose that $f\colon X \to Y$ is a smooth morphism of pure relative dimension $d$. Then the right adjoint to the functor 
\[
f_!\colon \cal{D}(X) \to \cal{D}(Y)
\]
is given by the formula
\[
f^!(-) = f^*(-)\otimes \bf{1}_X\langle d\rangle \colon \cal{D}(Y) \to \cal{D}(X).
\]
\end{thm}
\begin{proof}
    Firstly, we note that Lemma~\ref{lemma:canonical-cycle-classes} constructs a theory of cycle maps underlying $c_1$. Furthemore, Theorem~\ref{thm:theory-is-strong-theory} ensures that $c_1$ is a {\it strong} theory of first Chern classes. Then Corollary~\ref{cor:excision-first-classes-geometric} implies that $\cal{D}$ is weakly $\bf{A}^1$-invariant (or even weakly motivic). Thus the result follows from Theorem~\ref{thm:poincare-duality-chern-classes}.
\end{proof}

\section{Poincar\'e Duality in examples}

In this section, we apply Theorem~\ref{thm:main-thm} to two particular examples of $6$-functor formalisms: $\ell$-adic \'etale sheaves on locally noetherian analytic adic spaces (resp. schemes) developed by R.\,Huber in \cite{H3}, and ``solid almost $\O^+/p$-$\varphi$-modules'' on $p$-adic adic spaces developed by L.\,Mann in \cite{Lucas-thesis}. \smallskip

In the first example, we recover Poincar\'e Duality previously established by R.\,Huber in \cite[Th\,7.5.3]{H3}. The proof is essentially formal: after unravelling all the definitions, Theorem~\ref{thm:main-thm} tells us that, for the purpose of proving Poincar\'e Duality, it suffices to construct a theory of first Chern classes and compute cohomology of the relative projective line. Both things are particularly easy in the case of \'etale sheaves: the theory of first Chern classes comes from the Kummer exact sequence, and the computation of \'etale cohomology of the projective line essentially boils down to proving $\rm{Pic}(\bf{P}^1_C)\simeq \Z$. This proof completely avoids quite elaborate construction of the trace map and verification of Deligne's fundamental lemma (see \cite[\textsection 7.2-7.4]{H3}). The same proof applies to $\ell$-adic sheaves on schemes and simplifies the argument as well. \smallskip

Then we apply the same methods to the theory of ``solid almost $\O^+/p$-$\varphi$-modules''. The proof of Poincar\'e Duality for $\ell$-adic sheaves applies essentially verbatim in this context. The main new ingredient is to verify that this $6$-functor formalism satisfies the excision axiom; this is not automatic in this situation. Nevertheless, the approach taken in this paper simplifies the proof of Poincar\'e Duality established in \cite[Cor.\,3.9.25]{Lucas-thesis}. In particular, it avoids any usage of Grothendieck Duality on the special fiber, and any explicit computations related to the ``$p$-adic nearby cycles'' on the formal model of $\bf{D}^1_C$. 
\subsection{$\ell$-adic duality}

The main goal of this section is to give an essentially formal proof of Poincar\'e Duality for \'etale cohomology of schemes and (locally noetherian) adic spaces. The proof is almost uniform in both setups: the only difference is the computation of the cohomology groups of the projective line. \smallskip

In this section, we fix a locally noetherian analytic adic space $S$ (resp. a scheme $S$) and an integer $n$ invertible in $\O_S$. We emphasize that, in the case of adic spaces, we do not make the assumption that $n$ is invertible in $\O_S^+$ until the very end. In what follows, $\cal{C}$ denotes the category of locally finite type adic $S$-spaces (resp. locally finitely presented $S$-schemes). \smallskip

We begin the section by defining the theory of \'etale first Chern classes. Before we start the construction, we advise the reader to take a look at Section~\ref{section:first-chern-classes} since we will follow the notations introduced there. In particular, we recall that in order to speak of (weak) first Chern classes, we first fix an invertible object $\bf{1}_S\langle 1\rangle \in \cal{D}(S)$. \smallskip

\begin{defn} We define the {\it Tate twist} as $\bf{1}_S\langle 1\rangle \coloneqq \mu_n[2]\in \cal{D}(S_\et; \Z/n\Z)$. This object is clearly invertible, so it fits into the assumptions of Section~\ref{section:Chern-classes}. 
\end{defn}

Now we recall that there is a natural Kummer exact sequence
\[
0 \to \mu_n \to \bf{G}_m \xr{f\mapsto f^n} \bf{G}_m \to 0
\]
on $X_\et$ for any $X\in \cal{C}$. This sequence is functorial in $X$, so defines a morphism of $\cal{D}(\Z)$-valued presheaves:
\[
\bf{G}_m[1] \xr{c} \mu_n[2] \colon \cal{C}^{\rm{op}} \to \cal{D}(\Z).
\]
By passing to the derived \'etale sheafifications (see \cite[L.\,3, Cor.\,11]{Dustin-lectures}), we get a morphism of $\cal{D}(\Z)$-valued sheaves
\[
\rm{R}\Gamma_{\et}(-, \bf{G}_m)[1] \xr{c} \rm{R}\Gamma_{\et}(-, \mu_n)[2] \colon \cal{C}^{\rm{op}} \to \cal{D}(\Z).
\]

\begin{defn}\label{defn:etale-first-chern-classes} A {\it theory of \'etale} first Chern classes is the homomorphism of $\cal{D}(\Z)$-valued analytic sheaves 
\[
c_1^\et\colon \rm{R}\Gamma_{\rm{an}}(-, \O^\times)[1] \to \rm{R}\Gamma_{\et}(-, \mu_n)[2]=\rm{R}\Gamma(-; \bf{1}\langle 1\rangle)
\]
obtained as the composition
\[
\rm{R}\Gamma_{\rm{an}}(-, \O^\times)[1] \to \rm{R}\Gamma_{\et}(-, \bf{G}_m)[1] \xr{c} \rm{R}\Gamma_{\et}(-, \mu_n)[2] \colon \cal{C}^{\rm{op}} \to \cal{D}(\Z),
\]
where the first map is the natural morphism from the analytic cohomology of $\O^\times$ to the \'etale cohomology of $\bf{G}_m$. 
\end{defn}

\begin{construction}\label{construction:first-chern-classes-etale-concrete} Let $X$ be an adic $S$-space. Then, after passing to $\rm{H}^0(-)$, Definition~\ref{defn:etale-first-chern-classes} defines a homomorphism
\[
c_1^\et\colon \rm{Pic}(X) \simeq \rm{H}^1_{\rm{an}}(X, \O_X^\times) \to \rm{H}^2(X, \mu_n).
\]
In what follows, we slightly abuse the notation and do not distinguish between these two versions of the homomorphism $c_1^\et$. 
\end{construction}

Now we will later need to know that $c_1^\et$ is a theory of first Chern classes in the sense of Definition~\ref{defn:chern-classes} (if $n$ is invertible in $\O_S^+$). Concretely, this means that we have to show that the natural morphism
\[
c_1^\et(\O(1)) + f^*\colon \ud{\Z/n\Z}_S \oplus \mu_n[2] \to \rm{R}f_* \mu_{n, \bf{P}^1_S}[2]
\]
is an isomorphism for the relative projective line $f\colon \bf{P}^1_S \to S$. We will show this claim with the assumption that $n$ is only invertible in $\O_S$.\smallskip

In the rest of this section, we do the computations entirely in the analytic context. In the algebraic case, the computation is standard (see \cite[Th.\,7.2.9]{Lei-Fu}). \smallskip

We start with the case when $S$ is a ``geometric point''. More explicitly, we fix an algebraically closed non-archimedean field $C$ and assume that $S=\Spa(C, \O_C)$. \smallskip

\begin{lemma}\label{lemma:mu-n-cohomology-curves} Let $X$ be a $1$-dimensional rigid-analytic variety over $S=\Spa(C, \O_C)$ and let $n$ be an integer invertible in $C$. Then 
\begin{enumerate}
    \item the natural morphism $\mu_n(C) \to \rm{H}^0(X; \mu_n)$ is an isomorphism if $X$ is connected;
    \item we have $\rm{H}^i(X, \mu_n)=0$ for $i\geq 3$;
    \item the first Chern class $c^\et_1\colon \rm{Pic}(X)/n \to \rm{H}^2(X, \mu_n)$ is an isomorphism (see Construction~\ref{construction:first-chern-classes-etale-concrete}).
\end{enumerate}
\end{lemma}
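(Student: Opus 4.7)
The plan is to run everything through the Kummer exact sequence
\[
0 \to \mu_n \to \G_m \xrightarrow{(-)^n} \G_m \to 0
\]
on $X_\et$, which is available since $n$ is invertible in $C$. The first Chern class morphism $c_1^\et$ is, by its very construction, the connecting map attached to this short exact sequence, so verifying the three claims and identifying $c_1^\et$ explicitly amount to the same calculation.

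For part (1), since $C$ is algebraically closed and $n$ is invertible in $C$, the choice of a primitive $n$-th root of unity $\zeta \in C$ trivializes $\mu_n \simeq \ud{\Z/n\Z}$ as \'etale sheaves on $X$. For connected $X$ one then reads off $\rm{H}^0(X_\et, \mu_n) = \Z/n\Z = \mu_n(C)$. For parts (2) and (3), the Kummer long exact sequence produces short exact sequences
\[
0 \to \rm{H}^{i-1}(X_\et, \G_m)/n \to \rm{H}^i(X_\et, \mu_n) \to \rm{H}^i(X_\et, \G_m)[n] \to 0,
\]
so the lemma reduces to the two statements that $\rm{H}^i(X_\et, \G_m) = 0$ for $i \geq 2$ and that the analytic-to-\'etale comparison identifies $\rm{H}^1(X_\et, \G_m) = \rm{H}^1_{\rm an}(X, \O_X^\times) = \rm{Pic}(X)$. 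Granting these, part (2) is immediate (both factors in the exact sequence vanish for $i \geq 3$), and part (3) is exactly the statement that $\rm{Pic}(X)/n \xrightarrow{c_1^\et} \rm{H}^2(X_\et, \mu_n)$ is an isomorphism, since $\rm{H}^2(X_\et, \G_m)[n] = 0$ provides injectivity and $\rm{Pic}(X)/n$ surjects by the $i=2$ instance.

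The Hilbert 90 identification $\rm{H}^1(X_\et, \G_m) = \rm{H}^1_{\rm an}(X, \O_X^\times)$ follows from \'etale descent for line bundles on analytic adic spaces, so the real content is the vanishing $\rm{H}^i(X_\et, \G_m) = 0$ for $i \geq 2$. This is the rigid-analytic analog of Tsen's theorem for curves over an algebraically closed field, and the main obstacle in the proof. The plan to establish it is:  work locally to reduce to the case of an affinoid curve $X = \Spa(A, A^+)$; use that $X$ has a cofinal system of formal models whose special fibers are curves over the algebraically closed residue field $k$ of $C$ (after possibly enlarging $A^+$); apply Tsen's theorem in the algebraic setting to kill Brauer classes over the special fiber; and lift via the $(\G_m \to \G_m/(1 + \m_C^+))$-type devissage, combined with Huber's cohomological dimension bounds (which already give vanishing for $i > 2\dim X + 1$ on any torsion coefficient and therefore handle $i \gg 0$ for free).

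The one remaining subtlety is that $\G_m$ is not torsion; however, by applying the above to $\mu_n$ (which is torsion) for all $n$ invertible in $C$ and using that $\G_m$-cohomology in the relevant range can be reconstructed from its torsion and divisible parts, one bootstraps the vanishing from $\mu_n$-coefficients back to $\G_m$-coefficients. Alternatively — and this is probably what the author has in mind, given the remark just before the lemma that ``this computation is standard in both theories'' — one simply invokes the corresponding result from \cite{H3}, where the vanishing of higher $\G_m$-cohomology on rigid-analytic curves over algebraically closed non-archimedean fields is established as part of the standard foundations; we expect our proof to simply cite this and then run the Kummer sequence argument above.
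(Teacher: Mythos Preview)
Your reduction via the Kummer long exact sequence is correct in spirit, and part~(1) is handled exactly as the paper does. But the key reduction you make --- to the vanishing $\rm{H}^i(X_\et,\G_m)=0$ for $i\geq 2$ --- is a stronger statement than what the paper proves or cites, and your argument for it is incomplete. The formal-model/Tsen sketch is vague (it is not clear how to control the passage from special fiber to generic fiber for non-torsion coefficients), and the proposed bootstrap ``from $\mu_n$-coefficients back to $\G_m$-coefficients'' is circular, since computing $\rm{H}^i(X_\et,\mu_n)$ is exactly what you are trying to do. Nor does \cite{H3} contain the global vanishing $\rm{H}^i(X_\et,\G_m)=0$ for $i\geq 2$ in a directly citable form.

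The paper avoids this entirely by passing through the morphism of sites $\pi\colon X_\et\to X_{\rm{an}}$. It shows (a) $\rm{R}^i\pi_*\mu_n=0$ for $i\geq 2$ by reducing to stalks at rank-$1$ points and invoking that the local Galois groups have cohomological dimension $\leq 1$ (this is where \cite[Cor.\,1.8.8, Lem.\,2.8.3]{H3} enter), and (b) $\rm{R}^1\pi_*\G_m=0$ by Hilbert~90. These two facts, applied to the Kummer sequence, yield an exact triangle $\rm{R}\pi_*\mu_n \to \O_X^\times \xrightarrow{n} \O_X^\times$ in the \emph{analytic} topology, where the vanishing $\rm{H}^i_{\rm{an}}(X,\O_X^\times)=0$ for $i\geq 2$ follows from the low Krull dimension of the structure sheaf. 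So the paper never needs the \'etale $\G_m$-vanishing you aim for; it only needs local cohomological dimension bounds for $\mu_n$ and analytic cohomological dimension bounds for $\O_X^\times$, both of which are available off the shelf.
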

\begin{proof}
    {\it Step~$0$. The morphism $\mu_n(C) \to \rm{H}^0(X; \mu_n)$ is an isomorphism if $X$ is connected.} Since $C$ is algebraically closed, we can choose a non-canonical isomorphism $\mu_n \simeq \ud{\Z/n\Z}$. Therefore, it suffices to show that the natural morphism
    \[
    \Z/n\Z \to \rm{H}^0(X, \Z/n\Z)
    \]
    is an isomorphism for a connected $X$. This is a standard result that we leave to the interested reader. \smallskip
    
    To prove the other parts, we consider the morphism of sites $\pi \colon X_{\et} \to X_{\rm{an}}$. \smallskip
    
    {\it Step~$1$. $\rm{R}^i\pi_* \mu_n=0$ for $i\geq 2$.} It suffices to show that the stalk $(\rm{R}^i\pi_* \mu_n)_x=0$ for every $x\in X$. Now \cite[Cor.\,2.4.6]{H3} ensures that, for each integer $i$ and $x\in X$, 
    \[
    \left(\rm{R}^i\pi_* \mu_n\right)_x \simeq \rm{H}^i\left(\Spa\left(K\left(x\right), K\left(x\right)^+\right), \mu_n\right).
    \]
    Thus \cite[Lemma 10.3]{adic-notes} implies that it suffices to prove the vanishing for {\it rank-$1$} points $x\in X$. In this case, 
    \[
    \rm{H}^i(\Spa\left(K\left(x\right), \O_{K\left(x\right)}\right), \mu_n) \simeq \rm{H}^i_{\rm{cont}}(G_{K(x)}, \mu_n).
    \]
    So it suffices to show that $G_{K(x)}$ is of cohomological degree $1$ for any $x\in X$. This follows from \cite[Cor.\,1.8.8 and Lemma 2.8.3]{H3}\footnote{The henselization in \cite[Lemma 2.8.3]{H3} disappears in the rank-$1$ case because $\O_K$ is henselian with respect to its pseudo-uniformizer $\varpi$ and $\m=\mathrm{rad}(\varpi)$ (see \cite[\href{https://stacks.math.columbia.edu/tag/09XJ}{Tag 09XJ}]{stacks-project}).} or one can adapt the proof of \cite[Lemma 5.2.5]{Ber}. \smallskip
    
    {\it Step~$2$. $\rm{R}^1\pi_* \bf{G}_m=0$.} We first note that \cite[(2.2.7)]{H3}  implies that the natural morphism 
    \[
    \rm{Pic}(U)\simeq \rm{H}^1_{\rm{an}}(U, \O_U^\times) \to \rm{H}^1_{\et}(U, \bf{G}_m)
    \]
    is an isomorphism (alternatively, this can be deduced from \cite[Th\,2.5.11]{KedLiu2}). Therefore, the definition of higher pushforwards imply that $\rm{R}^1\pi_*\bf{G}_m$ is the sheafification (in the analytic topology on $X$) of the presheaf
    \[
    U\mapsto \rm{Pic}(U).
    \]
    Since any class $\alpha\in \rm{Pic}(U)$ trivializes {\it analytically} locally on $U$, we conclude the sheafification of this presheaf is zero. \smallskip
    
    {\it Step~$3$. Finish the proof.} The Kummer exact sequence
    \[
    0 \to \mu_n \to \bf{G}_m \xr{\cdot n} \bf{G}_m \to 0
    \]
    implies that we have an exact triangle
    \begin{equation}\label{eqn:pushforward}
    \rm{R}\pi_* \mu_n \to \rm{R}\pi_*\bf{G}_m \xr{\cdot n} \rm{R}\pi_*\bf{G}_m.
    \end{equation}
    Note that $\pi_*\bf{G}_m = \O_X^\times$, so Steps~$(1)$ and $(2)$ imply that (\ref{eqn:pushforward}) stays exact after applying $\tau^{\leq 1}$ to $\rm{R}\pi_*\bf{G}_m$. Thus we get the following exact triangle
    \[
    \rm{R}\pi_* \mu_n \to \O_X^\times \xr{f\mapsto f^n} \O_X^\times.
    \]
    Since $\rm{H}^i(X_{\rm{an}}, \O_X^\times)=0$ for $i\geq 2$ by \cite[Cor.\,1.8.8]{H3} and \cite[\href{https://stacks.math.columbia.edu/tag/0A3G}{Tag 0A3G}]{stacks-project}, we conclude that 
    $\rm{H}^i(X, \mu_n)=0$ for $i\geq 3$ and the natural morphism 
    \[
    \rm{Pic}(X)/n\simeq \rm{H}^1(X_{\rm{an}}, \O_X^\times)/n \to \rm{H}^2(X, \mu_n)
    \]
    is an isomorphism. After unravelling the definitions, one sees that this morphism coincides with $c_1$ from Construction~\ref{construction:first-chern-classes-etale-concrete}. 
\end{proof}

\begin{cor}\label{cor:cohohomology-projective-line} Let $X=\bf{P}^1_C$ be the (analytic) projective line over $\Spa(C, \O_C)$ and let $n$ be an integer invertible in $C$. Then 
\begin{enumerate}
    \item the natural morphism $\mu_n(C) \to \rm{H}^0(\bf{P}^1_C, \mu_n)$ is an isomorphism;
    \item we have $\rm{H}^i(\bf{P}^1_C, \mu_n)=0$ for $i\geq 3$;
    \item the unique homomorphism $c_1\colon \Z/n\Z \to \rm{H}^2(\bf{P}^1_C, \mu_n)$ sending $1$ to $c_1(\O(1))$ is an isomorphism.
\end{enumerate}
\end{cor}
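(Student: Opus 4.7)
The plan is to deduce this corollary directly from Lemma~\ref{lemma:mu-n-cohomology-curves} applied to the specific $1$-dimensional rigid-analytic variety $X = \bf{P}^1_C$. Parts~$(1)$ and $(2)$ are immediate: since $\bf{P}^1_C$ is connected and $1$-dimensional, Lemma~\ref{lemma:mu-n-cohomology-curves}$(1)$ and $(2)$ give exactly the required statements without any additional work.

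For part~$(3)$, Lemma~\ref{lemma:mu-n-cohomology-curves}$(3)$ tells us that the first Chern class homomorphism $c_1^{\et}\colon \rm{Pic}(\bf{P}^1_C)/n \to \rm{H}^2(\bf{P}^1_C, \mu_n)$ is an isomorphism. So the only thing left to verify is that $\rm{Pic}(\bf{P}^1_C)$ is a free abelian group of rank $1$ generated by the class of $\O(1)$, which would then identify $\rm{Pic}(\bf{P}^1_C)/n$ with $\Z/n\Z$ via $1\mapsto [\O(1)]$ and translate the statement of Lemma~\ref{lemma:mu-n-cohomology-curves}$(3)$ into the desired form.

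The computation of $\rm{Pic}(\bf{P}^1_C) \simeq \Z$ is the only remaining step. I would carry it out by covering $\bf{P}^1_C$ by two closed disks $\bf{D}^+_0, \bf{D}^+_\infty$ with overlap an annulus, and using that $\rm{Pic}$ of a closed disk over $\Spa(C, \O_C)$ is trivial (since the Tate algebra $C\langle T\rangle$ is a PID, or more generally a UFD whose units are constants). A Mayer--Vietoris / \v{C}ech argument in the analytic topology then identifies $\rm{Pic}(\bf{P}^1_C)$ with $\O^\times(\text{annulus})/(\O^\times(\bf{D}^+_0)\cdot \O^\times(\bf{D}^+_\infty))$, which is generated by the coordinate function and hence isomorphic to $\Z$ via the map sending a line bundle to its degree; under this identification the universal line bundle $\O(1)$ corresponds to $1$. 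Alternatively, one may invoke rigid-analytic GAGA for the proper scheme $\bf{P}^1_C$ (comparing algebraic and analytic Picard groups) to reduce the claim to the standard algebraic fact $\rm{Pic}(\bf{P}^1_{C, \rm{alg}}) \simeq \Z$.

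The main (and only) obstacle is this Picard computation, but it is entirely standard in rigid-analytic geometry; once it is in hand, parts $(1)$--$(3)$ of the corollary follow immediately from the corresponding parts of Lemma~\ref{lemma:mu-n-cohomology-curves}, with no further input needed.
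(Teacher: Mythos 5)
Your proposal is correct and matches the paper's argument: the corollary is deduced formally from Lemma~\ref{lemma:mu-n-cohomology-curves} together with the fact that $\Z \to \rm{Pic}(\bf{P}^1_C)$, $n\mapsto \O(n)$, is an isomorphism. The only difference is that the paper simply cites \cite[Cor.\,7.10]{adic-notes} for this Picard computation, whereas you sketch a (standard, and valid) \v{C}ech/GAGA proof of it.
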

\begin{proof}
    This follows formally from Lemma~\ref{lemma:mu-n-cohomology-curves} and the fact that the morphism
    \[
    \Z \to \rm{Pic}(\bf{P}^1_C),
    \]
    sending $n$ to $\O(n)$, is an isomorphism. The latter fact follows from \cite[Cor.\,8.3]{adic-notes}.
\end{proof}

Now we go back to the case of a general locally noetherian analytic adic base $S$. Then we consider the relative (analytic) projective line $f\colon \bf{P}^1_S \to S$. This comes with the ``universal'' line bundle $\O(1)$ (see \cite[Rmk.\,7.10]{adic-notes} for the construction in the analytic setup). The first Chern class $c_1(\O(1))$ defines a morphism
\[
c_1(\O(1)) \colon \ud{\Z/n\Z}_{\bf{P}^1_S} \to \mu_n[2].
\]
in the (triangulated) derived category $D(\bf{P}^1_S; \Z/n\Z)$. Due to the $(f^*, \rm{R}f_*)$-adjunction, $c_1(\O(1))$ defines a morphism
\[
c_1^\et(\O(1)) \colon \ud{\Z/n\Z}_S \to \rm{R}f_* \mu_{n, \bf{P}^1_S}[2].
\]

\begin{prop}\label{prop:projective-bundle-formula-etale} Let $f\colon \bf{P}^1_S \to S$ be the relative (analytic) projective line over $S$ and let $n$ be an integer invertible in $S$. Then the natural morphism
\[
c_1^\et(\O(1)) + f^*\colon \ud{\Z/n\Z}_S \oplus \mu_n[2] \to \rm{R}f_* \mu_{n, \bf{P}^1_S}[2]
\]
is an isomorphism\footnote{The notation ``$f^*$'' means the natural morphism $\mu_n[2] \to \rm{R}f_* \mu_{n, \bf{P}^1_S}[2]$ coming as the unit of the $(f^*, \rm{R}f_*)$-adjunction.}.
\end{prop}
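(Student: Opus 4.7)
The plan is to apply proper base change to reduce the claim to the fibers of $f\colon \bf{P}^1_S \to S$ over the geometric points of $S$, then further reduce each such fiber to the rank-one case $\Spa(C, \O_C)$, where Corollary~\ref{cor:cohohomology-projective-line} applies. We argue in the adic setting; the scheme case is standard.

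Since $f\colon \bf{P}^1_S \to S$ is proper, proper base change in \'etale cohomology of adic spaces (\cite[Thm.\,4.1.1(c)]{H3}), combined with the compatibility of $c_1^\et$ and of the unit map $\rm{id}\to \rm{R}f_* f^*$ with base change, reduces us to verifying that, for every geometric point $\ov{s} = \Spa(C, C^+) \to S$, the induced morphism
\[
\Z/n\Z \oplus \mu_n[2] \to \rm{R}\Gamma(\bf{P}^1_{\ov{s},\et}, \mu_n)[2]
\]
is an isomorphism.

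Next I would reduce the higher-rank geometric point $\Spa(C, C^+)$ to the rank-one point $\Spa(C, \O_C)$. The natural morphism $\Spa(C, \O_C) \to \Spa(C, C^+)$ pulls back to a base change map on relative projective lines that induces an isomorphism on \'etale cohomology — this is standard in Huber's \'etale theory and can be verified in the spirit of \cite[Lemma 9.2]{adic-notes}, using that the two underlying topological spaces and their sheaves of analytic functions coincide, so that \'etale covers extend. Compatibility of $c_1^\et$ and the pullback with this change of base then brings us to the case $S = \Spa(C, \O_C)$.

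In this final case, the statement is immediate from Corollary~\ref{cor:cohohomology-projective-line} once we additionally note the vanishing $\rm{H}^1(\bf{P}^1_C, \mu_n) = 0$, which follows from the Kummer long exact sequence: $\bf{G}_m(\bf{P}^1_C) = C^\times$ is $n$-divisible (since $C$ is algebraically closed and $n$ is invertible in $C$), and $\rm{Pic}(\bf{P}^1_C) \simeq \Z$ is torsion-free, so both connecting maps vanish. The main obstacle in carrying out this plan rigorously is the reduction from $\Spa(C, C^+)$ to $\Spa(C, \O_C)$; the rest is a formal consequence of proper base change together with the already-established geometric-point computations.
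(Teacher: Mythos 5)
Your overall route (reduce to stalks over geometric points, then to the rank-one case $\Spa(C,\O_C)$, then invoke Corollary~\ref{cor:cohohomology-projective-line}) is broadly the same as the paper's, and your observation that one also needs $\rm{H}^1(\mathbf{P}^1_C,\mu_n)=0$ --- which Corollary~\ref{cor:cohohomology-projective-line} does not state explicitly --- together with your Kummer-sequence argument ($C^\times$ is $n$-divisible and $\rm{Pic}(\mathbf{P}^1_C)\simeq \Z$ is torsion-free) is correct and fills a point the paper leaves implicit. However, the step you yourself flag as the main obstacle is a genuine gap, and the justification you sketch for it is wrong. The natural map $\mathbf{P}^1_{\Spa(C,\O_C)}\to \mathbf{P}^1_{\Spa(C,C^+)}$ is \emph{not} an isomorphism of underlying topological spaces: the target contains additional points (valuations whose restriction to $C$ is the higher-rank valuation with ring $C^+$), so the claim that ``the two underlying topological spaces and their sheaves of analytic functions coincide, so that \'etale covers extend'' fails, and the desired comparison of \'etale cohomology is not a formal statement about extending covers. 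Proving it requires real input --- in effect the same input as the paper's argument: overconvergence of $\rm{R}f_*\mu_n$, so that stalks at higher-rank points agree with stalks at their rank-one generalizations, combined with Huber's identification of stalks of $\rm{R}f_*$ with cohomology of fibers over $\Spa(K(x),K(x)^+)$. The paper's proof is organized precisely so that higher-rank geometric points never occur: one checks the map on stalks, uses \cite[Lemma 9.3]{adic-notes} (overconvergence of $\rm{R}f_*$) to reduce to rank-one points, and then uses \cite[Prop.\,2.6.1]{H3} together with base-change compatibility of $c_1^\et$ to land on $S=\Spa(C,\O_C)$, where Corollary~\ref{cor:cohohomology-projective-line} applies.

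A second, related concern is your opening appeal to ``proper base change'' via \cite[Thm.\,4.1.1(c)]{H3}: that result is what the paper refers to as quasi-compact base change, and, more importantly, in this proposition $n$ is only assumed invertible in $\O_S$, not in $\O_S^+$ --- exactly the regime in which the strong form of proper base change is problematic (see the remark at the end of the $\ell$-adic section). What you actually need, and what the paper uses, is the stalk/fiber description of $\rm{R}f_*$, which holds without the $\O^+$-invertibility hypothesis. So as written the proposal assumes the hard part; the fix is to replace both of your reduction steps by the overconvergence-plus-stalk argument above (after which your final computation over $\Spa(C,\O_C)$, including the $\rm{H}^1$ vanishing, goes through).
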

\begin{proof}
    It suffices to show that the morphism $c_1^\et(\O(1)) + f^*$ is an isomorphism on stalks. First, \cite[Lemma 10.4]{adic-notes} ensures that $\rm{R}f_*$ preserves overconvergent sheaves, so it is sufficient on stalks over {\it rank-$1$} points. Now we note that the formation of first Chern classes commute with arbitrary base change (similarly to Remark~\ref{rmk:properties-first-chern-theory}(\ref{rmk:properties-first-chern-theory-base-change})), \cite[Prop.\,2.6.1]{H3} ensures that it suffices to prove the claim under the additional assumption that $S=\Spa(C, \O_C)$ for an algebraically closed, non-archimedean field $C$. Then the result follows directly from Corollary~\ref{cor:cohohomology-projective-line}.
\end{proof}

\begin{thm}\label{thm:ell-adic-poincare-package} Let $S$ be a locally noetherian analaytic adic space, $n$ an integer invertible in $\O_S^+$, and $\cal{D}_\et(-; \Z/n\Z) \colon \rm{Corr}(\cal{C}) \to \Cat_\infty$ be the $6$-functor formalism formalism constructed in \cite[Th.\,9.4 and Rmk.\,9.5]{adic-notes}. Then
\begin{enumerate}
    \item $\cal{D}_\et(-; \Z/n\Z)$ satisfies the excision axiom (see Definition~\ref{defn:excision-axiom});
    \item Definition~\ref{defn:etale-first-chern-classes} defines a theory of first Chern classes on $\cal{D}_\et(-; \Z/n\Z)$ (see Definition~\ref{defn:geometric-chern-classes-without-length}) with $\bf{1}_S\langle 1\rangle = \mu_n[2]$.
\end{enumerate}
\end{thm}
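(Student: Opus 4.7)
My plan is to verify the two claims in sequence. For the excision axiom, the strategy is to reduce the statement to a pointwise check: for a Zariski-closed immersion $i\colon Z\hookrightarrow X$ with open complement $j\colon U\hookrightarrow X$, one must show the canonical sequence $j_!j^*\to \rm{id}\to i_*i^*$ is a fiber sequence in $\cal{D}_\et(X;\Z/n\Z)$. By analytic (resp.\ Zariski) descent, we may argue locally on $X$. Then by \cite[Prop.\,2.6.1]{H3} (resp.\ the standard computation of stalks for schemes), we can check the statement on geometric stalks at rank-$1$ points. At such a geometric point $\ov{x}\to X$, the stalk of $j_!j^*\F\to \F\to i_*i^*\F$ is either $\F_{\ov x}\to \F_{\ov x}\to 0$ (if $\ov x\in U$) or $0\to \F_{\ov x}\to \F_{\ov x}$ (if $\ov x\in Z$), both of which are exact triangles. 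This is the classical excision property of \'etale cohomology and is straightforward.

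For the second claim, I first recall that $\bf{1}_S\langle 1\rangle = \mu_n[2]$ is invertible since $n$ is invertible in $\O_S$. The map $c_1^\et$ was defined in Definition~\ref{defn:etale-first-chern-classes} as a morphism of $\rm{Sp}$-valued sheaves, so it automatically qualifies as a weak theory of first Chern classes. To promote it to a theory of first Chern classes in the sense of Definition~\ref{defn:geometric-chern-classes-without-length}, I need to verify that the natural map
\[
c_1^\et(\O(1))+f^*\langle 1\rangle\colon \bf{1}_S\oplus \bf{1}_S\langle 1\rangle\to f_*\bf{1}_{\bf{P}^1_S}\langle 1\rangle
\]
is an isomorphism for $f\colon \bf{P}^1_S\to S$. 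After unwinding notation, this is precisely the assertion that
\[
\ud{\Z/n\Z}_S\oplus \mu_n[2]\xr{c_1^\et(\O(1))+f^*} \rm{R}f_*\mu_{n,\bf{P}^1_S}[2]
\]
is an isomorphism, which is exactly the content of Proposition~\ref{prop:projective-bundle-formula-etale} (valid whenever $n$ is invertible in $\O_S$).

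The bulk of both parts is therefore a bookkeeping exercise: everything non-trivial has already been proved (the pointwise computation on $\bf{P}^1_C$ for rank-$1$ geometric points in Corollary~\ref{cor:cohohomology-projective-line}, together with the overconvergence of $\rm{R}f_*$, yields the projective bundle formula via Proposition~\ref{prop:projective-bundle-formula-etale}). The mildly delicate point is ensuring that the identifications $\rm{R}\Gamma(-,\bf{1}\langle 1\rangle)\simeq \rm{R}\Gamma_\et(-,\mu_n[2])$ and the map $c_1^\et(\O(1))$ of Construction~\ref{construction:first-chern-classes-etale-concrete} are compatible with those appearing in Definition~\ref{defn:geometric-chern-classes-without-length}, but this is routine once one unwraps the $(f^*,f_*)$-adjunction. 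I do not anticipate any significant obstacle; the only subtlety worth flagging is that in the adic setting, the hypothesis ``$n$ invertible in $\O_S^+$'' is needed only to ensure that $\cal{D}_\et(-;\Z/n\Z)$ is a genuine $6$-functor formalism on $\cal{C}$ in the sense of Definition~\ref{defn:six-functors}, while excision and the projective bundle formula themselves only need $n$ invertible in $\O_S$.
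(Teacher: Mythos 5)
Your proposal is correct and follows essentially the same route as the paper: excision is verified by a stalkwise computation, and the Chern-class condition is exactly the projective bundle formula of Proposition~\ref{prop:projective-bundle-formula-etale}. One small caution: for the excision check you should test exactness at \emph{all} geometric points rather than reducing to rank-$1$ points (rank-$1$ stalks alone are not conservative unless the relevant cone is known to be overconvergent, and \cite[Prop.\,2.6.1]{H3} is not needed here), but your pointwise computation applies verbatim at every geometric point, so nothing is lost.
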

\begin{proof}
    It is essentially obvious that $\cal{D}_\et(-; \Z/n\Z)$ satisfies the excision axiom. More precisely, it suffices to show that, for any locally finite type adic $S$-space $X$, a complex $\F\in \cal{D}_\et(X; \Z/n\Z)$, and a Zariski-closed immersion $i\colon Z\to X$, the triangle
    \[
    j_!j^* \F \to \F \to i_*i^*\F
    \]
    is exact, where $j\colon U \to X$ is the open complement of $Z$. This is clear by arguing on stalks. The fact that $c_1$ is a theory of first Chern classes follows directly from Proposition~\ref{prop:projective-bundle-formula-etale}. 
\end{proof}

Before we state the general version of Poincar\'e Duality, we recall that the Tate twist $\ud{\Z/n\Z}(m)$ is by definition the \'etale sheaf $\mu_n^{\otimes m}$ (with the obvious meaning if $m$ is negative). Likewise, for a sheaf $\F\in \cal{D}(X_\et; \Z/n\Z)$, we denote its Tate twist $\F\otimes \ud{\Z/n\Z}(m)$ simply by $\F(m)$.

\begin{thm}\label{thm:main-PD-ell-adic} Let $Y$ be a locally noetherian analytic adic space, let $f\colon X\to Y$ be a smooth morphism of pure dimension $d$, and let $n$ be an integer invertible in $\O_Y^+$. Then the functor
\[
\rm{R}f_!\colon \cal{D}(X_\et; \Z/n\Z) \to \cal{D}(Y_\et; \Z/n\Z)
\]
admits a right adjoint given by the formula
\[
f^*(d)[2d] \colon \cal{D}(Y_\et; \Z/n\Z) \to \cal{D}(X_\et; \Z/n\Z).
\]
\end{thm}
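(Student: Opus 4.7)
The proof proposal is essentially a direct application of the abstract machinery developed in the paper, specialized to the \'etale situation. The plan is to simply combine Theorem~\ref{thm:ell-adic-poincare-package} with Theorem~\ref{thm:main-thm} and then unwind what the Tate twist $\bf{1}_X\langle d\rangle$ concretely means in this setting.

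More precisely, I would proceed as follows. First, invoke \cite[Thm.\,8.4 and Rmk.\,8.5]{adic-notes} to regard the assignment $X \mapsto \cal{D}(X_\et; \Z/n\Z)$ as a $6$-functor formalism
\[
\cal{D}_\et(-; \Z/n\Z)\colon \rm{Corr}(\cal{C}) \to \Cat_\infty
\]
on the category $\cal{C}$ of locally finite type adic $Y$-spaces (recall that here $n$ is assumed invertible in $\O_Y^+$, which is precisely the hypothesis needed to obtain the $6$-functor formalism). Next, apply Theorem~\ref{thm:ell-adic-poincare-package} to this $6$-functor formalism, which gives two facts simultaneously: (i) $\cal{D}_\et(-; \Z/n\Z)$ satisfies the excision axiom in the sense of Definition~\ref{defn:excision-axiom}; and (ii) Definition~\ref{defn:etale-first-chern-classes} endows $\cal{D}_\et(-; \Z/n\Z)$ with a theory of first Chern classes $c_1^\et$ in the sense of Definition~\ref{defn:geometric-chern-classes-without-length}, with the invertible ``Tate object'' taken to be $\bf{1}_Y\langle 1\rangle = \mu_n[2]$.

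With these two properties in hand, Theorem~\ref{thm:main-thm} applies directly to the smooth morphism $f\colon X \to Y$ of pure relative dimension $d$ and yields that the functor
\[
\rm{R}f_!\colon \cal{D}(X_\et; \Z/n\Z) \to \cal{D}(Y_\et; \Z/n\Z)
\]
admits a right adjoint given by $f^*(-) \otimes \bf{1}_X\langle d\rangle$. The only remaining task is to identify $\bf{1}_X\langle d\rangle$ with $\ud{\Z/n\Z}_X(d)[2d]$. By the definition of the Tate twists recalled in Notation (\ref{notation:tate-twists-base}) and (\ref{notation:tate-twists}), we have
\[
\bf{1}_X\langle d\rangle = f^*(\bf{1}_Y\langle 1\rangle)^{\otimes d} = f^* \mu_n^{\otimes d}[2d] = \ud{\Z/n\Z}_X(d)[2d],
\]
with the obvious interpretation in case $d$ is negative (though this does not occur for a smooth morphism of pure relative dimension). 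This completes the proof.

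Since essentially all of the work has been deferred to Theorem~\ref{thm:ell-adic-poincare-package} and Theorem~\ref{thm:main-thm}, there is no genuine obstacle to overcome at this stage. The ``main content'' lives in the verification carried out earlier: namely, that the Kummer sequence produces a theory of first Chern classes (which in turn reduced to the computation of cohomology of $\bf{P}^1$ in Proposition~\ref{prop:projective-bundle-formula-etale}, handled via overconvergence and reduction to a geometric rank-$1$ point) and that excision for \'etale sheaves is automatic by checking on stalks. Both of these are what make the abstract machinery applicable; once applied, the result follows formally.
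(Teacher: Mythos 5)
Your proposal is correct and is essentially identical to the paper's own proof: the paper likewise takes $S=Y$, invokes the \'etale $6$-functor formalism of \cite[Thm.\,8.4 and Rmk.\,8.5]{adic-notes}, applies Theorem~\ref{thm:ell-adic-poincare-package} to get excision and a theory of first Chern classes with $\bf{1}_Y\langle 1\rangle=\mu_n[2]$, and concludes via Theorem~\ref{thm:main-thm}. Your explicit unwinding of $\bf{1}_X\langle d\rangle = \ud{\Z/n\Z}_X(d)[2d]$ is a harmless elaboration of what the paper leaves implicit.
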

\begin{proof}
    Put $S=Y$ and consider the \'etale $6$-functor formalism $\cal{D}_\et(-; \Z/n\Z)$ that associates to $X$ the $\infty$-derived category $\cal{D}(X_\et; \Z/n\Z)$ (see \cite[Th.\,9.4 and Rmk.\,9.5]{adic-notes}). Then Theorem~\ref{thm:ell-adic-poincare-package} implies that $\cal{D}_\et$ satisfies the excision axiom and admits a theory of first Chern classes with $\bf{1}_S\langle 1\rangle =\mu_n[2]$. Thus, the result follows from Theorem~\ref{thm:main-thm}.
\end{proof}

\begin{rmk}\label{rmk:scheme-duality} The proof of Theorem~\ref{thm:main-PD-ell-adic} works in essentially the same way for the $6$-functor formalism of \'etale $\Z/n\Z$-sheaves on schemes (see \cite[Rmk.\,9.6]{adic-notes} for the construction of \'etale $6$-functor formalism). In particular, this reproves the classical Poincar\'e Duality in the theory of \'etale cohomology of schemes.
\end{rmk}

\begin{rmk} Note that the only place, where we used that $n$ is invertible in $\O_S^+$ (as opposed to being invertible in $\O_S$) is to make sure that the categories $\cal{D}_\et(-; \Z/n\Z)$ can be arranged into a $6$-functor formalism. If $n$ is not invertible in $\O_S^+$, the problem is that the proper base change formula does not hold in general. In the next section, we work around this issue by using another $6$-functor formalism closely related to the $p$-adic cohomology of $p$-adic rigid-analytic spaces.
\end{rmk}

\subsection{$p$-adic duality}

The goal of this section is to give a new proof of Poincar\'e Duality for $\O^{+}/p$-$\varphi$-modules''. \smallskip

In what follows, we fix a locally noetherian analytic adic space $S$ with a morphism $S \to \Spa(\Q_p, \Z_p)$, and $\cal{C}$ the category of locally finite type adic $S$-spaces. \smallskip

Now we briefly sketch the construction of the $6$-functor formalism of $\O^+/p$-($\varphi$-)modules developed in \cite{Lucas-thesis}. We will not discuss the full construction of this formalism here; instead we only sketch the part that are important for the discussion of this section, and refer to \cite{Lucas-thesis} for the thorough construction of this $6$-functor formalism. \smallskip 

To begin with, we recall that \cite[Th.\,3.6.12 and Prop.\,3.9.13]{Lucas-thesis} define\footnote{See also \cite[Prop.\,3.5.14]{Lucas-thesis} to conclude that any locally finite type morphism of analytic adic spaces is bdcs in the sense of \cite[Def.\,3.6.9]{Lucas-thesis}.} two (closely related) $6$-functor formalisms
\[
\cal{D}_{\Box}^{\rm{a}}(-; \O^+/p) \colon \Corr(\cal{C}) \to \Cat_\infty,
\]
and 
\[
\cal{D}_{\Box}^{\rm{a}}(-; \O^+/p)^{\varphi} \colon \Corr(\cal{C}) \to \Cat_\infty.
\]
These two $6$-functor formalisms are defined in a significantly more general setup, that generality will not play a huge role in our discussion beyond the point that we can evaluate $\cal{D}_{\Box}^{\rm{a}}(-; \O^+/p)$ on strictly totally disconnected perfectoids over $S$ (which are essentially never locally finite type over $S$). \smallskip

We briefly discuss the construction of the category $\cal{D}_{\Box}^{\rm{a}}(X; \O^+/p)$ in \cite{Lucas-thesis}. First, for a (strictly) totally disconnected perfectoid space with a map $\Spa(R, R^+)\to S$, one puts 
\[
\cal{D}_{\Box}^{\rm{a}}(\Spa(R, R^+); \O^+/p) = \cal{D}_{\Box}^{\rm{a}}(R^+/p)
\]
the almost category of solid $R^+/p$-modules (see \cite[Def.\,3.1.2]{Lucas-thesis}). Then one shows that this assignment satisfies (hyper-)descent in the $\rm{v}$-topology (see \cite[Th.\,3.1.27 and Def.\,3.1.3]{Lucas-thesis}) on (strictly) totally disconnected perfectoid spaces over $S$. After that, Mann formally extends $\cal{D}_{\Box}^{\rm{a}}(X; \O^+/p)$ to all adic $S$-spaces by descent. This category comes equipped with the usual $4$ functors: $f_*$, $f^*$, $\ud{\Hom}$, and $\otimes$. The question of defining the shriek functors is quite subtle and we refer to \cite[\textsection 3.6]{Lucas-thesis} for their construction. \smallskip

The $\varphi$-version of $\cal{D}_{\Box}^{\rm{a}}(X; \O^+/p)$ is defined as the equalizer (in the $\infty$-categorical sense) 
\[
\cal{D}^{\rm{a}}_{\Box}(X; \O^+/p)^{\varphi} \coloneqq \rm{eq}\left(\cal{D}_{\Box}^{\rm{a}}(-; \O^+/p)  \xr{\varphi - \rm{id}} \cal{D}_{\Box}^{\rm{a}}(-; \O^+/p)\right). 
\]
Then \cite[Prop.\,3.9.13]{Lucas-thesis} extends the $6$-functors to $\cal{D}^{\rm{a}}_{\Box}(X; \O^+/p)^{\varphi}$. \smallskip

Our first goal is to show that both of these $6$-functor formalisms satisfy the excision axiom (see Definition~\ref{defn:excision-axiom}). This will allow us to apply Theorem~\ref{thm:main-thm} to this situation and reduce the question of proving Poincar\'e Duality to the question of constructing a theory of first Chern classes and computing the cohomology groups of the projective line $\bf{P}^1_C$. \smallskip

One useful tool in proving the excision axiom will be the (sub)category of discrete objects $\cal{D}^{\rm{a}}_\Box(X; \O_X^+/p)_{\omega} \subset \cal{D}^{\rm{a}}_\Box(X; \O_X^+/p)$ introduced in \cite[Def.\,3.2.17]{Lucas-thesis}. If $X$ admits a map\footnote{This condition ensures that $X\in X_{\rm{v}}^{\Lambda}$ in the sense of \cite[Def.\,3.2.5]{Lucas-thesis}.} to an affinoid perfectoid space $\Spa(R, R^+)$ \cite[Prop.\,3.3.16]{Lucas-thesis} justifies the name and shows that there is a functorial equivalence
\[
\cal{D}^{\rm{a}}_\Box(X; \O_X^+/p)_{\omega} \simeq \rm{Shv}^{\wdh{ }}(X_\et; \O_X^{+, \rm{a}}/p)^{\rm{oc}}
\]
between discrete objects in $\cal{D}^{\rm{a}}_\Box(X; \O_X^{+, a}/p)$ and overconvergent objects in the left-completed $\infty$-derived category of \'etale sheaves of almost $\O_X^+/p$-modules (see \cite[Prop.\,3.3.16]{Lucas-thesis}).

\begin{lemma}\label{excision:perfectoid} Let $X=\Spa(R, R^+)$ be a strictly totally disconnected perfectoid space over $S$, let $i\colon Z \hookrightarrow X$ be a Zariski-closed affinoid perfectoid subspace (in the sense of \cite[Def.\, 5.7]{Scholze-diamond}), and let $j\colon U \hookrightarrow X$ be its open complement. Then 
\[
j_! \O_U^{+, \rm{a}}/p \to \O_X^{+, \rm{a}}/p \to i_* \O_Z^{+, \rm{a}}/p
\]
is a fiber sequence in $\cal{D}^{\rm{a}}_{\Box}(X; \O^{+}/p)$.
\end{lemma}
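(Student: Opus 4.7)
The plan is to verify the fiber sequence by transferring the problem to the concrete model $\cal{D}^{\rm{a}}_\square(X; \O^+/p) \simeq \cal{D}^{\rm{a}}_\square(R^+/p)$ available since $X = \Spa(R, R^+)$ is strictly totally disconnected.

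First I would extract the formal part of the argument from the axioms of the $6$-functor formalism. The Cartesian square $\emptyset = U \times_X Z$ together with proper base change (using $i_! = i_*$ for the proper morphism $i$) yields $j^* i_* = 0$ and $i^* j_! = 0$. Since $i$ is a monomorphism, its diagonal is an isomorphism, so proper base change applied to that diagonal gives $i^* i_* \simeq \rm{id}$. Writing $F \coloneqq \rm{fib}(\O_X^{+, \rm{a}}/p \to i_* \O_Z^{+, \rm{a}}/p)$, these identities yield $j^* F \simeq \O_U^{+, \rm{a}}/p$ and $i^* F \simeq 0$. The counit of the $(j_!, j^*)$-adjunction then furnishes a canonical comparison map
\[
\alpha \colon j_! \O_U^{+, \rm{a}}/p \longrightarrow F
\]
inducing equivalences after both $j^*$ and $i^*$.

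Second, I would verify that $\alpha$ itself is an equivalence by writing both sides as solid almost $R^+/p$-modules. Letting $Z = \Spa(S, S^+)$, the Zariski-closed affinoid perfectoid condition produces an almost surjection $R^+/p \twoheadrightarrow S^+/p$ with kernel $I^{\rm{a}}$, so $F$ corresponds to $I^{\rm{a}}$ under the equivalence. For $j_! \O_U^{+, \rm{a}}/p$, one can exhaust the Zariski-open $U$ by rational subdomains disjoint from $Z$: if $Z = V(f_1, \dots, f_n)$, by $U_k = \{|f_i| \geq |\varpi|^k \text{ for some } i\}$, each again strictly totally disconnected affinoid perfectoid. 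I would then express $j_! \O_U^{+, \rm{a}}/p$ as the resulting filtered colimit of shriek extensions from the $U_k$ and compare with the analogous filtration of $I^{\rm{a}}$ by finite sums of monomials in the $f_i$ to identify $\alpha$ with an isomorphism.

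The main obstacle is the compatibility invoked in the second step: the solid shriek $j_! \O_U^{+, \rm{a}}/p$ must actually be computed by the filtered colimit of shriek extensions along the exhausting family $\{U_k\}$. I expect this to follow from the analytic descent for $\cal{D}^{\rm{a}}_\square(-; \O^+/p)$ established in \cite{Lucas-thesis} together with the fact that rational subdomains of a strictly totally disconnected space remain strictly totally disconnected. Granting that, each shriek extension involved reduces to a transparent restriction-of-scalars along an explicit $\O^+/p$-algebra map, making the identification with $I^{\rm{a}}$ a direct verification.
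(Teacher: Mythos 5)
Your first, formal step is fine: constructing the comparison map $\alpha\colon j_!\O_U^{+,\rm{a}}/p \to \rm{fib}\left(\O_X^{+,\rm{a}}/p \to i_*\O_Z^{+,\rm{a}}/p\right)$ and noting it becomes an equivalence after $j^*$ and $i^*$ is unproblematic (and of course does not by itself prove the claim, since excision is exactly what is at stake). The gap is in your second step. First, a Zariski-closed subspace in the sense of \cite[Def.\,5.7]{Scholze-diamond} is the vanishing locus of an arbitrary ideal, not necessarily of finitely many functions $f_1,\dots,f_n$, so your exhaustion $U_k=\{|f_i|\geq |\varpi|^k \text{ for some } i\}$ only covers a special case. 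Second, and more seriously, the $U_k$ are rational subdomains, not clopen subsets, so $j_{k,!}\neq j_{k,*}$, and computing $j_{k,!}\O_{U_k}^{+,\rm{a}}/p$ inside $\cal{D}^{\rm{a}}_\square(R^+/p)$ is precisely the hard point of excision in the solid formalism; it is \emph{not} a ``transparent restriction-of-scalars along an explicit $\O^+/p$-algebra map'' (that description conflates $j_!$ with a pushforward). Likewise the proposed ``filtration of $I^{\rm{a}}$ by finite sums of monomials in the $f_i$'' is not spelled out, and it is unclear how it would match the colimit of shriek extensions, especially in the almost/perfectoid setting where the relevant ideal involves the elements $f_i^{1/p^m}$. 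So as written the decisive identification is asserted rather than proved.

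For comparison, the paper's proof avoids computing $j_!$ on rational subsets altogether: since $Z$ is Zariski-closed, $U$ is stable under generalizations and specializations, and because $X$ is strictly totally disconnected one gets $U=\pi^{-1}(\pi(U))$ for $\pi\colon |X|\to\pi_0(X)$, whence $U$ is a \emph{filtered union of clopen} subsets $U_i$. For clopen immersions $j_{i,!}=j_{i,*}$, so $j_!\O_U^{+,\rm{a}}/p=\colim_i j_{i,*}\O_{U_i}^{+,\rm{a}}/p$ is a discrete object; since $i_*\O_Z^{+,\rm{a}}/p$ is also discrete, the comparison map can be checked in the discrete subcategory $\simeq$ overconvergent \'etale sheaves, where isomorphisms are detected on stalks, i.e.\ after pullback to points $\Spa(C,C^+)$, where the statement is tautological. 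If you want to salvage your approach, you would either need to establish a genuine computation of $j_!$ on rational subsets of affinoid perfectoids (a nontrivial input), or replace your rational exhaustion by the clopen one, at which point you have essentially reproduced the paper's argument.
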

\begin{proof}
    {\it Step~$1$. $j_! \O_U^{+, \rm{a}}/p$ is discrete.} We first consider the morphism $\pi\colon |X| \to \pi_0(X)$ from \cite[Lemma 7.3]{Scholze-diamond}. Since $Z$ is Zariski-closed, it is both closed under generalizations and specializations. Thus the same holds for $U$, so the natural morphism $U \to \pi^{-1}(\pi(U))$ is an isomorphism. Since $\pi$ is a quotient morphism, we conclude that $U'\coloneqq \pi(U)$ must be open in $\pi_0(X)$. \smallskip
    
    Now recall that $\pi_0(X)$ is a profinite set. So clopen subsets form a base of topology on $\pi_0(X)$. Therefore $U' = \cup_{i\in I} U'_i$ is a filtered union of clopen subset $U'_i$ (in particular, they are quasi-compact). Thus we conclude that $U = \cup_{i\in I} \pi^{-1}(U'_i)$ is a filtered union of clopen subspaces of $X$. We denote the pre-image $U'_i$ by $j_i \colon U_i \to X$. Then, by construction (see \cite[Lemma 3.6.2]{Lucas-thesis}), we have 
    \[
    j_! \O_U^{+, \rm{a}}/p \simeq \colim j_{i, !} \O_{U_i}^{+, \rm{a}}/p.
    \]
    Since each $j_i$ is clopen, we conclude that $j_{i, !}=j_{i, *}$. Thus each $j_{i, !} \O_{U_i}^{+,\rm{a}}/p=j_{i, *}\O_{U_i}^{+, \rm{a}}/p$ is discrete by \cite[Lemma 3.3.10(ii)]{Lucas-thesis}. So the colimit is also discrete by \cite[Lemma 3.2.19]{Lucas-thesis}. \smallskip
    
    {\it Step~$2$. Reduce to the case $X=\Spa(C, C^+)$.} Now we note that $i_*\O_Z^{+, \rm{a}}/p$ is discrete by \cite[Lemma 3.3.10]{Lucas-thesis}. So we can check that the morphism 
    \begin{equation}\label{eqn:iso?}
        j_!\O_U^{+, \rm{a}}/p \to \rm{fib}\left(\O_X^{+,\rm{a}}/p \to i_*\O_Z^{+, \rm{a}}/p\right)
    \end{equation}
    is an isomorphism in $\cal{D}^{\rm{a}}_\Box(X; \O_X^{+}/p)_{\omega} \simeq \rm{Shv}^{\wdh{ }}(X_\et; \O_X^{+,\rm{a}}/p)^{\rm{oc}}$. However, a property of a map being an isomorphism in $\rm{Shv}^{\wdh{ }}(X_\et; \O_X^{+, a}/p)^{\rm{oc}}$ can be checked on stalks. Therefore, it suffices to prove the claim after a pullback\footnote{Here, we implicitly use base change for both $j_!$ and $i_*$} along each morphism 
    \[
    \Spa(C, C^+) \to X,
    \]
    where $C$ is an algebraically closed non-archimedean field, and $C^+\subset C$ is an open bounded valuation ring. But this is essentially obvious: note that $Z\times_X \Spa(C, C^+)$ is a Zariski-closed subspace of $\Spa(C, C^+)$, so it is either empty or equal to $\Spa(C, C^+)$. In both cases, Morphism~(\ref{eqn:iso?}) is tautologically an isomorphism. 
\end{proof}

\begin{lemma}\label{lemma:excision-phi-modules} The $6$-functor formalisms $\cal{D}_{\Box}^{\rm{a}}(-; \O^+/p)$ and $\cal{D}_{\Box}^{\rm{a}}(-; \O^+/p)^{\varphi}$ satisfy the excision axiom.
\end{lemma}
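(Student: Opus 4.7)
The plan is to reduce the excision axiom for an arbitrary coefficient $\F$ to the case of the unit object $\bf{1}_X=\O_X^{+,\rm{a}}/p$, and then to reduce the unit case to the geometric situation already handled in Lemma~\ref{excision:perfectoid} via $v$-descent.

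First, I would use the projection formula to reduce to the unit object. For any $\F\in \cal{D}_\square^{\rm{a}}(X;\O^+/p)$, the projection formula and the fact that $j^*$ and $i^*$ are symmetric monoidal give canonical identifications
\[
j_!j^*\F \simeq j_!\bf{1}_U\otimes \F, \qquad i_*i^*\F \simeq i_!i^*\F \simeq i_*\bf{1}_Z\otimes \F,
\]
the latter using that Zariski-closed immersions are proper so $i_*=i_!$. Since $\cal{D}_\square^{\rm{a}}(X;\O^+/p)$ is stable and $-\otimes \F$ is exact, it suffices to show that
\[
j_!\bf{1}_U \to \bf{1}_X \to i_*\bf{1}_Z
\]
is a fiber sequence in $\cal{D}_\square^{\rm{a}}(X;\O^+/p)$.

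Next, by analytic descent of the $6$-functor formalism, I may assume $X$ is affinoid. Choose a $v$-cover $\pi\colon \wdt X\to X$ by a strictly totally disconnected affinoid perfectoid space $\wdt X$, and form the pullbacks $\wdt Z\coloneqq Z\times_X\wdt X$ and $\wdt U\coloneqq U\times_X\wdt X$, with induced immersions $\wdt i$ and $\wdt j$. Since $\cal{D}_\square^{\rm{a}}(-;\O^+/p)$ satisfies $v$-descent, the property of being a fiber sequence can be checked after pulling back along $\pi^*$. Proper base change (applied to the proper morphism $i$) together with the compatibility of $j_!$ with arbitrary base change in any $6$-functor formalism identifies $\pi^*$ applied to the above triangle with
\[
\wdt j_!\bf{1}_{\wdt U}\to \bf{1}_{\wdt X}\to \wdt i_*\bf{1}_{\wdt Z}.
\]
The base change $\wdt Z\hookrightarrow \wdt X$ is again a Zariski-closed immersion, and since $\wdt X$ is strictly totally disconnected affinoid perfectoid, $\wdt Z$ is automatically an affinoid perfectoid Zariski-closed subspace of $\wdt X$ in the sense of \cite[Def.\,5.7]{Scholze-diamond}. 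Therefore Lemma~\ref{excision:perfectoid} applies and yields the desired fiber sequence.

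Finally, for the $\varphi$-variant, note that by construction
\[
\cal{D}^{\rm{a}}_{\square}(X; \O^+/p)^{\varphi}=\rm{eq}\bigl(\cal{D}^{\rm{a}}_{\square}(X; \O^+/p)\xrightarrow{\varphi-\rm{id}} \cal{D}^{\rm{a}}_{\square}(X; \O^+/p)\bigr),
\]
and the six functors on the $\varphi$-variant are defined as the induced functors on this equalizer. Since equalizers commute with limits, and in a stable $\infty$-category fiber sequences are limits, the excision fiber sequence in the non-$\varphi$-version descends to a fiber sequence in the $\varphi$-version. The principal obstacle is the geometric fact that the base change $\wdt Z\subset \wdt X$ is a Zariski-closed affinoid perfectoid subspace in the exact sense required by Lemma~\ref{excision:perfectoid}; once this is set up the rest of the argument is a formal consequence of $v$-descent, proper base change, and the projection formula.
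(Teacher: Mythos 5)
Your proposal is correct and follows essentially the same route as the paper: reduce to the unit object via the projection formulas for $j_!$ and $i_*$, use $v$-descent and base change to check the fiber sequence on strictly totally disconnected perfectoids where Lemma~\ref{excision:perfectoid} applies, and transfer the result to the $\varphi$-variant through the forgetful functor from the equalizer (the paper cites \cite[Lem.\,3.9.12]{Lucas-thesis} for its conservativity and compatibility with the six functors, which is the precise form of your ``equalizers commute with limits'' step). The only point you leave implicit is the one the paper settles by citing \cite[Lemma 5.2]{adic-notes}: that a Zariski-closed immersion of locally noetherian analytic adic spaces pulls back to a Zariski-closed (affinoid perfectoid) subspace in the sense of \cite[Def.\,5.7]{Scholze-diamond}, which you correctly identify as the needed geometric input.
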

\begin{proof}
    We fix a locally finite type adic $S$-space $X$, a Zariski-closed immersion $Z\xhookrightarrow{i} X$, and its open complement $U\xhookrightarrow{j} X$. We wish to show that, for any $\F \in \cal{D}_{\Box}^{\rm{a}}(X; \O^+/p)$ (resp. $\F \in \cal{D}_{\Box}^{\rm{a}}(X; \O^+/p)^{\varphi}$), the natural morphism
    \[
    j_!j^*\F \to \rm{fib}\left(\F \to i_*i^*\F\right)
    \]
    is an isomorphism. Since the forgetful functor 
    \[
    \cal{D}_{\Box}^{\rm{a}}(X; \O^+/p)^{\varphi} \to \cal{D}_{\Box}^{\rm{a}}(X; \O^+/p)
    \]
    commutes with limits, all $6$-functors, and is conservative (see \cite[Lem.\,3.9.12]{Lucas-thesis}), it is sufficient to prove that $\cal{D}_{\Box}^{\rm{a}}(-; \O^+/p)$ satisfies excision. \smallskip
    
    For this, we note that the projection formulas for $i_*$ and $j_!$ imply that it suffices to show that the natural morphism
    \[
    j_!\O_U^{+,\rm{a}}/p \to \rm{fib}\left(\O_X^{+, \rm{a}}/p \to i_*\O_Z^{+,\rm{a}}/p\right)
    \]
    is an isomorphism. By the $\rm{v}$-descent and shriek base-change, it can be checked on the basis of strictly totally disconnected perfectoid spaces. Now \cite[Lemma 5.2]{adic-notes} ensures that, for any perfectoid space $T$ with a morphism $T \to X$, the pullback $(Z\times_X T)^\diam \to T^\diam$ is represented by a Zariski-closed immersion. Therefore, the result follows from Lemma~\ref{excision:perfectoid} and the fact that, for any adic space $Y$ over $\Spa(\Q_p, \Z_p)$, the construction of  $\cal{D}^{\rm{a}}_{\Box}(Y; \O^+/p)^{\varphi}$ depends only on the associated diamond of $Y$.
\end{proof}

Now we discuss the computation of the cohomology groups of the projective line, and the construction of first Chern classes. An important tool to deal with these questions is the Riemann--Hilbert functor from \cite[\textsection 3.9]{Lucas-thesis}. We follow the notation of \cite{Lucas-thesis}, and denote by $\cal{D}_\et(X; \bf{F}_p)$ the {\it left-completed} $\infty$-derived category\footnote{It may be more appropriate to denote this category by $\widehat{\cal{D}}_\et(X; \bf{F}_p)$ or $\rm{Shv}^{\wdh{}}(X_\et; \bf{F}_p)$, but we prefer to stick to the notation used in \cite{Lucas-thesis}. The reason to use this notation is that the left completed version naturally arises as the ``derived'' category of \'etale $\bf{F}_p$-sheaves on the associated diamond $X^\diam$.} of \'etale sheaves of $\bf{F}_p$-modules on $X$. We also denote by $\cal{D}_\et(X; \bf{F}_p)^{\rm{oc}} \subset \cal{D}_\et(X; \bf{F}_p)$ the full $\infty$-subcategory spanned by overconvergent sheaves (see \cite[Def.\,3.9.17]{Lucas-thesis}). Then \cite[Def.\,3.9.21]{Lucas-thesis} defines the Riemann--Hilbert functor
\[
-\otimes \O_X^{+, \rm{a}}/p \colon \cal{D}_\et(X; \bf{F}_p)^{\rm{oc}} \to \cal{D}^{\rm{a}}_\Box(X; \O_X^+/p)^{\varphi}.
\]
If $X$ admits a map to an affinoid perfectoid field $\Spa(R, R^+)$, then (essentially by construction) the following diagram
\begin{equation}\label{eqn:RH}
\begin{tikzcd}
    \cal{D}_\et(X; \bf{F}_p)^{\rm{oc}} \arrow{d}{-\otimes \O_X^{+, \rm{a}}/p} \arrow{r}{-\otimes \O_X^{+, \rm{a}}/p} & \cal{D}^{\rm{a}}_\Box(X; \O_X^+/p)^{\varphi} \arrow{d}{\rm{can}} \\
    \rm{Shv}^{\wdh{}}(X_\et; \O_X^{+, \rm{a}}/p)^{\rm{oc}} \arrow{r} & \cal{D}^{\rm{a}}_\Box(X; \O_X^+/p)
\end{tikzcd}
\end{equation}
commutes up to a homotopy, where the left vertical functor is (the left completion) of the naive (derived) tensor product functor, and the bottom horizontal functor is the canonical identification of $\rm{Shv}^{\wdh{}}_\et(X; \O_X^{+, \rm{a}}/p)^{\rm{oc}}$ with the subcategory of discrete objects in $\cal{D}^{\rm{a}}_\Box(X; \O_X^+/p)$. \smallskip

\begin{defn} The {\it $p$-adic Tate twist} $\O_X^{+, \rm{a}}/p(i) \in \cal{D}^{\rm{a}}_\Box(X; \O_X^+/p)^{\varphi}$ (resp. $\O_X^{+, \rm{a}}/p(i) \in \cal{D}^{\rm{a}}_\Box(X; \O_X^+/p)$) is the image of the Tate twist $\ud{\bf{F}}_p(i)$ under the Riemann--Hilbert functor, i.e.,
\[
\O_X^{+, \rm{a}}/p(i) \simeq \ud{\bf{F}}_p(i)\otimes \O_X^{+, \rm{a}}/p.
\]
\end{defn}

\begin{warning} In the next lemma, we follow the terminology of \cite{Lucas-thesis} and do not write $\rm{R}$ for the derived functors on the category of $\bf{F}_p$-sheaves.
\end{warning}

\begin{lemma}\label{lemma:primitive-comparison} Let $f\colon X \to Y$ be a proper morphism in $\cal{C}$, and $k$ an integer. Then the natural morphism
\[
\bigl(f_{\et, *} \ud{\bf{F}_p}(k)\bigr) \otimes \O_{Y}^{+, \rm{a}}/p \to f_* \bigl(\O_X^{+, \rm{a}}/p(k)\bigr)
\]
is an isomorphism in $\cal{D}^{\rm{a}}_\Box(Y; \O_Y^+/p)^{\varphi}$.
\end{lemma}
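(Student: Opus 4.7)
The plan is to reduce the statement to (a relative form of) Scholze's Primitive Comparison Theorem. I would proceed as follows.

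First, I would use that the forgetful functor $\cal{D}^{\rm{a}}_\square(-; \O^+/p)^{\varphi} \to \cal{D}^{\rm{a}}_\square(-; \O^+/p)$ is conservative and commutes with all six functors (\cite[Lem.\,3.9.12]{Lucas-thesis}). Therefore it suffices to check that the natural map is an isomorphism after applying this forgetful functor, i.e., in $\cal{D}^{\rm{a}}_\square(Y; \O^+/p)$. Here the comparison map is the canonical one: on the left, $f_{\et, *}\ud{\bf{F}_p}(k)$ is overconvergent by proper base change in \'etale cohomology and the Riemann-Hilbert functor $-\otimes \O^{+,\rm{a}}/p$ is applied; on the right, we first apply Riemann-Hilbert on $X$ and then take the proper pushforward in Mann's formalism.

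Second, both functors $f_*$ (in Mann's formalism) and $f_{\et, *}$ commute with base change along any $Y' \to Y$ (by proper base change), and $\cal{D}^{\rm{a}}_\square(-; \O^+/p)$ satisfies $v$-hyperdescent by construction. Hence the question of whether the comparison map is an isomorphism may be checked after an arbitrary $v$-cover of $Y$. We may therefore assume $Y = \Spa(R, R^+)$ is a strictly totally disconnected perfectoid space. On such $Y$ (and hence on any $X \to Y$), $\mu_p$ is canonically isomorphic to $\ud{\bf{F}_p}$, so $\ud{\bf{F}_p}(k)$ is trivialized and we may reduce to the case $k = 0$.

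Third, with $Y = \Spa(R, R^+)$ as above, I would unwind both sides. The left hand side reduces to $\rm{R}\Gamma_\et(X, \bf{F}_p) \otimes^L R^{+, \rm{a}}/p$ viewed in $\cal{D}^{\rm{a}}_\square(R^+/p)$ (using that $f_{\et, *}\ud{\bf{F}_p}$ is overconvergent, hence its value on $Y$ is computed by global \'etale cohomology, and the Riemann-Hilbert functor on the discrete subcategory is given by the naive tensor product by diagram~(\ref{eqn:RH})). The right hand side, by Mann's construction of $f_*$ via $v$-hyperdescent on $X$, computes $\rm{R}\Gamma(X, \O_X^{+,\rm{a}}/p)$ as a solid almost $R^+/p$-module. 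Under these identifications the natural comparison map is exactly the one furnished by Scholze's Primitive Comparison Theorem for the proper morphism $X \to \Spa(R, R^+)$, which asserts that the natural map
\[
\rm{R}\Gamma_\et(X, \bf{F}_p) \otimes^L R^{+,\rm{a}}/p \longrightarrow \rm{R}\Gamma(X, \O_X^{+,\rm{a}}/p)
\]
is an almost isomorphism; the extension from the smooth proper case to the general proper case is carried out in \cite{Lucas-thesis}.

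The main obstacle is the careful unwinding step: namely, verifying that Mann's abstractly defined $f_*$ on the perfectoid base indeed recovers the almost $\O^+/p$-cohomology, and that the natural transformation we are studying matches the Primitive Comparison map under these identifications. Once this bookkeeping is done, the lemma is essentially a restatement of Primitive Comparison in Mann's language.
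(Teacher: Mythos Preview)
Your approach is essentially the same as the paper's: pass to $\cal{D}^{\rm{a}}_\square(-;\O^+/p)$ via conservativity of the forgetful functor, localize in the $v$-topology, and reduce to the Primitive Comparison Theorem. The paper carries out the ``unwinding step'' you flag as the main obstacle by staying sheafified rather than globalizing: after reducing to the case where $Y$ maps to an affinoid perfectoid, it identifies the subcategory of discrete objects with $\rm{Shv}^{\wdh{}}(Y_\et;\O_Y^{+,\rm{a}}/p)^{\rm{oc}}$ (via \cite[Prop.\,3.3.16]{Lucas-thesis}), checks that Mann's $f_*$ agrees with $f_{\et,*}$ on discrete objects (via \cite[Lem.\,3.3.10(ii), 3.3.15(iii)]{Lucas-thesis}), and then invokes the relative Primitive Comparison as an almost isomorphism of \'etale sheaves $\rm{R}^d f_{\et,*}\ud{\bf{F}}_p(k)\otimes_{\bf{F}_p}\O_Y^+/p \to \rm{R}^d f_{\et,*}\O_X^+/p(k)$.

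One small imprecision in your version: over a general strictly totally disconnected $Y=\Spa(R,R^+)$, the left hand side is not literally $\rm{R}\Gamma_\et(X,\bf{F}_p)\otimes^L_{\bf{F}_p} R^{+,\rm{a}}/p$, since $f_{\et,*}\ud{\bf{F}}_p$ can vary over $\pi_0(Y)$ and the tensor product must be taken relative to $C(\pi_0(Y),\bf{F}_p)$. You would either need to reduce one step further to geometric points $\Spa(C,C^+)$ (after checking both sides are discrete so that stalks detect isomorphisms), or simply work with the sheafified statement as the paper does.
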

\begin{proof}
    The claim is $\rm{v}$-local on the base, so we can assume that $Y$ (and, therefore, $X$) admits a morphism to an affinoid perfectoid space $\Spa(R, R^+)$. Then we wish to leverage Diagram~(\ref{eqn:RH}) to reduce the question to the classical Primitive Comparison Theorem. \smallskip
    
    More precisely, we first note that the forgetful functor $\cal{D}^{\rm{a}}_\Box(X; \O_X^+/p)^{\varphi} \to \cal{D}^{\rm{a}}_\Box(X; \O_X^+/p)$ is conservative by \cite[Lem.\,3.9.12(i)]{Lucas-thesis}. Thus, it suffices to show that the corresponding morphism
    \[
    f_{\et, *} \ud{\bf{F}_p}(k) \otimes \O_{Y}^{+, \rm{a}}/p \to f_* \O_X^{+, \rm{a}}/p(k)
    \]
    is an isomorphism in $\cal{D}^{\rm{a}}_\Box(Y; \O_Y^+/p)$. Now we note that \cite[Prop.\,3.3.16 and Lemmas 3.3.10(ii), 3.3.15(iii)]{Lucas-thesis} imply that the diagram
\begin{equation}\label{eqn:RH-2}
\begin{tikzcd}
     \rm{Shv}^{\wdh{}}(X_\et; \O_X^{+, \rm{a}}/p)^{\rm{oc}} \arrow{d}{f_{\et, *}} \arrow{r}{} & \cal{D}^{\rm{a}}_\Box(X; \O_X^+/p) \arrow{d}{f_{*}} \\
    \rm{Shv}^{\wdh{}}(Y_\et; \O_X^{+, \rm{a}}/p)^{\rm{oc}} \arrow{r} & \cal{D}^{\rm{a}}_\Box(Y; \O_X^+/p)
\end{tikzcd}
\end{equation}
    commutes up to a homotopy. Therefore, Diagram~(\ref{eqn:RH}) ensures that it suffices to show that the natural morphism
    \[
    \left(f_{\et, *} \ud{\bf{F}_p}(k)\right) \otimes \O_{Y}^{+, \rm{a}}/p \to f_{\et, *} \O_X^{+, \rm{a}}/p(k)
    \]
    is an isomorphism in $\rm{Shv}^{\wdh{}}(Y_\et; \O_Y^{+, \rm{a}}/p)$. More explicitly, we reduced the question to showing that, for each $k$ and $d$, the natural morphism
    \[
    \rm{R}^df_{\et, *} \ud{\bf{F}}_p(k) \otimes_{\bf{F}_p} \O_Y^+/p \to \rm{R}^df_{\et, *} \O_X^+/p(k)
    \]
    is an {\it almost} isomorphism of \'etale $\O_Y^+/p$-module. This follows from the standard Primitive Comparison Theorem from the $p$-adic Hodge theory, see \cite[Cor.\,5.11]{Scholze-p-adic} or \cite[Lemma 6.3.7]{almost-coherent}.
\end{proof}

Now we are ready to define first Chern classes on $\cal{D}^{\rm{a}}_\Box(-; \O^+/p)^\varphi$ and $\cal{D}^{\rm{a}}_\Box(-; \O^+/p)$. For this, we note that the Riemann--Hilbert functor $\cal{D}_\et(X; \bf{F}_p)^{\rm{ov}} \to \cal{D}^{\rm{a}}_\Box(X; \O_X^+/p)^\varphi$ sends the constant sheaf $\ud{\bf{F}_p}$ to the unit object $\O_X^+/p$, and so it defines a functorial in $X$ morphism:
\[
\rm{R}\Gamma_{\et}(X, \mu_p) \to \rm{R}\Gamma(X, \O_X^{+, \rm{a}}/p(1)) \coloneqq \Hom_{\cal{D}^{\rm{a}}_\Box(X; \O_X^+/p)^\varphi}(\O_X^{+, \rm{a}}/p, \O_X^{+, \rm{a}}/p(1)). 
\]

\begin{defn} We define the {\it Tate twist} as $\bf{1}_S\langle 1\rangle \coloneqq \O_S^{+,\rm{a}}/p(1)[2]\in \cal{D}_\Box^{\rm{a}}(S; \O_S^+/p)^{\varphi}$. This object is invertible (since $-\otimes \O_{S}^{+, \rm{a}}/p$ is symmetric monoidal), so it fits into the assumptions of Section~\ref{section:Chern-classes}. 
\end{defn}

\begin{defn}\label{defn:first-chern-classes-mod-p} A theory of first Chern classes on the $6$-functor formalism $\cal{D}^{\rm{a}}_\Box(-; \O^+/p)^{\varphi}$ is the morphism of $\rm{Sp}$-valued presheaves 
\[
c_1^{\varphi}\colon \rm{R}\Gamma_{\rm{an}}(-, \O^\times)[1] \to \rm{R}\Gamma(-, \O^{+, \rm{a}}/p)[2]=\rm{R}\Gamma(-, \bf{1}\langle 1\rangle)
\]
obtained as the composition
\[
\rm{R}\Gamma_{\rm{an}}(-; \O^\times)[1] \xr{c_1^\et} \rm{R}\Gamma_{\et}(-; \mu_p)[2] \to \rm{R}\Gamma(-; \O^+/p(1))[2],
\]
where the first morphism comes from Definition~\ref{defn:etale-first-chern-classes}.
\end{defn}

\begin{thm}\label{thm:p-adic-poincare-package} Let $S$ be a locally noetherian analaytic adic space over $\Spa(\Q_p, \Z_p)$. Then
\begin{enumerate}
    \item $\cal{D}^{\rm{a}}_\Box(-; \O^+/p)^{\varphi}$ satisfies the excision axiom;
    \item $c_1^{\varphi}$ is a theory of first Chern classes on $\cal{D}^{\rm{a}}_\Box(-; \O^+/p)^{\varphi}$.
\end{enumerate}
\end{thm}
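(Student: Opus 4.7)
The first assertion is immediate: Lemma~\ref{lemma:excision-phi-modules} already establishes the excision axiom for $\cal{D}^{\rm{a}}_\square(-; \O^+/p)^{\varphi}$ (its proof reduces, via the conservative forgetful functor, to the unenhanced version, which in turn reduces by $\rm{v}$-descent to Lemma~\ref{excision:perfectoid} on strictly totally disconnected perfectoids). So the work lies entirely in the second claim.

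For (2), the fact that $c_1^\varphi$ is a morphism of $\rm{Sp}$-valued \emph{sheaves} (rather than merely presheaves) is automatic: the target $\rm{R}\Gamma(-, \O^{+, \rm{a}}/p(1))[2]$ is an analytic sheaf by the $\rm{v}$-descent of $\cal{D}_{\square}^{\rm{a}}(-; \O^+/p)^{\varphi}$, and $c_1^\varphi$ is defined as a composition through $c_1^\et$, which is already a morphism of sheaves (Theorem~\ref{thm:ell-adic-poincare-package}). So the content is the projective bundle formula for $f\colon \bf{P}^1_S \to S$: I must show that
\[
c_1^{\varphi}(\O(1)) + f^{*}\langle 1\rangle \colon \O_S^{+, \rm{a}}/p \oplus \O_S^{+, \rm{a}}/p(1)[2] \longrightarrow f_*\O_{\bf{P}^1_S}^{+, \rm{a}}/p(1)[2]
\]
is an isomorphism in $\cal{D}^{\rm{a}}_\square(S; \O_S^+/p)^\varphi$.

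The plan is to transport the \'etale projective bundle formula through the Riemann-Hilbert functor $- \otimes \O^{+, \rm{a}}/p$. Since $S \to \Spa(\Q_p, \Z_p)$, the prime $p$ is invertible in $\O_S$, so Proposition~\ref{prop:projective-bundle-formula-etale} with $n = p$ provides an isomorphism
\[
c_1^{\et}(\O(1)) + f^{*} \colon \ud{\bf{F}}_p \oplus \ud{\bf{F}}_p(1)[2] \xrightarrow{\sim} \rm{R}f_{\et, *}\ud{\bf{F}}_p(1)[2]
\]
in $\cal{D}_\et(S; \bf{F}_p)^{\rm{oc}}$. Applying the symmetric monoidal Riemann-Hilbert functor yields an isomorphism whose source is $\O_S^{+, \rm{a}}/p \oplus \O_S^{+, \rm{a}}/p(1)[2]$ and whose target is $(\rm{R}f_{\et, *}\ud{\bf{F}}_p(1)[2]) \otimes \O_S^{+, \rm{a}}/p$. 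Since $f$ is proper, Lemma~\ref{lemma:primitive-comparison} identifies this target canonically with $f_*\O_{\bf{P}^1_S}^{+, \rm{a}}/p(1)[2] = f_*\bf{1}_{\bf{P}^1_S}\langle 1\rangle$.

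The final, and only nontrivial, step is to verify that under these identifications the resulting morphism coincides with $c_1^{\varphi}(\O(1)) + f^{*}\langle 1\rangle$. For the first summand this holds by the very definition of $c_1^\varphi$ as $c_1^\et$ followed by Riemann-Hilbert (Definition~\ref{defn:first-chern-classes-mod-p}). For the second summand, this is the compatibility of the unit of the $(f^*, f_*)$-adjunction in the two $6$-functor formalisms with the Riemann-Hilbert comparison of Lemma~\ref{lemma:primitive-comparison}; unwinding the construction, this amounts to checking that the Primitive Comparison isomorphism of Lemma~\ref{lemma:primitive-comparison} is compatible with the tautological map coming from symmetric monoidality of $f^*$. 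I expect this to be the main obstacle, but it reduces to a formal diagram chase using the commutative square Diagram~(\ref{eqn:RH}) and its proper pushforward analogue Diagram~(\ref{eqn:RH-2}), together with the construction of the comparison map in \cite[\textsection 3.9]{Lucas-thesis}.
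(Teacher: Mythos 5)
Your proposal is correct and follows essentially the same route as the paper: part (1) by Lemma~\ref{lemma:excision-phi-modules}, and part (2) by transporting the \'etale projective bundle formula (Proposition~\ref{prop:projective-bundle-formula-etale} with $n=p$) through the Riemann--Hilbert functor and identifying the target via the Primitive Comparison of Lemma~\ref{lemma:primitive-comparison}, the paper phrasing this as a two-out-of-three argument in the same commutative square whose compatibility you flag as the remaining formal check.
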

\begin{proof}
    Lemma~\ref{lemma:excision-phi-modules} ensures that $\cal{D}^{\rm{a}}_\Box(-; \O^+/p)^{\varphi}$ satisfies the excision sequence. To show that $c_1$ is a theory of first Chern classes, we have to show that the natural morphism
    \[
    c_1^{\varphi}(\O(1)) + f^* \colon \O_S^{+, \rm{a}}/p \oplus \O_S^{+, \rm{a}}/p(1)[2] \to f_* \left( \O_{\bf{P}^1_S}^{+, \rm{a}}/p(1)[2]\right) 
    \]
    is an isomorphism. For this, we use the commutative diagram
    \[
    \begin{tikzcd}[column sep = 9em]
        \left(\ud{\bf{F}}_p \oplus \mu_p[2]\right) \otimes \O_S^{+, \rm{a}}/p  \arrow{d}\arrow{r}{\left(c_1^\et(\O(1))+ f^*_\et\right) \otimes \O^+/p} & f_{\et, *}\left(\mu_p[2]\right) \otimes \O_S^{+, \rm{a}}/p \arrow{d}\\
        \O_S^{+, \rm{a}}/p \oplus \O_S^{+, \rm{a}}/p(1)[2] \arrow{r}{c^{\varphi}_1(\O(1))+f^*} & f_*\left(\O^+/p(1)[2]\right).
    \end{tikzcd}
    \]
    The left vertical arrow is an isomorphism by definition, the right vertical arrow is an isomorphism by Lemma~\ref{lemma:primitive-comparison}, and the top horizontal map is an isomorphism by Proposition~\ref{prop:projective-bundle-formula-etale}. Therefore, the bottom horizontal arrow must be an isomorphism as well finishing the proof.  
\end{proof}

\begin{thm}\label{thm:main-PD-p-adic} Let $Y$ be a locally noetherian analytic adic space over $\Spa(\Q_p, \Z_p)$ and let $f\colon X\to Y$ be a smooth morphism of pure dimension $d$. Then the functor 
\[
f_!\colon \cal{D}^{\rm{a}}_\Box(X; \O_X^+/p)^{\varphi} \to \cal{D}^{\rm{a}}_\Box(Y; \O_Y^+/p)^{\varphi}
\]
admits a right adjoint given by the formula
\[
f^* \otimes \O_X^{+, \rm{a}}/p(d)[2d] \colon \cal{D}^{\rm{a}}_\Box(Y; \O_Y^+/p)^{\varphi} \to \cal{D}^{\rm{a}}_\Box(X; \O_X^+/p)^{\varphi}.
\]
\end{thm}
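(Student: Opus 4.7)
The plan is to deduce Theorem~\ref{thm:main-PD-p-adic} as a direct application of Theorem~\ref{thm:main-thm} to the $6$-functor formalism $\cal{D}^{\rm{a}}_\square(-; \O^+/p)^{\varphi}$. Concretely, I would set $S=Y$ (with the implicit map $Y\to \Spa(\Q_p,\Z_p)$), take $\cal{C}$ to be the category of locally finite type adic $Y$-spaces, and use the $6$-functor formalism built in \cite{Lucas-thesis}. The only two hypotheses of Theorem~\ref{thm:main-thm} that must be checked are the excision axiom and the existence of a theory of first Chern classes valued in some invertible $\bf{1}_S\langle 1\rangle$, but both have already been packaged into Theorem~\ref{thm:p-adic-poincare-package}.

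More precisely, I would first invoke Theorem~\ref{thm:p-adic-poincare-package}(1) to conclude that $\cal{D}^{\rm{a}}_\square(-;\O^+/p)^{\varphi}$ satisfies the excision axiom in the sense of Definition~\ref{defn:excision-axiom}. Then I would apply Theorem~\ref{thm:p-adic-poincare-package}(2) with the Tate twist $\bf{1}_S\langle 1\rangle \coloneqq \O_S^{+,\rm{a}}/p(1)[2]$, giving the theory of first Chern classes $c_1^{\varphi}$ of Definition~\ref{defn:first-chern-classes-mod-p}. With these two inputs, Theorem~\ref{thm:main-thm} applies verbatim to the smooth morphism $f\colon X\to Y$ of pure relative dimension $d$, producing the formula
\[
f^!(-) \simeq f^*(-)\otimes \bf{1}_X\langle d\rangle
\]
for the right adjoint of $f_!$.

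Finally, I would unwind the Tate twist notation: since $\bf{1}_S\langle 1\rangle = \O_S^{+,\rm{a}}/p(1)[2]$ is invertible and $f^*$ is symmetric monoidal, one gets
\[
\bf{1}_X\langle d\rangle = f^*\bigl(\O_S^{+,\rm{a}}/p(1)[2]\bigr)^{\otimes d} \simeq \O_X^{+,\rm{a}}/p(d)[2d],
\]
which matches the formula in the statement. There is essentially no serious obstacle in this proof beyond the inputs already collected: the conceptual content (excision, the projective bundle formula for $\bf{P}^1_S\to S$ reduced via the Riemann-Hilbert functor to the $\ell$-adic case via Primitive Comparison, and the formal deduction of Poincar\'e Duality from first Chern classes plus excision) has all been established earlier. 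The main verification to keep honest is that the hypotheses of Theorem~\ref{thm:main-thm} are stated in terms of a $6$-functor formalism on locally finite type adic $S$-spaces, which is exactly the setting in which $\cal{D}^{\rm{a}}_\square(-;\O^+/p)^{\varphi}$ is defined (using \cite[Prop.\,3.9.13]{Lucas-thesis} together with \cite[Prop.\,3.5.14]{Lucas-thesis}), so no additional extension of the formalism is required.
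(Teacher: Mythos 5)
Your proposal is correct and follows exactly the paper's argument: the paper also deduces the theorem directly by combining Theorem~\ref{thm:p-adic-poincare-package} (excision plus the theory of first Chern classes $c_1^{\varphi}$ with $\bf{1}_S\langle 1\rangle=\O_S^{+,\rm{a}}/p(1)[2]$) with the general duality statement of Theorem~\ref{thm:main-thm}. Your extra step of unwinding $\bf{1}_X\langle d\rangle\simeq \O_X^{+,\rm{a}}/p(d)[2d]$ is just making explicit what the paper leaves implicit, so there is nothing to add.
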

\begin{proof}
    This is a direct consequence of Theorem~\ref{thm:p-adic-poincare-package} and Theorem~\ref{thm:main-thm}.
\end{proof}

\begin{rmk} Essentially the same proof applies to the $6$-functor formalism $\cal{D}^{\rm{a}}_\Box(-; \O^+/p)$.
\end{rmk}





\bibliography{biblio}

\end{document}